\numberwithin{equation}{section}
\theoremstyle{plain}
\newtheorem{theorem}{Theorem}[section]
\newtheorem{proposition}[theorem]{Proposition}
\newtheorem{lemma}[theorem]{Lemma}
\newtheorem{corollary}[theorem]{Corollary}
\theoremstyle{definition}
\newtheorem{definition}[theorem]{Definition}
\newtheorem{assumption}[theorem]{Assumption}
\newtheorem{convention}[theorem]{Convention}
\theoremstyle{remark}
\newtheorem{remark}[theorem]{Remark}
\newcommand{\RR}{\mathbb{R}}   
\newcommand{\CC}{\mathbb{C}}   
\newcommand{\HH}{\mathbb{H}}   
\newcommand{\ZZ}{\mathbb{Z}}   
\newcommand{\NN}{\mathbb{N}}   
\newcommand{\oa}{\mathfrak{a}} 
\newcommand{\ob}{\mathfrak{b}} 
\newcommand{\oc}{\mathfrak{c}} 
\newcommand{\od}{\mathfrak{d}} 
\newcommand{\err}{\mathcal{E}} 
\newcommand{\sgconst}{K_2}	
\newcommand{\pconst}{\rho}	
\newcommand{\nconst}{\nu}	
\newcommand{\uconst}{\Upsilon}	
\newcommand{\creg}{c^\lambda}	
\newcommand{\cshort}{c^\short}	
\newcommand{\clatt}{c^\latt}			
\newcommand{\pdev}{d_\pP}			
\newcommand{\dist}{d\hspace*{-0.08em}\bar{}\hspace*{0.1em}}	
\newcommand{\Ln}{\Lambda^n}				
\newcommand{\Lyl}{\Lambda_{\yy,\ell}}	
\newcommand{\LylP}{\Lyl(\pP)}			
\newcommand{\LylPx}{\Lyl(\px)}			
\newcommand{\LylPsx}{\Lyl(\pP_{\sigma\actn\xx})}
\newcommand{\Lij}{\Lambda_{ij}(\xx)}	
\newcommand{\On}{\Omega_{n/2}}			
\newcommand{\edge}{\mathfrak{E}}		
\newcommand{\edgem}{\edge^{(1)}_{\yy,\ell}(\pP)}	
\newcommand{\edgee}{\edge^{(2)}_{\yy,\ell}(\pP)}	
\newcommand{\edgel}{\edge_{\yy,\ell}(\pP)}			
\newcommand{\xx}{{\bm x}}		
\newcommand{\xxt}{{\bm{\tilde{x}}}}		
\newcommand{\yy}{{\bm y}} 		
\newcommand{\zz}{{\bm z}} 		
\newcommand{\ww}{{\bm w}}		
\newcommand{\bet}{{\bm \eta}}	
\newcommand{\forget}{\mathfrak{F}}	
\newcommand{\net}{\mathcal{N_\ell}}		
\newcommand{\supp}{\operatorname{supp}}
\newcommand{\family}{\bm{\mathcal{F}}}	
\newcommand{\length}[1]{m_{#1}}		
\newcommand{\len}{{\length{\xx\zz}}}
\newcommand{\pP}{{\mathcal{P}}} 	
\newcommand{\pQ}{{\mathcal{Q}}}		
\newcommand{\px}{\pP_\xx}			
\newcommand{\pz}{\pP_\zz}			
\newcommand{\pw}{\pP_\ww}			
\newcommand{\pww}{\pw\vee\pP_{\ww'}}	
\newcommand{\pl}{{\pP_{\yy,\ell}}}	
\newcommand{\PE}[1]{\bm{E}^{#1}}	
\newcommand{\ep}{\PE{\pP}}			
\newcommand{\epx}{\PE{\px}}		
\newcommand{\dat}{\bm{H}}	
\newcommand{\grn}{\bm{G}}	
\newcommand{\Vecs}{\bm{V}}	
\newcommand{\EV}{\bm{U}}	
\newcommand{\orth}{\bm{O}}	
\newcommand{\adj}{\bm{A}}	
\newcommand{\GOE}{\bm{Z}}	
\newcommand{\Bmat}{\bm{B}}	
\newcommand{\iden}{\bm{I}}	
\newcommand{\lperm}{\bm{A}}	
\newcommand{\mat}{\bm{M}}	
\newcommand{\cov}{\bm{R}}	
\newcommand{\vu}{\bm{\vec{u}}}	
\newcommand{\ve}{\bm{\vec{e}}}	
\newcommand{\vg}{\bm{\vec{g}}}	
\newcommand{\vh}{\bm{\vec{h}}}	
\newcommand{\vv}{\bm{\vec{v}}}	
\newcommand{\vw}{\bm{\vec{w}}}	
\newcommand{\vq}{\bm{\vec{q}}}	
\newcommand{\vorth}{\bm{\vec{O}}}	
\newcommand{\zero}{\bm{\vec{0}}}	
\newcommand{\discreteCauchy}{\mathcal{C}}	
\newcommand{\ip}[1]{\left\langle #1 \right\rangle}	
\newcommand{\ipr}[1]{\ip{#1}_\RR}					
\newcommand{\ipn}[1]{\ip{#1}_{\Ln}}			
\newcommand{\ipyl}[1]{\ip{#1}_{\Lambda_{y,\ell}}}	
\newcommand{\ipij}[1]{\ip{#1}_{\Lambda_{ij}(\xx)}}	
\newcommand{\eqpart}[1]{\overset{#1}{\sim}}	
\newcommand{\eqp}{\eqpart{\pP}}				
\newcommand{\eqpx}{\eqpart{\px}}		
\newcommand{\eqpsx}{\eqpart{\pP_{s_{ab}^{ij}\xx}}} 
\newcommand{\eqpww}{\eqpart{\pww}}			
\newcommand{\eqpl}{\eqpart{\pl}}			
\newcommand{\eqloc}[1]{\underset{#1}{\sim}}	
\newcommand{\eql}{\eqloc{\yy,\ell}}			
\newcommand{\eqij}{\approx_{ij}}			
\newcommand{\EE}[2][]{\mathbb{E}_{#1} \left[ #2 \right]}	
\newcommand{\ee}{\mathbb{E}}									
\newcommand{\prob}[1]{\mathbb{P} \left[ #1 \right]}	
\newcommand{\normal}{\mathcal{N}}						
\newcommand{\one}[1]{\mathds{1}_{#1}}					
\newcommand{\sign}{\mathrm{sign}}							
\newcommand{\fc}{\mathrm{fc}}								
\newcommand{\levlim}{\mathcal{U}}
\newcommand{\countable}{\mathcal{A}}
\newcommand{\op}{\mathcal{A}}		
\newcommand{\dom}{\mathcal{D}}		
\newcommand{\enint}{\mathcal{I}}	
\newcommand{\indint}{\mathcal{J}}	
\newcommand{\loc}{\one{\operatorname{loc}}}	
\newcommand{\oldgen}{\mathscr{B}}	
\newcommand{\gen}{\mathscr{L}}			
\newcommand{\move}{\mathscr{M}}		
\newcommand{\exch}{\mathscr{E}}		
\newcommand{\semi}{\mathscr{U}}		
\newcommand{\short}{\mathscr{S}}		
\newcommand{\latt}{\mathscr{W}}			
\newcommand{\kproj}{\bm{\Pi}}		
\newcommand{\proj}{\bm{\tilde \Pi}} 
\newcommand{\pyl}{\proj_{\yy,\ell}}	
\newcommand{\dir}{\mathcal{D}}		
\newcommand{\dyl}{\dir_{\yy,\ell}}	
\newcommand{\Av}{\mathrm{Av}}
\newcommand{\Stab}{\mathrm{Stab}}	
\newcommand{\actn}{\cdot}			
\newcommand{\actN}{\star}			
\newcommand{\sym}{\mathcal{G}}		
\newcommand{\symP}{\sym_\pP}		
\newcommand{\symPx}{\sym_{\px}}		
\newcommand{\symPl}{\sym_{\pl}}		
\newcommand{\tr}{\mathrm{Tr}}		
\newcommand{\bl}{\bm{\lambda}}		
\newcommand{\tbl}{\bm{\tilde \lambda}}	
\newcommand{\vspan}{\mathrm{span}}	
\newcommand{\rank}{\mathrm{rank}}
\newcommand{\fh}{\mathfrak{h}}		
\newcommand{\soB}{\mathfrak{X}}			
\newcommand{\texp}{\tilde{\exp}}	
\renewcommand{\Im}{\operatorname{Im}}
\title{High dimensional normality of noisy eigenvectors}
\author{
Jake Marcinek \\
Harvard University \\ 
\texttt{marcinek@math.harvard.edu}
\and 
Horng-Tzer Yau \\
Harvard University \\
\texttt{htyau@math.harvard.edu}
\thanks{H.-T. Y. is partially supported by NSF grants DMS-1606305 and DMS-1855509, and a
Simons Investigator award.}
}
\begin{document}
\maketitle

\begin{abstract}

We study joint eigenvector distributions for large symmetric matrices in the presence of weak noise.  Our main result asserts that every submatrix in the orthogonal matrix of eigenvectors converges to a multidimensional Gaussian distribution.  The proof  involves analyzing the \emph{stochastic eigenstate equation (SEE)} \cite{QUE} which describes the Lie group valued flow of eigenvectors induced by matrix valued Brownian motion.
We consider the associated \emph{colored eigenvector moment flow} defining an SDE on a particle configuration space.  This flow extends  the \emph{eigenvector moment flow} first introduced in \cite{QUE} to the multicolor setting. However, it is no longer driven by an underlying Markov process on configuration space due to the lack of positivity in the semigroup kernel.  Nevertheless, we prove the dynamics admit sufficient averaged decay and contractive properties.  This allows us to establish optimal time of relaxation to equilibrium for the colored eigenvector moment flow and prove joint asymptotic normality for eigenvectors.  Applications in random matrix theory include the explicit computations of joint eigenvector distributions for general Wigner type matrices and sparse graph models when corresponding eigenvalues lie in the bulk of the spectrum, as well as joint eigenvector distributions for L\'evy matrices when the eigenvectors correspond to small energy levels.
\end{abstract}

\tableofcontents

\section{Introduction}

In this paper, we study the joint distribution of
many eigenvectors simultaneously.  These eigenvectors belong to the matrix model $\adj+\Bmat$ where $\adj$ is deterministic symmetric matrix and $\Bmat$ is a small Gaussian perturbation in the form of a scaled Gaussian orthogonal ensemble (GOE).
Some assumptions must be imposed on $\adj$ regarding both its eigenvalues and eigenvectors.  The eigenvalue assumption essentially amounts to the existence of a local profile for the spectrum of $\adj$ and is stated in terms of diagonal Green's function entries.  The eigenvector assumption essentially amounts to a weak form of restricted delocalization in a selected set of directions and is stated in terms of the off-diagonal Green's function entries.  These assumptions are typically satisfied if $\adj$ is sampled from most random matrix ensembles.
The class of matrix models we deal with is more general, however, because the proof of the main theorem captures the regularizing effect of the random perturbation.
Classical comparison arguments can then be used to remove the $\Bmat$ contribution and recover the joint eigenvector distributions for a wide variety of random systems exhibiting a delocalized phase.  For example, the results hold for general type Wigner ensembles, sparse random graph models, and heavy-tailed L\'evy matrices.

Universality of eigenvalue statistics is a classical field stemming from the work on heavy atoms of Wigner \cite{W55} \cite{W58}, Dyson \cite{Dyson}, Gaudin, and Mehta \cite{mehta}.  After recent breakthroughs due to the method of analyzing Green's functions and Dyson Brownian motion, universality has reemerged as an active field of research.  
For examples of universality across a variety of eigenvalue statistics in Wigner matrices, see
 \cite{erdos2009bulk}, \cite{erdHos2010wegner}, \cite{localrelax1}, \cite{localrelax2}, \cite{TV09a}, \cite{TV10}, \cite{TV10a}, and \cite{TV11}.
This allowed the scope of universality and understanding of local statistics to improve drastically.  Relevant extensions of methodologies and general classes of random matrices include the following papers: 
\cite{wigfixed}, \cite{bourgade2016universality}, \cite{che2017local}, \cite{che2019universality}, \cite{scgeneral}, \cite{erdos2017dynamical}, \cite{huang2015spectral}, \cite{huang2015bulk}, \cite{fixed}, \cite{landon2017convergence}, \cite{rigidity}.
Following in the line of universality of eigenvalue statistics, 
universality of eigenvectors in random matrices has also been a focus of study 
--- both for its independent interest and as a tool for accessing finer eigenvalue data.  See the following for some examples: 
\cite{benaych2014central}, \cite{benigni2017eigenvectors}, \cite{benigni2019fermionic}, \cite{bordenave2013localization}, \cite{bourgade2018distribution}, \cite{bourgade2017eigenvector}, \cite{dumitriu2018sparse}, \cite{he2018local}, \cite{knowles2013eigenvector}, \cite{o2016eigenvectors}, \cite{tao2012random}.  
One cornerstone work that will be of primary relevance in this paper is \cite{QUE} which proves single direction normality for generalized Wigner ensembles. 

In particular, the $L^2$-normalized eigenvectors $\vu_i = (u_i^\alpha)_{\alpha=1}^N$, $i \in [N]$, of a GOE are Haar distributed so we expect asymptotic normality of any finite submatrix
\begin{equation}\label{eq:goe_evec}
(\sqrt{N} u_i^\alpha)_{i \in I, \alpha \in A} \rightarrow (g_i^\alpha)_{i \in I, \alpha \in A}
\end{equation}
where $I,A\subset[N]$ are any finite subsets and $g_i^\alpha$ are independent and identically distributed standard Gaussian random variables.  The primary contribution of \cite{QUE} is rowwise and columnwise convergence in moments
\begin{equation}\label{eq:genwig_evec}
(\sqrt{N} u_i^\alpha)_{i \in I} \rightarrow (g_i)_{i=1}^{|I|}
\quad\mbox{and}\quad
(\sqrt{N} u_i^\alpha)_{\alpha \in A} \rightarrow (g_\alpha)_{\alpha=1}^{|A|}
\end{equation}
for generalized Wigner ensembles.  The focus of this paper is in extending \eqref{eq:genwig_evec} to joint normality of the full $|I| \times |A|$ dimensional distribution as in \eqref{eq:goe_evec}. 

The paper \cite{QUE} also introduces the notion of probabilistic quantum unique ergodicity (QUE) in the random matrix theory setting which asserts concentration of 
\begin{equation}
p_{ii} = \sum_{\alpha \in A} N (|u_i^\alpha|^2 - 1)
\end{equation}
for growing subsets $A\subset[N]$ and fixed spectral index $i$.  
One key result of \cite{QUE} is a proof of 
the law of large numbers for this quantity for generalized Wigner ensembles.
Quantum unique ergodicity is a strong expression of the flatness of individual eigenvectors.     This notion was first introduced by Rudnick and Sarnak \cite{rudnick1994behaviour} in the context of Laplacian eigenvectors tending weakly to the volume form on hyperbolic manifolds. QUE  remains largely open in this general context except in the case of some arithmetic surfaces. 

A related quantity
$p_{ij} = \sum_{\alpha \in A} N u_i^\alpha u_j^\alpha$
for distinct spectral indices $i \neq j \in[N]$ can similarly be interpreted as an analogue to quantum weak mixing (QWM).
The central limit theorem scaling for QUE and QWM is proved in \cite{bourgade2018random} and strengthened in \cite{benigni2019fermionic}.   The former uses the perfect matchings flow to control moments of $p_{ii}$ and $p_{ij}$; the latter controls the quantity $\EE{\det\left((p_{ij})_{i,j \in I}\right)}$.
Since these results are tangential to the primary concern of this paper and in particular do not imply \eqref{eq:goe_evec}, we refer the readers to the original papers for further details.

The paper \cite{QUE} introduces the eigenvector moment flow and uses it to first prove fast uniform convergence 
  to standard Gaussians for 
one component of any finite number of eigenvectors,
  or by polarization, 
for any finite number of components from one eigenvector.
However, aside from polarizing to compute basic linear combinations of moments, no conclusions can be deduced regarding joint normality of multiple components of multiple eigenvectors.

The results and techniques in \cite{QUE} have inspired much recent research in the dynamical approach to random matrix theory.  Regarding eigenvectors, sparse models \cite{bourgade2017eigenvector}, diagonal initial conditions \cite{benigni2017eigenvectors}, band matrices \cite{bourgade2018random}, \cite{bourgade2019random}, \cite{yang2018random}, and correlations between eigenvectors \cite{benigni2019fermionic} all use a technique stemming from \cite{QUE} which is that eigenvector moment flow satisfies a maximum principle.  Furthermore, the dynamics of alternate spectral statsistics -- such as decoupling, homogenization, and eigenvalue gaps -- can be compared to eigenvector moment flow and hence the same maximum principle applies in these settings.  These observations are pushed through in \cite{wigfixed} and \cite{fixed} to prove fixed energy universality in the case of Wigner and general DBM initial data respectively, 
and in \cite{bourgade2018extreme} to prove minimum gap universality.  Along with \cite{landon2018comparison} which proves universality for the largest gaps by understanding Gibb's measure on local gap size, universality for extremal gaps is concluded.

With $\dat = (h_{\alpha\beta})_{\alpha,\beta=1}^N$ a self adjoint $N \times N$ matrix, define the time $s$ Gaussian perturbation of $\dat$ by $\dat(s) = \dat+\Bmat(s)$ where $\Bmat(s) = (B_{\alpha\beta}(s))_{\alpha,\beta=1}^N$ is the time dependent matrix whose normalized entries $\sqrt{\frac{N}{1+\delta_{\alpha\beta}}}B_{\alpha\beta}(s)$, $s \geq 0$, are independent and identically distributed standard Brownian motions for all $1 \leq \alpha \leq \beta \leq N$ and $B_{\beta\alpha}(s)=B_{\alpha\beta}(s)$.  Clearly, the SDE of this flow is simply the matrix Brownian motion defined by 
\begin{equation}
d \dat(s) = d \Bmat(s).
\end{equation}
All results will be stated in the setting of the real symmetric GOE universality class (which happens to contain the family of sparse graph models of interest), however as is the case with most dynamical eigenvector papers (e.g. \cite{QUE}, \cite{bourgade2018random}, \cite{benigni2019fermionic}), the same techniques carry over to hermitian and quaternionic self-adjoint (GUE and GSE) universality classes.  The induced stochastic processes on eigenvalues and eigenvectors are referred to as Dyson Brownian motion and the stochastic eigenstate equation, respectively.

Dyson Brownian motion (DBM) which, by orthogonal conjugation invariance of the GOE, decouples from its counterpart --- the stochastic eigenstate equation --- has been the focus of study in many recent works.
The properties of DBM have been studied thoroughly in \cite{landon2017convergence}, \cite{fixed}, \cite{localrelax2} and many other articles.  DBM is the central tool in the dynamical step of many proofs of universality for a wide variety of random matrix statistics (local law, edge, gaps, local statistics, fixed energy, sparse, etc).  
If $\lambda_i(s)$ is the $i$th largest eigenvector of the perturbed matrix at time $s$, $\dat(s)$, then Dyson Brownian motion is given by
\begin{equation}
d\lambda_i(s) = \frac{dW_{ii}(s)}{\sqrt{N}}  + \frac{1}{N} \sum_{j \neq i} \frac{1}{\lambda_i(s) - \lambda_j(s)} ds
\end{equation}
at all times $s > 0$ where $W_{ii}(s)/\sqrt{2}$ are independent standard Brownian motions for all $1 \leq i \leq N$.  Inherently, Dyson Brownian motion describes the Lengevin dynamics of a 1-dimensional log gas in an environment with temperature corresponding to the symmetry class of the underlying matrix, admitting a global mixing rate of 1 and local mixing rate of $N$.  

On the other hand, the stochastic eigenstate equation (SEE) generates a diffusion on the Lie group $SO(N)$.  The diffusion is nondegenerate and invariant with respect to right multiplication and therefore the limiting distribution must coincide with Haar measure.  The diffusion is not heat flow generated by the typical Laplacian, however.  Letting $\orth_{ij} = \ve_i \ve_j^\top - \ve_j \ve_i^\top$, $i < j$ be the standard orthogonal basis of rotation generators in $\mathfrak{so}(N)$, the stochastic eigenstate equation diffusion rate is inversely proportional to the squared distance between corresponding eigenvalues along the $\orth_{ij}$-axis.  If $\vu_i(s)$ is the $L^2$ normalized eigenvector for the matrix $\dat(s)$ with corresponding eigenvalues $\lambda_i(s)$, then the stochastic eigenstate equation is given by
\begin{equation}\label{eq:edbm_intro}
d \vu_i(s) = \frac{1}{\sqrt{N}} \sum_{j \neq i} \frac{dW_{ij}(s)}{\lambda_i(s) - \lambda_j(s)}\vu_j(s) - \frac{1}{2N} \sum_{j \neq i} \frac{ds}{\left(\lambda_i(s) - \lambda_j(s)\right)^2}\vu_i(s)
\end{equation}
at all times $s > 0$ where $W_{ij}$ are independent and identically distributed standard Brownians for all $i < j$ independent from $W_{kk}$ for all $k$, and $W_{ji}=W_{ij}$ for $i>j$.
Heuristically, eigenvectors will spin around one another quadratically faster as the corresponding eigenvalues approach one another.
A direct analysis of the SEE seems to be a difficult problem requiring reconciliation of the matrix entry marginals in a high dimensional geometrically described measure.  
Although the SEE was previously known to Bru \cite{bru1991wishart} in setting of Wishart ensembles, the first rigorous analysis of SEE to understand distributions of eigenvector components was in \cite{QUE}, where  the \emph{eigenvector moment flow} was introduced. We will explain some details of this flow  in subsequent paragraphs.

To motivate the main result on joint normality of eigenvector components, consider the following three observations.
By the local regularity of eigenvalues given by many works on universality of Dyson Brownian motion (for example, see \cite{landon2017convergence}), bulk eigenvalues roughly form a 1-dimensional lattice with spacing $N^{-1}$.
Taking the above diffusion rate into account, one might expect that after time $t$, the $Nt$ nearest eigenvectors will be uniformly distributed on the $Nt$ dimensional sphere and will be independent from the more distant eigenvectors.  As coordinates on a high dimensional sphere are approximately independed Gaussians, one might further expect that these nearby eigenvectors become independent and identically distributed standard Gaussian vectors in $\RR^N$.  All that remains from this heuristic is to establish the covariance structure of these Gaussians.  Fixing unit vectors $\vv, \vw \in \RR^N$ and letting $f_i(s) = \EE{N(\vu_i(s) \cdot \vv)(\vu_i(s) \cdot \vw) | \bl}$, the eigenvector covariance conditioned on an eigenvalue path $\bl = \{\lambda_i(s) | i \in [N], s \geq 0\}$ satisfies the PDE
\begin{equation}\label{eq:intro_cov_pde}
\partial_t f_i(s) = \sum_{j \neq i} \frac{f_j(s) - f_i(s)}{N(\lambda_i(s) - \lambda_j(s))^2}
\end{equation}
where the sum is taken over all spectral indices $j$ aside from $i$.
The PDE \eqref{eq:intro_cov_pde} is obtained through an application of It\^o's formula to the observable $f_i(s)$ using the SEE differential from \eqref{eq:edbm_intro}.  Through rigidity of eigenvalues, the operator generating these dynamics is reminiscent of a discrete analogue to the half Laplacian kernel
\begin{equation}
-\sqrt{-\Delta} \varphi(x) = \int_\RR \frac{\varphi(y) - \varphi(x)}{|y-x|^2}dy
\end{equation}
which generates the heavy-tailed stochastic process with Cauchy increments.  As the Stieltjes transform and the Green's function encode Cauchy convolutions with the empirical spectral density $\frac{1}{N} \sum_{i=1}^N \delta_{\lambda_i(s)}$ and the skewed empirical spectral measure $\sum_{i = 1}^N (\vu_i(s) \cdot \vv)(\vu_i(s) \cdot \vw) \delta_{\lambda_i(s)}$, respectively, one might expect for every spectral index $i$ with corresponding eigenvalue $\lambda_i$ belonging to a regular region of the spectrum,
\begin{align}\label{eq:cov_ansatz}
f_i(s) \approx \EE{f_{\discreteCauchy(i,s)}(0)} &= \left( \frac{1}{N} \sum_{j=1}^N \frac{s f_j(0)}{(\lambda_i(S) - \lambda_j(s))^2 + s^2}\right) \left( \frac{1}{N} \sum_{j=1} \frac{1}{(\lambda_i(s) - \lambda_j(s))^2 + s^2}\right)^{-1} \\
&= \frac{\ipr{ \vv, \Im \grn(\lambda_i(s) + is) \vw }}{\Im m(\lambda_i(s)+is)}
\end{align}
where $\discreteCauchy(i,s)$ is a family of discrete approximations to Cauchy random variables with widths $s$ and centers $\lambda_i(s)$.  In particular, the distributions are given by
\begin{equation}
\prob{\discreteCauchy(i,s) = k} = N^{-1} \left((\lambda_k(s) - \lambda_i(s))^2 + s^2\right)^{-1} m(\lambda_i(s) + is)^{-1}
\end{equation}
for all spectral indices $i,k \in [N]$, and times $s \gg \eta_*$ when we have strong control on both the numerator $\ipr{ \vv, \Im \grn(E+i\eta) \vw }$ and denominator $\Im m(E+i\eta)$ down to imaginary scales $\eta \gg N^{-1}$ slightly larger than the order of the eigenvalue lattice by local laws. Here, $\eta_*$ is the smallest imaginary scale on which the local laws of the original matrix are valid.

To make this heuristic rigorous, we aim to prove convergence of moments along a fixed set of test vectors.  The analysis follows an approach from \cite{QUE} where the dynamics of the following moments are studied
\begin{equation}
\bet \mapsto \EE{\prod_{i=1}^N \ipr{\vu_i, \vq}^{2\eta_i}}
\end{equation}
for a fixed $\vq\in\RR^N$, where the underlying moments are parameterized by $\bet = (\eta_i)_{i=1}^N$ such that $0 \leq \eta_i \in \ZZ$ and $\sum_{i=1}^N \eta_i = n$.  (Inner products on several spaces will be referred to in this paper, so for organizational purposes $\ipr{\vv,\vw} = \vv \cdot \vw$ is used to denote the standard dot product on $\RR^N$).  The flow of such moments induced by the SEE is found to be a stochastic Markov process.
Convergence to Gaussian moments in the generalized Wigner setting is proved by studying this flow which in turn implies convergence in distribution of single eigenvector components from multiple eigenvectors to normality.  In this paper, the \emph{eigenvector moment flow} is generalized to incorporate multiple components in multiple eigenvectors.  To this end, we fix unit test vectors $\vv_1,\ldots,\vv_n\in\RR^N$, and consider moments of the form
\begin{equation}
\xx \mapsto \EE{\prod_{a=1}^n \ipr{\vu_{x_a},\vv_a}}
\end{equation}
where the underlying moments are now parameterized by ordered $n$-tuples of indices $\xx=(x_1,\ldots,x_n)^\top\in\{1,\ldots,N\}^n$.  Such $\xx$ are interpereted as \emph{distinguishable particle configurations} with $n$ particles labeled by the index $a \in \{1,\ldots,n\}$ where particle $a$ is located at site $x_a\in\{1,\ldots,N\}$.  For such mixed moments, the SEE induces a dynamical process on $\xx$ reminiscent of a heavy-tailed random walk and can be described in terms of a particle jumping process.  
We will give a precise evolution equation for this dynamics in  Theorem~\ref{thm:levmf}. For now, we will continue with a heuristic description of this dynamics.  
In all allowable jump operations, two particles are chosen to jump between two sites at a rate inversely proportional to the squared distance between the sites.  For this reason, the moment flow always preserves both the total particle number $n$, as well as the particle number parity at each individual site.  This means the original renormalized configuration space $\{1,\ldots,N\}^n$ decomposes into many closed systems --- each corresponding to a particle number parity assignment to each site.  The largest subsystem corresponds to the particle configurations with even particle numbers at every site.  This is also the primary subsystem of interest as normalized eigenvectors are naturally defined only up to a sign $\vu_i\mapsto\pm\vu_i$, so moments consisting of odd powers of any eigenvector have little inherent meaning.  This leads us to the primary configuration space of interest, $\Ln \subset \{1,\ldots,N\}^n$ where $\xx\in\{1,\ldots,N\}^n$ is in the configuration space $\xx\in\Ln$ if and only if $|\{a\in\{1,\ldots,n\}|x_a=i\}|$ is even for every $i\in\{1,\ldots,N\}$.  Although only dynamics on $\Ln$ is presented here for this reason, a similar analysis carries through for the remaining odd-moment counterparts and analogous decay-to-equilibrium rates can be established for all closed subsystems.  For this reason, the total particle number $n$ is taken to be even.

\begin{figure}[!ht]
\center
\includegraphics{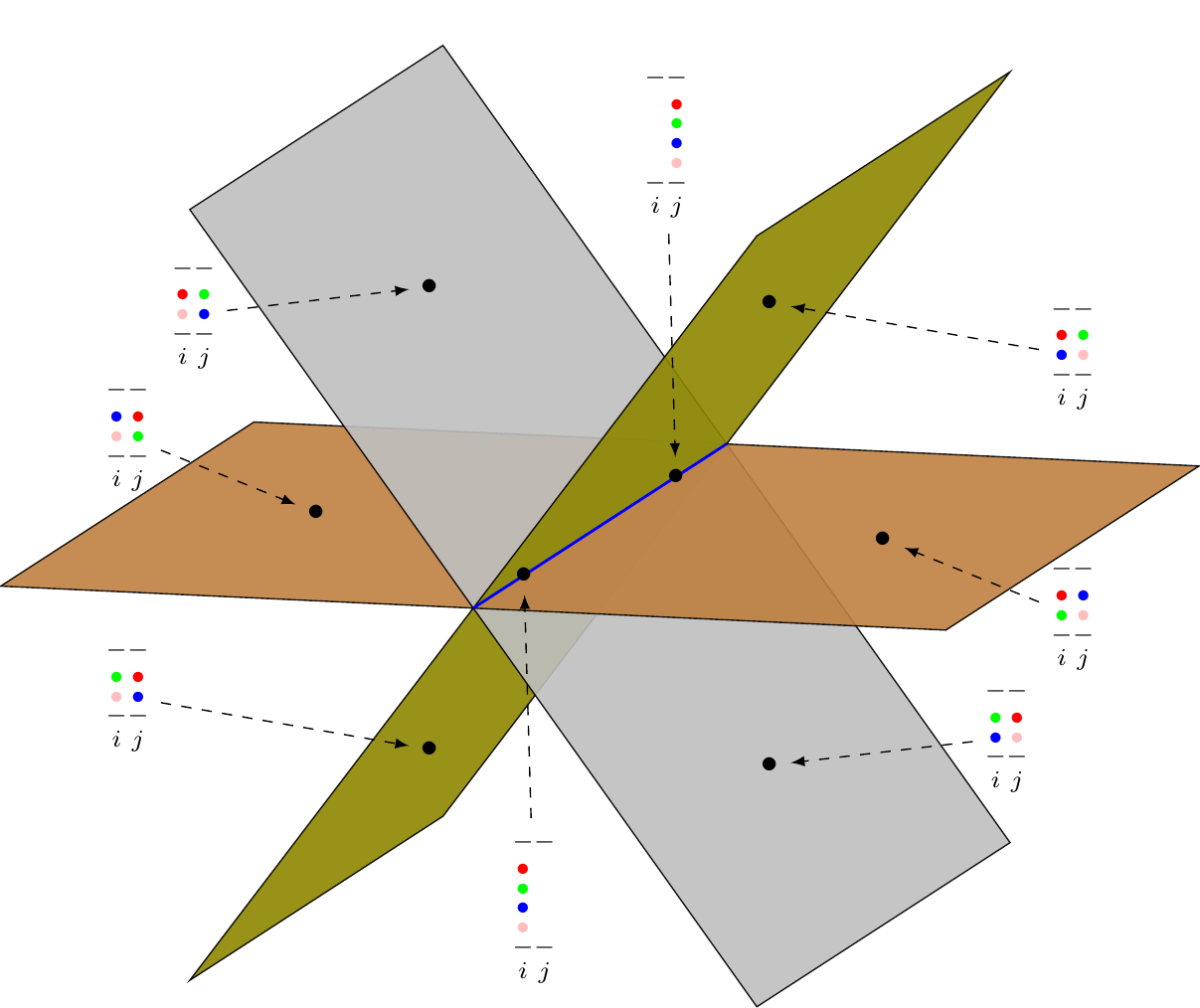}
\caption{\label{fig:config}This diagram shows the intersection of the three strata in $\Lambda^4$, the configuration space of $4$ distinguishable particles.  All particle configurations which are supported on sites $i$ and $j$ are identified and labeled.  The particles are labeled by colors: red, green, blue, and pink.  The three strata (planes in this case) are distinguished by which pairs of colored particles are matched together.  Each stratum is parameterized by local coordinates $(i,j)\in\ZZ^2$ specifying the sites of the two pairs.  For instance, the brown plane consists of all particle configurations where the red and green particles are positioned at the same site and the blue and pink particles are positioned at the same site.  Local coordinates $(i,j)\in\ZZ^2$ in the brown plane correspond to the particle configuration where the red and green particles appear at site $i$ and the blue and pink particles appear at site $j$.}
\end{figure}

The geometry of the even moment configuration space $\Ln$ is quite intricate.  See Figure~\ref{fig:config} to follow along with the description on an examplary plot of $\Lambda^4$.  For general $n$, there are $n!! = 2^{-n/2} n! / (n/2)!$ highest dimensional strata which look locally like a copy of $\ZZ^{n/2}$ and correspond to a unique perfect matching of the $n$ particles.  The positions of the $n/2$ pairs then provide a convenient coordinate chart within such an $(n/2)$-dimensional stratum.  These highest dimensional strata intersect where multiple pairs of particles lie at the same site.

\begin{figure}[!ht]
\center
\includegraphics[scale=1]{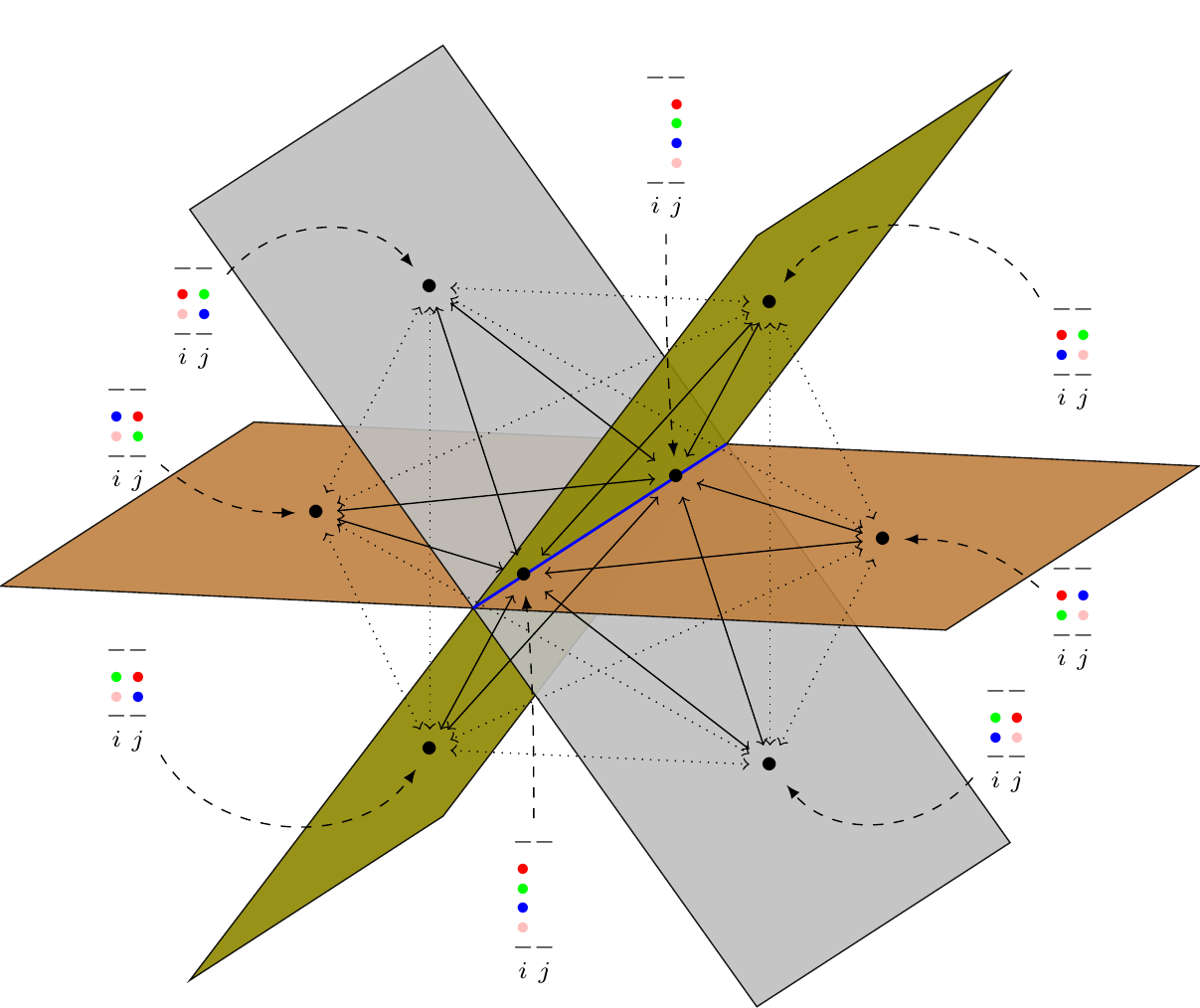}
\caption{\label{fig:dynamics}This diagram shows all interactions between the pairs of particles supported on sites $i$ and $j$.  All attracting move interactions are presented as solid arrows.  These terms allow two particles to jump from site $i$ to site $j$ or from site $j$ to site $i$.  In the case depicted here, all such terms have one endpoint in the interior of a plane and the other endpoint on the three-way intersection.  
All repelling local exchange interactions are presented as dotted lines.  These terms allow one particle at site $i$ to swap positions with one particle at site $j$.  In the case depicted here, all such terms have one endpoint in the interior of one plane and the other endpoint in the interior of another.  None of the configurations in the three-way intersection are involved in a two-particle swap.}
\end{figure}

As the description shifts towards the dynamical aspects, see Figure~\ref{fig:dynamics} to continue following along with the same $\Lambda^4$ example.  Eigenvector moment flow can be expressed as the \emph{difference}  of two positivity preserving operators, $\gen=\move-\exch$, in the sense that if $f \geq 0$ on $\Ln$, then $e^{t\move}f \geq 0$ and $e^{t\exch}f\geq0$ on $\Ln$ as well.  These two positivity preserving operators both generate random walks on $\Ln$, see Theorem~\ref{thm:levmf} for a precise definition.  
In this paper,  we adopt the convention that the positivity properties always refer to the operators  $-\move$ and $-\gen = (-\move) + \exch$.  With this convention,
The operator $-\move$ will be considered the positive contribution to $-\gen$ and $\exch$ the negative contribution.  More specifically, we will see that while $-\move \geq_\infty 0$ and $-\exch \geq_\infty 0$, this strength of positivity does not hold for the difference $-\gen = (-\move) + \exch \not\geq_\infty 0$. 
Here we used the notation that an operator $K$ and $p\in[1,\infty]$ satisfy $K\geq_p0$ if $\partial_t \|f_t\|_{L^p} \geq 0$ whenever $\partial_t f_t = Kf_t$.  However, a weaker notion of positivity does hold for all three operators, $-\gen,-\move,-\exch \geq_2 0$.  We now describe the steps of the two random walks on the level of particle configurations as well as how they interact with one another with respect to the geometry of $\Ln$.

The positive attracting contribution to eigenvector moment flow $-\move$ is the (negative) \emph{move operator}.  A step in the random walk generated by the move operator consists of one pair of particles jumping together from one site to another site.  The negative repelling contribution to eigenvector moment flow $\exch$ is the \emph{exchange operator}.  A step in the random walk generated by the exchange operator consists of two pairs of particles positioned at distinct sites swapping partners.  All steps occur at a rate inversely proportional to the squared distance between the initial and final positions.

On the interior of a highest dimensional stratum, the move operator dominates and eigenvector moment flow behaves like a heavy-tailed random walk in $\RR^{n/2}$ with Cauchy increments in each of the $n/2$ dimensions.  The corresponding kernel is the $n/2$-fold tensor product of Cauchy distributions with local centered density given explicitly by
\begin{equation}
\rho_t(x_1,\ldots,x_{n/2}) \propto \prod_{a=1}^{n/2} \frac{t}{x_a^2 + t^2}.
\end{equation}
Near the intersection of multiple highest dimensional strata, the exchange operator comes into effect. 
The exchange term partially cancels the mixing effect \emph{between hyperplanes} that is otherwise induced by the move operator.

This mixing scheme leads to a rich subspace of invariant functions given by separate constants along each highest dimensional stratum and on the intersection given by the average over all incident hyperplanes.
One primary difficulty of this scheme is that the negative contribution of the repelling exchange operator breaks the positivity preserving properties of $\gen$, ruling out the possibility of the eigenvector moment flow satisfying a maximum principle ($-\gen \not\geq_\infty 0$) in spaces with more than $4$ particles.

The maximum principle is a fundamental property of the colorblind flows discussed above which is used in a critical way to prove relaxation.  In all of the previous approaches to eigenvector flow, the maximum principle cannot be easily replaced.  In our paper, under the pretext of the non-positivity preserving colored eigenvector moment flow, we provide such a replacement --- the \emph{energy method}.
Although eigenvector moment flow is no longer positive in the $L^\infty$ sense, it is still positive definite with respect to a suitable reversible measure.  With the notation above, although $-\gen \not \geq_\infty 0$, we still have $-\gen \geq_2 0$.  From this observation, we may smooth coefficients to show sufficiently fast convergence of a local cutoff for the eigenvector moment observable in $L^2$.  

The role of the energy method is to turn this $L^2$ convergence into pointwise convergence of the colored eigenvector moment flow.  The authors believe this new approach can be used for many related problems involving high dimensional flows of random matrix theory statistics in place of the maximum principle.
The idea is first to establish a Poincar\'e inequality for the colored eigenvector moment flow implying certain mixing properties.  In our case, the mixing properties will be utilized in proving the Nash inequality. Following a line of reasoning similar to \cite{carlen1987upper}, the converse of Nash's original argument will finally imply an ultracontractive estimate ($L^2 \rightarrow L^\infty$) on the semigroup generated by colored eigenvector moment flow.  These steps are spelled out in slightly more detail for a toy model in the following paragraphs.

The Poincar\'e inequality requires rather subtle combinatorics to keep track of exchange terms, but is easier to understand heuristically from the Cauchy random walk analogue in $\ZZ^n$.  Suppose 
\begin{equation}
Lf(\xx) = \sum_{a \in [n]} \sum_{0 \neq k \in \ZZ} \frac{f(\xx + k \ve_a)-f(\xx)}{k^2}
\end{equation}
for every $\xx \in \ZZ^n$.  Let $B=[\ell]^n\subset\ZZ^n$ be a finite box of side length $\ell$.  The Poincar\'e inequality states that 
\begin{equation}
\sum_{\xx\in B} \left(f(\xx) - \frac{1}{\ell^n} \sum_{\yy\in B} f(\yy) \right)^2 \leq \ell \ip{f,(-L)f}_B := \ell\sum_{\xx\in B} \sum_{a\in[n]} \sum_{\substack{1-x_a \leq k \leq \ell-x_a \\ k \neq 0}} \frac{|f(\xx+k\ve_a)-f(\xx)|^2}{k^2}.
\end{equation}
To obtain this bound, use Jensen's inequality and a path counting argument.  Let $B^*$ be the set of edges $(\xx,\yy)\in B^2$ such that $\yy=\xx+k\ve_a$ for some $k\in[\ell]$ and $a\in[n]$.  Note that there is a path of length at most $n$ between any pairs of points in $B$ obtained by fixing one coordinate at a time and that there are at most $\ell^{n-1}$ paths passing through any given edge.  Then
\begin{equation}
\sum_{\xx\in B} \left(f(\xx) - \frac{1}{\ell^n} \sum_{\yy\in B} f(\yy) \right)^2 \leq \ell^{-n} \sum_{\xx\in B} \sum_{\yy\in B} |f(\xx)-f(\yy)|^2 \leq \ell^{-1} \sum_{(\xx,\yy)\in B^*} |f(\xx)-f(\yy)|^2 \leq \ell \ip{f,(-L)f}_B
\end{equation}
where the last inequality comes from the coefficient bound $k^{-2} \geq \ell^{-2}$ for every $k\in[\ell]$.

Taking the Poincar\'e inequality for granted on all translates of the box $B$, a Nash inequality is obtained by dissecting $\ZZ^n$ into boxes of optimal size and applying the Poincar\'e inequality to each component.  To be slightly more detailed, dissect $\ZZ^n$ into many smaller local boxes.  Any function $f$ may be bounded by its deviation to local equilibrium and the weight of its local equilibrium.  After taking suitable norms, this decomposition amounts to bounding $L^2$-norm by the Dirichlet form, $\dir(f) = \ip{ f, (-L) f }$, and $L^1$-norm 
\begin{equation}
\|f\|_2^2 \leq \sum_{\yy \in \ZZ^n} \sum_{\xx \in B+\ell \yy}\left(f(\xx) - \frac{1}{\ell^{n}}\sum_{\zz \in B+\ell\zz} f(\zz)\right)^2 + \frac{1}{\ell^n} \sum_{\yy \in \ZZ^n} \left(\sum_{\zz \in B+\ell\yy} f(\zz)\right)^2 \leq \ell \dir(f) + \frac{1}{\ell^{n}} \|f\|_1^2.
\end{equation}
Optimizing over the length scale $\ell$ and rearranging exponents, this bound is equivalent to the more classical form
\begin{equation}
\|f\|_2^{2+\frac{2}{n}} \leq \dir(f) \|f\|_1^{\frac{2}{n}}
\end{equation}
as the Nash inequality.  Lastly, the ultracontractive estimate is obtained by integrating the Nash inequality.  More precisely, to get ultracontractive control on the semigroup $e^{tL}$, start with any positive normalized $f \in L^1(\ZZ^n)$, $\|f\|_1 = 1$, and let $u_t = \|e^{tL} f\|_2^{-2/n}$.  Then
\begin{equation}\label{eq:intro_integrate_Nash}
\partial_t u_t = \frac{1}{n} \|e^{tL}f\|_2^{-2-\frac{2}{n}} \dir(e^{tL} f) \geq \|e^{tL} f\|_1^{-2/n} = 1
\end{equation}
since $e^{tL}$ is a Markov semigroup and hence preserves $L^1$ norms of positive functions.  Therefore, $u_t \geq t$ meaning $\|e^{tL}f\|_2 \leq t^{-n/2}$.  The argument concludes by duality in that $\|e^{tL}\|_{2,\infty} = \|e^{tL}\|_{1,2} \leq t^{-n/2}$.

Another difficulty in moving from this toy example to eigenvector moment flow is that since the semigroup no longer generates a Markov process, it in turn does not conserve the $L^1$ norm.  The key ingredient needed for the step outlined in \eqref{eq:intro_integrate_Nash} is showing that regardless, the $L^1 \rightarrow L^1$ operator norm of the eigenvector moment flow transition semigroup is bounded.

To summarize, the key contributions are listed here.
First, we introduce the idea of using fast $L^2$-mixing to replace the fast $L^\infty$-mixing from \cite{QUE}.  This is necessary because the maximum principle relies on $L^\infty$-positivity of the generator which no longer holds.  However, the generator does admit $L^2$-positivity with respect to an explicit reversible measure.
Second, we introduce an ultra-contractive estimate which implies fast $L^\infty$-mixing after taking $L^1$ or $L^2$ control as an input.  The three sub-ingredients for this step are 
a Poincar\'e inequality proved through path counting and conditioning on particle symmetries,
a Nash inequality proved through optimal dissection of the configuration space,
and $L^1$ boundedness for the non-positivity-preserving dynamics.
 
Returning to applications of the main theorem, a wide class of mean field models including general type Wigner, sparse graphs, and L\'evy matrices all lie in the realm of applicability for the results of the main theorem.  To exemplify specific simple applications of the main dynamical result, we provide complete proofs of joint eigenvector normality for the following three models.
\begin{definition}[Generalized Wigner]\label{defn:genwig}
A \emph{generalized Wigner ensemble} $\dat = \dat_N = (h_{ij})_{i,j=1}^N$ is a sequence of random self adjoint matrices indexed by their size $N$ whose entries independent random variables up to symmetry.  That is, $h_{ij} = h_{ji}$ are mutually independent random variables for $1 \leq i \leq j \leq N$.  Moreover, these entries have mean zero and variance $\sigma_{ij}^2 = \EE{|h_{ij}|^2}$ satisfying:
\begin{enumerate}
\item Normalization: for any $j \in [[1,N]]$, $\sum_{i=1}^N \sigma_{ij}^2 = 1$.
\item Non-degeneracy: there exists $C>0$ such that $C^{-1} \leq N\sigma_{ij}^2 \leq C$ for all $1 \leq i \leq j \leq N$.
\item Finite moments: for any $p \geq 1$, there exists a constant $C_p > 0$ such that $\EE{|\sqrt{N} h_{ij}|^p} < C_p$ for all $i,j,N$.
\end{enumerate}
\end{definition}
\begin{definition}[Sparse graphs]\label{defn:graphs}
Consider the following two graph models with sparsity $p$:
\begin{enumerate}
\item (Erd\H{o}s--R\'enyi Graph model $G(N,p/N)$) Let $\adj$ be the adjacency matrix of the Erd\H{o}s--R\'enyi graph on $N$ vertices, $(v_i)_{i=1}^N$.  That is, for every $i < j$, $(v_i,v_j)$ is an edge with probability $p/N$ independent of all other edges.  Then $\dat = \adj / \sqrt{p(1-p/N)}$ is the normalized adjacency matrix.
\item ($p$-Regular graph model) Let $\adj$ is the adjacency matrix of a $p$-regular graph on $N$ vertices chosen uniformly at random from the set of all such graphs.  Then $\dat = \adj / \sqrt{p-1}$ is the normalized adjacency matrix.
\end{enumerate}
\end{definition}
\begin{definition}[L\'evy matrices]\label{defn:levy}
Fix a parameter $\alpha \in (0,2)$ and let $\sigma > 0$ be a real number.  A random variable $Z$ is a \emph{$(\sigma, \alpha)$-stable law} if it has the characteristic function
\begin{equation}
\EE{e^{itZ}} = \exp\left( -\sigma^\alpha |t|^\alpha \right), 
\quad \mbox{ for all } t \in \RR.
\end{equation}
Fix $Z$, a $(\sigma, \alpha)$-stable law with
\begin{equation}
\sigma = \left( \frac{\pi}{2\sin\left( \frac{\pi\alpha}{2}\right) \Gamma(\alpha)} \right)^{1/\alpha} > 0.
\end{equation}
Let $J$ be a random variable with finite variance $\EE{J^2} < \infty$ such that $J$ and $Z+J$ are symmetric and 
\begin{equation}
\frac{C^{-1}}{\left(|t|+1\right)^\alpha} \leq \prob{|Z+J| \geq t} \leq \frac{C}{\left( |t| + 1 \right)^\alpha}
\quad \mbox{ for all } t\geq 0 \mbox{ for some constant } C>1.
\end{equation}
Now let $\{H_{ij}\}_{1 \leq i \leq j \leq N}$ be independent and identically distributed random variables with the same law as $N^{-1/\alpha}(Z+J)$.  Set $H_{ij} = H_{ji}$ for $1 \leq j < i \leq N$ and define the random symmetric $N \times N$ matrix $\dat = \left(H_{ij}\right)_{i,j=1}^N$ called an \emph{$\alpha$-L\'evy matrix}.
\end{definition}
The specific eigenvector distributions of Generalized Wigner ensembles was first characterized in \cite{QUE}.
Eigenvector distributions of the two sparse graph models were first characterized in \cite{bourgade2017eigenvector}.
The GOE statistics of L\'efy eigenvalues at small energy is proved in \cite{aggarwal2018goe} and the corresponding eigenvector component distributions were first characteriezd in \cite{aggarwal2020eigenvector}.  The corresponding comparison arguments follow the frameworks provided in these three papers, respectively, where their single component analogues are proved. 

Lastly, the authors believe that, as the maximum principle inspired a unified approach to the variety of problems exemplified above, so may our replacement technique, the energy method, to higher dimensional analogues of related flows.

\subsection{Outline of the paper}

After covering our main results, assumptions, and notation in Section \ref{sec:model}, in Section \ref{sec:fc}, we recall results regarding free convolutions, the isotropic local law, and Dyson Brownian motion pertaining to our model which will be necessary inputs to the proofs in Sections \ref{sec:l2} and \ref{sec:proof}.  In Section \ref{sec:jump}, we introduce a particle jump process related to the flow of joint eigenvector moments under eigenvector Dyson Brownian motion.  We then go on to establish the relevant algebraic and positivity properties of this jump process.  In Section \ref{sec:l2} we develope a framework for isolating local particle dynamics near the regular spectral energy interval and show averaged local convergence.  In Section \ref{sec:energy} we prove ultracontractivity of the hydrodynamic limit of this particle jump process initialized with general data.  Finally, in Section \ref{sec:proof}, we apply these results to our matrix model to prove Theorem~\ref{thm:main}, then provide quick comparison arguments to show these consequences persist in generalized Wigner, sparse graph models, and $\alpha$-L\'evy matrices proving Theorems \ref{thm:genwig}, \ref{thm:p-reg}, and \ref{thm:levy} respectively.

\section{Model and main theorem}\label{sec:model}
In this paper, we consider the following family of deterministic matrix models which many random matrix ensembles belong to with overwhelming probability.  The comparison argument for a select few random matrix ensembles is done in Section \ref{sec:proof} allowing the main deterministic result on the regularizing effect of the SEE to carry over to such models.  As discussed in the introduction, the assumptions are in place to imply a local eigenvalue profile and delocalization.

\subsection{Model}\label{ssec:model}
Let $\dat = \dat_N = (h_{ij})_{i,j=1}^N$ always denote a symmetric $(N \times N)$-matrix.
Universally fix $N$-dependent scales $\frac{1}{N} \leq \eta_* = \eta_*(N) \leq r = r(N)$, an energy level $E_0 \in \RR$, a set of $N$-dimensional unit vectors $S \subset S^{N-1}$, and a large constant $\oa > 0$.  For any $z \in \HH = \{x+iy \in \CC : x \in \RR \mbox{ and } 0 < y \in \RR\}$, let $\grn(z) = (\dat-z)^{-1}$ be the Green's function of $\dat$ and $m_N(z) = \frac{1}{N} \grn(z)$ the Stieltjes transform of the empirical spectral distribution of $\dat$.
\begin{assumption}\label{ass:eval}
Assume the following two properties regarding the eigenvalues of $\dat$:
\begin{enumerate}
\item The matrix norm of $\dat$ is polynomially bounded
\begin{equation}\label{eq:eval1}
\|\dat\| \leq N^\oa
\end{equation}
\item The Stieltjes transform is constant order near the fixed energy
\begin{equation}\label{eq:eval2}
\frac{1}{\oa} \leq | \Im m_N(z) |  \leq \oa
\end{equation}
uniformly on $z \in \{ E+i\eta: E \in [E_0 - r, E_0+r], \eta \in [\eta_*,1]\}$.
\end{enumerate}
\end{assumption}

\begin{assumption}\label{ass:evec}
Assume the following property regarding eigenvectors of $\dat$ for every (small) constant $\ob > 0$:
For all $\vv,\vw \in S$,
\begin{equation}\label{eq:evec1}
|\ipr{ \vv, \Im \grn(z) \vw }| \leq N^\ob
\end{equation}
uniformly on $z \in \{ E+i\eta: E \in [E_0 - r, E_0+r], \eta \in [\eta_*,1]\}$.
\end{assumption}

\subsection{Preliminary notation}
For any positive integer $M > 0$, let $[M] = [1,M] \cap \ZZ$ be a set of size $M$, typically used for indexing. 
The standard column basis vectors in $\RR^M$ will be denoted by $\ve_i = \ve_i^{(M)}$.  The superscript is dropped when the dimension in clear from context.  The components are $\ve_i = (e_i^1, \ldots, e_i^M)^\top$ with $e_i^j = \one{i=j}$.  In general, vector components written in the standard basis will always appear in the superscript.

For $N$-dependent (possibly random) quantities $X$ and $Y$, denote $X \ll Y$ to mean there exists a positive constant (independent of $N$) $c>0$ such that $X \leq N^{-c} Y$ for $N$ large enough.  
We also write $X \precsim Y$ to mean that for all $c > 0$ small and all $D > 0$ large, $\prob{X > N^c Y} \leq N^{-D}$ for $N$ large enough.  More generally, we say that an $N$-dependent event $\op$ holds with \emph{overwhelming probability} if for all $D > 0$ large $\prob{\op} \geq 1 - N^{-D}$ for $N$ large enough.

The matrix model appearing in the main result will be a Gaussian perturbed version of a deterministic symmetric matrix satisfying Assumptions \ref{ass:eval} and \ref{ass:evec}.  Let $\GOE$ be a Gaussian orthogonal ensemble.  That is, $\GOE = (Z_{ij})_{i,j=1}^N$ is a symmetric matrix with rescaled entries $\sqrt{N} Z_{ij} / \sqrt{1+\delta_{ij}}$ being mutually independent and identically distributed standard Gaussian random variables for every $1 \leq i \leq j \leq N$.  Define the time $t \geq 0$ Gaussian perturbation of $\dat$ by $\dat(t) = \dat + \sqrt{t} \GOE$.  The Green's function and Stieltjes transform of the perturbed matrix are defined analogously
\begin{equation}
\grn(t;z) = (\dat(t) - z)^{-1} \quad \mbox{and} \quad m_N(t;z) = \frac{1}{N} \tr \grn(t;z)
\end{equation}
for all $z \in \HH$.  For $i \in [N]$, let $\lambda_i(t) \in \RR^N$ and $\vu_i(t) = (u_i^1(t), \ldots, u_i^N(t))^\top \in S^{N-1} \subset \RR^N$ denote the ordered eigenvalues and ($L^2$-normalized) eigenvectors of $\dat(t)$ respectively.  That is, $\{\lambda_i(t), \vu_i(t) | i \in [N], 0 \leq t \in \RR\}$ satisfy
\begin{enumerate}
\item $\dat(t) = \sum_{i=1}^N \lambda_i(t) \vu_i(t) \vu_i(t)^\top$ for all $t \geq 0$,
\item\label{item:ordered} $\lambda_i(t) \leq \lambda_{i+1}(t)$ for all $i \in [N-1]$ and all $t \geq 0$,
\item $\|\vu_i(t)\|_2 = 1$ for all $i \in [N]$ and $t \geq 0$.
\end{enumerate}
For each $t > 0$, the inequalities in item \ref{item:ordered} are almost surely strict and the collection $(\pm \vu_i(t) | i \in [N])$ of eigenvalues and eigenvectors up to $N$ possible sign changes $\vu_i(t) \mapsto - \vu_i(t)$ is almost surely unique because the space of real symmetric matrices admitting an eigenvalue of multiplicity greater than $1$ is not full rank and the distribution for $\dat(t)$ is absolutely continuous with respect to the Lebesgue measure.  The orthogonal matrix of eigenvectors will be referred to by $\EV(t) = (\vu_1(t), \ldots, \vu_N(t)) \in O(N)$ and $\EV = \EV(0)$.
\begin{convention}
The global sign of individual eigenvectors is not of concern in this paper, so each eigenvector is chosen independently uniformly at random from the orbit of the involution $\vu_i(t) \mapsto -\vu_i(t)$ for each $i\in[N]$.  To be precise fix a time $t \geq 0$ and let $(B_i)_{i=1}^N$ be $N$ independent and identically distributed uniform $\{-1,1\}$ Bernoulli random variables.  Then
\begin{equation}
\EE{\EV(t) | \GOE} = \EE{(B_1 \vu_1(t), \ldots, B_N \vu_N(t)) | \GOE}
\end{equation}
after conditioning on the time $t$ randomness induced by $\GOE$, there is additional randomness in the choice of sign for each eigenvector.  This symmetrizes the distribution of $\EV(t)$ to be invariant under the corresponding $(\ZZ/2)^N$ action on $O(N)$.  For our purposes, it forces all mixed multivariate moments with odd multiplicities on any eigenvector to vanish.  For example, if $\vv_1,\vv_2,\vv_3,\vv_4\in\RR^N$ are fixed vectors and $i \neq j \in [N]$ are distinct spectral indices, then $\EE{\ipr{\vu_i(t),\vv_1} \ipr{\vu_i(t),\vv_2} \ipr{\vu_i(t), \vv_3} \ipr{\vu_j(t), \vv_4}}=0$ where as $\EE{\ipr{\vu_i(t),\vv_1} \ipr{\vu_i(t),\vv_2} \ipr{\vu_j(t), \vv_3} \ipr{\vu_j(t), \vv_4}}$ does not necessarily vanish.
\end{convention}

The classical eigenvalue locations $\gamma_i(t)\in\RR$ for all $i \in [N]$ are defined via the Stieltjes transform of the additive free convolution $m_{\fc,t}(z) = m_{\fc,t}^{(N)}(z)$
\begin{equation}\label{eq:defnfc}
\gamma_i(t) = \inf \left\{ \gamma \in \RR : \frac{1}{\pi}\int_{-\infty}^\gamma \lim_{\eta \rightarrow 0^+} \Im m_{\fc,t}(E+i\eta) dE \geq \frac{i-1/2}{N}\right\} \quad \mbox{where} \quad m_{\fc,t}(z) = m_N(z + t m_{\fc,t}(z))
\end{equation}
is defined implicitly.  It is known that there exists a unique analytic solution to \eqref{eq:defnfc} on all of the upper half plane $\HH$ with a continuous extension to $\bar{\HH} = \HH \cup \RR$ for any $t > 0$.  See \cite{biane1997free} for details on the free convolution.  We further define an analogous free convolution analogue of the Green's function to simplify notation: 
\begin{equation}\label{eq:defn_Gfc}
\grn_{\fc,t}(z) = \grn(z + tm_{\fc,t}(z))
\end{equation}
which is also analytic in $z\in\HH$ and extends continuously to $\bar{H}$ for any $t > 0$.

\subsection{Statement of main results}

\begin{definition}
For $0 < \kappa < 1$, define the \emph{$\kappa$-truncated energy interval} by
\begin{equation}
\enint^\kappa = \enint_r^\kappa(E_0) = [E_0 - (1-\kappa)r, E_0 + (1-\kappa)r]
\end{equation}
where $E_0$ is the energy level and $r$ is the regularity scale introduced in Section \ref{ssec:model}.
\end{definition}

\begin{theorem}\label{thm:main}
Suppose $\dat$ is a symmetric $N \times N$ matrix satisfying Assumptions \ref{ass:eval} and \ref{ass:evec}.  Suppose $t = t(N)$ is a scale and $\oc > 0$ is a constant satisfying $\eta_* N^\oc < t < r N^{-\oc}$.  
Fix a constant $0 < \kappa < 1$ and a positive integer $n > 0$.  
Then there exists a (small) constant $\od = \od(\oc, n, \kappa)$ depending on $\oc$, $n$, and $\kappa$ such that the multidimensional eigenvector moments of the Gaussian perturbed matrix $\dat(t)$ satisfy
\begin{equation}
\sup_{ \substack{ i_1, \ldots, i_n \in [N] : \gamma_i(t) \in \enint^\kappa \\ \vv_1, \ldots, \vv_n \in S}} \left| \EE{ \prod_{a=1}^n \ipr{ \sqrt N \vu_{i_a}(t), \vv_k }} - \EE{ \prod_{a=1}^n \ipr{ \vh_{i_a}, \vv_a } } \right| < N^{-\od}
\end{equation}
for $N$ large enough.  The supremum is taken over all $n$-tuples of indices $(i_1,\ldots,i_n)\in[N]^n$ whose corresponding time $t$ classical eigenvalues lie in the $\kappa$-truncated energy interval, $\gamma_{i_a}(t) \in \enint^\kappa$, $a\in[n]$.  On the right hand side, for each $1 \leq i \leq N$, $\vh_i \in \RR^N$ are mutually independent centered Gaussian random vectors, $\vh_i \sim \normal(\zero, \cov_i)$ with covariance
\begin{equation}
\EE{\vh_i\vh_i^\top} = \cov_i := \frac{\Im \grn_{\fc,t}(\gamma_{i_a}(t))}{\Im m_{\fc,t}(\gamma_{i_a}(t))}.
\end{equation}
The terms appearing in the ratio defining the covariance matrix are limits
\begin{equation}
\ipr{\vv, \Im \grn_{\fc,t}(\gamma_i(t)) \vw} = \lim_{\HH \ni z \rightarrow \gamma_i(t)} \ipr{\vv, \Im \grn_{\fc,t}(z) \vw} \quad \mbox{and} \quad m_{\fc,t}(\gamma_i(t)) = \lim_{\HH\ni z\rightarrow \gamma_i(t)} m_{\fc,t}(z)
\end{equation}
which are guaranteed to exist and are finite.  The matrix $\Im \grn_{\fc,t}(\gamma_{i_a}(t))$ is symmetric with entries given by the imaginary parts of the corresponding entries in $\grn_{\fc,t}(\gamma_{i_a}(t))$.
\end{theorem}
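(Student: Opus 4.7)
My plan is to analyze the joint eigenvector moment $\EE{\prod_{a=1}^n \ipr{\sqrt N \vu_{i_a}(t), \vv_a}}$ by tracking its evolution under the SEE. Conditioning on the eigenvalue path $\bl = \{\lambda_j(s)\}$ and applying Itô's formula to \eqref{eq:edbm_intro}, I would define the observable $f_s(\xx) := \EE{\prod_{a=1}^n \ipr{\sqrt N \vu_{x_a}(s),\vv_a} \mid \bl}$ on particle configurations $\xx \in \Ln$, and verify that it satisfies the autonomous equation $\partial_s f_s = \gen f_s$, where $\gen = \move - \exch$ is the colored eigenvector moment flow generator to be introduced in Section~\ref{sec:jump}. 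This reduces the theorem to a quantitative claim about relaxation of $f_s$ to an explicit equilibrium at time $s = t$, uniformly over configurations $\xx$ whose particles sit at spectral indices with $\gamma_{i_a}(t) \in \enint^\kappa$.

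The identification of the limiting profile follows the Cauchy-convolution heuristic of \eqref{eq:cov_ansatz}: for a single particle the stationary state is $\ipr{\vv, \Im \grn_{\fc,t}(\gamma_i(t))\vw}/\Im m_{\fc,t}(\gamma_i(t))$, and in the full $n$-particle setting it is the sum over perfect matchings of products of such ratios, which is precisely the multivariate Gaussian moment $\EE{\prod_a \ipr{\vh_{i_a}, \vv_a}}$ appearing on the right. Using the isotropic local law and free-convolution inputs from Section~\ref{sec:fc}, together with Assumptions~\ref{ass:eval}--\ref{ass:evec}, I would show that discrete Cauchy averages against the time-$t$ spectrum are close to these free-convolution ratios on imaginary scales $\eta \gg \eta_*$, supplying the input against which $f_t$ will be compared.

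The main technical engine, and the heart of the proof, is establishing sufficiently fast $L^\infty$-mixing of $e^{s\gen}$ without a maximum principle: because $\exch$ destroys $L^\infty$-positivity of $-\gen$, the approach of \cite{QUE} does not apply, and I would replace it by the energy method sketched in the introduction. Concretely I would (i) prove a local Poincaré inequality on boxes $\Lyl \subset \Ln$ by adapting the path-counting argument of the toy model, with additional combinatorics to track exchange terms crossing the stratum intersections of Figures~\ref{fig:config}--\ref{fig:dynamics}; (ii) dissect $\Ln$ into optimally-sized local boxes to upgrade this to a Nash inequality of the form $\|f\|_2^{2+4/n} \lesssim \dir(f) \|f\|_1^{4/n}$ (with effective dimension $n/2$ coming from pairing); (iii) establish the genuinely new ingredient that $e^{s\gen}$ is $L^1 \to L^1$ bounded despite its non-Markov character, by directly controlling the sign cancellations introduced by the exchange kernel; and (iv) integrate the Nash inequality as in \eqref{eq:intro_integrate_Nash} to obtain the ultracontractive estimate $\|e^{s\gen}\|_{2\to\infty} \lesssim s^{-n/4}$. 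Combining this $L^\infty$-bound with $L^2$-relaxation coming from the Poincaré inequality, and then comparing the resulting local equilibrium of $f_t$ to the free-convolution profile identified above, yields the pointwise moment convergence claimed in Theorem~\ref{thm:main}.

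I expect step (iii) above, the $L^1 \to L^1$ bound for the non-positivity-preserving semigroup, to be the single hardest point, since it has no analogue in the maximum-principle-based approach and is essential for running the ultracontractive duality argument $\|e^{s\gen}\|_{1\to 2} = \|e^{s\gen}\|_{2\to\infty}$. A secondary difficulty is synchronizing the conditional analysis with the eigenvalue trajectory: one must propagate the local laws for $\dat$ through the DBM flow (Section~\ref{sec:fc}) to ensure that the classical locations $\gamma_{i_a}(t)$ and the free-convolution entries $\Im \grn_{\fc,t}(\gamma_{i_a}(t))$ are stable on the relevant time-scale $\eta_* \ll t \ll r$, and this stability is what determines the exponent $\od = \od(\oc,n,\kappa)$ in the final bound.
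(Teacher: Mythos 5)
Your architecture matches the paper's (colored moment flow, free-convolution ansatz, energy method with Poincar\'e $\to$ Nash $\to$ $L^1$-boundedness $\to$ ultracontractivity, then comparison via local laws), but there is one genuine gap in how you propose to close the argument. You write that the $L^2$-relaxation comes ``from the Poincar\'e inequality.'' It cannot. The Poincar\'e inequality only controls the deviation of $f$ from a \emph{local average of $f$ itself} (the projection $\pyl f$); it carries no information about which equilibrium the flow selects, and in particular cannot show that $f_t$ is close to the specific ansatz $F_t$ built from $\Im \grn_{\fc,t}/\Im m_{\fc,t}$. In the paper this $L^2$-smallness of (a local cutoff of) $f_s - F_t$ is a separate and substantial step (Proposition \ref{prop:l2}): one runs a \emph{short-range} cutoff of the generator, uses only the negative-semidefiniteness $-\gen \geq_2 0$ to discard the Dirichlet form, isolates the move-to-unoccupied-site terms, and identifies their leading order via the isotropic local law as $\ipr{\vv_a,\Im\grn_{\fc,s}\vv_b}\cdot f_s(\xx\backslash ab)$ — an $(n-2)$-particle observable. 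Closing this requires an \emph{induction on the particle number $n$} (to replace $f_s(\xx\backslash ab)$ by the $(n-2)$-particle ansatz) plus a Gronwall argument, together with finite speed of propagation and a Duhamel expansion to justify the short-range truncation. None of this appears in your plan, and without it the ultracontractive bound has nothing small to act on: $\|e^{s\gen}\|_{2\to\infty}\lesssim s^{-n/4}$ is useless unless you have already shown $\|f-F\|_2$ is small on the relevant neighborhood, with the \emph{correct} $F$.

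A secondary concern: for step (iii) you propose to prove $\|e^{s\gen}\|_{1\to 1}\lesssim 1$ ``by directly controlling the sign cancellations introduced by the exchange kernel.'' It is unclear how to make that work, since the cancellations are global and configuration-dependent. The paper's proof (Lemma \ref{lem:L1}) avoids sign bookkeeping entirely: it represents $\semi(s_1,s_2)\delta_\xx$ as the expectation of the test polynomial $P(\cdot,\EV(s_2),\Vecs)$ along an $SO(N)$-valued random walk, and the bound $\|P(\cdot,\EV,\Vecs)\|_1\le n!!$ follows deterministically from orthogonality of $\EV$. If you cannot reproduce something like this representation, step (iii) — which you correctly identify as essential for the duality $\|e^{s\gen}\|_{1\to2}=\|e^{s\gen}\|_{2\to\infty}$ — is likely to stall.
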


\begin{remark}
See \cite{QUE} for the single eigenvector case.
\end{remark}

\begin{definition}
For the remainder of the paper, the time $t$ and truncation parameter $\kappa$ will always be understood to be from Theorem \ref{thm:main}.  Define the \emph{time-$t$ $\kappa$-truncated index interval} as 
\begin{equation}
\indint^\kappa = \indint^\kappa(s) = \{ i \in [N] : \gamma_i(t) \in \enint^\kappa\}.
\end{equation}
\end{definition}

We apply these results to the three popular random matrix models introduced above: generalized Wigner, $p$-sparse random graphs, $\alpha$-Levy random matrices.

\begin{theorem}\label{thm:genwig}
Suppose $\dat$ is a generalized Wigner matrix and fix $\alpha \in (0,1/2)$ small.  Then for every polynomial $P$ of $m$ variables,
\begin{equation}
\sup_{\substack{\alpha N \leq i_a \leq (1-\alpha)N : a \in [m] \\ v_a \in S^{N-1} : a \in [m]}} |\EE{ P((\sqrt{N} \ipr{ \vu_{i_a}, \vv_a })_{a =1 }^m)} - \EE{ P((\ipr{ \vg_{i_a}, \vv_a })_{a=1}^m)}| \leq N^{-\od}
\end{equation}
where $(\vg_i)_{i=1}^N$ are independent and identically distributed standard Gaussian random vectors in $\RR^N$ and $\od > 0$ is a constant depending only on $P$ and $\alpha$.
\end{theorem}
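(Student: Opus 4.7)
The plan is to deduce Theorem \ref{thm:genwig} from Theorem \ref{thm:main} via three ingredients: verification of the deterministic Assumptions \ref{ass:eval} and \ref{ass:evec} for a generalized Wigner matrix, identification of the limiting covariance $\cov_i$ as the identity in the bulk, and a Green's function comparison argument to remove the small Gaussian perturbation. Fix a small $\epsilon > 0$, set $t = N^{-1+\epsilon}$, $\eta_* = N^{-1+\epsilon/2}$, and let $r$ be a small constant depending on $\alpha$. For a generalized Wigner matrix $\dat$, the local semicircle law and its isotropic version (referenced in the introduction) imply that Assumptions \ref{ass:eval} and \ref{ass:evec} hold with $S = S^{N-1}$ the full unit sphere for every $E_0 \in [-2+\alpha, 2-\alpha]$, with overwhelming probability (the uniformity in $\vv,\vw \in S^{N-1}$ needed for Assumption \ref{ass:evec} follows from the standard net-plus-operator-norm argument). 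Applying Theorem \ref{thm:main} to $\dat(t)$ at finitely many energies $E_0$ covering the bulk range, and union bounding, handles every spectral index $i$ with $\alpha N \leq i \leq (1-\alpha) N$.

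Next I would identify the covariance. For the generalized Wigner model, $m_{\fc,t}(z)$ differs from the semicircle Stieltjes transform by an $O(t)$-perturbation, and the isotropic local law at the classical location yields
\begin{equation}
\ipr{\vv, \grn_{\fc,t}(\gamma_i(t)) \vw} = m_{\fc,t}(\gamma_i(t)) \ipr{\vv, \vw} + O(N^{-c})
\end{equation}
for bulk indices $i$, with overwhelming probability. Consequently $\cov_i = \iden + O(N^{-c})$, so the Gaussian vectors $\vh_i$ appearing in Theorem \ref{thm:main} are close in distribution to independent standard Gaussian vectors $\vg_i$. Because the expectation of any fixed polynomial in finitely many Gaussian linear functionals depends Lipschitz-continuously on the covariance, $\EE{P((\ipr{\vh_{i_a}, \vv_a})_{a=1}^m)}$ and $\EE{P((\ipr{\vg_{i_a}, \vv_a})_{a=1}^m)}$ differ by at most $N^{-c'}$ for some $c' > 0$ depending on $P$.

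Finally, I would remove the Gaussian perturbation by a Green's function comparison argument following the framework of \cite{QUE}. By expressing products $\prod_a \ipr{\sqrt{N} \vu_{i_a}, \vv_a}^{2k_a}$ as contour integrals of products of resolvent matrix elements $\ipr{\vv_a, \grn(z) \vv_a}$ against delta-like spectral bumps centered at $\gamma_{i_a}(t)$ and regularized on a scale slightly larger than $N^{-1}$, one rewrites $\EE{P(\cdot)}$ as a polynomial functional of Green's function matrix entries at $m$ nearby spectral parameters. Since the entries of $\dat$ and $\dat(t)$ match in their first two moments up to order $t = N^{-1+\epsilon}$, a Lindeberg-type four-moment swapping argument then transfers the conclusion of Theorem \ref{thm:main} from $\dat(t)$ to $\dat$ with total error $N^{-\od}$. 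The main obstacle is this last step: unlike the single-eigenvector setting of \cite{QUE}, the observable now involves a product of $m$ spectral projections at distinct classical locations simultaneously, so the comparison must be performed on the joint polynomial in all $m$ Green's function elements at once, and the high-moment bounds for the resulting products must be controlled uniformly in the choice of indices and test vectors before the standard swapping estimates can be applied.
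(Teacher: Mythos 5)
Your overall strategy coincides with the paper's: verify Assumptions \ref{ass:eval} and \ref{ass:evec} via the isotropic local semicircle law, apply Theorem \ref{thm:main} to a Gaussian-perturbed matrix, identify $\cov_i\approx\iden$ in the bulk (a step the paper leaves implicit but which you correctly include), and remove the perturbation by a Green's function comparison routed through the contour-integral representation of eigenvector observables as in \cite{QUE} and \cite{knowles2013eigenvector}.

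The gap is in the last step. You propose to compare $\dat$ directly with $\dat(t)=\dat+\sqrt{t}\,\GOE$ by Lindeberg swapping, on the grounds that their entries ``match in their first two moments up to order $t$.'' They do not match in second moments: the variance of each entry of $\dat(t)$ is inflated by $t(1+\delta_{ij})/N\sim N^{-2+\epsilon}$. In the swapping argument the second-order Taylor term is controlled only by exact (or essentially exact) cancellation of second moments; with a per-entry mismatch of $N^{-2+\epsilon}$ summed over $O(N^2)$ entries, the resulting error is of order $N^{\epsilon}\sup|\partial_{ij}^{2}F|$, which is not small, and no argument is given that the second derivatives decay enough to absorb it. The paper avoids this by never comparing $\dat$ with $\dat+\sqrt{t}\,\GOE$: it runs the variance-preserving Ornstein--Uhlenbeck flow $H_s$ from $H_0=\dat$ (Eq.~\eqref{eq:gendbm}), constructs an auxiliary generalized Wigner matrix $\tilde H$ with slightly deflated variances so that $H_t\overset{d}{=}\tilde H+\sqrt{N\tilde\nu}\,\GOE$, applies Theorem \ref{thm:main} to $\tilde H$, and compares $H_0$ with $H_t$ via the third-order continuity estimate (Lemma A.1 of \cite{QUE}), whose error $O(tN^{1/2})M$ is small precisely because the OU flow preserves the first two moments exactly. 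You need this rescaling construction (or an equivalent reverse-heat-flow matching) for the comparison to close. A secondary omission: passing from Green's function functionals back to individual eigenvector products at fixed spectral indices requires the level repulsion estimate (to isolate single eigenvalues inside the contours), which the paper cites explicitly and your outline does not address.
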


\begin{theorem}\label{thm:p-reg}
Let $\dat$ be the normalized adjacency matrix of a sparse Erd\H{o}s--R\'enyi graph $G(N, p/N)$ with sparsity $N^\delta \leq p \leq N/2$ or the adjacency matrix of a random $p$-regular graph with sparsity $N^\delta \leq p \leq N^{2/3 - \delta}$.  Then 
\begin{equation}
\sup_{\substack{a \in [m] \\ \alpha N \leq i_a \leq (1-\alpha)N \\ v_a \in S^{N-1} \cap \ve^\perp}} |\EE{ P(\sqrt{N} u_{i_a} \cdot v_a)} - \EE{ P(N_{i_a} \cdot v_a)}| \leq N^{-\od}
\end{equation}
where $\ve = (1,1,\ldots,1)^\top = \sum_{i=1}^N \ve_i$ is the constant all-ones vector, $(\vg_i)_{i=1}^N$ are independent and identically distributed standard Gaussian random vectors in $\RR^N$, and $\od > 0$ is  a constant depending only on $P$, $\delta$, and $\alpha$.  
\end{theorem}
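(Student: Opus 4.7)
The plan is to follow the same three-step strategy used for Theorem \ref{thm:genwig}, adapted to the sparse setting via the framework of \cite{bourgade2017eigenvector} (which proves the single-component analogue of this statement): (a) verify that the deterministic Assumptions \ref{ass:eval} and \ref{ass:evec} hold for the normalized adjacency matrix $\dat$ with overwhelming probability, when test vectors are restricted to $\ve^\perp$; (b) apply Theorem \ref{thm:main} to the Gaussian-perturbed matrix $\dat(t) = \dat + \sqrt{t}\,\GOE$ for an admissible $t$; (c) remove the Gaussian component by a Green's function comparison argument to transfer the joint eigenvector moments from $\dat(t)$ back to $\dat$.

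For step (a), the isotropic local semicircle laws established for sparse Erd\H{o}s--R\'enyi graphs and for random $p$-regular graphs in the stated sparsity ranges (after projecting away the trivial Perron-Frobenius direction $\ve$) give precisely Assumption \ref{ass:eval} from the diagonal bound $|m_N(z)| \asymp 1$ in the bulk and Assumption \ref{ass:evec} from $|\ipr{\vv, \Im \grn(z)\vw}| \precsim 1$ for $\vv, \vw \in S^{N-1} \cap \ve^\perp$, both uniformly down to a mesoscopic imaginary scale $\eta_*$ small enough that the window $\eta_* N^\oc < t < r N^{-\oc}$ of Theorem \ref{thm:main} is nonempty. Step (b) is then immediate: conditional on the graph, Theorem \ref{thm:main} yields joint convergence of $\EE{P((\sqrt N \ipr{\vu_{i_a}(t), \vv_a})_{a=1}^m)}$ to moments of centered Gaussians $\vh_{i_a}$ with covariance $\cov_{i_a} = \Im \grn_{\fc,t}(\gamma_{i_a}(t))/\Im m_{\fc,t}(\gamma_{i_a}(t))$. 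Because the free convolution $m_{\fc,t}$ is a small perturbation of the Wigner semicircle in the bulk for these sparse models, and because the isotropic local law gives $\cov_{i_a}|_{\ve^\perp} \to \iden|_{\ve^\perp}$, the limit law of $\ipr{\vh_{i_a}, \vv_a}$ with $\vv_a \in \ve^\perp$ coincides with that of $\ipr{\vg_{i_a}, \vv_a}$.

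Step (c) is the main obstacle. The plan is to express the eigenvector polynomial via a spectral integral $\sum_i f(\lambda_i) \vu_i\vu_i^\top = \pi^{-1} \int f(E)\,\Im \grn(E + i0^+)\, dE$ smoothed at a mesoscopic imaginary scale and localized by a mollifier near each $\gamma_{i_a}(t)$, thereby rewriting $P(\sqrt N \ipr{\vu_{i_a}, \vv_a})$ as a polynomial in the isotropic Green's function entries $\ipr{\vv_a, \grn(z)\vv_b}$. Along the Gaussian interpolation $\dat(s) = \dat + \sqrt{s}\,\GOE$ for $s \in [0,t]$, It\^o's formula reduces the comparison to a sum of Green's function polynomial expectations weighted by the cumulants of the entries of $\dat$. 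For the Erd\H{o}s--R\'enyi model the first two cumulants match those of the Gaussian after normalization, and the higher cumulants satisfy sparsity bounds of order $(Np)^{-(k-2)/2} N^{-1}$ for the $k$-th cumulant, so the isotropic local law combined with power counting closes the comparison provided $p \geq N^\delta$. For random regular graphs the Bernoulli entries cannot be cumulant-matched to Gaussians, so the plan is to substitute a simple-switching / local-resampling comparison between the regular graph Green's function and a Gaussian-convolved version, with the upper cutoff $p \leq N^{2/3-\delta}$ arising from accumulated switching error. The hard part is running these Green's function polynomial expansions uniformly in $m$ and in the degree of $P$, since joint eigenvector moments produce long chains whose isotropic estimates must remain summable under the given sparsity constraints.
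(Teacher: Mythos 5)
Your steps (a) and (b) match the paper: Proposition \ref{prop:perturb-ER} is exactly the verification that the sparse models (projected onto $\ve^\perp$) satisfy Assumptions \ref{ass:eval} and \ref{ass:evec} for the matrix $\tilde H_s = f + e^{-s/2}(H-f)$ along the Ornstein--Uhlenbeck flow, so that Theorem \ref{thm:main} applies to its Gaussian perturbation. The divergence, and the problem, is in your step (c). You propose to represent $P(\sqrt N\, \vu_{i_a}\cdot \vv_a)$ as a spectral integral of $\Im \grn(E+i\eta)$ mollified at a \emph{mesoscopic} scale $\eta$ and then run a cumulant/Lindeberg expansion on the resulting Green's function polynomials. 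This cannot isolate a single eigenvector: any window with $\eta \gg N^{-1}$ on which the isotropic local law is valid averages over $\sim N\eta \gg 1$ eigenvectors, so the functional calculus recovers partial traces $\sum_{j:\lambda_j \in I}\ipr{\vv,\vu_j}\ipr{\vu_j,\vw}$, not the individual overlap $\ipr{\vv,\vu_{i_a}}\ipr{\vu_{i_a},\vw}$; and pushing $\eta$ below $N^{-1}$ destroys the local law input. This is precisely why the paper does \emph{not} compare Green's functions but instead compares the eigenvector observables directly, via the contour-integral derivative bounds of Proposition \ref{prop:Qi-deriv} for $\partial_{ab}^{(k)}\ipr{\vv,\vu_i}\ipr{\vu_i,\vw}$, regularized by the Tao--Vu quantity $Q_i$ and combined with delocalization and eigenvalue non-accumulation (items (i) and (ii) of Proposition \ref{prop:perturb-ER}). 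A further missing ingredient in your outline is level repulsion: the derivative bounds only hold on the event $Q_i \leq N^{2\tau}$, and controlling the complementary event is an essential (and in the sparse setting nontrivial) input to the comparison, exactly as in \cite{bourgade2017eigenvector} and \cite{knowles2013eigenvector}.

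Two smaller points. First, the Erd\H{o}s--R\'enyi entries are not centered; the comparison must be run along the OU flow centered at $f = (p/N)(p(1-p/N))^{-1/2}$ (so that $H_t \overset{d}{=} f + e^{-t/2}(H-f) + \sqrt{1-e^{-t}}\,\GOE$), not along the plain additive interpolation $\dat + \sqrt{s}\,\GOE$; your cumulant-matching count should be set up for $H_t$ versus $H_0$ in this parametrization. Second, for random $p$-regular graphs the paper does not introduce a separate switching-based comparison at this stage; the switchings enter only in establishing the local law inputs (via \cite{BKY}), after which the same derivative-based comparison goes through, with the restriction $p \leq N^{2/3-\delta}$ inherited from those inputs rather than from "accumulated switching error" in the comparison itself.
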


The following theorem on eigenvector distributions in L\'evy matrices generalizes the main results \cite[Theorems 2.7 and 2.8]{aggarwal2020eigenvector} by replacing eigenvector moment flow with colored eigenvector moment flow in the dynamical step of the proof.  In particular, Theorem 2.7 computes the distribution for a column in the $(n \times n)$-submatrix $\mat = (N u_{i_a}^{k_b})_{a,b=1}^n$ for some spectral indices $i_1,\ldots,i_n\in[N]$ and some directional indices $k_1,\ldots,k_n\in[N]$.  On the other hand, Theorem 2.8 computes the distribution for a row in the same $(n \times n)$-submatrix $\mat$.  The new dynamics, allows us to compute the distribution of the entire submatrix $\mat$.

\begin{theorem}\label{thm:levy}
Suppose $\dat$ is an $\alpha$-L\'evy matrix as defined in Definition \ref{defn:levy}.  There is a countable set $\countable \subset (0,2)$ such that for every $\alpha \in (0,2) \backslash \countable$ there is a constant $c(\alpha) > 0$ so that the following holds.  Fix a positive integer $n > 0$, a sequence of spectral indices $i_1 < i_2 < \ldots < i_n \in [N]$, and a sequence of test directions $k_1 < k_2 < \ldots < k_n \in [N]$.  Moreover, the spectral indices should satisfy $|i_k-i_1|\leq N^{1/2}$ for every $2 \leq i \leq n$ and $\lim_{N \rightarrow \infty} \gamma_{i_1} = E$ for some $E \in [-c(\alpha), c(\alpha)]$.  Then we have convergence of random $(n \times n)$-matrices
\begin{equation}
(\sqrt N u_{i_a}^{k_b})_{a,b=1}^n \rightarrow (g_{ab} \levlim_b(E))_{a,b=1}^n
\end{equation}
in mixed moments where $\{g_{ab}|a,b\in[n]\}$ are standard Gaussian random variables and $\{\levlim_a(E)|a\in[n]\}, a=1, \ldots n,$ are random variables each distributed with the law denoted by  $\levlim_*(E)$.  All $n^2 + n$ random variables are mutually independent. 
The  probability distribution of  $\levlim_*(E)$,  defined in \cite[Definition 2.5]{aggarwal2020eigenvector} , 
is given by the density of states  of an operator on an infinite tree.
\end{theorem}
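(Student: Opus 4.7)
\medskip

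\noindent\emph{Proof proposal.} The plan is to fit this theorem into the standard three-step DBM scheme (regularize, apply the main dynamical result, compare back) and simply replace the single-component input with the multi-component Gaussian output provided by Theorem~\ref{thm:main}. Concretely, write $\dat(t)=\dat+\sqrt{t}\,\GOE$ for a small scale $\eta_\ast N^\oc < t < rN^{-\oc}$ chosen as in Theorem~\ref{thm:main}, where the energy window is centered at $E$ with radius $r$ slightly larger than $\eta_\ast$. All of the infrastructure we need for L\'evy matrices at small energies (boundedness of $\|\dat\|$ truncated at a polynomial scale, the isotropic local law $\ipr{\vv,\Im\grn(z)\vw}\le N^\ob$ for $\vv,\vw$ in the standard basis directions $S=\{\ve_{k_1},\ldots,\ve_{k_n}\}$, and two-sided boundedness of $|\Im m_N|$ on the relevant energy strip) follows from \cite{aggarwal2018goe} and the local law inputs used in \cite{aggarwal2020eigenvector}. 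This verifies Assumptions~\ref{ass:eval} and \ref{ass:evec} with overwhelming probability, so Theorem~\ref{thm:main} applies pathwise on the event of good $\dat$.

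\medskip

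\noindent First I would apply Theorem~\ref{thm:main} with test vectors $\vv_a = \ve_{k_{b(a)}}$ chosen from the standard basis to conclude that for any fixed polynomial observable in the entries of the submatrix $\mat(t)=(\sqrt{N}\,u_{i_a}^{k_b}(t))_{a,b=1}^n$, the expectation is $N^{-\od}$-close to that of a Gaussian model in which, conditionally on $\dat$ (equivalently on the Green's function of $\dat$), each row $\vh_{i_a}$ is an independent centered Gaussian vector with covariance $\cov_{i_a}=\Im\grn_{\fc,t}(\gamma_{i_a}(t))/\Im m_{\fc,t}(\gamma_{i_a}(t))$. Projecting onto the $n$ test directions, the relevant $n\times n$ conditional covariance of the Gaussian submatrix has entries $(\cov_{i_a})_{k_b k_c} = \ipr{\ve_{k_b},\Im\grn_{\fc,t}(\gamma_{i_a}(t))\ve_{k_c}}/\Im m_{\fc,t}(\gamma_{i_a}(t))$. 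Because $t\ll 1$ and $|i_a-i_1|\le N^{1/2}$, on the chosen scale the free convolution Green's function at $\gamma_{i_a}(t)$ is close to the ordinary Green's function of $\dat$ at the classical location $\gamma_{i_a}\to E$, and the denominator $\Im m_{\fc,t}(\gamma_{i_a}(t))$ concentrates on the (deterministic) density of states at $E$.

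\medskip

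\noindent The next step, which identifies the limiting law, is to invoke the heavy-tailed local-law results behind \cite{aggarwal2020eigenvector}. For $a=b$ one has $\Im G_{k_b k_b}(E+i\eta)/\Im m(E+i\eta)$ converging in distribution to the random variable $\levlim_*(E)$ described there by the density of states of an infinite-tree operator. For $a\neq b$, the off-diagonal entry $\Im G_{k_a k_b}(E+i\eta)/\Im m(E+i\eta)$ tends to zero in probability, since distinct columns of the Green's function at a L\'evy matrix with coordinates far apart decorrelate on the relevant scale. Moreover, the $n$ values $\{\levlim_b(E)\}_{b=1}^n$ for distinct $k_b$ are asymptotically mutually independent, inheriting independence from the local tree structure of the L\'evy resolvent approximation. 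Consequently the conditional covariance matrix $(\cov_{i_a})_{k_bk_c}$ becomes asymptotically diagonal with diagonal entries $\levlim_b(E)^2$ (the same $\levlim_b(E)$ for every spectral index $a$ since the different $i_a$'s lie in an $N^{1/2}$-window where the Green's function profile does not change), which shows that the Gaussian model of Theorem~\ref{thm:main} collapses to $g_{ab}\,\levlim_b(E)$ for independent standard Gaussians $g_{ab}$ independent of the $\levlim_b(E)$'s.

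\medskip

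\noindent Finally, a Green's function comparison argument identical to the one used in \cite{aggarwal2020eigenvector} to pass from $\dat+\sqrt{t}\,\GOE$ back to $\dat$ (a Lindeberg-style swap that controls polynomials of eigenvector entries through resolvent expansions; this is where the restriction to $\alpha\in(0,2)\setminus\countable$ enters to ensure the required regularity of the limiting spectral measure) removes the Gaussian perturbation. Combining the three steps yields convergence in mixed moments of $(\sqrt{N}\,u_{i_a}^{k_b})_{a,b=1}^n$ to $(g_{ab}\levlim_b(E))_{a,b=1}^n$. The main obstacle, and the place where this argument goes beyond the single row/column theorems of \cite{aggarwal2020eigenvector}, is isolating the off-diagonal part of the conditional covariance: one must show that $\Im G_{k_a k_b}/\Im m$ vanishes uniformly enough across the $n^2-n$ pairs $a\neq b$ for the Gaussian mixture limit to factorize as a product of independent pieces, rather than produce nontrivial correlations among the entries of the submatrix. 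The colored eigenvector moment flow does the rest of the work for us, because once the full-matrix dynamical output from Theorem~\ref{thm:main} is in hand the remaining analysis is a deterministic computation on the limiting covariance, whereas the previous (colorblind) eigenvector moment flow could only access one row or one column of $\mat$ at a time.
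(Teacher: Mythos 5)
Your proposal matches the route the paper takes: it proves Theorem~\ref{thm:levy} by verifying Assumptions~\ref{ass:eval} and \ref{ass:evec} from the L\'evy local laws of \cite{aggarwal2018goe}, substituting Theorem~\ref{thm:main} (colored eigenvector moment flow) for the single row/column dynamical step of \cite{aggarwal2020eigenvector}, identifying the limiting covariance through the tree approximation of the resolvent (diagonal entries giving independent copies of the law $\levlim_*(E)$, off-diagonals vanishing), and then running the same comparison argument to remove the Gaussian component. The only blemish is a normalization slip: if $\sqrt{N}u_{i_a}^{k_b}\to g_{ab}\levlim_b(E)$ then the diagonal covariance entry $\ipr{\ve_{k_b},\Im\grn_{\fc,t}\ve_{k_b}}/\Im m_{\fc,t}$ must converge to $\levlim_b(E)^2$ rather than to $\levlim_b(E)$, so your two statements about the limit of $\Im G_{k_bk_b}/\Im m$ should be reconciled with the convention of \cite[Definition 2.5]{aggarwal2020eigenvector}.
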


\section{Free convolution and local law consequences}\label{sec:fc}

With $t$, $\oc$, and $\kappa$ as in Theorem \ref{thm:main}, this section organizes the local laws and relevant consequences that will be used in computations throughout proofs in Sections \ref{sec:l2} and \ref{sec:proof}.
For eigenvalues, there is tight deterministic control on the free convolution Stieltjes transform and classical positions.  
These approximate the true Stieltjes transform and eigenvalue positions with overwhelming probability.
For eigenvectors, the free convolution Green's function approximates the true Green's function with overwhelming probability.
One consequence is that all eigenvectors are delocalized in the regular set of directions $S$ from Assumption \ref{ass:evec}.
Another consequence is Lipschitz continuity for both the free convolution Stieltjes transform $m_{\fc,s}(z)$ and the free convolution Green's function in the regular set of directions $\ipr{\vv,\grn_{\fc,s}(z)\ww}$, $\vv,\ww \in S$.  This Lipschitz continuity holds in both spectral parameter $z\in\HH$ bounded away from $\RR$ and time $s$ bounded away from $0$.

First, we give deterministic properties for the Stieltjes trarnsform of the free convolution and the classical locations.  These first two results can be found in \cite{landon2017convergence}.

\begin{proposition}[Regularity of the free convolution]\label{prop:reg_fc}
Suppose $\dat$ satisfies Assumption \ref{ass:eval}.  Then there exists a constant $C>0$ depending only on $\oa$ such that, the Stieltjes transform satisfies
\begin{equation}
C^{-1} \leq \Im m_{\fc,s}(z) \leq C
\quad \mbox{and} \quad
|m_{\fc,s}(z)| \leq C \log N
\end{equation}
uniformly for time $t/2 \leq s \leq t$ and spectral parameter $z = E+i\eta$ with $E\in\enint^{\kappa/10}$ and $0 \leq \eta \leq 1-\kappa r$, while the classical locations satisfy
\begin{equation}\label{eq:classicalspeed}
|\partial_s \gamma_i(t)| \leq C \log N
\end{equation}
uniformly for $i \in \indint^{\kappa/10}$ and time $t/2 \leq s \leq t$.
\end{proposition}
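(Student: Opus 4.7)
The plan is to exploit the defining implicit equation $m_{\fc,s}(z) = m_N(z + s m_{\fc,s}(z))$. Setting $w(z) := z + s m_{\fc,s}(z)$, the strategy is to show that $w(z)$ always lies in the regular window $\{E + i\eta : E \in [E_0 - r, E_0 + r],\ \eta \in [\eta_*, 1]\}$ on which Assumption \ref{ass:eval}(2) directly controls $\Im m_N$. Since $m_{\fc,s}$ is the Stieltjes transform of a probability measure, the vertical part $\Im w(z) = \eta + s \Im m_{\fc,s}(z) \geq 0$ is automatic; the nontrivial content is the quantitative two-sided bound.

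For $C^{-1} \leq \Im m_{\fc,s}(z) \leq C$, I would run a continuity-in-$s$ bootstrap. At $s = 0$, $m_{\fc,0}(z) = m_N(z)$ and Assumption \ref{ass:eval}(2) applies directly. Suppose the target bound $\Im m_{\fc,s}(z) \in [1/(2\oa), 2\oa]$ were to fail for the first time at some $s_0 \in (0, t]$. Combined with $s_0 \geq t/2 \geq \eta_* N^\oc / 2 \gg \eta_*$, one still has $\Im w(z) \geq s_0/(2\oa) \geq \eta_*$; and $|\Re w(z) - E| \leq s_0 |m_{\fc,s_0}(z)| \leq r N^{-\oc} \cdot C \log N \ll r \kappa / 10$ using the a priori logarithmic bound below, which places $\Re w(z)$ safely inside $[E_0 - r, E_0 + r]$ since $E \in \enint^{\kappa/10}$. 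Assumption \ref{ass:eval}(2) then forces $\Im m_{\fc,s_0}(z) = \Im m_N(w(z)) \in [1/\oa, \oa]$, contradicting the assumed failure. A parallel continuity-in-$\eta$ argument handles passage to the boundary $\eta = 0$.

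The bound $|m_{\fc,s}(z)| \leq C \log N$ follows from Stieltjes transform considerations: the representing measure $\rho_s$ has support in an interval of length $2\|\dat\| + 4\sqrt{s} \leq 3N^\oa$ by Assumption \ref{ass:eval}(1), with bounded density on the regular window by the preceding step. Splitting the principal-value integral defining $\Re m_{\fc,s}$ into the bulk (where the density bound gives an integrable singularity contributing $O(\log(1/\eta_*)) = O(\log N)$) and the tail (where the denominator exceeds $1$) yields the estimate, while $\Im m_{\fc,s}$ is already $O(1)$. For the derivative $\partial_s \gamma_i(s)$, I would implicitly differentiate the quantile definition \eqref{eq:defnfc} to obtain $\partial_s \gamma_i(s) = -(\Im m_{\fc,s}(\gamma_i(s)))^{-1} \int_{-\infty}^{\gamma_i(s)} \partial_s \Im m_{\fc,s}(E) dE$. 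The denominator is bounded below by the previous step, while differentiating the defining equation in $s$ gives $\partial_s m_{\fc,s}(z) \bigl(1 - s\, m_N'(w(z))\bigr) = m_N'(w(z)) m_{\fc,s}(z)$, expressing $\partial_s m_{\fc,s}$ as a rational function of $m_{\fc,s}$ and $m_N'$ that is $O(\log N)$, and the integrated imaginary part is controlled via Stieltjes inversion back to upper-half-plane data where the already-established bounds apply.

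The main obstacle is the first step: the lower bound on $\Im m_{\fc,s}$ must hold uniformly down to $\eta = 0$, and because $\Im w(z)$ depends self-referentially on $\Im m_{\fc,s}$ itself, one cannot read the bound off from a single application of Assumption \ref{ass:eval}(2) but must set up the continuity bootstrap above and verify that the failure set is closed while being empty at $s=0$. A secondary subtlety is ensuring during the bootstrap that the real-part shift $s \Re m_{\fc,s}(z)$ stays small enough not to push $\Re w(z)$ outside $[E_0 - r, E_0 + r]$, which is precisely why a $\kappa/10$-truncation of the energy interval appears in the hypothesis.
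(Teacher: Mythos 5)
The paper does not actually prove this proposition; it defers to \cite{landon2017convergence}, so you are supplying an argument where the authors only cite one. Your overall strategy --- controlling $w(z)=z+s\,m_{\fc,s}(z)$ so that the self-consistent equation \eqref{eq:defnfc} can be closed against Assumption \ref{ass:eval}(2), plus a continuity argument --- is the right one and matches the cited proof in spirit. But the bootstrap as you have set it up does not close. You run continuity in $s$ from $s=0$ and then assert that the first failure time satisfies $s_0\geq t/2$; nothing justifies this, and the conclusion you are bootstrapping is in fact generally \emph{false} for small $s$ and small $\eta$: if $s\ll\eta_*$ and $\eta<\eta_*$, then $\Im w(z)=\eta+s\,\Im m_{\fc,s}(z)<\eta_*$ no matter what a priori bound you assume, so $w(z)$ sits below the scale on which Assumption \ref{ass:eval}(2) says anything, and $\Im m_N$ there can degenerate (the spectrum of $\dat$ is completely unconstrained below scale $\eta_*$). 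The repair is to reorganize: for $\eta\geq\eta_*$ the $s$-bootstrap is harmless since $\Im w\geq\eta\geq\eta_*$ automatically; then, for each \emph{fixed} $s\in[t/2,t]$ (so that $s\gg\eta_*$), run the continuity argument in decreasing $\eta$ from $\eta=\eta_*$ down to $0$, using $\Im w\geq s\,\Im m_{\fc,s}\geq s/(2\oa)\gg\eta_*$ to stay inside the regular window. Relatedly, your bootstrap invokes the $C\log N$ bound on $|m_{\fc,s}|$ "below" to control $\Re w$, while that bound is itself derived from the first step; the two estimates have to be carried as a joint bootstrap hypothesis.

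The second genuine gap is in the $C\log N$ bound itself. Your splitting covers only the regular window (bounded density) and the region $|x-E|\geq 1$ (denominator at least $1$); the annulus $r\lesssim|x-E|\leq 1$ is not covered, and there the mass of $\rho_{\fc,s}$ is not controlled by the window regularity --- a priori that annulus contributes only $O(1/r)$, and $r$ may be nearly as small as $N^{-1}$, so this is far from $O(\log N)$. (Biane's universal bound $\|\rho_{\fc,s}\|_\infty\leq(\pi\sqrt s)^{-1}$ does not rescue this either, since $s$ can be polynomially small.) The missing input is that Assumption \ref{ass:eval}(2), applied at the single energy $E_0$ but at \emph{every} scale $\eta=L\in[\eta_*,1]$, yields the counting bound $|\{i:|\lambda_i-E_0|\leq L\}|\leq 2\oa NL$ for all such $L$; a dyadic decomposition of $|m_N(w)|$ at $\Im w\geq cs\geq ct/2$ then gives $O(1)$ per dyadic scale and $O(\log N)$ overall, with the $|x-E_0|\geq 1$ tail contributing $O(1)$ by normalization. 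With these two repairs the remaining steps (the quantile differentiation for $\partial_s\gamma_i$, using the lower bound on $\Im m_{\fc,s}$ and the Burgers-type identity $\partial_s m_{\fc,s}=\tfrac12\partial_z\bigl(m_{\fc,s}(m_{\fc,s}+z)\bigr)$ appearing later in the paper) go through as you describe.
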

\begin{remark}
As a consequence, if $\gamma_i(s) \in \enint^{\kappa'}$ for some $t/2 \leq s \leq t$ and some $\kappa'>\kappa$, then $i\in\gamma\in\indint^\kappa$.
\end{remark}
\begin{definition}
For every small $\varepsilon>0$, define the \emph{$\varepsilon$-truncated spectral domain}
\begin{equation}
\dom_\varepsilon = \{z = E+i\eta \in \HH | E \in \enint^{\varepsilon\kappa}, N^{-1+\varepsilon} \leq \eta \leq 1-\kappa r\} \subset \HH.
\end{equation}
\end{definition}
Next we have overwhelming probability estimates on eigenvalue statistics of the perturbed matrix.

\begin{proposition}[Regularity of eigenvalues]\label{prop:reg_eval}
Suppose $\dat$ satisfies Assumption \ref{ass:eval} and fix positive constants $\varepsilon, \ob > 0$.  Then with overwhelming probability the following two estimates hold: the Stieltjes transform satisfies
\begin{equation}
|m_N(s;z) - m_{\fc,s}(z)| \leq N^\ob (N\eta)^{-1}
\end{equation}
uniformly for $z \in \dom_\varepsilon$ and $t/2 \leq s \leq t$, while individual eigenvalues satisfy
\begin{equation}\label{eq:rigidity}
|\lambda_i(s) - \gamma_i(s)| \leq \frac{N^\ob}{N}
\end{equation}
uniformly for $i \in \indint^{\kappa/10}$ and $t/2 \leq s \leq t$.
\end{proposition}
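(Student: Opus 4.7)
The plan is to reduce both estimates to direct applications of the free convolution local law established in \cite{landon2017convergence}, whose two hypotheses are (i) regularity of the initial Stieltjes transform $m_N(0;\cdot)$ on the relevant strip, which is precisely Assumption \ref{ass:eval}, and (ii) regularity of the free convolution Stieltjes transform $m_{\fc,s}$ along the flow, which is Proposition \ref{prop:reg_fc}. Since $\dat(s) = \dat + \sqrt{s}\,\GOE$ is the standard deformed Gaussian model and the equation defining $m_{\fc,s}$ in \eqref{eq:defnfc} is exactly its self-consistent equation, the only thing to check is that the lower bound $\Im m_{\fc,s}(z) \geq C^{-1}$ from Proposition \ref{prop:reg_fc} gives stability of this self-consistent equation throughout $\dom_\varepsilon$ and for all $t/2 \leq s \leq t$. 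Granting stability, the local law yields $|m_N(s;z) - m_{\fc,s}(z)| \prec (N\eta)^{-1}$ on this set, which after the $N^\ob$ inflation is the first claimed estimate with overwhelming probability.

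For the rigidity estimate, I would pass from the local law on $m_N$ down to the real axis by the standard Helffer--Sj\"ostrand route. Concretely, approximate $\one{(-\infty,E]}$ by a smooth cutoff with spectral parameter $\eta \sim N^{-1+\varepsilon}$, apply the Helffer--Sj\"ostrand formula to rewrite the eigenvalue counting function $\#\{j : \lambda_j(s) \leq E\}$ as a contour integral against $\Im m_N(s;z)$, and subtract the analogous integral against $\Im m_{\fc,s}(z)$. The local law bounds the difference by $N^\ob$, so
\begin{equation*}
\left| \#\{j : \lambda_j(s) \leq E\} - N\int_{-\infty}^E \rho_{\fc,s}(x)\, dx \right| \leq N^\ob
\end{equation*}
uniformly for $E \in \enint^{\kappa/10}$, where $\rho_{\fc,s}$ is the density associated to $m_{\fc,s}$. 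The lower bound on $\Im m_{\fc,s}$ from Proposition \ref{prop:reg_fc} translates into a lower bound on $\rho_{\fc,s}$ on the same interval, so inverting the counting function estimate (using that the classical locations $\gamma_i(s)$ are defined by the quantile relation of $\rho_{\fc,s}$) produces $|\lambda_i(s) - \gamma_i(s)| \leq N^\ob/N$ for every $i \in \indint^{\kappa/10}$.

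The main obstacle, entirely absorbed into the black box of \cite{landon2017convergence}, is propagating the local law down to the optimal scale $\eta \sim N^{-1+\varepsilon}$. This is accomplished by a continuity/bootstrap in $\eta$: the trivial bound at macroscopic $\eta \sim 1$ is sharpened to smaller scales using Ward identities for $\grn(s;z)$, large deviation estimates for the Gaussian component of $\dat(s)$, and stability of the self-consistent equation at each fixed $z$. Stability is a direct consequence of the non-degeneracy bound $\Im m_{\fc,s} \geq C^{-1}$ supplied by Proposition \ref{prop:reg_fc}, and uniformity in $s \in [t/2,t]$ follows from the Lipschitz control \eqref{eq:classicalspeed} together with a union bound on an $N^{-\mathrm{const}}$-net of time values. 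In the present setup no new ideas are required beyond verifying that Assumption \ref{ass:eval} and Proposition \ref{prop:reg_fc} together match the hypotheses of the cited local law verbatim on $\dom_\varepsilon \times [t/2,t]$.
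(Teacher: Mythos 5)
Your proposal is correct and follows essentially the same route as the paper, which simply cites \cite{bourgade2017eigenvector} (Proposition 2.2) for this statement; the local-law-plus-stability argument for $m_N(s;z)-m_{\fc,s}(z)$ and the Helffer--Sj\"ostrand passage from the Stieltjes transform estimate to eigenvalue rigidity are exactly the content of that cited result and its antecedents in \cite{landon2017convergence}. Nothing in your outline conflicts with the hypotheses actually available here (Assumption \ref{ass:eval} plus Proposition \ref{prop:reg_fc}), so the reduction is sound.
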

For a proof, see \cite[Proposition 2.2]{bourgade2017eigenvector}.  We also have overwhelming probability estimates on the quadratic form arising from the Green's function.  The following isotropic law is \cite[Theorem 2.1]{bourgade2017eigenvector}.

\begin{proposition}[Isotropic local law]\label{prop:isotropiclaw}
Suppose $\dat$ satisfies Assumption \ref{ass:eval} and fix two small positive constants $\varepsilon, \ob > 0$.  Then for any $\vv \in S^{N-1}$, with overwhelming probability
\begin{equation}
\left| \ipr{ \vv, (\grn(s;z) - \grn_{\fc,s}(z)) \vv } \right| \leq \frac{N^\ob}{\sqrt{ N\eta }} \Im \ipr{\vv, \grn_{\fc,s}(z) \vv }
\end{equation}
uniformly for $z \in \dom_\varepsilon$ and $t/2 \leq s \leq t$.
\end{proposition}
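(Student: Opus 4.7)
The strategy I would adopt is the method of characteristics for Dyson Brownian motion combined with a continuity/bootstrap argument in the spectral parameter, which is by now standard in the literature but nontrivial to execute. The starting point is the subordination identity $\grn_{\fc,s}(z) = (\dat - \zeta(s,z))^{-1}$ where $\zeta(s,z) = z + s\,m_{\fc,s}(z)$; this identity (and the already-established Proposition~\ref{prop:reg_fc} which controls $m_{\fc,s}$) reduces the problem to comparing the perturbed Green's function $\grn(s;z)$ with the \emph{initial} Green's function evaluated at a shifted spectral parameter. The resolvent identity then gives
\begin{equation*}
\grn(s;z) - \grn_{\fc,s}(z) = -\grn(s;z)\bigl(\sqrt{s}\,\GOE + s\,m_{\fc,s}(z)\bigr)\grn_{\fc,s}(z),
\end{equation*}
which separates a martingale piece (involving the Gaussian matrix $\GOE$) from a deterministic drift piece (involving $m_{\fc,s}(z)$).

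Next I would apply It\^o's formula to $\langle \vv, \grn(s;z)\vv\rangle$ along the evolution $d\dat(s) = \frac{1}{\sqrt{N}}d\Bmat(s)$. The drift term produced by the second-order It\^o correction is exactly the one that cancels against $s\,m_{\fc,s}(z)$ up to an error controlled by the already-known averaged local law $|m_N(s;z) - m_{\fc,s}(z)| \leq N^\ob(N\eta)^{-1}$. The remaining contribution is a martingale $dM_s$ whose bracket can be bounded by the Ward identity $\grn\grn^* = \eta^{-1}\Im\grn$, giving
\begin{equation*}
d\langle M\rangle_s \precsim \tfrac{1}{N\eta}\,\Im\langle\vv,\grn(s;z)\vv\rangle \cdot \Im\langle\vv,\grn_{\fc,s}(z)\vv\rangle \, ds,
\end{equation*}
which, together with Burkholder--Davis--Gundy, matches the target bound $N^\ob(N\eta)^{-1/2}\Im\langle\vv,\grn_{\fc,s}(z)\vv\rangle$.

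The bootstrap is the key technical step. I would define the deviation
$X_\vv(s,z) = |\langle\vv,(\grn(s;z) - \grn_{\fc,s}(z))\vv\rangle|$ and the ``good'' event on which $X_\vv(s,z) \leq N^\ob(N\eta)^{-1/2}\Im\langle\vv,\grn_{\fc,s}(z)\vv\rangle$. Using a stopping time $\tau$ at the first violation, the self-improving structure of the resolvent identity combined with Gaussian concentration for the martingale shows that $X_\vv(\tau,z)$ strictly satisfies the bound, producing the standard contradiction. To cover the full domain $\dom_\varepsilon$, I would first establish the estimate at the large scale $\eta \sim 1$ (where it is essentially trivial given Assumption~\ref{ass:evec}) and then descend to $\eta \geq N^{-1+\varepsilon}$ via a Lipschitz-in-$\eta$ grid argument, using a union bound over the grid together with the trivial derivative estimate $\|\partial_\eta \grn\| \leq \eta^{-2}$.

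The main obstacle I anticipate is that the martingale bound needs to be bootstrapped self-consistently: controlling $\Im\langle\vv,\grn(s;z)\vv\rangle$ (which appears in the bracket) requires the very bound we are trying to prove. This is exactly the role of the ``good'' event in the stopping-time argument --- on it, one can replace $\Im\langle\vv,\grn(s;z)\vv\rangle$ by $\Im\langle\vv,\grn_{\fc,s}(z)\vv\rangle$ up to a small relative error, closing the inequality. One must also be careful that the regular direction set $S$ is used consistently, since Assumption~\ref{ass:evec} only provides delocalization-type control for $\vv \in S$, not for arbitrary unit vectors; fortunately the proposition is stated for general $\vv \in S^{N-1}$, so one expects the control on $\Im\langle\vv,\grn_{\fc,s}(z)\vv\rangle$ to enter only as an a posteriori quantity on the right-hand side rather than as an input.
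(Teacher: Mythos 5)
The paper does not actually prove this proposition: it is imported verbatim as \cite[Theorem 2.1]{bourgade2017eigenvector}, so there is no internal argument to compare against. Your sketch is, in substance, the standard dynamical proof of isotropic laws for Gaussian-divisible ensembles (and is close in spirit to how the cited reference and its predecessors argue): subordination via $\grn_{\fc,s}(z)=\grn(z+sm_{\fc,s}(z))$, It\^o along the DBM flow, a Ward-identity bound on the martingale bracket, and a stopping-time bootstrap descending from $\eta\sim 1$ over a grid in $\dom_\varepsilon$. Your closing observation about the direction set is also correct: the estimate is multiplicative in $\Im\ipr{\vv,\grn_{\fc,s}(z)\vv}$, so no delocalization input from Assumption \ref{ass:evec} is needed for general $\vv\in S^{N-1}$.

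One step needs more care than you give it. As an instantaneous bound, your bracket estimate is off by a factor of $\eta^{-1}$: the martingale part of $d\ipr{\vv,\grn(s;z)\vv}$ has quadratic variation of order $N^{-1}\ipr{\vv,\grn\grn^*\vv}^2\,ds=N^{-1}\eta^{-2}\left(\Im\ipr{\vv,\grn(s;z)\vv}\right)^2 ds$, and integrating this at \emph{fixed} $z$ over $[0,t]$ yields $t/(N\eta^2)$, which is much larger than the target $(N\eta)^{-1}$ precisely when $\eta\ll t$ --- the regime of interest. The bound you wrote is only recovered by running the whole argument along the characteristic $\partial_s z_s=-m_{\fc,s}(z_s)$, for which $\Im z_s\asymp\eta+c(t-s)$ and hence $\int_0^t (N(\Im z_s)^2)^{-1}ds\asymp (N\eta)^{-1}$. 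You invoke the method of characteristics in your opening sentence but then state both the It\^o drift cancellation and the bracket bound at fixed $z$; these must be made consistent, and the grid/continuity step must likewise be run in the moving frame. With that repair the sketch closes and reproduces the cited result.
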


The first is a classical application to bounds on the Green's functions: delocalization.

\begin{corollary}[Delocalization]\label{cor:deloc}
Suppose $\dat$ satisfies Assumptions \ref{ass:eval} and \ref{ass:evec} and fix $\vv \in S$, where $S$ is the set of regular unit vectors from Assumption \ref{ass:evec}. For every $\ob > 0$, with overwhelming probability
\begin{equation}
|\ipr{\vu_i(s), \vv}|^2 \leq \frac{N^\ob}{N}
\end{equation}
uniformly for $i \in \indint^{\kappa/10}$ and $t/2 \leq s \leq t$.
\end{corollary}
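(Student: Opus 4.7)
The plan is to deduce delocalization from a pointwise upper bound on the diagonal Green's function $\ip{\vv, \Im \grn(s;z)\vv}$ at a spectral parameter of the form $z = \lambda_i(s) + i\eta$ with imaginary scale $\eta$ slightly above $N^{-1}$. First, via the spectral decomposition
\begin{equation}
\Im \ip{\vv,\grn(s;z)\vv} = \sum_{k=1}^N \frac{\eta\,|\ip{\vu_k(s),\vv}|^2}{(\lambda_k(s)-E)^2+\eta^2},
\end{equation}
the single $k=i$ term already bounds $|\ip{\vu_i(s),\vv}|^2 \leq \eta\,\Im\ip{\vv,\grn(s;z)\vv}$. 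So the whole task reduces to proving $\Im\ip{\vv,\grn(s;z)\vv} \precsim 1$ at some $z$ with $\Im z \approx N^{-1+\varepsilon}$ and $\Re z = \lambda_i(s)$.

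Next I would pick a small $\varepsilon>0$ (e.g.\ $\varepsilon < \kappa/10$) and set $\eta = N^{-1+\varepsilon}$. By the rigidity estimate \eqref{eq:rigidity} from Proposition~\ref{prop:reg_eval}, for $i\in\indint^{\kappa/10}$ we have $\lambda_i(s) = \gamma_i(s) + O(N^{\ob}/N)$ with overwhelming probability, and since $\gamma_i(s)\in\enint^{\kappa/10}$, the point $z := \lambda_i(s)+i\eta$ lies in $\dom_\varepsilon$. Proposition~\ref{prop:isotropiclaw} then gives with overwhelming probability
\begin{equation}
\Im\ip{\vv,\grn(s;z)\vv} \leq \bigl(1+N^{\ob/2}(N\eta)^{-1/2}\bigr)\,\Im\ip{\vv,\grn_{\fc,s}(z)\vv} \leq 2\,\Im\ip{\vv,\grn_{\fc,s}(z)\vv},
\end{equation}
so the problem reduces to a deterministic upper bound on the free convolution Green's function against $\vv$.

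For that deterministic bound I would use the defining identity $\grn_{\fc,s}(z) = \grn(z + s\,m_{\fc,s}(z))$ to transfer to the original matrix $\dat$ and invoke Assumption~\ref{ass:evec}. Setting $w = z + s\,m_{\fc,s}(z)$, Proposition~\ref{prop:reg_fc} shows $\Im w = \eta + s\,\Im m_{\fc,s}(z) \geq s/C$, which exceeds $\eta_*$ since $s \geq t/2 \geq \eta_* N^{\oc}/2$, while $\Im w \leq 1$ because $s \leq t \leq r N^{-\oc}$ and $|\Im m_{\fc,s}|\leq C$. Similarly, $|\Re w - E_0|\leq (1-\varepsilon\kappa)r + s\,C\log N \leq r$ for $N$ large, again using $s \leq rN^{-\oc}$. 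Hence $w$ lies in the spectral window where Assumption~\ref{ass:evec} applies, giving $|\ip{\vv,\Im\grn(w)\vv}| \leq N^{\ob}$, and since $\vv$ is real this equals $\Im\ip{\vv,\grn_{\fc,s}(z)\vv}$. Combining the chain of inequalities yields $|\ip{\vu_i(s),\vv}|^2 \leq 2\eta N^{\ob} = 2 N^{\ob+\varepsilon-1}$, which is at most $N^{\ob'}/N$ for any prescribed $\ob'>0$ after shrinking $\varepsilon$ and the ambient constant in Assumption~\ref{ass:evec} accordingly.

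The main obstacle is not conceptual but bookkeeping: one must verify that the truncation parameters line up so that $w = z + s\,m_{\fc,s}(z)$ lands in the spectral rectangle of Assumption~\ref{ass:evec} uniformly for all $i\in\indint^{\kappa/10}$ and $t/2 \leq s \leq t$. This is where the choices $t \geq \eta_* N^{\oc}$ (lower bound on $\Im w$) and $t \leq r N^{-\oc}$ (control of $|\Re w - E_0|$) are both used essentially. Once the parameters are aligned, taking a union bound over a fine grid in $(s,\lambda_i(s))$ with Lipschitz estimates from Proposition~\ref{prop:reg_fc} upgrades the probabilistic bound to hold uniformly in $i$ and $s$.
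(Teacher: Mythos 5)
Your proposal is correct and follows the same route as the paper's proof: bound $|\ipr{\vu_i(s),\vv}|^2$ by $\eta\,\Im\ipr{\vv,\grn(s;\lambda_i(s)+i\eta)\vv}$ via the spectral decomposition at a scale $\eta$ slightly above $N^{-1}$, pass to $\grn_{\fc,s}$ by the isotropic law (Proposition~\ref{prop:isotropiclaw}), and finish with Assumption~\ref{ass:evec} applied at the shifted spectral parameter. Your extra bookkeeping verifying that $z+s\,m_{\fc,s}(z)$ lands in the window of Assumption~\ref{ass:evec} is a detail the paper leaves implicit, but the argument is the same.
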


\begin{proof}
Set $z_i = \lambda_i(s) + i N^{-1+\ob/2}$.  By the spectral decomposition of $\grn(t;z)$, Proposition \ref{prop:isotropiclaw}, and Assumption \ref{ass:evec}
\begin{equation}
|\ipr{\vu_i(s), v}|^2 \leq N^{-1+\ob/2} \Im \ipr{ \vv, \grn(s; z_i) \vv } \leq N^{-1+\ob/2}(1 + \frac{N^{\ob/5}}{\sqrt{N^{\ob/2}}}) \Im \ipr{ \vv, \Im \grn_{\fc,s}(z_i) \vv } \leq N^{-1+\ob}
\end{equation}
with overwhelming probability when $z_i \in \dom_{\min(\ob/2,\kappa/10)}$.  This holds with overwhelming probability by the bound on classical eigenvalues \eqref{eq:classicalspeed} and eigenvalue rigidity \eqref{eq:rigidity}.  The second inequality holds with overwhelming probability by Proposition \ref{prop:isotropiclaw} and the last inequality comes from \eqref{eq:evec1}.
\end{proof}

Lastly, the regularity assumptions on the Green's function and Stieltjes transform and time $0$ imply smoothness for $\ipr{\vv,\grn_{\fc,s}(z)\vw}$ and $m_{\fc,s}(z)$ in $(s,z) \in \RR_+ \times \RR_+$ away from the boundary.

\begin{proposition}\label{prop:reg_Gm}
Suppose $\dat$ satisfies Assumptions \ref{ass:eval} and \ref{ass:evec} and $\kappa$ and $t$ are fixed as above.  
There exists a constant $C>0$ such that for all $\ob>0$, the following bounds hold uniformly for time $t/2 \leq s \leq t$ and spectral parameter $z=E+i\eta$ with $E \in \enint^\kappa$ and $0 \leq \eta < s$.
\begin{equation}\label{eq:reg_mfc}
|\partial_z m_{\fc,s}(z)| \leq \frac{C}{s} \quad \mbox{and} \quad |\partial_s m_s(z)| \leq \frac{C\log(N)}{s}
\end{equation}
and similarly
\begin{equation}\label{eq:reg_Gfc}
|\partial_z \ipr{\vv, \grn_{\fc,s}(z) \vw}| \leq \frac{N^\ob}{s} \quad \mbox{and} \quad |\partial_s m_s(z)| \leq \frac{N^\ob}{s}
\end{equation}
for every $\vv,\vw\in S$, the set of regular unit vectors.
\end{proposition}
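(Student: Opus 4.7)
The plan is to combine implicit differentiation of the defining relation $m_{\fc,s}(z) = m_N(z + s m_{\fc,s}(z))$ with Cauchy--Riemann gradient estimates on the harmonic imaginary parts.

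First, I would extend $m_{\fc,s}$ and $\grn_{\fc,s}$ analytically to a strip slightly below the real axis. Rewriting the relation as $z = \zeta - s m_N(\zeta)$ with $\zeta = z + s m_{\fc,s}(z)$, Proposition~\ref{prop:reg_fc} gives $\Im \zeta(z) = \eta + s\, \Im m_{\fc,s}(z) \geq s/C$ for $\Im z \geq 0$ and $\Re z \in \enint^{\kappa/10}$. Since $m_N$ is analytic away from the real line, the implicitly defined map $z \mapsto \zeta(z)$ extends analytically to $\{\Im z > -c_1 s\}$ for some small $c_1 > 0$, and hence so do $m_{\fc,s}(z) = m_N(\zeta(z))$ and $\grn_{\fc,s}(z) = \grn(\zeta(z))$. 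In this extended region the bound $\Im m_{\fc,s}(z) \leq 2C$ persists by continuity, and Assumption~\ref{ass:evec} applied to $\zeta(z)$ (which remains in the regime $\Re \zeta \in [E_0 - r, E_0 + r]$, $\Im \zeta \in [\eta_*, 1]$ owing to $s \log N \ll \kappa r$ granted by $s \leq t \leq r N^{-\oc}$) yields $|\ipr{\vv, \Im \grn_{\fc,s}(z) \vw}| \leq 2 N^\ob$ for every $\vv, \vw \in S$.

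Second, I would deduce the spatial derivative bounds from the Cauchy--Riemann equations: for any analytic $f = u + iv$ one has $|f'(z)| = |\nabla v(z)|$, and a harmonic function $v$ on a disk of radius $r_0 = c_1 s / 2$ around $z$ satisfies the standard gradient estimate $|\nabla v(z)| \leq C_0 \|v\|_{L^\infty(B(z, r_0))}/r_0$. Applied to $v = \Im m_{\fc,s}$ and $v(\cdot) = \ipr{\vv, \Im \grn_{\fc,s}(\cdot)\vw}$ with the bounds from Step~1, this produces $|\partial_z m_{\fc,s}(z)| \leq C/s$ and $|\partial_z \ipr{\vv, \grn_{\fc,s}(z) \vw}| \leq N^\ob/s$, the first half of each displayed inequality.

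Third, I would reduce the time derivatives to the spatial ones via implicit differentiation. Differentiating $m_{\fc,s}(z) = m_N(\zeta)$ separately in $z$ and $s$ and solving algebraically gives
\begin{equation}
\partial_z m_{\fc,s}(z) = \frac{m_N'(\zeta)}{1 - s m_N'(\zeta)}, \qquad \partial_s m_{\fc,s}(z) = m_{\fc,s}(z)\, \partial_z m_{\fc,s}(z),
\end{equation}
and analogously for $\grn_{\fc,s}$ the chain rule yields $\partial_s \grn_{\fc,s}(z) = m_{\fc,s}(z)\, \partial_z \grn_{\fc,s}(z)$. Multiplying the bounds from Step~2 by $|m_{\fc,s}(z)| \leq C \log N$ from Proposition~\ref{prop:reg_fc} produces the two time derivative bounds, with the logarithmic factor absorbed into the prefactor $N^\ob$ for the Green's function statement. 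The main obstacle is Step~1: verifying the analytic extension and the uniform imaginary-part bounds in a strip of width $\sim s$ below the real axis, which is exactly what makes the Cauchy radius $r_0 \sim s$ available; once this geometric setup is in place, Steps~2 and~3 are essentially algebraic.
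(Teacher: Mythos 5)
Your overall architecture is sound and genuinely different from the paper's: you derive everything from the subordination relation by implicit differentiation plus a harmonic gradient estimate, whereas the paper imports $|\partial_z m_{\fc,s}|\leq C/s$ and the Burgers-type identity from \cite{landon2017convergence}, and then handles the Green's function part by the purely algebraic route $\partial_z\grn=\grn^2$, $\Im\grn(\zeta)=(\Im\zeta)\,\grn(\zeta)^*\grn(\zeta)$, and Cauchy--Schwarz, which reduces everything to Assumption \ref{ass:evec} evaluated at $\Im\zeta=\eta+s\Im m_{\fc,s}(z)\geq s/C$ and never requires leaving the upper half plane. Your Step 3 identities $\partial_z m_{\fc,s}=m_N'(\zeta)/(1-sm_N'(\zeta))$, $\partial_s m_{\fc,s}=m_{\fc,s}\,\partial_z m_{\fc,s}$, and $\partial_s\grn_{\fc,s}=m_{\fc,s}\,\partial_z\grn_{\fc,s}$ are correct for the additive perturbation $\dat+\sqrt{s}\,\GOE$ (the paper's displayed identity $\partial_s m_{\fc,s}=\tfrac12\partial_z\left(m_{\fc,s}(m_{\fc,s}+z)\right)$ is the Ornstein--Uhlenbeck version carried over from the reference; both yield the same bound once $|m_{\fc,s}|\leq C\log N$ is used).

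The gap is in Step 1, and it is exactly the step you flag as ``the main obstacle.'' The analytic continuation of $z\mapsto\zeta(z)$ to $\{\Im z>-c_1 s\}$ does not follow from analyticity of $m_N$ off the real axis: inverting $z=\zeta-sm_N(\zeta)$ requires a stability bound $|1-sm_N'(\zeta)|\geq c>0$ on the relevant domain, and the only estimate your stated inputs provide is
\begin{equation}
|sm_N'(\zeta)|\;\leq\;\frac{s\,\Im m_N(\zeta)}{\Im\zeta}\;=\;\frac{s\,\Im m_{\fc,s}(z)}{\eta+s\,\Im m_{\fc,s}(z)},
\end{equation}
which tends to exactly $1$ as $\eta\downarrow 0$ --- precisely the regime $0\leq\eta<s$ the proposition addresses. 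Establishing the quantitative gap away from $1$ (and hence the continuation and the persistence of $\Im m_{\fc,s}\leq 2C$ and $|\ipr{\vv,\Im\grn_{\fc,s}(z)\vw}|\leq 2N^\ob$ in the strip) is the actual content of the cited part of \cite{landon2017convergence}; without it the Cauchy radius $r_0\sim s$ in your Step 2 is not available, and the whole chain collapses. If you want your route to be self-contained you must prove that stability estimate; otherwise, note that the paper's resolvent-identity computation sidesteps the continuation entirely for the Green's function, so that only the two $m_{\fc,s}$ bounds need to be quoted from the literature.
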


\begin{proof}
See \cite[Section 7.1]{landon2017convergence} for a proof that
\begin{equation}\label{eq:reg_mfcz}
|\partial_z m_{\fc,s}(z)| \leq \frac{C}{s}.
\end{equation}
Differentiating the definition of $m_{\fc,s}(z)$ in \eqref{eq:defnfc} yields the identity
\begin{equation}
\partial_t m_{\fc,s}(z) = \frac{1}{2} \partial_z\left(m_{\fc,s}(z)\left(m_{\fc,s}(z) + z\right) \right).
\end{equation}
This identity also appears in \cite[Section 7.1]{landon2017convergence}.  Proposition \ref{prop:reg_fc} together with \eqref{eq:reg_mfcz} give the second bound in \eqref{eq:reg_mfc}.  It remains to prove \eqref{eq:reg_Gfc}.  For this, appeal to the algebraic identities on the level of general Green's functions: $\partial_z \grn(z) = \grn(z)^2$ and $\Im \grn(z) = \eta |\grn(z)|^2 := \grn(z)^*\grn(z)$.  Let $\grn'$ denote the complex matrix-valued function $\grn'(z) = \partial_z \grn(z)$.  Then using the identites, we obtain the bound
\begin{equation}\label{eq:reg_Gprime}
\begin{aligned}
|\ipr{\vv, \grn'(z+sm_{\fc,s}(z))\vw}| 
&= |\ipr{\vv, \grn(z+sm_{\fc,s}(z))^2 \vw}| \\
&\leq |\ipr{\vv, \grn(z+sm_{\fc,s}(z))^2 \vv}| + |\ipr{\vw, \grn(z+sm_{\fc,s}(z))^2 \vw}|\\
&\leq \ipr{\vv, |\grn(z+sm_{\fc,s}(z))|^2 \vv} + \ipr{\vw, |\grn(z+sm_{\fc,s}(z))|^2 \vw}\\
&= \frac{1}{\eta + s \Im m_{\fc,s}(z)} \left( \ipr{\vv, \Im \grn(z+sm_{\fc,s}(z)) \vv} + \ipr{\vw, \Im \grn(z+sm_{\fc,s}(z)) \vw} \right) \\
&\leq \frac{N^\ob}{\eta + s \Im m_{\fc,s}(z)} \leq \frac{N^\ob}{s}.
\end{aligned}
\end{equation}
The first inequality is Cauchy-Schwarz applied to both the real and imaginary parts separately. The second inequality is the triangle inequality applied to the spectral decomposition of $\grn(z+s m_{\fc,s}(z))$.  The third inequality is from and Assumption \ref{ass:evec}.  The fourth is from Proposition \ref{prop:reg_fc} reducing $\ob$ if necessary.

Differentiating the definition of $\grn_{\fc,s}(z)$ from \eqref{eq:defn_Gfc} gives
\begin{equation}\label{eq:reg_Gz_diff}
\partial_z \grn_{\fc,s}(z) = G'(z+s\Im m_{\fc,s}(z)) \left(1 + s\partial_z m_{\fc,s}(z) \right).
\end{equation}
Combining \eqref{eq:reg_mfcz}, \eqref{eq:reg_Gprime}, and \eqref{eq:reg_Gz_diff} yields
\begin{equation}
|\partial_z \ipr{\vv, \grn_{\fc,s}(z) \ww}| 
= |\ipr{\vv, G'(z+s\Im m_{\fc,s}(z)) \ww}| \left|1 + s\partial_z m_{\fc,s}(z) \right| 
\leq \frac{N^\ob}{s}
\end{equation}
where again constants are absorbed into the $N^\ob$ factor.  Similarly,
\begin{equation}\label{eq:reg_Gs_diff}
\partial_s \grn_{\fc,s}(z) = G'(z+s\Im m_{\fc,s}(z)) \left(m_{\fc,s}(z) + s\partial_s m_{\fc,s}(z) \right)
\end{equation}
so combining \eqref{eq:reg_mfcz}, \eqref{eq:reg_Gprime}, and \eqref{eq:reg_Gz_diff} yields
\begin{equation}
|\partial_s \ipr{\vv, \grn_{\fc,s}(z) \ww}| 
= |\ipr{\vv, G'(z+s\Im m_{\fc,s}(z)) \ww}| \left|m_{\fc,s}(z) + s\partial_s m_{\fc,s}(z) \right|
\leq \frac{N^\ob}{s}
\end{equation}
by Proposition \ref{prop:reg_fc} and \eqref{eq:reg_mfc}.  This time, the $\log(N)$ factor is absorbed into the $N^\ob$ factor.
\end{proof}

\section{Colored particle jump process}\label{sec:jump}

To prove Theorem \ref{thm:main}, we employ the renormalization strategy outlined in the introduction.  In this section, the \emph{distinguishable particle configurations} are introduced.  The SEE paired against specific multivariate moment test functions induces a the \emph{colored eigenvector moment flow} (CEMF) which generates a (non-stochastic) process on the configuration space.  The remainder of the section is focused on proving a variety of algebraic and positivity properties of the CEMF.

\subsection{Colored eigenvector moment flow}

\begin{definition}\label{def:distinguishable}
A \emph{distinguishable particle configuration} is a lattice vector $\xx = (x_1, \ldots, x_n)^\top \in [N]^n$, interpreted as a collection of $n$ labeled particles, each with a position and a unique label.  Particles are labeled by $a,b,c \in [n] = \{1,\ldots,n\}$ and may be positioned on the sites $i,j,k \in [N] = \{1, \ldots, N\}$ forming a finite integer lattice.  In this interpretation, particle $a$ is positioned at $x_a$ for each $a \in [n]$.  In this paper, labels are depicted as colors.

\emph{Particle number operators} specify how many particles are positioned at a given site.  These operators are defined by
\begin{equation}
n_i(\xx) = |\{a \in [n] | x_a = i\}|
\end{equation}
for all $i \in [N]$.  We say that a distinguishable particle configuration $\xx \in [N]^n$ is \emph{even} if each site is occupied by an even number of particles.  That is, $n_i(\xx)$ is even for all $i \in [N]$.  It will become apparent that our dynamics preserves the parity of all particle numbers, $n_i(\xx) \mod 2$, for all $i \in [N]$. In particular, the set of all even partitions form a closed system which will be denoted by $\Ln \subset [N]^n$
\begin{equation}
\Ln = \{\xx \in [N]^n | n_i(\xx) \mbox{ is even for all } i \in [N]\}.
\end{equation}
\end{definition}

\begin{remark}
Throughout, $N$ denotes matrix size and number of sites while $n$ denotes the degree of eigenvector component moment and total particle number respectively in the matrix and particle configuration settings.  For every $\xx \in \Ln$, $n = \sum_{i=1}^N n_i(\xx)$.
\end{remark}

\begin{remark}\label{rmk:distinguishable}
We use the term \emph{distinguishable} to highlight the contrast between the distinguishable particle configurations introduced in Definition \ref{def:distinguishable} and their \emph{indistinguishable} counterparts introduced in \cite[Section 3.2]{QUE}.  In that context, [indistinguishable] particle configurations are given by $\bet : [N] \rightarrow \NN$ where $\eta_j = \bet(j)$ is interpreted as the number of particles at site $j$.

For future reference, the configuration space of indistinguishable $(n/2)$-particle configurations will be denoted $\On$.  It contains all indistinguishable particle configurations constrained to have a total particle number of $n/2$, $\sum_{j=1}^N \eta_j = n/2$.
When drawing comparisons between the two configuration spaces, the relation will always be through the projection map $\forget : \Ln \rightarrow \On$, called the \emph{colorblind map}, given by $\xx \mapsto \bet$ where $\eta_j = n_j(\xx)/2$ for all $j \in [N]$.  This map essentially forgets the $[n]$-labeling on particles.
\end{remark}

\begin{definition}[Colored eigenvector moment observable]\label{defn:levmo}
Fix $n$ unit vectors $\vv_1, \ldots, \vv_n \in \RR^N$, with $\|\vv_a\|_2 = 1$ for all $a \in [n]$, and denote the entire collection by $\Vecs = (\vv_1, \ldots, \vv_n) \in \RR^{N \times n}$. Fix also an initial symmetric matrix $\dat$ and let $\Bmat(s) = (B_{ij}(s))_{i,j}^N$ be a matrix of $N^2$ independent and identically distributed Brownian motions $B_{ij}(s)$, $0 \leq s \leq t$.  Consider the ordered and normalized spectral decomposition 
\begin{equation}
\dat + \frac{\Bmat(s) + \Bmat(s)}{\sqrt{2N}} = \sum_{i=1}^N \lambda_i(s) \vu_i(s) \vu_i(s)^\top,
\end{equation}
where for every $s \geq 0$, $\lambda_i(s) \leq \lambda_{i+1}(s)$ when $i \in [N-1]$ and $\|\vu_i(s)\|_2=1$ for all $i \in [N]$.  Again, this decomposition is also almost surely unique at all times $s > 0$ (in fact, the left hand side has the same distribution as $\dat + \sqrt{s}\GOE$ for a GOE $\GOE$; see the remarks below for more details).  Denote the collection of full time eigenvalue trajectories by $\bl = (\lambda_i(s) | i \in [N], s \in [0, t])$.
The \emph{colored eigenvector moment observable} $f_s : \Ln \rightarrow \RR$ is defined by integrating out most of the randomness from $\Bmat(s)$
\begin{equation}
f_s(\xx) = f_s(\xx; \dat, \Vecs, \bl) = \left( \prod_{i=1}^N n_i(\xx)!! \right)^{-1} \EE{\left. N^{n/2} \prod_{a=1}^n \ipn{ \vu_{x_a}(s), \vv_a } \right| \bl}
\end{equation}
for every $\xx \in \Ln$.  Here $k!! = (k-1)(k-2)!!$ with $0!!=1$ and $1!! = 0$.
\end{definition}

\begin{remark}
Let $\GOE$ be a GOE and note that the two noise matrices 
\begin{equation}
\sqrt{t} \GOE \overset{d}{\sim} (\Bmat(t)+\Bmat^\top(t))/\sqrt{2N}
\end{equation}
share the same distribution despite admitting different covariance structures through time.  In particular, the two sets of spectral statistics of the corresponding time-$t$ $\dat$-perturbations share the same joint distributions on $(\lambda_i(t), \vu_i(t))_{i=1}^N$.  This means that, after integrating out the $\bl$ randomness, the colored eigenvector moment observable takes a form of interest from the context of Theorem \ref{thm:main}
\begin{equation}
\left( \prod_{i=1}^N n_i(\xx)!! \right) \EE{ f_t(\xx; \dat, \Vecs, \bl) } = \EE{ N^{n/2} \prod_{a=1}^n \ipr{ \vu_{x_a}(t), \vv_a } }.
\end{equation}
\end{remark}

\begin{remark}
The stochastic process describing $\bl$ is well understood and goes by the name of Dyson Brownian motion.  Some relevant properties which hold almost surely are: $\lambda_i(s) < \lambda_{i+1}(s)$ for all $i \in [N-1]$ and all $s > 0$ and $\lambda_i$ is continuous in $s \geq 0$ for every $i \in [N]$.
\end{remark}

\begin{remark}
In the context of the colored eigenvector moment observable, each particle corresponds to a factor in the moment being computed --- an eigenvector component.  The label of the particle $a \in [n]$ corresponds to a direction, namely the unit vector $\vv_a \in \RR^n$, while the position corresponds to a spectral index, namely the eigenvector $\vu_{x_a}$ to be tested.

The normalizing factor $\prod_{i=1}^n n_i(\xx)!!$ would be the corresponding Gaussian moment if each $\sqrt{N}\vu_i$ were independent and identically distributed standard Gaussian random vectors in $\RR^N$ and the test vectors $\vv_1, \ldots \vv_n$ were orthonormal.  Neither of those two assumptions must be even approximately true.  In fact, see Definition \ref{def:ansatz} for the ansatz limiting short time observable with arbitrary initial data $\dat$ and possibly non-orthogonal vectors $\vv_1, \ldots, \vv_n$.
\end{remark}

\begin{theorem}\label{thm:levmf}
As the matrix entries of $\dat$ undergo symmetric matrix-valued Brownian motion and the eigenvalues follow trajectories $\tbl$, the time derivative of the colored eigenvector moment observable satisfies
\begin{equation}
\partial_s f_s(\xx) = \gen_s f_s(\xx) = \sum_{1 \leq i < j \leq j} c_{ij}(s) \gen_{ij} f_s(\xx), \quad c_{ij}(s) = N^{-1} \left(\lambda_i(s) - \lambda_j(s)\right)^2
\end{equation}
where $\gen_{ij} = \move_{ij} - \exch_{ij}$ is decomposed as the difference between the move operator
\begin{equation}
\move_{ij} f(\xx) = \frac{n_j(\xx)+1}{n_i(\xx)-1} \sum_{a \neq b \in [n]} \left( f(m_{ab}^{ij}\xx) - f(\xx) \right) + \frac{n_i(\xx)+1}{n_j(\xx)-1} \sum_{a \neq b \in [n]} \left( f(m_{ab}^{ji}\xx) - f(\xx) \right)
\end{equation}
and the exchange operator
\begin{equation}
\exch_{ij}f(\xx) = 2 \sum_{a \neq b \in [n]} \left( f(s_{ab}^{ij}\xx) - f(\xx) \right)
\end{equation}
respectively.  For each pair of labels $a,b\in[n]$ and each pair of sites $i,j\in[N]$ the two particle jump and swap operators are respectively $m_{ab}^{ij}$ and $s_{ab}^{ij}$ defined as mappings $m_{ab}^{ij}, s_{ab}^{ij} : \Ln \rightarrow \Ln$ of particle configurations by
\begin{equation}
m_{ab}^{ij}\xx = \xx + \one{x_a=x_b=i}(j-i)(\ve_a + \ve_b) \quad \mbox{and} \quad s_{ab}^{ij}\xx = \xx + \one{x_a=x_b=i}(j-i)(\ve_a - \ve_b).
\end{equation}
Moreover the reversible measure for this generator is 
\begin{equation}
\pi(\xx) = \prod_{i=1}^N (n_i(\xx)!!)^2
\end{equation}
meaning that 
\begin{equation}
\sum_{\xx \in \Ln} \pi(\xx) f(\xx) \gen_s g(\xx) = \sum_{xx \in \Ln} \pi(\xx) g(\xx) \gen_s f(\xx)
\end{equation}
for every $s \geq 0$ and test functions $f,g : \Ln \rightarrow \RR$.
\end{theorem}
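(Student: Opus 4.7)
The approach is a direct It\^o computation. Apply the multidimensional It\^o product rule to $\prod_{a=1}^n \ipr{\vu_{x_a}(s), \vv_a}$, substitute the SEE \eqref{eq:edbm_intro} for each differential $d\vu_{x_a}(s)$, and take conditional expectation given $\bl$ to eliminate the $dW_{ij}$ martingales. The remaining evolution decomposes into a first-order drift coming from the $-\tfrac{1}{2N}\sum_{k\neq i}(\lambda_i-\lambda_k)^{-2}\vu_i\,ds$ term of the SEE and second-order It\^o cross-variations $d\ipr{\vu_{x_a},\vv_a}\,d\ipr{\vu_{x_b},\vv_b}$ for $a\neq b$. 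Using $d\langle W_{ik},W_{jl}\rangle=(\delta_{ij}\delta_{kl}+\delta_{il}\delta_{jk})\,ds$, the cross-variation simplifies in closed form to $\tfrac{1}{N}\sum_{k\neq i}(\lambda_i-\lambda_k)^{-2}\ipr{\vu_k,\vv_a}\ipr{\vu_k,\vv_b}\,ds$ when $x_a=x_b=i$, and to $-(N(\lambda_i-\lambda_j)^2)^{-1}\ipr{\vu_j,\vv_a}\ipr{\vu_i,\vv_b}\,ds$ when $x_a=i\neq j=x_b$.

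Next, read each contribution as a jump on $\Ln$ applied to the unnormalized observable $F_s(\xx):=(\prod_i n_i(\xx)!!)\,f_s(\xx)$. For a pair $a\neq b$ sitting at a common site $i$, the diagonal cross-variation summed over target sites $k\neq i$ produces $F_s(m_{ab}^{ik}\xx)$, which combines with the corresponding piece of the SEE drift to give a move-type jump kernel $c_{ik}(s)\bigl(F_s(m_{ab}^{ik}\xx)-F_s(\xx)\bigr)$. For a pair at distinct sites $i\neq j$, the off-diagonal cross-variation produces $-c_{ij}(s)F_s$ evaluated at the configuration obtained by interchanging the label-site assignments of $a$ and $b$. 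Grouping by site pairs $(i,j)$ and dividing through by $\prod_i n_i(\xx)!!$ to pass from $F_s$ back to $f_s$, the double-factorial ratio $\prod_k n_k(\yy)!!/\prod_k n_k(\xx)!!$ combines with the ordered-pair count $n_i(n_i-1)$ at the initial site to produce the stated $(n_j+1)/(n_i-1)$-type coefficients of $\move_{ij}$, and analogously the factor of $2$ appearing in $\exch_{ij}$. Summing over all site pairs yields $\gen_s f_s=\sum_{i<j}c_{ij}(s)(\move_{ij}-\exch_{ij})f_s$.

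For reversibility of $\pi(\xx)=\prod_i (n_i(\xx)!!)^2$, it suffices to verify detailed balance $\pi(\xx)r(\xx,\yy)=\pi(\yy)r(\yy,\xx)$ on each elementary transition. For a move $\xx\leftrightarrow\yy=m_{ab}^{ij}\xx$, the double-factorial identity $(k+2)!!/k!!=k+2$ gives the required $\pi$-ratio, which matches the ratio of the forward rate from the $i\to j$ piece of $\move_{ij}$ and the reverse rate from the $j\to i$ piece. Exchanges are involutions whose rates are manifestly symmetric in $\xx$ and $\yy$ since they leave the site occupation numbers $n_k$ unchanged. The main technical obstacle is the combinatorial conversion from $F_s$ to $f_s$ in the middle step: one must precisely match the double-factorial ratios against the ordered-pair counts and reconcile the signs from the SEE drift (negative, contributing only a diagonal multiplier), the diagonal cross-variation (positive, assembling into $\move_{ij}$), and the off-diagonal cross-variation (negative, because it comes from the antisymmetric $\delta_{il}\delta_{jk}$ term in the $W$-covariance), so that they collapse exactly into the stated $\move_{ij}-\exch_{ij}$ decomposition with the stated coefficients.
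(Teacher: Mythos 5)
Your proposal is correct and follows essentially the same route as the paper's own derivation in Appendix~\ref{app:derivedynamics}: a direct It\^o computation on the prenormalized observable using the SEE, with the diagonal and off-diagonal cross-variations assembling into the move and exchange terms, the $\sqrt{\pi(m_{ab}^{ij}\xx)/\pi(\xx)}$ ratio producing the $(n_j+1)/(n_i-1)$ coefficients, and reversibility checked on elementary transitions (your detailed-balance check is equivalent to the paper's symmetry verification of $\ipn{\delta_\xx,\move_{ij}\delta_\yy}$ and $\ipn{\delta_\xx,\exch_{ij}\delta_\yy}$). The only cosmetic discrepancy is that your identity $(k+2)!!/k!!=k+2$ uses the standard double-factorial convention rather than the paper's shifted one ($k!!=(k-1)(k-2)!!$), under which the ratio is $k+1$; this does not affect the argument.
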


\begin{remark}
Consider any two distinct labels $a \neq b \in [n]$ and any two sites $i,j \in [N]$.
The two particle jump operator acts by moving particles $a$ and $b$ from site $i$ to site $j$ when possible:
\begin{equation}
m_{ab}^{ij} \xx = (x'_1, \ldots, x'_n)^\top \quad \mbox{where} \quad x'_c = \begin{cases} j & \mbox{ if } c \in \{a,b\} \mbox{ and } x_a = x_b = i \\ x_c &\mbox{ otherwise.} \end{cases}
\end{equation}
Similarly, the two particle swap operator acts by swapping the locations of particle $a$ originally at site $i$ with particle $b$ originally at site $j$ when possible:
\begin{equation}
s_{ab}^{ij} \xx = (x''_1, \ldots, x''_n)^\top \quad \mbox{where} \quad x''_c = \begin{cases} j & \mbox{ if } c=a \mbox{ and } x_a = i \mbox{ and } x_b=j \\ i &\mbox{ if } c=b \mbox{ and } x_a=i \mbox{ and } x_b=j \\ x_c &\mbox{ otherwise.} \end{cases}
\end{equation}
\end{remark}

\begin{proof}
See Appendix \ref{app:derivedynamics} for specific computations and the derivation of the reversible measure.  The colored eigenvector moment flow derivation is outlined here by following a sequence of four steps, each computing a derivative from the previous step / applying It\^o's formula.  Note that Steps 1, 2, and 3 have been standard since the introduction of the SEE in \cite{QUE}, but are included here to provide the complete sequential derivation as certain quantities in these steps will be referred to later.  The key algebraic novelty is identity \eqref{eq:test_poly} in Step 4.
\begin{enumerate}[Step 1]
\item Let $\dat = (h_{ij})_{i,j=1}^N$ be a symmetric matrix with increasing eigenvalues $\lambda_i$ and corresponding eigenvectors $\vu_i = (u_i^\alpha)_{\alpha =1}^N$, $i \in [N]$.  The matrix of eigenvectors is denoted $\EV = (\vu_1, \ldots, \vu_N)$.  Run independent and identically distributed symmetric Brownian motions on $h_{ij}$: $dh_{ij}(s) = dB_{ij}(s)$ with $B_{ij}(s) = B_{ji}(s)$, $\EE{ B_{ij}(s) } = 0$, and $\EE{ B_{ij}(s)^2 } = (1+\delta_{ij})s/N$.
\item Recover the induced stochastic processes on spectral quantities: Dyson Brownian motion 
\begin{equation}
d\lambda_i(s) = \frac{dW_{ii}(s)}{\sqrt{N}}  + \frac{1}{N} \sum_{j \neq i} \frac{1}{\lambda_i(s) - \lambda_j(s)} dt
\end{equation}
and the stochastic eigenstate equation
\begin{equation}\label{eq:edbm}
d u_i^\alpha(s) = \frac{1}{\sqrt{N}} \sum_{j \neq i} \frac{dW_{ij}(s)}{\lambda_i(s) - \lambda_j(s)}u_j^\alpha(s) - \frac{1}{2N} \sum_{j \neq i} \frac{dt}{\left(\lambda_i(s) - \lambda_j(s)\right)^2}u_i^\alpha(s)
\end{equation}
where $W_{ij}(s) = W_{ji}(s)$ and $W_{ij}(s) / \sqrt{1+\delta_{ij}}$ are independent and identically distributed standard Brownian motions for each $i \leq j$.
\item Obtain the associated generator for eigenvector Dyson Brownian motion.  This is a second order translation invariant differential operator on the Lie group $SO(N)$.  The Lie algebra $\mathfrak{spreviouso}(N)$ is the algebra of antisymmetric $N \times N$ matrices.  An orthonormal basis for the Lie algebra $\mathfrak{so}(N)$ is $\{\soB_{ij} = \ve_i \ve_j^\top - \ve_j \ve_i^\top | 1 \leq i < j \leq N\}$.  The associated first order differential operators (or equivalently, left invariant vector fields on $SO(N)$) are $\soB_{ij} = \vu_j \cdot \partial_{\vu_i} - \vu_i \cdot \partial_{\vu_j}$ in the sense that for any smooth $f : SO(N) \rightarrow \RR$, we have
\begin{equation}
\soB_{ij} f(\EV) = \sum_{\alpha=1}^N u^\alpha_j \partial_{u^\alpha_i} f(u) - u^\alpha_i \partial_{u^\alpha_j} f(\EV).
\end{equation}
The generator for eigenvector Dyson Brownian motion is given by the elliptic operator 
\begin{equation}\label{eq:gen_edbm}
\partial_s \EE{f(\EV(s))} = \EE{\sum_{1 \leq i < j \leq N} c_{ij} (\soB_{ij}^2 f)(\EV(s))}
\end{equation}
with coefficients $c_{ij}(s) = N^{-1} \left(\lambda_i(s) - \lambda_j(s)\right)^{-2}$.
\item Applying this generator to a polynomial of eigenvector components generates an action on the polynomials exponents described by $\gen_{ij} = \move_{ij} - \exch_{ij}$.  More specifically, for each $\xx \in \Ln$, define the polynomial over $\EV = (u_{i}^\alpha)_{i,\alpha=1}^N \in SO(N)$ and $\Vecs = (\vv_1, \ldots, \vv_n) \in \RR^{n \times N}$, $\vv_a = (v_a^\alpha)_{\alpha = 1}^N$ for each $a \in [n]$, by
\begin{equation}\label{eq:test_poly}
P(\xx,\EV,\Vecs) = \frac{\prod_{a=1}^n \sum_{\alpha=1}^N v_a^\alpha u_{x_a}^\alpha}{\prod_{i=1}^N n_i(\xx)!!}.
\end{equation}
Then $\soB_{ij}^2 P(\xx,\EV,\Vecs) = \gen_{ij} P(\xx,\EV,\Vecs)$ where $\soB_{ij}$ is acting on the $\EV$ variables and $\gen_{ij}$ is acting on the $\xx$ variables.
\end{enumerate}
\end{proof}

\begin{definition}[Permutation action]\label{def:actn}
Let $S_n$ be the symmetric group on $[n]$.  There is a natural action, denoted by $\actn$, of $S_n$ on the configuration space $\Ln$ of even distinguishable particle configurations given by permuting labels.  That is, for all $\sigma \in S_n$, $\xx \in \Ln$, and $a \in [n]$, we have
\begin{equation}
\sigma \actn \xx = (x_{\sigma(1)}, \ldots, x_{\sigma(n)})^\top.
\end{equation}
A \emph{perfect matching} on $[n]$ is a fixed point free involution in $S_n$.  This means $\sigma \in S^n$ is a perfect matching if and only if $\sigma^2 (a) = a \neq \sigma(a)$ for all $a \in [n]$.  Denote the set of perfect matchings by $M_n \subset S_n$.  Say that a particle configuration $\xx \in \Ln$ is \emph{stabilized} by $\sigma \in S_n$ if $\sigma \actn \xx = \xx$ under this action and let $\Stab_{S_n}(\xx)$ be the subgroup of all permutations that stabilize $\xx$.  Note that the cardinality of the set of perfect matchings stabilizing a specified particle configuration $\xx \in \Ln$ is $|M_n \cap \Stab_{S_n}(\xx)| = \sqrt{\pi(\xx)} = \prod_{i=1}^N n_i(\xx)!!$ for all $\xx \in \Ln$.
\end{definition}

\begin{definition}[Ansatz]\label{def:ansatz}
For every $\yy \in \Ln$, the \emph{ansatz observable} from the perspective of $\yy$ is given by
\begin{equation}
F(\xx; \yy) = \frac{1}{\sqrt{\pi(\xx)}} \sum_{\sigma \in M_n \cap \Stab_{S_n}(\xx)} \sqrt{\prod_{a=1}^n \ipr{ \vv_a, \frac{1}{2}\left( \cov_{y_a} + \cov_{y_{\sigma(a)}} \right)\vv_{\sigma(a)}}}
\end{equation}
where the covariance matrices $\cov_i$, for all $i\in[N]$, are given by 
\begin{equation}
\cov_i = \frac{\Im \grn_{\fc,t}(\gamma_{y_a}(t))}{\Im m_{\fc,t}(\gamma_{y_a}(t))}
\end{equation}
and $t$ is the time specified in Theorem \ref{thm:main}.
\end{definition}

\begin{remark}
We motivate this definition now by evaluating the eigenvector moment observable using several approximations, then continue to outline the proof of Theorem \ref{thm:main}.
The eigenvector moment observable at time $t$ from Theorem \ref{thm:main} is given by
\begin{equation}
f_t(\xx) = \frac{N^{n/2}}{\sqrt{\pi(\xx)}} \EE{ \prod_{a=1}^n \ipr{\vu_{x_a}(t), \vv_a}}.
\end{equation}
The long time equilibrium will be described by the eigenvector matrix $\EV$ approaching Haar distribution.  In particular, let $\orth \sim SO(N)$ be Haar distributed and $\vorth_i$ be column $i$ of $\orth$.  Then the global equilibrium state of the eigenvector moment observable will be 
\begin{equation}
\kproj f_t(\xx) = \pi(\xx)^{-1/2} \EE{\prod_{a=1}^n \ipr{\vorth_{x_a}, \vv_a}}.
\end{equation}
As Haar and spherical distributions are somewhat difficult to work with algebraically, we use a Gaussian approximation for each column individually: $\vorth_i \approx \vg_i$ where $\vg_i \sim \normal(0, \iden_N)$ are independent and identically distributed standard Gaussian vectors.  The local short time equilibrium is not yet completely mixed as Haar on $SO(N)$ but still exhibits the Gaussian behavior, except with nonstandard covariance.  
The local ansatz is that eigenvectors are mutually independent and distributed as $\vu_i(t) \sim \normal(0, N^{-1}\cov_i(t))$.  It is convenient to write this ansatz as a $\vu_i(t) = N^{-1/2} \cov_i(t)^{1/2} \vg_i$.  Refer to \eqref{eq:cov_ansatz} for heuristic reasoning on why this should be the case.  Thus, our \emph{base ansatz observable} will be
\begin{equation}\label{eq:ansatz_base}
\bar{F_t}(\xx) = \frac{1}{\sqrt{\pi(\xx)}} \EE{ \prod_{a=1}^n \ipr{\vg_{x_a}, \cov_{x_a}^{1/2} \vv_a} }.
\end{equation}

Unfortunately, such a simple expression will not be time invariant with the eigenvector moment flow dynamics.  The primary obstruction to invariance are the $x_a$ parameters appearing on the right hand side of the inner products.  If the right side was $\xx$-independent, then the above expression would be invariant as the terms on the left side of the inner product are all standard Gaussians, and hence rotationally invariant, and hence Haar invariant.  Therefore, we generalize this base ansatz observable by introducing a second particle configuration, therein decoupling the left and right sides of the inner product 
\begin{equation}\label{eq:ansatz_decouple}
\bar{F}(\xx,\yy) = \frac{1}{\sqrt{\pi(\xx)}} \EE{ \prod_{a=1}^n \ipr{\vg_{x_a}, \cov_{y_a}^{1/2} \vv_a} }.
\end{equation}
We think of the first variable $\xx$ as specifying the particle color profile and the second variable $\yy$ as specifying the particle locations, or equivalently the covariance structures.
As desired, this \emph{decoupled ansatz observable} is invariant with respect to eigenvector moment flow acting on the $\xx$ variable.  That is, $\bar{F}(\cdot;\yy)\in\ker(\gen(s))$ and to remind the reader of this time invariance, the $t$ subscript is dropped.
It will be necessary to evaluate the integral in \eqref{eq:ansatz_decouple}, for example in the proof of Proposition \ref{prop:l2}.  This can be done by Wick's theorem which computes the high moment as a sum over particle matchings of covariance products 
\begin{align}
\bar{F}(\xx,\yy) &= \frac{1}{\sqrt{\pi(\xx)}} \sum_{\sigma\in M_n \cap \Stab(\xx)} \prod_{a\in[n]/\sigma} \EE{ \ipr{\vg_{x_a}, \cov_{y_a}^{1/2} \vv_a } \ipr{\vg_{x_a}, \cov_{y_{\sigma(a)}}^{1/2}\vv_{\sigma(a)}}} \\
&=\frac{1}{\sqrt{\pi(\xx)}} \sum_{\sigma\in M_n \cap \Stab(\xx)} \prod_{a\in[n]/\sigma} \ipr{\cov_{y_a}^{1/2} \vv_a, \cov_{y_{\sigma(a)}}^{1/2}\vv_{\sigma(a)}} \\
&=\frac{1}{\sqrt{\pi(\xx)}} \sum_{\sigma\in M_n \cap \Stab(\xx)} \sqrt{\prod_{a=1}^n \ipr{\vv_a, \left(\cov_{y_a}^{1/2} \cov_{y_{\sigma(a)}}^{1/2}\right)\vv_{\sigma(a)}}}
\end{align}
where the products in the first two expressions are taken over particle pair representatives $a\in[n]$ so that exactly one label from each involution coset $\{b,\sigma(b)\} \subset [n]$, $b\in[n]$, is chosen.  This can be written in the slightly cleaner product notation appearing in the third expression because each factor appears exactly twice.

To avoid dealing with the matrix square roots appearing in the inner product factors, we make one final simplification to end the construction of our ansatz observable by replacing the matrix geometric mean with a matrix arithmetic mean
\begin{equation}
F_t(\xx; \yy)=\frac{1}{\sqrt{\pi(\xx)}} \sum_{\sigma\in M_n \cap \Stab(\xx)} \sqrt{\prod_{a\in[n]/\sigma} \ipr{\vv_a, \frac{1}{2}\left( \cov_{y_a} + \cov_{y_{\sigma(a)}} \right) \vv_{\sigma(a)}}}.
\end{equation}

Returning to our original focus of approximating short time behavior of the eigenvector moment observable $f_t$, keep in mind that these more general ansatz observables specialize to the base ansatz from \eqref{eq:ansatz_base} when the color profile variable $\xx$ and the location variable $\yy$ coincide
\begin{equation}\label{eq:ansatz}
F(\xx,\xx) = \bar{F}_t(\xx,\xx) = \bar{F}(\xx) \approx f_t(\xx)
\end{equation}
since $x_{\sigma(a)} = x_a$ for all $\sigma\in M_n\cap\Stab(\xx)$.

To prove Theorem \ref{thm:main}, for every $\yy\in\Ln$ which is supported on $\indint^\kappa$, we will consider a local cutoff of $f_{t_0}(\xx) - F(\xx,\yy)$ near $\yy$ at some initial time $t_0 < t$, which we denote $h_{t_0}(\xx;\yy)$.  This local observable will be driven forward to define $h_t(\xx;\yy)$, $t \geq t_0$, via slight variants of the generator $\gen(s)$ acting on the $\xx$ variable.  Using the short range properties of $\gen(s)$, we will see that for all such $\yy$ this local observable relaxes quickly in $L^2$, 
\begin{equation}
\|h_{t_0+t_1}(\cdot,\yy)\|_2^2 < N^{-c}|\supp(h_{t_0}(\cdot,\yy))|.
\end{equation}
Then using the long range properties of $\gen(s)$, we will find the ultracontractive bound
\begin{equation}
\|h_t(\cdot,\yy)\|_\infty^2 < \frac{N^\varepsilon}{(N(t-t_0-t_1))^{n/2}} \|h_{t_0+t_1}(\cdot;\yy)\|_2^2.
\end{equation}
The argument then concludes by carefully choosing times and cutoff parameters so that the making the final comparison at the terminal time $t$ holds
\begin{equation}
|f_t(\xx) - F_t(\xx,\xx)| \approx |h_t(\xx;\xx)| \leq \|h_t(\cdot;\xx)\|_\infty \leq N^{-\od}
\end{equation}
for some small constant $\od>0$ depending only on $n$.
\end{remark}

\subsection{Positivity properties}
Unlike the indistinguishable eigenvector moment flow, the generator $\gen_s$ does not admit a maximum principle despite both the move and exchange components components, $\sum_{i<j} c_{ij} \move_{ij}$ and $\sum_{i < j } c_{ij} \exch_{ij}$ respectively, satisfying one.  While $\gen_s$ lacks positivity in the $L^\infty$ sense, being a lift of the generator for the indistinguishable eigenvector moment flow suggests there might be positivity in some other sense.  This section summarizes the positivity results used throughout the paper.

In a sense, the end goal of this problem is to describe any of the derivatives appearing the four step derivation of the the eigenvector moment flow sufficiently precisely.  The high dimensionality, however, poses some difficulties.

\begin{definition}\label{def:inner_product}
From this point onward, $\Ln$ will refer to the discrete measure space with measure $\pi$ introduced in Theorem \ref{thm:levmf}.  In particular, we will be discussing $L^p$, $1 \leq p \leq \infty$, norms always taken with respect to the reversible measure $\pi$.  That is, the $L^p$ norm of a function $f : \Ln \rightarrow \RR$ is
\begin{equation}\label{eq:p-norm}
\|f\|_p = \left(\sum_{\xx\in\Ln}\pi(\xx)|f(\xx)|^p\right)^{1/p}
\end{equation}
and $f\in L^p(\Ln)$ when this sum converges (which is always the case since $\Ln$ is finite).
For any $f,g \in L^2(\Ln)$, denote their inner product by
\begin{equation}
\ipn{ f, g } = \sum_{\xx \in \Ln} \pi(\xx) f(\xx) g(\xx).
\end{equation}
For any $\xx \in \Ln$, let $\delta_\xx$ be the unique function that satisfies $\ipn{ \delta_\xx, f }= f(\xx)$ for all $f \in L^\infty(\Ln)$.  This leads to
\begin{equation}
\delta_\xx(\yy) = \begin{cases} \pi(\xx)^{-1} &\mbox{if } \yy = \xx \\ 0 &\mbox{else.} \end{cases}
\end{equation}
Having specified a coefficient trajectory $c_{ij}(s) \geq 0$, for all times $s \geq 0$, and start/stop times $0 \leq s_1 \leq s_2$, let $\semi(s_1, s_2) = \semi(s_1, s_2; \{c_{ij}(s) | 1 \leq i < j \leq N, s \geq 0\})$ denote the \emph{propagator} or \emph{transition semigroup} $\semi(s_1, s_2) : L^2(\Ln) \rightarrow L^2(\Ln)$ defined by
\begin{equation}\label{eq:semigroup}
\semi(s_1, s_2) f(\xx) = h_{s_2}(\xx) \quad \mbox{where} \quad h_{s_1}(\xx) = f(\xx) \quad \mbox{and} \quad \partial_s h_s(\xx) = \gen_s h_s(\xx)
\end{equation}
for all $s \in (s_1, s_2)$ and $\xx \in \Ln$.
\end{definition}

\begin{lemma}[$L^1 \rightarrow L^1$ boundedness]\label{lem:L1}
For any coefficient trajectory $c_{ij}(s)=c_{ji}(s) \geq 0$, $1 \leq i \neq j \leq N$, $s \geq 0$ and for any $0 \leq s_1 \leq s_2$, the $L^1 \rightarrow L^1$ operator norm is bounded independent of $N$
\begin{equation}
\|\semi(s_1, s_2)\|_{1,1} \leq n!!.
\end{equation}
\end{lemma}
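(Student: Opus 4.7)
The plan is to reduce the $L^1 \to L^1$ bound to an $L^\infty \to L^\infty$ bound by duality, and then to control the resulting kernel using the polynomial lifting to $SO(N)$. The first step exploits the reversibility of $\pi$ under $\gen_s$ (Theorem~\ref{thm:levmf}), which makes $\semi(s_1, s_2)$ self-adjoint in $L^2(\Ln, \pi)$. Since the $L^1$-$L^\infty$ duality is realized by the same pairing $\ipn{\cdot, \cdot}$, the adjoint of $\semi(s_1, s_2)$ with respect to this duality coincides with $\semi(s_1, s_2)$ itself, whence $\|\semi(s_1, s_2)\|_{L^1 \to L^1} = \|\semi(s_1, s_2)\|_{L^\infty \to L^\infty}$. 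Writing $T(\xx, \yy)$ for the kernel (so that $\semi g(\xx) = \sum_\yy T(\xx, \yy) g(\yy)$), the lemma reduces to establishing the row-sum bound $\sup_\xx \sum_\yy |T(\xx, \yy)| \leq n!!$.

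The second step is to reinterpret the dynamics via the polynomial identity in Step 4 of the derivation of Theorem~\ref{thm:levmf}. Given any unit test vectors $\Vecs = (\vv_1, \ldots, \vv_n) \in (S^{N-1})^n$, the polynomial $P(\xx, \EV, \Vecs) = \pi(\xx)^{-1/2}\prod_a \ipr{\vu_{x_a}, \vv_a}$ satisfies $\soB_{ij}^2 P = \gen_{ij} P$. Consequently, the heat semigroup on $SO(N)$, $\mathcal{H}(s_1, s_2) := \exp\bigl(\int_{s_1}^{s_2}\sum_{i<j} c_{ij}(s)\soB_{ij}^2 \, ds\bigr)$, satisfies
\begin{equation*}
\bigl(\mathcal{H}(s_1, s_2) P(\xx_0, \cdot, \Vecs)\bigr)(\EV) = \sum_\xx T(\xx_0, \xx) P(\xx, \EV, \Vecs).
\end{equation*}
Because its generator is a nonnegative linear combination of squares of left-invariant vector fields on the compact Lie group $SO(N)$, $\mathcal{H}(s_1, s_2)$ is a Markov semigroup and hence a contraction on $L^\infty(SO(N))$.

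The third step is to extract the row-sum bound from the pointwise $L^\infty$-bound on $SO(N)$. Since each factor $|\ipr{\vu_{x_a}, \vv_a}| \leq 1$, we have $\|P(\xx_0, \cdot, \Vecs)\|_{L^\infty(SO(N))} \leq \pi(\xx_0)^{-1/2}$, so the contraction gives $\bigl|\sum_\xx T(\xx_0, \xx)P(\xx, \EV, \Vecs)\bigr| \leq \pi(\xx_0)^{-1/2}$ uniformly in $(\EV, \Vecs)$. Choosing $\Vecs$ adapted to the frame that diagonalizes a given $\EV$ picks out single kernel entries and produces the pointwise bound $|T(\xx_0, \xx)| \leq \sqrt{\pi(\xx)/\pi(\xx_0)}$. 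A dual/averaging step then converts the pointwise $L^\infty$-bound on $SO(N)$ into a row-sum bound, using the second-moment orthogonality $\EE_\Vecs[P(\xx, \EV, \Vecs)P(\xx', \EV, \Vecs)] = \one{\xx = \xx'}/(N^n \pi(\xx))$ when $\Vecs$ is drawn uniformly on $(S^{N-1})^n$, so that the signed combinations of kernel entries $T(\xx_0, \xx)$ can be reconstructed through integration against the $P(\xx, \cdot, \Vecs)$. The constant $n!!$ emerges as $\max_{\xx \in \Ln}\sqrt{\pi(\xx)} = \max_\xx \prod_i n_i(\xx)!!$, attained when all $n$ particles concentrate at a single site.

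The hard part is the third step: converting the $L^\infty(SO(N))$-contraction into an $\ell^1(\Ln)$-bound on the kernel with the correct constant. While the pointwise bound $|T(\xx_0, \xx)| \leq \sqrt{\pi(\xx)/\pi(\xx_0)}$ is immediate, summing naively in $\xx$ yields an $N$-dependent expression, so the argument must weigh each kernel entry against $\sqrt{\pi(\xx)}$ in a way that matches the sign structure of $T(\xx_0, \cdot)$. A Schur-test-style pairing with weights $\sqrt{\pi(\xx)}$, combined with the self-adjoint symmetry $\pi(\xx) T(\xx, \yy) = \pi(\yy) T(\yy, \xx)$, should yield the bound $\sum_\xx |T(\xx_0, \xx)| \leq \max_\xx \sqrt{\pi(\xx)} = n!!$ independently of $N$, the coefficients $c_{ij}(s)$, and the interval $[s_1, s_2]$.
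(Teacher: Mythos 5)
Your overall strategy---lifting the dynamics to $SO(N)$ through the polynomial identity $\soB_{ij}^2 P = \gen_{ij}P$ and exploiting compactness of the group---is exactly the right idea, and your second step (the $SO(N)$ heat semigroup generated by $\sum_{i<j}c_{ij}(s)\soB_{ij}^2$ is an $L^\infty$ contraction) is sound. But the argument has two problems, one repairable and one that is a genuine gap. The repairable one: $\semi(s_1,s_2)$ is \emph{not} self-adjoint in $L^2(\Ln,\pi)$ when the coefficients $c_{ij}(s)$ depend on time; each $\gen_s$ is symmetric, but the time-ordered product is not, and the adjoint is the semigroup of the time-reversed coefficient trajectory. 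Your duality reduction $\|\semi\|_{1,1}=\|\semi\|_{\infty,\infty}$ therefore fails as stated, though it can be patched since the time-reversed trajectory satisfies the same hypotheses.

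The genuine gap is your third step, which you yourself flag as ``the hard part'' and never complete. The pointwise bound $|T(\xx_0,\xx)|\le\sqrt{\pi(\xx)/\pi(\xx_0)}$ is useless for a row sum (as you note, it is $N$-dependent after summation), and the proposed repair cannot work: to extract $\sum_\xx|T(\xx_0,\xx)|$ from $\sup_{\EV,\Vecs}\bigl|\sum_\xx T(\xx_0,\xx)P(\xx,\EV,\Vecs)\bigr|$ you would need to choose $(\EV,\Vecs)$ so that $P(\xx,\EV,\Vecs)$ matches the sign pattern of $T(\xx_0,\cdot)$ across all of $\Ln$, but $P$ factorizes as a product over particles and cannot realize arbitrary sign patterns; the second-moment orthogonality in $\Vecs$ reconstructs individual entries, not their absolute sum. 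A Schur test with weights $\sqrt{\pi}$ also relies on the self-adjoint symmetry that fails here. The paper's mechanism is different and is the actual content of the lemma: for \emph{every fixed} orthogonal $\EV$ and unit $\Vecs$ one has the deterministic identity
\begin{equation*}
\|P(\cdot,\EV,\Vecs)\|_1=\sum_{\sigma\in M_n}\prod_{a\in[n]/\sigma}\Bigl(\sum_{\alpha,i}|v_a^\alpha u_i^\alpha|\,|v_{\sigma(a)}^\alpha u_i^\alpha|\Bigr)\le |M_n|=n!!,
\end{equation*}
obtained by resumming over the perfect matchings stabilizing $\xx$ and then using orthonormality of the columns of $\EV$ together with AM--GM. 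One then writes $\semi(s_1,s_2)P(\xx,\EV,\Vecs)=\EE{P(\xx,\EV(s_2),\Vecs)}$ for the $SO(N)$-valued diffusion, applies Jensen to move the $L^1$ norm inside the expectation, and notes that every $\delta_\yy$ is a scalar multiple of some $P(\cdot,\iden_N,\Vecs^\yy)$. Without the matching resummation your constant $n!!$ is only numerology ($\max_\xx\sqrt{\pi(\xx)}$ happens to equal $|M_n|$), not a proof.
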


\begin{proof}
The idea behind this proof is to find a probabilistic representation of $\semi(s_1, s_2)$ applied to $\delta$-functions.  The approach to this is by reverse engineering the four step derivation of eigenvector moment flow outlined above.

Begin by fixing $\EV = (u_i^\alpha)_{i,\alpha=1}^{N} \in SO(N)$ an orthogonal matrix and $\Vecs = (\vv_1, \ldots \vv_n)$ with $\vv_a = (v_a^\alpha)_{\alpha=1}^N \in \RR^N$, $\|\vv_a\|_2$ a collection of unit vectors in $\RR^n$. 

We now describe an analogue of the stochastic process appearing in Step 2 with arbitrary coefficients, \eqref{eq:edbm}, describing eigenvector Dyson Brownian motion.  This process must then inherit the generator described in Step 3 \eqref{eq:gen_edbm} with arbitrary nonnegative coefficients $c_{ij}$.
Consider the stochastic process which is the scaling limit of the translation invariant (but not time invariant) random walk on the Lie group $SO(N)$ with rate $\sqrt{c_{ij}(s)}$ in the $\soB_{ij}$ direction.   Define the process explicitly by 
\begin{equation}\label{eq:lie_rw}
u_i^\alpha(s) = \sum_{k=1}^N u_k^\alpha \texp\left(\int_{s_1}^s \sqrt{c(s')} \odot d\fh(s')\right)_{ki}
\end{equation}
where $\texp(\int \cdot)$ is the time ordered exponential.  Here $\fh = (\fh_{ij})_{i,j=1}^N$ is a Lie algrebra valued Brownian motion.   In particular, $\fh_{ij} = -\fh_{ji}$ are independent and identically distributed standard Brownian motions for $1 \leq i < j \leq N$.  The $\mathfrak{so}(N)$-valued differential $\sqrt{c(s)} \odot d\fh(s)$ is defined by 
\begin{equation}
\left(\sqrt{c(s)} \odot d\fh(s)\right)_{ij} = \begin{cases} \sqrt{c_{ij}(s)} d\fh_{ij}(s) & \mbox{if } 1 \leq i < j \leq N \\
0 &\mbox{if } 1 \leq i = j \leq N \\
-\sqrt{c_{ij}(s)} d\fh_{ji}(s) & \mbox{if } 1 \leq j < i \leq N \end{cases}
\end{equation}
for all $i,j \in [N]$.

It\^o's lemma applied to the exponential map \eqref{eq:lie_rw} gives
\begin{equation}\label{eq:ito_texp}
d\EV(s) = \EV(s) \left( \sqrt{c(s)} \odot d\fh(s) + \frac{1}{2} (\sqrt{c(s)} \odot d\fh(s))^2 \right)
\end{equation}
where the square in the second term includes a matrix product.  In particular,
\begin{equation}
[(\sqrt{c(s)} \odot d\fh(s))^2]_{ij} = \sum_{k \neq i,j} \left(\sqrt{c_{ik}(s)} d\fh_{ik}(s) \right)\left( - \sqrt{c_{kj}(s)} d\fh_{kj}(s)  \right) = -\delta_{ij} \sum_{k \neq i} c_{ik}(s) dt
\end{equation}
is a diagonal matrix.  Expanding the expression entry-wise, \eqref{eq:ito_texp} becomes
\begin{equation}
d u^\alpha_i(s) = \sum_{k \neq i} u^\alpha_k(s) \sqrt{c_{ki}(s)} d\fh_{ki}(s) - u^\alpha_i(s) \sum_{k \neq i} c_{ik}(s) dt.
\end{equation}
Note that this expression exactly matches Step 2, \eqref{eq:edbm}, in distribution after specializing coefficients to $c_{ij}(s) = N^{-1} \left(\lambda_i(s) - \lambda_j(s)\right)^2$.  By the same computations that lead us from Step 2 to Steps 3 and 4, we learn that the generator for $\EV$ is given by $\sum_{1 \leq i < j \leq N} c_{ij}(s)\soB_{ij}^2$ and that $\soB_{ij}^2 P(\xx,\EV,\Vecs) = \gen_{ij} P(\xx,\EV,\Vecs)$ where $P$ is the polynomial defined in \eqref{eq:test_poly}.  In particular, we have just shown that for every $\xx \in \Ln$, $\EV \in SO(N)$, and $\Vecs \in \RR^{N \times n}$, we have
\begin{equation}\label{eq:stoch_propagator}
\semi(s_1, s_2)P(\xx,\EV,\Vecs) = \EE{P(\xx,\EV(s_2),\Vecs)}
\end{equation}
where the expectation is taken over the stochastic process $\EV(t)$ as defined in \eqref{eq:lie_rw} or equivalently in \eqref{eq:ito_texp}.  Taking an $L^1$ norm gives
\begin{equation}
\|P(\cdot,\EV,\Vecs)\|_1 = \sum_{\xx \in \Ln} \left( \prod_{i=1}^N n_i(\xx)!! \right) \left| \prod_{a=1}^n \sum_{\alpha=1}^N v_a^\alpha u^\alpha_{x_a} \right|.
\end{equation}
Now we rearrange the sum, grouping all configurations $\xx$ which share a common perfect matching stabilizing $\xx$.  There are exactly $\prod_{i=1}^N n_i(\xx)!!$ such perfect matchings for each $\xx \in \Ln$.  The sum over $\xx$ now becomes a double sum -- first over the perfect matching $\sigma \in M_n$, then over the site $i$ at which the matched pair $\{a, \sigma(a)\}$ is positioned.  This second sum is done independently for all matched pairs $\{a, \sigma(a)\}$ ranging over all orbit representatives $a \in [n] / \sigma$.  Therefore,
\begin{equation}
\|P(\cdot, \EV, \Vecs)\|_1 = \sum_{\sigma \in M_n} \prod_{a \in [n]/\sigma}  \left( \sum_{\alpha=1}^N \sum_{i = 1}^N |v_a^\alpha u^\alpha_i | |v_{\sigma(a)}^\alpha u^\alpha_i| \right).
\end{equation}
To bound the inner sum, use the fact that $\EV \in SO(N)$ is an orthogonal matrix and that the test vectors are $L^2$ normalized: $\|\vv_a\|=1$ for all $a \in [n]$.
\begin{equation}
\sum_{\alpha=1}^N \sum_{i =1}^N |v_a^\alpha u^\alpha_i v_{\sigma a}^\alpha u^\alpha_i| = \sum_{\alpha=1}^N |v_a^\alpha v_{\sigma a}^\alpha| \leq \sum_{\alpha=1}^N \frac{1}{2} \left( |v_a^\alpha|^2 + |v_{\sigma a}^\alpha|^2 \right) = 1
\end{equation}
where the the inequality is Schwarz or AM-GM.  This proves that 
\begin{equation}\label{eq:deterministic_testpoly}
\|P(\cdot,\EV,\Vecs)\|_1 \leq |M_n| = n!!
\end{equation}
for every $\EV \in SO(N)$ and $\Vecs \in \RR^{n \times N}$, $\|v_a\|_2 = 1$.  Since $\sqrt{c(s)} \odot d\fh(s)$ is an $\mathfrak{so}(N)$-valued differential, the stochastic process $\EV(t) \in SO(N)$ remains in the Lie group almost surely.  Hence,
\begin{equation}\label{eq:jensen_poly}
\|\semi(s_1, s_2) P(\cdot, \EV, \Vecs)\|_1 = \|\EE{P(\cdot, \EV(s_2), \Vecs)}\|_1 \leq \EE{\|P(\cdot, \EV(s_2), \Vecs)\|_1} \leq n!!
\end{equation}
where the first inequality is Jensen since the $L^1$ norm is convex and the second inequality is \eqref{eq:deterministic_testpoly}.  To conclude the proof, simply note that every $\delta$-function $\delta_\xx$, $\xx \in \Ln$, can be written as a scalar multiple of some $P(\cdot, \EV, \Vecs)$:
\begin{equation}\label{eq:d_as_P}
\delta_\xx(\yy) = \left(\prod_{i=1}^N n_i(\xx)!!\right)^{-1} P(\yy, \iden_N, \Vecs^\xx)
\end{equation}
where $\iden_N$ is the $N \times N$ identity matrix and $\Vecs^\xx$ is the collection of standard basis vectors specified by the distinguishable particle configuration $\xx$: $v^\xx_a = \ve_{x_a}$ for all $a \in [n]$.  From \eqref{eq:jensen_poly} and \eqref{eq:d_as_P} along with the facts that for all $\xx \in \Ln$, $\prod_{i=1}^N n_i(\xx)!! \geq 1$, we get
\begin{equation}\label{eq:d_11}
\|\semi(s_1,s_2)\delta_\xx\|_1 \leq n!!
\end{equation}
for every $\delta$-function.  This $L^1$ bound on $\delta$-functions translates to a bound on the $L^1 \rightarrow L^1$ operator norm by recovering the $L^1$ norm through the triangle inequality.  Proceed by expanding any $f \in L^1(\Ln)$ according to its representation as a linear combination of $\delta$-functions: $f = \sum_{\xx \in \Ln} f(\xx) \pi(\xx) \delta_\xx$.
\begin{multline}
\|f\|_1 = \left\|\sum_{\xx \in \Ln} \pi(\xx) f(\xx) \semi(s_1,s_2)\delta_\xx\right\|_1 \leq \sum_{\xx \in \Ln} \pi(\xx) |f(\xx)| \|\semi(s_1,s_2)\delta_\xx\|_1 \leq \|f\|_1 \sup_{\xx \in \Ln} \|\semi(s_1,s_2)\delta_\xx\|_1 \\
\leq n!! \|f\|_1
\end{multline}  
where the last inequality is \eqref{eq:d_11}.
\end{proof}

\begin{remark}
The idea behind this proof is that all dynamics generated by linear combinations of $\gen_{ij}$, $1 \leq i < j \leq N$, on the configuration space $\Ln$ admit a description from an underlying eigenvector evolution.  Compactness (and more specifically, $L^2$ boundedness) of the unit sphere $S^{N-1}$, or more generally the special orthogonal group $SO(N)$, translates to $L^1$ boundedness for the renormalized random walk on the configuration space.
\end{remark}

\begin{lemma}[Negative semidefinite]\label{lem:definite}
For all $i \neq j \in \ZZ$, $\move_{ij} \leq \exch_{ij} \leq 0$ in the sense that for all test functions $f\in L^2(\Ln)$,
\begin{equation}
\ipn{ f, (-\exch_{ij}) f} \geq 0 \quad\mbox{and}\quad 
\ipn{ f, (\exch_{ij}-\move_{ij}) f} \geq 0
\end{equation}
for all $i \neq j \in [N]$.
\end{lemma}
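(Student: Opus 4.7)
The lemma has two parts: $-\exch_{ij}\succeq 0$ and $-\gen_{ij}=\exch_{ij}-\move_{ij}\succeq 0$. These require different tools.

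For the first part, the plan is the standard Dirichlet form identity. One reads off from the definition of $\exch_{ij}$ that its off-diagonal kernel satisfies $\exch_{ij}(\xx,\yy)=2\,|\{(a,b):a\ne b,\,x_a=i,\,x_b=j,\,s_{ab}^{ij}\xx=\yy\}|\geq 0$, and observes that rows sum to zero (since $\exch_{ij}$ annihilates constants). Because each swap $s_{ab}^{ij}$ preserves every site occupation number $n_k(\cdot)$, the measure $\pi$ is invariant under swaps, giving detailed balance $\pi(\xx)\exch_{ij}(\xx,\yy)=\pi(\yy)\exch_{ij}(\yy,\xx)$. Symmetrization then yields
\begin{equation*}
\ipn{f,-\exch_{ij}f}=\tfrac{1}{2}\sum_{\xx,\yy\in\Ln}\pi(\xx)\exch_{ij}(\xx,\yy)\bigl[f(\xx)-f(\yy)\bigr]^2\geq 0.
\end{equation*}

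The second part is subtler because the kernel of $-\gen_{ij}$ is not sign-definite: swap transitions contribute positively while move transitions contribute negatively, so no direct Dirichlet form argument applies. Instead, my plan is to lift the problem to the Lie group via the Step~4 polynomial identity $\soB_{ij}^2 P(\xx,\EV,\Vecs)=\gen_{ij}P(\xx,\EV,\Vecs)$ from the proof of Theorem~\ref{thm:levmf}, and then exploit the antisymmetry of the left-invariant vector field $\soB_{ij}$ on $L^2(SO(N),dg)$ against normalized Haar measure (valid because $SO(N)$ is unimodular). Concretely, for $f\in L^2(\Ln,\pi)$ I would introduce auxiliary independent standard Gaussians $\vg_1,\ldots,\vg_n\sim\normal(\zero,\iden_N)$ and define the lift
\begin{equation*}
\Phi_f(\EV,\vg)=\sum_{\xx\in\Ln}\sqrt{\pi(\xx)}\,f(\xx)\prod_{a=1}^n\ipr{\vg_a,\vu_{x_a}}.
\end{equation*}
The orthogonality relation $\ipr{\vu_i,\vu_j}=\delta_{ij}$ (for $\EV\in O(N)$) together with independence of the $\vg_a$ gives the isometry $\EE[\vg]{\Phi_f\Phi_g}=\ipn{f,g}$, and combining the polynomial identity $\soB_{ij}^2 P=\gen_{ij}P$ with the $\pi$-self-adjointness of $\gen_{ij}$ (Theorem~\ref{thm:levmf}) produces the intertwining $\soB_{ij}^2\Phi_f=\Phi_{\gen_{ij}f}$.

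Averaging additionally over Haar on $SO(N)$ and integrating by parts via antisymmetry of $\soB_{ij}$,
\begin{equation*}
\ipn{f,\gen_{ij}f}=\EE[\EV,\vg]{\Phi_f\cdot\soB_{ij}^2\Phi_f}=-\EE[\EV,\vg]{(\soB_{ij}\Phi_f)^2}\leq 0,
\end{equation*}
which is the desired $\ipn{f,-\gen_{ij}f}\geq 0$. I expect the main (modest) technical point to be the verification of the intertwining $\soB_{ij}^2\Phi_f=\Phi_{\gen_{ij}f}$, which packages two ingredients simultaneously: the differential identity at the polynomial level and the self-adjointness of $\gen_{ij}$ on $L^2(\Ln,\pi)$. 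Both are already available from the derivation of the colored eigenvector moment flow, so there is no genuinely new obstacle.
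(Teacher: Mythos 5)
Your proposal is correct, and the second half takes a genuinely different route from the paper. For $\ipn{f,(-\exch_{ij})f}\geq 0$ you and the paper do essentially the same thing: the paper simply invokes the maximum principle (i.e.\ that $\exch_{ij}$ generates a $\pi$-reversible jump process), which is exactly the nonnegative-kernel/detailed-balance/Dirichlet-form computation you spell out. For $\ipn{f,(\exch_{ij}-\move_{ij})f}\geq 0$, however, the paper argues by contradiction through Lemma~\ref{lem:L1}: since $\gen_{ij}$ is $\pi$-symmetric its eigenvalues are real, and a positive eigenvalue $\lambda$ would make $\|e^{t\gen_{ij}}f\|_1=e^{t\lambda}\|f\|_1$ grow, contradicting the uniform bound $\|e^{t\gen_{ij}}\|_{1,1}\leq n!!$. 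Your argument instead exhibits $-\gen_{ij}$ directly as a pushforward of the manifestly nonnegative operator $-\soB_{ij}^2$ on $L^2(SO(N))$ via the Gaussian lift $\Phi_f$, using only the Step~4 identity $\soB_{ij}^2P=\gen_{ij}P$ and reversibility; the isometry $\EE[\Phi_f\Phi_g]=\ipn{f,g}$ and the intertwining $\soB_{ij}^2\Phi_f=\Phi_{\gen_{ij}f}$ both check out, and skew-adjointness of $\soB_{ij}$ against bi-invariant Haar measure closes the argument. What your version buys is the explicit sum-of-squares identity $\ipn{f,(-\gen_{ij})f}=\EE[(\soB_{ij}\Phi_f)^2]$ and independence from Lemma~\ref{lem:L1}; what the paper's version buys is brevity, since Lemma~\ref{lem:L1} is needed elsewhere anyway. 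Both proofs ultimately rest on the same $SO(N)$ representation of the dynamics (the paper's $L^1$ bound is itself proved via the stochastic flow on $SO(N)$), so the difference is one of packaging rather than of underlying mechanism.
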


\begin{proof}
Both $\move_{ij}$ and $\exch_{ij}$ satisfy the maximum principle and hence are negative semidefinite.  Let $f$ be an eigenfunction of $\move_{ij} - \exch_{ij}$ with eigenvalue $\lambda$.  By Lemma \ref{lem:L1}, $n!! \|f\|_1 \geq \| e^{t(\move_{ij} - \exch_{ij})} f \|_1 = e^{t \lambda} \|f\|_1$ is bounded independent of $t$, so we must have  $\lambda \leq 0$ and $\move_{ij}-\exch_{ij}$ must also be negative semidefinite.
\end{proof}

\begin{lemma}[Kernel Projection]\label{lem:kernel}
Let $\kproj = \kproj_N : L^2(\Ln) \rightarrow L^2(\Ln)$ be the orthogonal projection onto the \emph{global kernel}, $\cap_{1 \leq i<j \leq N} \ker(\gen_{ij})$.  Then for any $\xx, \yy \in \Ln$,
\begin{equation}
\ipn{ \delta_\xx, \kproj \delta_\yy } = \frac{1}{\sqrt{\pi(\xx)\pi(\yy)}} \EE{\prod_{a=1}^n O_{x_a y_a}}
\end{equation}
where $\orth = (O_{ij})_{i,j=1}^N \in SO(N)$ is a Haar distributed random matrix.
\end{lemma}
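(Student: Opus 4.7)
The plan is to identify $\kproj$ with long-time averaging against Haar measure on $SO(N)$ by pulling back through the probabilistic representation of the semigroup from the proof of Lemma~\ref{lem:L1}, and then evaluate the resulting formula on the explicit polynomial representation of delta functions.

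First I would specialize to constant coefficients $c_{ij}(s) \equiv 1$, giving $\gen = \sum_{1 \leq i < j \leq N} \gen_{ij}$. By Lemma~\ref{lem:definite}, each $-\gen_{ij}$ is positive semidefinite on $L^2(\Ln)$, so $-\gen \geq 0$ and
\begin{equation*}
f \in \ker(\gen) \iff \sum_{i<j} \ipn{f, -\gen_{ij} f} = 0 \iff f \in \bigcap_{1 \leq i<j \leq N} \ker(\gen_{ij}).
\end{equation*}
Since $\Ln$ is finite, the spectral theorem for the symmetric operator $-\gen$ on $L^2(\Ln)$ implies $\semi(0, s) = e^{s \gen} \to \kproj$ as $s \to \infty$.

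Next I would lift this convergence through the probabilistic representation. With $c_{ij} \equiv 1$, the $SO(N)$-valued process $\EV(s)$ from \eqref{eq:lie_rw} is a left-invariant Brownian motion on the compact Lie group $SO(N)$, whose generator $\sum_{i<j} \soB_{ij}^2$ is (up to a constant) the bi-invariant Laplace--Beltrami operator. This diffusion is non-degenerate, so its unique invariant probability measure on $SO(N)$ is Haar, and $\EV(s)$ converges in distribution to a Haar-distributed $\orth$ regardless of the starting point $\EV(0) = \EV$. Combining with \eqref{eq:stoch_propagator} gives
\begin{equation*}
\kproj P(\xx, \EV, \Vecs) = \EE{P(\xx, \orth, \Vecs)}.
\end{equation*}

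Finally, I would evaluate on delta functions. Using \eqref{eq:d_as_P} together with the identity $\prod_i n_i(\xx)!! = \sqrt{\pi(\xx)}$, we have $\delta_\xx = \pi(\xx)^{-1/2} P(\cdot, \iden_N, \Vecs^\xx)$ with $\Vecs^\xx = (\ve_{x_1}, \ldots, \ve_{x_n})$. Since $\ipn{\delta_\xx, g} = g(\xx)$ for every $g$, and $P(\xx, \orth, \Vecs^\yy) = (\prod_i n_i(\xx)!!)^{-1} \prod_{a=1}^n O_{y_a, x_a}$,
\begin{equation*}
\ipn{\delta_\xx, \kproj \delta_\yy} = \kproj \delta_\yy(\xx) = \pi(\yy)^{-1/2} \EE{P(\xx, \orth, \Vecs^\yy)} = \frac{1}{\sqrt{\pi(\xx) \pi(\yy)}} \EE{\prod_{a=1}^n O_{y_a, x_a}}.
\end{equation*}
Haar measure on $SO(N)$ is invariant under $\orth \mapsto \orth^\top = \orth^{-1}$, so the expectation equals $\EE{\prod_{a=1}^n O_{x_a, y_a}}$, yielding the claimed identity.

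The main obstacle is conceptual rather than technical: one must recognize that the intersection of kernels of the colored eigenvector moment flow operators -- an a priori combinatorial object -- is accessible through a left-invariant Brownian motion on $SO(N)$ whose equilibrium is Haar. Once this link has been set up via Lemma~\ref{lem:L1}, the remaining work (identifying $\kproj$ as the long-time limit of $\semi(0,s)$, expressing $\delta$-functions through $P$, and exploiting transpose-invariance of Haar) is essentially automatic.
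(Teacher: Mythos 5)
Your proof is correct and follows essentially the same route as the paper's: identify $\kproj$ with the long-time limit of the constant-coefficient semigroup, use the probabilistic representation \eqref{eq:stoch_propagator} to realize that limit as an expectation against Haar measure on $SO(N)$, and evaluate on the polynomial representation \eqref{eq:d_as_P} of delta functions. Your explicit justification that $\ker(\sum_{i<j}\gen_{ij})=\cap_{i<j}\ker(\gen_{ij})$ via Lemma~\ref{lem:definite} and your handling of the transpose-invariance of Haar are slightly more careful than the paper's write-up, but the argument is the same.
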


\begin{proof}
Borrowing the $\delta$-function representation from \eqref{eq:d_as_P} and the classifying property of $\delta$-functions from Definition \ref{def:inner_product}, the left hand side can be written as
\begin{equation}
\langle \delta_\xx, \kproj \delta_\yy \rangle = \pi(\yy)^{-1/2} \kproj P(\xx, \iden_N, \Vecs^\yy).
\end{equation}
Running the dynamics generated by $\sum_{1 \leq i < j \leq N} \gen_{ij}$ for long times, all components orthogonal to the kernel become arbitrarily small.  Therefore, the orthogonal projection onto the kernel has the representation $\kproj = \lim_{s \rightarrow \infty} e^{s\sum_{1 \leq i < j \leq N} \gen_{ij}}$.  Moreover, as $\gen_{ij}$ are symmetric negative semidefinite, $\ker(\sum_{1 \leq i < j \leq N} \gen_{ij}) = \cap_{1 \leq i < j \leq N} \ker(\gen_{ij})$. Now we can borrow the stochastic process interpretation of the propagator from \eqref{eq:stoch_propagator}
\begin{equation}
\langle \delta_\xx, \kproj \delta_\yy \rangle = \pi(\yy)^{-1/2} \lim_{s \rightarrow \infty} e^{s\sum_{1 \leq i < j \leq N} \gen_{ij}} P(\xx, \iden_N, \Vecs^\yy) = \pi(\yy)^{-1/2} \lim_{s \rightarrow \infty} \EE{P(\xx, \EV(s), \Vecs^\yy)}
\end{equation}
where $\EV(s) = \texp(\int_0^s d\fh)$ is a standard Brownian motion on $SO(N)$ generated by the classical Laplace-Beltrami operator on $SO(N)$.  Here $\fh$ is a standard Brownian motion on $\mathfrak{so}(N)$ as described in the proof of Lemma \ref{lem:L1}.  Recalling the definition of $P(\xx,\EV,\Vecs)$ from \eqref{eq:test_poly} and the choice $\Vecs^\yy = (\ve_{y_1}, \ldots, \ve_{y_n})$, the right hand side becomes
\begin{equation}
\pi(\yy)^{-1/2} \lim_{s \rightarrow \infty} \EE{P(\xx, \EV(s), \Vecs^\yy)} = \frac{1}{\sqrt{\pi(\xx)\pi(\yy)}} \lim_{s \rightarrow \infty} \EE{ \prod_{a=1}^n \langle \vu_{x_a}(s), \ve_{y_a} \rangle } = \frac{1}{\sqrt{\pi(\xx)\pi(\yy)}} \EE{ \prod_{a=1}^n O_{x_a y_a} }
\end{equation}
where the last equality is a consequence of the limiting distribution of $\EV(s)$ being Haar measure on $SO(N)$.
\end{proof}

\begin{remark}
For explicit computations for such Haar integrals of matrix entries involving Weingarten functions, see \cite{collins2003moments}.  For our purposes, an elementary $L^\infty$ bound will suffice.  For convenience, we use the subgaussian techniques from \cite{vershynin2018high} for a quick proof.
\end{remark}

\begin{lemma}\label{lem:subgaussian}
There exists a universal constant $\sgconst>0$ such that for all $\xx,\yy\in\Ln$,
\begin{equation}
\left|\EE{\prod_{a=1}^n O_{x_ay_a}}\right| \leq \sgconst^n n^{n/2} N^{-n/2}
\end{equation}
uniformly in particle number $n$ and length of the configuration space $N$.
\end{lemma}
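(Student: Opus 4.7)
The plan is to reduce the quantity to a product of single-entry $L^n$ norms via H\"older's inequality and then to invoke the subgaussian behavior of individual entries of a Haar orthogonal matrix. The key probabilistic input is that any fixed entry $O_{ij}$ of $\orth \in SO(N)$ has the same distribution as the first coordinate of a uniform vector on $S^{N-1}$, which is known to be subgaussian with $\psi_2$-norm of order $N^{-1/2}$.

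\textbf{Step 1 (decoupling).} Apply the elementary bounds
\begin{equation*}
\left|\EE{\prod_{a=1}^n O_{x_a y_a}}\right| \leq \EE{\prod_{a=1}^n |O_{x_a y_a}|} \leq \prod_{a=1}^n \left(\EE{|O_{x_a y_a}|^n}\right)^{1/n},
\end{equation*}
where the second inequality is H\"older applied with $n$ equal exponents. This reduces the problem of bounding the joint $n$-th mixed moment to bounding the marginal $n$-th absolute moment of a single entry.

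\textbf{Step 2 (subgaussian bound on a single entry).} Any fixed entry $O_{ij}$ is distributed as the first coordinate of a uniformly random unit vector on $S^{N-1}$, which in turn is distributed as $g_1/\sqrt{g_1^2+\cdots+g_N^2}$ for i.i.d. standard Gaussians $g_k$. Standard concentration for the norm of a Gaussian vector (as in Vershynin's book) yields a universal constant $C>0$ with $\|O_{ij}\|_{\psi_2} \leq C N^{-1/2}$. The characterization of subgaussian random variables by their moment growth then gives another universal constant $C'>0$ such that
\begin{equation*}
\left(\EE{|O_{ij}|^n}\right)^{1/n} \leq C' \sqrt{n}\, \|O_{ij}\|_{\psi_2} \leq C' C \sqrt{n}\, N^{-1/2}
\end{equation*}
for every $n \geq 1$ and every $i,j \in [N]$, uniformly in the pair of indices.

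\textbf{Step 3 (assembly).} Inserting the uniform bound from Step 2 into the product of Step 1 and setting $\sgconst = C'C$ gives
\begin{equation*}
\left|\EE{\prod_{a=1}^n O_{x_a y_a}}\right| \leq \prod_{a=1}^n \sgconst \sqrt{n}\, N^{-1/2} = \sgconst^n n^{n/2} N^{-n/2},
\end{equation*}
as desired. Note that the bound does not require any structural hypothesis on $\xx,\yy\in\Ln$ (such as the even-parity condition); it holds uniformly over arbitrary index sequences.

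The only nontrivial input is the subgaussian estimate for entries of a Haar orthogonal matrix, and this is a standard concentration fact rather than an obstacle; the rest is a clean two-line application of H\"older and the $\psi_2$-moment characterization.
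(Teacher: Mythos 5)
Your proof is correct and follows essentially the same route as the paper: the paper decouples the product via AM--GM (bounding $\prod_a|O_{x_ay_a}|$ by $\frac{1}{n}\sum_a|O_{x_ay_a}|^n$) where you use generalized H\"older, but both reduce to the same single-entry $n$-th moment, which is then controlled by the identical subgaussian estimate for a coordinate of a uniform point on $S^{N-1}$ from Vershynin. The two decoupling steps yield the same final bound, so there is nothing further to add.
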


\begin{proof}
Bound the lefthand side by Jensen's inequality and AM-GM.
\begin{equation}
\left|\EE{\prod_{a=1}^n O_{x_ay_a}}\right| \leq \frac{1}{n}\sum_{a=1}^n \EE{|O_{x_ay_a}|^n}
\end{equation}
Note that although not independent, for all $i,j\in[N]$ the marginal distribution of $O_{ij}$ matches that of the first coordinate from the uniform spherical distribution on $S^N$.  By \cite[Proposition 3.4.6]{vershynin2018high}, $\sqrt{N}O_{ij}$ is subgaussian with Olicz 2-norm $\|\sqrt{N}O_{ij}\|_{\psi_2} \leq \sgconst'$ bounded by a universal constant $\sgconst'>0$ independent of $N$.  Then \cite[Proposition 2.5.2]{vershynin2018high} says that 
\begin{equation}
\EE{|\sqrt{N}O_{ij}|^n}^{1/n} \leq \sgconst \sqrt{n}
\end{equation}
for another universal constant $\sgconst>0$ independent of $N$ and $n$.
\end{proof}

\begin{corollary}\label{cor:kproj_bound}
The global kernel projection operator on $L^2(\Ln)$ satisfies the following strong $L^1 \rightarrow L^\infty$ operator norm bound
\begin{equation}
\|\kproj\|_{1,\infty} \leq \sgconst^n n^{n/2} N^{-n/2}
\end{equation}
where $\sgconst$ is the universal constant from Lemma \ref{lem:subgaussian}.
\end{corollary}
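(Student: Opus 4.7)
The plan is to bound the operator norm by evaluating $\kproj$ against the atomic basis $\{\delta_\yy\}_{\yy \in \Ln}$ and applying the two preceding lemmas pointwise. Any $f \in L^1(\Ln)$ admits the representation $f = \sum_{\yy \in \Ln} \pi(\yy) f(\yy) \delta_\yy$, and linearity of $\kproj$ together with the characterizing property $\ipn{\delta_\xx, g} = g(\xx)$ gives, for every $\xx \in \Ln$,
\begin{equation}
\kproj f(\xx) = \sum_{\yy \in \Ln} \pi(\yy) f(\yy) \, \ipn{\delta_\xx, \kproj \delta_\yy}.
\end{equation}
First I would substitute the formula from Lemma \ref{lem:kernel} for $\ipn{\delta_\xx, \kproj \delta_\yy}$ and then apply Lemma \ref{lem:subgaussian} to obtain the pointwise bound
\begin{equation}
\left| \ipn{\delta_\xx, \kproj \delta_\yy} \right| \leq \frac{\sgconst^n n^{n/2} N^{-n/2}}{\sqrt{\pi(\xx)\pi(\yy)}}.
\end{equation}

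Next I would insert this into the display above. Since $\pi(\xx) = \prod_{i=1}^N (n_i(\xx)!!)^2 \geq 1$ for every $\xx \in \Ln$ (as each $n_i(\xx)!! \geq 1$ using the convention $0!! = 1$), the factor $(\pi(\xx)\pi(\yy))^{-1/2} \leq 1$, so the triangle inequality yields
\begin{equation}
|\kproj f(\xx)| \leq \sgconst^n n^{n/2} N^{-n/2} \sum_{\yy \in \Ln} \pi(\yy) |f(\yy)| = \sgconst^n n^{n/2} N^{-n/2} \|f\|_1.
\end{equation}
Taking the supremum over $\xx \in \Ln$ gives $\|\kproj f\|_\infty \leq \sgconst^n n^{n/2} N^{-n/2} \|f\|_1$, which is the desired operator norm bound.

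The argument is essentially immediate once the previous two lemmas are in hand; no genuine obstacle arises. The only subtlety worth emphasizing is that the reversible measure $\pi$ satisfies $\pi \geq 1$ on $\Ln$, which is what allows the $1/\sqrt{\pi(\xx)\pi(\yy)}$ factor produced by the Haar-kernel representation to be absorbed harmlessly so that the weighted sum over $\yy$ collapses exactly to the $L^1(\pi)$ norm as defined in \eqref{eq:p-norm}. All the analytic content lives in Lemma \ref{lem:kernel} (relating $\kproj$ to Haar integrals) and Lemma \ref{lem:subgaussian} (subgaussian moment estimates for entries of a Haar-distributed orthogonal matrix); the corollary simply packages them into an operator-theoretic statement.
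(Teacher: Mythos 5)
Your proof is correct and follows essentially the same route as the paper: expand $f$ against the $\delta$-basis, invoke Lemma \ref{lem:kernel} and Lemma \ref{lem:subgaussian} for the pointwise bound on $\ipn{\delta_\xx,\kproj\delta_\yy}$, and absorb the $(\pi(\xx)\pi(\yy))^{-1/2}$ factor using $\pi\geq1$. No discrepancies to report.
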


\begin{proof}
Suppose $f\in L^1(\Ln)$.  Then for all $\xx\in\Ln$,
\begin{equation}
|\kproj f(\xx)| = \left| \sum_{\yy\in\Ln} \pi(\yy)f(\yy)\ipn{\delta_\xx, \kproj \delta_\yy} \right| \leq \|f\|_1 \sup_{\xx,\yy\in\Ln} \frac{|\EE{\prod_{a=1}^n O_{x_ay_a}}|}{\sqrt{\pi(\xx)\pi(\yy)}} \leq \sgconst^n n^{n/2} N^{-n/2} \|f\|_1
\end{equation}
by Lemma \ref{lem:subgaussian} and the fact that $\pi(\xx) \geq 1$ for all $\xx\in\Ln$.
\end{proof}

\begin{remark}
In addition to bounds on kernel projection entries, it will be helpful to have some exact algebraic relations for functions within the kernel.
\end{remark}

\begin{definition}\label{defn:actN}
Let $S_N$ be the group of permutations of $[N]$.  Just as $S_n$ acts on $\Ln$ by permuting labels, there is a similar permutation action, denoted by $\actN$, of $S_N$ on $\Ln$ by permuting sites.  For every $\tau \in S_N$, $\xx \in \Ln$, and $a \in [n]$ the action satisfies
\begin{equation}
\tau \actN \xx = (\tau(x_1), \ldots, \tau(x_n))^\top.
\end{equation}
\end{definition}

\begin{corollary}[Spatial invariance in the kernel]\label{cor:spatial_kernel}
For any site permutation $\tau \in S_N$ and any function in the global kernel $f \in \cap_{1 \leq i < j \leq N} \ker(\gen_{ij})$, we have $f(\tau \actN \xx) = f(\xx)$ for every $\xx \in \Ln$.
\end{corollary}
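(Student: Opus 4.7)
The plan is to use Lemma~\ref{lem:kernel}, which gives an explicit formula for the kernel projection $\kproj$, and reduce the desired spatial invariance to a corresponding invariance for Haar measure on $SO(N)$ under left multiplication by permutation matrices.

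First, since $f \in \cap_{i<j} \ker(\gen_{ij})$ is in the global kernel, $\kproj f = f$, and expanding in $\delta$-functions gives the representation
\begin{equation}
f(\xx) = \sum_{\yy \in \Ln} \pi(\yy) f(\yy) \ipn{\delta_\xx, \kproj \delta_\yy} = \sum_{\yy \in \Ln} \pi(\yy) f(\yy) \frac{1}{\sqrt{\pi(\xx)\pi(\yy)}} \EE{\prod_{a=1}^n O_{x_a y_a}}
\end{equation}
via Lemma~\ref{lem:kernel}. The same formula applied to $\tau \actN \xx$ has the same prefactor $\pi(\tau \actN \xx) = \pi(\xx)$, since permuting sites only relabels the particle number operators $n_i(\xx) \mapsto n_{\tau^{-1}(i)}(\xx)$ and $\pi$ depends only on the multiset of occupancies. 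Therefore it suffices to prove that, for every $\xx,\yy\in\Ln$ and every $\tau\in S_N$,
\begin{equation}\label{eq:invariance-target}
\EE{\prod_{a=1}^n O_{\tau(x_a) y_a}} = \EE{\prod_{a=1}^n O_{x_a y_a}}.
\end{equation}

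To establish \eqref{eq:invariance-target}, let $P_\tau \in O(N)$ be the permutation matrix with $(P_\tau)_{ij} = \delta_{i,\tau(j)}$, so that $(P_\tau^{-1} \orth)_{ij} = O_{\tau(i)j}$. If $\tau$ is even, then $P_\tau^{-1} \in SO(N)$, and by left invariance of Haar measure on $SO(N)$ the matrix $P_\tau^{-1} \orth$ is again Haar distributed, giving \eqref{eq:invariance-target} immediately. If $\tau$ is odd, choose the reflection $R = \mathrm{diag}(-1,1,\ldots,1)$, so that $R P_\tau^{-1} \in SO(N)$ and $R P_\tau^{-1} \orth$ is Haar on $SO(N)$. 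The entries satisfy
\begin{equation}
(R P_\tau^{-1} \orth)_{ij} = (-1)^{\one{i=1}} O_{\tau(i) j},
\end{equation}
so $\prod_{a=1}^n (R P_\tau^{-1} \orth)_{x_a y_a} = (-1)^{n_1(\xx)} \prod_{a=1}^n O_{\tau(x_a) y_a}$. Because $\xx \in \Ln$, the count $n_1(\xx)$ is even, and the sign drops out. Taking expectations and using Haar invariance of the left-hand side yields \eqref{eq:invariance-target} in this case as well.

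Combining the two displays,
\begin{equation}
f(\tau \actN \xx) = \sum_{\yy \in \Ln} \frac{\pi(\yy) f(\yy)}{\sqrt{\pi(\xx)\pi(\yy)}} \EE{\prod_{a=1}^n O_{\tau(x_a) y_a}} = \sum_{\yy \in \Ln} \frac{\pi(\yy) f(\yy)}{\sqrt{\pi(\xx)\pi(\yy)}} \EE{\prod_{a=1}^n O_{x_a y_a}} = f(\xx),
\end{equation}
as desired. The only real subtlety is the odd-permutation case; without the evenness constraint on $n_i(\xx)$ baked into the definition of $\Ln$, the sign correction from restricting Haar to $SO(N)$ rather than $O(N)$ would obstruct the argument, and this is exactly where the configuration-space parity condition earns its keep.
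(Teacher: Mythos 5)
Your proof is correct and follows essentially the same route as the paper's: both reduce the claim to left-invariance of Haar measure via the explicit kernel-projection formula of Lemma~\ref{lem:kernel}, you by expanding $f=\kproj f$ over $\delta$-functions and the paper by first showing $\kproj\delta_\xx=\kproj\delta_{\tau\actN\xx}$ and then using self-adjointness of $\kproj$. In fact your treatment is slightly more careful: the paper asserts that the permutation matrix lies in $SO(N)$, which fails for odd $\tau$, whereas your reflection-correction argument, using that $n_1(\xx)$ is even on $\Ln$, closes exactly that gap.
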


\begin{proof}
Let $\lperm^\tau = (a^\tau_{ij})_{i,j=1}^N \in SO(N)$ be the orthogonal matrix with entries $a^\tau_{ij} = \one{j = \tau(i)}$.  This is the (left) permutation matrix corresponding to $\tau$ in the sense that for any $N \times N$ matrix $\dat=(h_{ij})_{i,j=1}^N$, we have $[\lperm^\tau \dat]_{ij} = h_{\tau(i)j}$.  I claim that 
\begin{equation}\label{eq:kernel_permute}
\kproj \delta_\xx = \kproj \delta_{\tau \actN \xx}
\end{equation}
for every $\xx \in \Ln$.  To see this, appeal to the Lemma \ref{lem:kernel} and use duality.  Indeed, when paired against any other $\delta$-function, $\delta_\yy$, $\yy \in \Ln$, we get
\begin{equation}
\langle \kproj \delta_{\tau \actN \xx}, \delta_\yy \rangle = \EE{\frac{\prod_{a=1}^n O_{\tau(x_a) y_a}}{\sqrt{\pi(\xx)\pi(\yy)}}} =\EE{\frac{\prod_{a=1}^n [\lperm^\tau \orth]_{x_a y_a}}{\sqrt{\pi(\xx)\pi(\yy)}}} = \EE{\frac{\prod_{a=1}^n O_{x_a y_a}}{\sqrt{\pi(\xx)\pi(\yy)}}} = \langle \kproj \delta_\xx, \delta_\yy \rangle
\end{equation}
where the third inequality holds because Haar measure is left-translation invariant.  Equipped with \eqref{eq:kernel_permute}, we can directly compute
\begin{equation}
f(\xx) = \langle \delta_\xx, f \rangle = \langle \delta_\xx, \kproj f \rangle = \langle \kproj \delta_\xx, f \rangle = \langle \kproj \delta_{\tau \actN \xx}, f \rangle = \langle \delta_{\tau \actN \xx}, \kproj f \rangle = \langle \delta_{\tau \actN \xx}, f \rangle = f(\tau \actN \xx).
\end{equation}
since $f \in \cap_{1 \leq i < j \leq N} \ker(\gen_{ij})$ if and only if $f = \kproj f$.
\end{proof}

The previous result gave a weak upper bound for the global kernel of the generator for the distinguishable eigenvector moment flow.  This will be used in the last step of the proof of Proposition \ref{prop:FSP} as well as in a path counting argument in Proposition \ref{prop:poincare}.  The following result provides a lower bound for the global kernel.  This will be used in Proposition \ref{prop:l2} through knowing that the ansatz is time invariant.

\begin{lemma}[Kernel classification] \label{lem:globalker}
For any  perfect matching $\sigma \in M_n$, define the \emph{stratum indicator} $\chi_\sigma : \Ln \rightarrow \RR$ by
\begin{equation}
\chi_\sigma(\xx) = \frac{\one{\sigma \actn \xx = \xx}}{\sum_{\sigma' \in M_n} \one{\sigma' \actn \xx = \xx}} = \frac{\one{\sigma \actn \xx = \xx}}{\sqrt{\pi(\xx)}}
\end{equation}
Then the global kernel $\cap_{1 \leq i < j \leq N} \ker(\gen_{ij})$ contains the $n!!$ dimensional subspace of $L^2(\Ln)$ spanned by the eigenbasis $\chi_\sigma$, $\sigma \in M_n$.  That is
\begin{equation}
\gen_{ij} \chi_\sigma = 0
\end{equation}
for all $1 \leq i < j \leq N$, $\sigma \in M_n$.
\end{lemma}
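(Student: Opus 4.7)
The plan is to verify $\gen_{ij}\chi_\sigma(\xx)=0$ for each fixed $1\le i<j\le N$, each $\sigma\in M_n$, and each $\xx\in\Ln$ by direct computation, organized by how $\xx$ sits relative to the $\sigma$-stratum $\Sigma_\sigma:=\{\xx\in\Ln:\sigma\actn\xx=\xx\}$. The key combinatorial data is $m_k:=\tfrac12 n_k(\xx)$ for $\xx\in\Sigma_\sigma$ (the number of $\sigma$-pairs located at site $k$), together with the identity $(2m+2)!!=(2m+1)(2m)!!$, which governs how $\sqrt{\pi(\xx)}=\prod_k n_k(\xx)!!$ changes when a two-particle transition relocates a $\sigma$-pair. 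Call a $\sigma$-pair \emph{split} if its two labels sit at distinct sites. Since every $m_{ab}^{ij}$ and $s_{ab}^{ij}$ involves only labels at sites $i,j$, a $\gen_{ij}$-neighbor of $\xx$ can lie in $\Sigma_\sigma$ only if all splits of $\xx$ (if any) are between $i$ and $j$; and a parity argument shows that any single move or swap changes the number of such splits by $0$ or $\pm 2$. Combined with the $\Ln$-parity constraint that the number of split-halves at each site must be even, this leaves only two nontrivial subcases: $\xx$ has $0$ splits between $i,j$ (so $\xx\in\Sigma_\sigma$) or exactly $2$ splits between $i,j$ (and no other splits). All other configurations satisfy $\gen_{ij}\chi_\sigma(\xx)=0$ trivially because every term in the generator vanishes individually.

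For the first nontrivial subcase $\xx\in\Sigma_\sigma$, I partition the ordered pairs $(a,b)$ with $x_a=x_b=i$ into those with $b=\sigma(a)$ (exactly $2m_i$ ordered pairs; $m_{ab}^{ij}\xx$ stays in $\Sigma_\sigma$) and those with $b\neq\sigma(a)$ (exactly $4m_i(m_i-1)$ ordered pairs; the image leaves $\Sigma_\sigma$, so $\chi_\sigma$ vanishes there). Using $\sqrt{\pi(m_{ab}^{ij}\xx)}/\sqrt{\pi(\xx)}=(2m_j+1)/(2m_i-1)$ for the first block, the prefactor $(n_j+1)/(n_i-1)=(2m_j+1)/(2m_i-1)$ exactly cancels the length ratio and the $i\to j$ half contributes $-4m_im_j/\sqrt{\pi(\xx)}$; the symmetric $j\to i$ half contributes the same, giving $\move_{ij}\chi_\sigma(\xx)=-8m_im_j/\sqrt{\pi(\xx)}$. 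For the exchange, every nontrivial swap sends $\xx$ out of $\Sigma_\sigma$ (the two swapped labels are matched to partners at their original sites, so after the swap both pairs become split), so $\chi_\sigma$ vanishes on all swap images and a direct count of the $n_i n_j=4m_im_j$ eligible ordered pairs yields $\exch_{ij}\chi_\sigma(\xx)=-8m_im_j/\sqrt{\pi(\xx)}$. The two contributions cancel exactly.

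For the second nontrivial subcase, write the two split pairs as $\{c_1,c_2\}$ and $\{d_1,d_2\}$ with $c_1,d_1$ at $i$ and $c_2,d_2$ at $j$, and let $p_i,p_j$ count the fully-matched $\sigma$-pairs at $i,j$, so $n_i=2p_i+2$ and $n_j=2p_j+2$. A short enumeration (using that any move of a fully-matched pair preserves the number of splits and hence leaves $\xx$ outside $\Sigma_\sigma$) shows that the only ordered pairs $(a,b)$ producing a $\Sigma_\sigma$-valued image are $(c_1,d_1),(d_1,c_1)$ for $m_{ab}^{ij}$; their counterparts $(c_2,d_2),(d_2,c_2)$ for $m_{ab}^{ji}$; and $(c_1,d_2),(d_1,c_2)$ for $s_{ab}^{ij}$. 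Computing each image's $\pi$-value via repeated use of $(2p+2)!!=(2p+1)(2p)!!$ and writing $C=\prod_{k\neq i,j}n_k(\xx)!!$, both $\move_{ij}\chi_\sigma(\xx)$ and $\exch_{ij}\chi_\sigma(\xx)$ collapse to the common value $4/[(2p_i+1)(2p_j+1)(2p_i)!!(2p_j)!!\,C]$ and cancel in $\gen_{ij}=\move_{ij}-\exch_{ij}$. The main obstacle in the whole argument is not conceptual but bookkeeping — correctly enumerating ordered $(a,b)$-contributions subcase by subcase and tracking how $\sqrt{\pi}$ changes under each transition so that the prefactors of $\move_{ij}$ align with the $\chi_\sigma$-ratios to match the exchange contribution exactly.
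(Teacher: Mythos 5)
Your proof is correct and follows essentially the same route as the paper's: a pointwise case analysis showing $\move_{ij}\chi_\sigma(\xx)=\exch_{ij}\chi_\sigma(\xx)$ according to whether $\xx$ lies in the $\sigma$-stratum, has exactly two $\sigma$-pairs split between $i$ and $j$, or neither. Your computed values $-8m_im_j/\sqrt{\pi(\xx)}=-2n_i(\xx)n_j(\xx)\pi(\xx)^{-1/2}$ and $4/\sqrt{\pi(\xx)}$ agree with the ones the paper asserts without detail, so you have in effect supplied the bookkeeping the paper omits.
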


\begin{proof}
For any $\sigma \in M_n$, $\xx \in \Ln$, and $i \neq j \in [N]$, there are three possibilities.  Either
\begin{itemize}
\item $\sigma \actn \xx = \xx$ in which case 
\begin{equation}
(\move_{ij} \chi_\sigma)(\xx) = (\exch_{ij}\chi_\sigma)(\xx) = -2n_i(\xx)n_j(\xx)\pi(\xx)^{-1/2}
\end{equation}
\item $\sigma \actn \xx \neq \xx$ but there exists indices $a \neq b \in [n]$ such that the matching $\sigma$ satisfies and there exists $a \neq b \in [n]$ such that $x_a=x_b=i$, $x_{\sigma(a)} = x_{\sigma(b)} = j$ and $x_{\sigma(c)} = x_c$ for all $c \in [n] \backslash \{a,b,\sigma(a), \sigma(b)\}$, in which case
\begin{equation}
(\move_{ij} \chi_\sigma ) (\xx) = ( \exch_{ij} \chi_\sigma )(\xx) = 4\pi(\xx)^{-1/2}
\end{equation}
\item $\sigma \actn \xx \neq \xx$ and $\chi_\sigma$ vanishes on $\xx$ and every configuration formed by any two particle jump or swap originating at $\xx$.  In this case, we have  
\begin{equation}
(\move_{ij} \chi_\sigma)(\xx) = (\exch_{ij} \chi_\sigma)(\xx) = 0.
\end{equation}
\end{itemize}
In all three cases, $(\gen_{ij} \chi_\sigma)(\xx) = (\move_{ij} \chi_\sigma)(\xx) - (\exch_{ij}\chi_\sigma)(\xx) = 0$. This implies that 
\begin{equation}
\vspan \{\chi_\sigma | \sigma \in M_n\} \subset \cap_{ij} \ker(\gen_{ij}).
\end{equation}
\end{proof}

\begin{remark}
In fact, the converse containment holds as well so the global kernel is precisely the span of all $\chi_\sigma$, $\sigma \in M_n$.  Is not needed throughout the paper so we do not provide the proof here.  It can however be deduced as an immediate consequence of the Poincar\'e inequality, Proposition \ref{prop:poincare}.  For completeness, a proof is included in Section \ref{sec:energy} after the Nash inequality where some convenient notation is introduced.  See Corollary \ref{cor:globalker_complete} for the complete proof.
\end{remark}

\begin{corollary}\label{cor:ansatzker}
The ansatz observable is time invariant with respect to colored eigenvector moment flow
\begin{equation}
\gen_s F_t(\xx; \yy) = 0
\end{equation}
for all $\xx, \yy \in \Ln$ and $s \geq 0$.
\end{corollary}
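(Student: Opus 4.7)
The plan is to rewrite the ansatz observable as a linear combination of the stratum indicators $\chi_\sigma$ from Lemma~\ref{lem:globalker} with coefficients that are constant in $\xx$, and then invoke that lemma directly. The crucial structural point is that although the indexing set in the definition of $F_t(\xx;\yy)$ is $M_n \cap \Stab_{S_n}(\xx)$ (which depends on $\xx$), the summand itself depends only on $\sigma$ and $\yy$. So I would first note the identity
\begin{equation}
\one{\sigma \in M_n \cap \Stab_{S_n}(\xx)} = \one{\sigma \in M_n} \cdot \one{\sigma \actn \xx = \xx} = \one{\sigma \in M_n} \sqrt{\pi(\xx)}\, \chi_\sigma(\xx),
\end{equation}
and use this to express
\begin{equation}
F_t(\xx;\yy) = \sum_{\sigma \in M_n} c_\sigma(\yy)\, \chi_\sigma(\xx), \qquad c_\sigma(\yy) := \sqrt{\prod_{a=1}^n \ipr{ \vv_a, \tfrac{1}{2}(\cov_{y_a} + \cov_{y_{\sigma(a)}}) \vv_{\sigma(a)}}},
\end{equation}
where the factors of $\sqrt{\pi(\xx)}$ from $\chi_\sigma(\xx)$ cancel with the normalization $1/\sqrt{\pi(\xx)}$ in the definition of $F_t$.

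Once this decomposition is in hand, the corollary is immediate: Lemma~\ref{lem:globalker} gives $\gen_{ij} \chi_\sigma = 0$ for every $\sigma \in M_n$ and every $1 \leq i < j \leq N$, hence by linearity $\gen_{ij} F_t(\cdot;\yy) = 0$ for every $\yy$ and every pair $i<j$. Since Theorem~\ref{thm:levmf} expresses the generator as $\gen_s = \sum_{i<j} c_{ij}(s) \gen_{ij}$, we conclude $\gen_s F_t(\cdot;\yy) = 0$ for every $s\geq 0$.

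There is no real obstacle here; the only thing to verify carefully is the bookkeeping that lets the $\yy$-dependent coefficient $c_\sigma(\yy)$ be pulled out of the sum over $\sigma$ in the definition of $F_t$. The fact that the covariance factors involve only $y_a$ and $y_{\sigma(a)}$ (not $x_a$ or $x_{\sigma(a)}$) is what makes this work, and it is precisely why the ansatz was designed with the decoupled form \eqref{eq:ansatz_decouple} in the first place. No analytic or probabilistic input is needed beyond the algebraic kernel computation already performed in Lemma~\ref{lem:globalker}.
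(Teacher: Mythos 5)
Your proof is correct and is essentially the paper's own argument: both express $F_t(\cdot;\yy)$ as a linear combination of the stratum indicators $\chi_\sigma$ with $\xx$-independent coefficients and then invoke Lemma~\ref{lem:globalker} together with linearity of $\gen_s = \sum_{i<j} c_{ij}(s)\gen_{ij}$. Your bookkeeping of the indicator $\one{\sigma\actn\xx=\xx} = \sqrt{\pi(\xx)}\,\chi_\sigma(\xx)$ cancelling the $1/\sqrt{\pi(\xx)}$ normalization is exactly the right observation.
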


\begin{proof}
Appealing to Lemma \ref{lem:globalker}, $F_t$ is a linear combination of $\chi_\sigma$, $\sigma \in M_n$.  Indeed
\begin{equation}
F_t(\xx; \yy) = \sum_{\sigma \in M_n} \chi_\sigma(\xx) \left( \one{\sigma \actn \yy = \yy}\prod_{a=1}^n \sqrt{\ipr{ \vv_a, \frac{\Im \grn_{\fc,t}(\gamma_{y_a}(t)+it))}{\Im m_{\fc,t}(\gamma_{y_a}(t)+it)}  \vv_{\sigma(a)} }} \right).
\end{equation}
\end{proof}

We conclude this section by providing a simple yet convenient algebraic representations for the quadratic form induced by the colored eigenvector moment flow dynamics which will be useful in Sections \ref{sec:l2relax} and \ref{sec:energy}.
\begin{lemma}[Integration by-parts]\label{lem:ibp}
For any $f, g \in L^2(\Ln)$ and any $s \geq 0$,
\begin{equation}
\ipn{ f, \gen_s g } = - \frac{1}{2} \sum_{\xx \neq \yy} \pi(\xx) \pi(\yy) \gen_{\xx\yy}(s) \left( f(\xx) - f(\yy) \right) \left( g(\xx) - g(\yy) \right)
\end{equation}
where $\gen_{\xx\yy}(s) = \ipn{ \delta_\xx, \gen_s \delta_\yy }$ is symmetric.
\end{lemma}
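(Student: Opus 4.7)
The plan is to reduce the identity to two facts about the matrix elements $\gen_{\xx\yy}(s) = \ipn{\delta_\xx, \gen_s \delta_\yy}$: \emph{(i)} symmetry $\gen_{\xx\yy}(s) = \gen_{\yy\xx}(s)$, and \emph{(ii)} the ``row-sum zero'' relation $\sum_{\yy \in \Ln} \pi(\yy)\,\gen_{\xx\yy}(s) = 0$ for each $\xx$. Both follow from what has already been proved in Theorem~\ref{thm:levmf}: reversibility of $\gen_s$ with respect to $\pi$ gives (i) directly, since
\begin{equation}
\gen_{\xx\yy}(s) = \ipn{\delta_\xx, \gen_s \delta_\yy} = \ipn{\gen_s \delta_\xx, \delta_\yy} = \ipn{\delta_\yy, \gen_s \delta_\xx} = \gen_{\yy\xx}(s),
\end{equation}
while (ii) follows because each $\gen_{ij}$ annihilates constants (the differences $f(m_{ab}^{ij}\xx) - f(\xx)$ and $f(s_{ab}^{ij}\xx) - f(\xx)$ vanish when $f \equiv 1$), and $\mathbf{1} = \sum_{\yy} \pi(\yy) \delta_\yy$ in the basis dual to $\{\delta_\yy\}$ under $\ipn{\cdot,\cdot}$, so
\begin{equation}
0 = (\gen_s \mathbf{1})(\xx) = \ipn{\delta_\xx, \gen_s \mathbf{1}} = \sum_{\yy} \pi(\yy)\, \gen_{\xx\yy}(s).
\end{equation}

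Next I would expand the left-hand side using the basis identity $f = \sum_{\xx} \pi(\xx) f(\xx) \delta_\xx$, which is valid since $\ipn{\delta_\yy, f} = f(\yy)$ by Definition~\ref{def:inner_product}. This gives
\begin{equation}
\ipn{f, \gen_s g} = \sum_{\xx,\yy \in \Ln} \pi(\xx)\pi(\yy)\, f(\xx)\, g(\yy)\, \gen_{\xx\yy}(s).
\end{equation}
The remaining step is the standard Dirichlet-form symmetrization: expand
\begin{equation}
(f(\xx)-f(\yy))(g(\xx)-g(\yy)) = f(\xx)g(\xx) + f(\yy)g(\yy) - f(\xx)g(\yy) - f(\yy)g(\xx),
\end{equation}
multiply by $\pi(\xx)\pi(\yy)\gen_{\xx\yy}(s)$, and sum over all $(\xx,\yy) \in \Ln \times \Ln$ (the diagonal $\xx = \yy$ contributes zero, so restricting to $\xx \neq \yy$ is cosmetic). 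By the symmetry (i), the two ``diagonal'' terms $f(\xx)g(\xx)$ and $f(\yy)g(\yy)$ give equal contributions, and likewise the two ``off-diagonal'' terms $f(\xx)g(\yy)$ and $f(\yy)g(\xx)$ give equal contributions. After grouping,
\begin{equation}
-\tfrac{1}{2}\sum_{\xx,\yy}\pi(\xx)\pi(\yy)\gen_{\xx\yy}(s)(f(\xx)-f(\yy))(g(\xx)-g(\yy)) = -\sum_\xx \pi(\xx) f(\xx) g(\xx) \Big(\sum_\yy \pi(\yy) \gen_{\xx\yy}(s)\Big) + \sum_{\xx,\yy}\pi(\xx)\pi(\yy)\gen_{\xx\yy}(s) f(\xx) g(\yy),
\end{equation}
and the first sum vanishes by (ii), leaving precisely $\ipn{f, \gen_s g}$.

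There is no real obstacle here; the only point to be careful about is bookkeeping the normalization of $\delta_\xx$ (which satisfies $\delta_\xx(\yy) = \pi(\xx)^{-1}\one{\xx = \yy}$, so the representation $f = \sum_\xx \pi(\xx) f(\xx)\delta_\xx$ and the identity $\ipn{\delta_\xx, g} = g(\xx)$ are consistent with the weighted inner product in \eqref{eq:p-norm}). Both (i) and (ii) are direct consequences of results already established in Theorem~\ref{thm:levmf}, so the proof is essentially algebraic manipulation after that point.
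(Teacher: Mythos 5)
Your proposal is correct and follows essentially the same route as the paper: both arguments rest on the symmetry $\gen_{\xx\yy}=\gen_{\yy\xx}$ from reversibility and the row-sum identity $\sum_{\yy}\pi(\yy)\gen_{\xx\yy}=0$ from constants lying in $\ker(\gen_s)$, followed by the standard Dirichlet-form symmetrization (the paper writes the form two ways and averages, while you expand the right-hand side and cancel --- the same computation read in the opposite direction).
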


\begin{proof}
Constant functions are in the kernel of $\gen_s$.  For instance, the vector of all ones is the sum of all stratum indicators: $\sum_{\xx \in \Ln} \pi(\xx) \delta_\xx = \sum_{\sigma \in M_n} \chi_\sigma \in \ker(\gen_s)$.  In particular, 
\begin{equation}
0 = \ipn{ \delta_\xx, \gen_s \sum_{\yy \in \Ln} \pi(\yy) \delta_\yy } = \sum_{\yy \in \Ln} \pi(\yy) \gen_{\xx\yy}.
\end{equation}
Replace the diagonal terms in the left hand side inner product with the relation $\pi(\xx) \gen_{\xx\xx} = -\sum_{\yy \neq \xx} \pi(\yy)\gen_{\xx\yy}$ to get
\begin{equation}
\ipn{ f, \gen_s g } = \sum_{\xx,\yy \in \Ln} \pi(\xx) \pi(\yy) \gen_{\xx\yy} f(\xx) g(\yy) = \sum_{\xx \neq \yy} \pi(\xx) \pi(\yy) \gen_{\xx\yy} f(\xx) \left(g(\yy) - g(\xx) \right).
\end{equation}
Similirly, swapping the roles of $\xx$ and $\yy$ gives
\begin{equation}
\ipn{ f, \gen_s g } = \sum_{\xx \neq \yy} \pi(\xx) \pi(\yy) \gen_{\xx\yy} f(\yy) \left(g(\xx) - g(\yy)\right)
\end{equation}
by reversibility.  Taking the average of the previous two identities gives the desired result.
\end{proof}

\begin{definition}
The Dirichlet form is a positive semidefinite quadratic form given by
\begin{equation}\label{def:dirichlet}
\dir_s(f) = \ipn{ f, (-\gen_s) f } = \frac{1}{2} \sum_{\xx \neq \yy} \pi(\xx) \pi(\yy) \gen_{\xx\yy}(s) |f(\xx) - f(\yy)|^2
\end{equation}
and describes the change in $L^2$ norm for the colored eigenvector moment observable, 
\begin{equation}\label{eq:dir_derivative}
\partial_s \|f_s\|_2^2 = \partial_s \|f_s - \kproj f_s\|_2^2 = -2 \dir_s(f_s)
\end{equation}
for all $s > 0$.
\end{definition}

\subsection{Particle configurations --- distinguishable, indistinguishable, and everything in between}\label{sec:colorlattice}

The \emph{colorblind map} $\forget : \Ln \rightarrow \On$ defined in Definition \ref{def:distinguishable} is compatible with the eigenvector moment flow dynamics.  To make this precise, define the functional pullback and pushforward operators by
\begin{equation}\label{eq:pushpull}
\forget^* f(\xx) = f(\forget(\xx)) \quad \mbox{and} \quad \forget_* g(\bet) = \pi(\forget^{-1}(\bet))^{-1} \sum_{\xx \in \forget^{-1}(\bet)} \pi(\xx) g(\xx)
\end{equation}
respectively for all $f \in L^2(\On)$ and $g \in L^2(\Ln)$.  It can be shown that $\gen_s \forget^* f  = \forget^* \oldgen_s f$ and $\forget_* \gen_s g = \oldgen_s \forget_* g$ where $\oldgen_s$ is the generator from \cite[Theorem 3.1]{QUE}.  In fact, a special case of Lemma \ref{lem:comm_genep} is that $[\gen_s, \forget^*\forget_*] = 0$.  Furthermore, the reversible measure of $\oldgen_s$ also introduced in \cite[Theorem 3.1]{QUE} is, up to a constant factor, the pushforward measure $\forget_* \pi$.  See Appendix \ref{app:rev_meas} for details.

These observations may be extended to a lattice, coined the \emph{coloring lattice}, where only specified particles become indistinguishable from one another corresponding to some equivalence relations. For the sake of this generalization, reinterperet the indistinguishable particle configuration space as the quotient $\On = \Ln / S_n$ of the distinguishable particle configuration space by the $S_n$ action, $\actn$.  Moreover, the colorblind map can be interpereted as the quotient map $\forget : \Ln \rightarrow \Ln / S_n$.

\begin{definition}[Partition, lattice, and groups]\label{defn:partition}
A \emph{partition of $[n]$} is a covering set of disjoint subsets of $[n]$.  That is, $\pP = \{P_1, \ldots, P_m\}$ for some $1 \leq m \leq n$, where $P_i \subset [n]$ for each $i \in [m]$, is a partition of $[n]$ if and only if $P_i \cap P_j = \varnothing$ when $i \neq j \in [m]$ and $\cup_{i=1}^m P_i = [n]$.  Endow the set of partitions of $[n]$ with a lattice structure ordered by refinement: $\pP \leq \pQ$ if and only if for all $P \in \pP$ there exists $Q \in \pQ$ such that $P \subset Q$.  Say that two labels $a,b \in [n]$ belong to the same $\pP$-part and write $a \eqp b$ if there exists $i \in [m]$ such that $a \in P_i$ and $b \in P_i$.  Lastly, for any partition $\pP$, let $\symP = \{\sigma \in S_n | \sigma \actn a \eqp a \mbox{ for all } a \in [n]\}$ denote the set of permutations \emph{compatible} with $\pP$.
\end{definition}

The colored configuration space $(\Lambda^\pP, \pi_\pP, \gen_s^\pP) = (\Ln, \pi, \gen_s) / \sym_\pP$ is the measure space which can now be interpereted as the space of particle configurations where two particles $a,b \in [n]$ are indistinguishable if and only if they have the same color $a \eqp b$.  The quotient map $\forget_\pP : \Ln \rightarrow \Lambda^\pP = \Ln / \symP$ remembers particle numbers of each color at every site, but not their original labels.  The colored measure and dynamics are given by the pushforward measure $\pi_\pP = (\forget_\pP)_* \pi$ and the functional pullback and pushforward from \eqref{eq:pushpull} $\gen_s^\pP = (\forget_\pP)_* \gen_s \forget_\pP^*$ of the operator $\gen_s$, respectively.

The indistinguishable particle configuration space can now be thought of as the colored particle configuration space with coloring $\pP = \{[n]\}$ for all $a \in [n]$ whereas the indistinguishable particle configuration space is identified with the colored configuration space with coloring $[\{n\}]$.

The use of the intermediate colored particle configuration spaces will play a central role in our proof of the Poincar\'e inequality.

\section{$L^2$ decay}\label{sec:l2}

In this section we prove an averaged form of convergence taking the form of fast decay in $L^2$ deviation to equilibrium for a local neighborhood.

For this section, fix time $t$ from Theorem \ref{thm:main} and an initial time $t/2 < t_0 < t$ which hence satisfies the conditions of the results from Section \ref{sec:fc}.
\begin{definition}\label{defn:short_range}
Fix $N$-dependent length scales $1 \ll \ell \ll K \ll Nt$, to be specified later.  Consider the \emph{averaging operator} on scale $K$ centered at $\yy\in\Ln$, $\Av(K, \yy)$.  The averaging operator serves as a mollified indicator for the $K$-neighborhood of $\yy \in \Lambda^n$.  It is a diagonal operator given by
\begin{equation}
\Av(K, \yy) f(\xx) = \Av(\xx; K,\yy) f(\xx) \quad \mbox{where} \quad \Av(\xx; K, \yy) = \frac{1}{K} \sum_{\alpha = K}^{2K-1} \one{\|\xx-\yy\|_1 < \alpha}
\end{equation}
The $L^1$ difference between particle configurations appearing in the definition of the coefficients $\Av(\xx;K,\yy)$ is taken to be $\|\xx-\yy\|_1 = \sum_{a=1}^n |x_a - y_a|$.  Consider the \emph{local short range cutoff coefficients} on scale $\ell$ defined by
\begin{equation}
\cshort_{ij}(s) = \cshort_{ij}(s; \ell) = \begin{cases} \creg_{ij}(s) &\mbox{if } i,j\in\indint^{\kappa/10} \mbox{ and } |i-j| \leq \ell \\ 0 &\mbox{otherwise} \end{cases}
\end{equation}
where $\creg_{ij}(s)$ are the coefficients defined in Theorem \ref{thm:levmf}.  Define the \emph{short range generator} on scale $\ell$ by 
\begin{equation}
\short(s) = \short(s;\ell) = \sum_{i<j} \cshort_{ij}(s; \ell)\gen_{ij}
\end{equation}
where $\gen_{ij}=\move_{ij}-\exch_{ij}$ is from Theorem \ref{thm:levmf}.  For all times $s \geq t_0$, the \emph{short range observable} $g_s(\xx;\ell,K,\yy)$ centered at $\yy$, initialized on scale $K$, and evolving on scale $\ell$ is defined as the unique solution to the partial differential equation initialized at the locally mollified difference between the moment observable and the ansatz observable and driven forward dyanamically by the short range generator
\begin{equation}
\begin{cases}
g_{t_0}(\xx; \ell,K,\yy) = \Av(\xx;K,\yy) \left( f_{t_0}(\xx) - F_t(\xx; \yy) \right) \\
\partial_s g_s(\xx; \ell,K,\yy) = \short(s;\ell) g_s(\xx; \ell,K,\yy)
\end{cases}
\end{equation}
for all $s \geq t_0$.  Here $f_{t_0}$ is the colore eigenvector moment observable introduced in Theorem \ref{thm:levmf}.  Lastly, let $\semi_\short(s_1, s_2) = \semi_\short(s_1,s_2;\ell)$ be the transition semigroup associated with the short range generator on length scale $\ell$.  That is,
\begin{equation}
\partial_s \semi_\short(s_1,s;\ell) f = \short(s;\ell) \semi_\short(s_1,s;\ell) f
\end{equation}
for every $f \in L^2(\Ln)$ and times $t_0 \leq s_1 < s$.
\end{definition}

In this section, it is shown that for $\yy \in \Ln$ with sites supported on $\indint^\kappa$, $y_a\in\indint^\kappa$ for all $a\in[n]$, the short range observable $g_s(\xx; \ell,K,\yy)$ is essentially supported on the $K$-neighborhood of $\yy$ up until time $t$ and that its $L^2$ norm decays quickly relative to the support volume.

\subsection{Finite speed of propogation}\label{sec:fsp}

Define the \emph{regular configuration distance} between two particle configurations $\xx,\yy\in\Ln$ to be the maximal difference in positions between two corresponding particles in the two configurations accounting only for the part of the difference appearing over the regular sites $\indint^\kappa$.
\begin{equation}\label{eq:dist}
\dist(\xx,\yy) = \sup_{a \in [n]} | \indint^\kappa \cap [\min(x_a, y_a), \max(x_a, y_a)) |
\end{equation}
Note that while $\dist$ is not a metric as it is degenerate, $\dist$ is still symmetric and satisfies the triangle inequality.

The following finite speed estimate has been used in related literature several times.  The first introduction of this method was in \cite{EY15} which identified the optimal speed and probability scales for the bound.  Later, \cite{bourgade2017eigenvector} used the idea to counter eigenvalue fluctuations with using individual hops in the short term operator.
The novel argument in the proof provided below is to more abstractly counter eigenvalue fluctuations with the Dirichlet form using properties of the global kernel from Proposition \ref{lem:globalker}.

\begin{proposition}\label{prop:FSP}
Let $0 < \kappa < 1$, $\varepsilon > 0$, and $\ell \geq N^\varepsilon$.  Then for any $\xx,\yy \in \Lambda^n$ with $\dist(\xx,\yy) > N^\varepsilon \ell$, 
\begin{equation}
\sup_{t_0 \leq s_1 \leq s_2 \leq s_1 + \ell/N \leq t} |\semi_\short(s_1,s_2;\ell)_{\xx\yy}| \leq e^{-N^{\varepsilon/2}}
\end{equation}
with overwhelming probability.  
\end{proposition}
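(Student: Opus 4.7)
The plan is to carry out an exponential-weight $L^2$ energy argument in the spirit of Erd\H{o}s--Yau and Bourgade--Yau, suitably modified to handle the fact that $\short(s)$ is not positivity-preserving because of the exchange operators. Fix $\yy \in \Ln$, let $u_s(\xx) = \semi_\short(s_1,s;\ell)\delta_\yy(\xx)$, and introduce the exponential weight $W(\xx) = \exp(\alpha\,\dist(\xx,\yy))$ with a parameter $\alpha \in (0,1/\ell]$ to be tuned. The critical geometric observation is that every transition $\xx \to \xx'$ with $\short_{\xx\xx'}(s) \neq 0$ is a two-particle move or swap between sites $i,j \in \indint^{\kappa/10}$ with $|i-j|\leq \ell$, so the triangle inequality yields $|\dist(\xx,\yy)-\dist(\xx',\yy)| \leq |i-j|$ and hence $|W(\xx)-W(\xx')| \leq C\alpha|i-j|\max(W(\xx),W(\xx'))$ on the support of $\short_{\xx\xx'}(s)$.

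Differentiating $\|Wu_s\|_2^2$ in $s$, applying the integration-by-parts identity of Lemma \ref{lem:ibp}, and using the symmetric Leibniz split
\[
(W^2(\xx)u(\xx)-W^2(\xx')u(\xx'))(u(\xx)-u(\xx')) = \tfrac{1}{2}(W^2(\xx)+W^2(\xx'))(u(\xx)-u(\xx'))^2 + \tfrac{1}{2}(W^2(\xx)-W^2(\xx'))(u^2(\xx)-u^2(\xx'))
\]
presents $-\partial_s\|Wu_s\|_2^2$ as a sum $\mathcal{D}_s(u_s,W) + \mathcal{R}_s(u_s,W)$, with $\mathcal{D}_s$ a weighted Dirichlet-type form and $\mathcal{R}_s$ a gradient remainder. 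The principal obstacle is that $\short_{\xx\xx'}(s) = c^\short_{ij}(s)(\move_{\xx\xx'} - \exch_{\xx\xx'})$ has indefinite sign off the diagonal (Lemma \ref{lem:definite}), so $\mathcal{D}_s$ is not pointwise nonnegative and cannot absorb $\mathcal{R}_s$ in the naive way. My plan at this point is to invoke the kernel classification of Lemma \ref{lem:globalker} together with Corollary \ref{cor:spatial_kernel}: the pointwise identity $\move_{ij}\chi_\sigma = \exch_{ij}\chi_\sigma$ established in the proof of Lemma \ref{lem:globalker} allows one, after decomposing $u_s = \kproj u_s + u_s^\perp$, to rewrite the exchange contributions in both $\mathcal{D}_s$ and $\mathcal{R}_s$ as matched-pair move contributions acting on $u_s^\perp$, up to combinatorial constants depending only on $n$. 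This reorganization restores pointwise nonnegativity of $\mathcal{D}_s$ and is where the ``countering of eigenvalue fluctuations by the global kernel'' alluded to in the remark enters.

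With $\mathcal{D}_s \geq 0$ in hand, I discard it and estimate $\mathcal{R}_s$ via Cauchy--Schwarz together with the overwhelming-probability rigidity bound $\sum_{j:|i-j|\leq\ell} c^\short_{ij}(s)|i-j|^2 \leq CN\ell$, which follows from the per-eigenvalue rigidity \eqref{eq:rigidity} of Proposition \ref{prop:reg_eval}. This produces the differential inequality $\partial_s \|Wu_s\|_2^2 \leq C_n\alpha^2\ell N\|Wu_s\|_2^2$ on the rigidity event. Gr\"onwall over $s \in [s_1,s_2]$ with $s_2 - s_1 \leq \ell/N$ then yields $\|Wu_{s_2}\|_2^2 \leq \exp(C_n\alpha^2\ell^2)/\pi(\yy)$. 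Choosing $\alpha = c/\ell$ for a sufficiently small absolute constant $c > 0$ one has $W(\xx) \geq \exp(cN^\varepsilon)$ whenever $\dist(\xx,\yy) > N^\varepsilon\ell$, and the elementary bound $|u_{s_2}(\xx)| \leq W(\xx)^{-1}\pi(\xx)^{-1/2}\|Wu_{s_2}\|_2$ gives
\[
|\semi_\short(s_1,s_2;\ell)_{\xx\yy}| = |u_{s_2}(\xx)| \leq \exp\bigl(-cN^\varepsilon + \tfrac{1}{2}C_nc^2\bigr) \leq e^{-N^{\varepsilon/2}}
\]
for $N$ large. The heart of the argument, and the step I expect to require the most care, is the kernel-based rearrangement that restores positivity of $\mathcal{D}_s$ in the presence of the indefinite exchange off-diagonal entries; everything else is classical.
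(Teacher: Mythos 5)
There is a genuine gap, and it sits exactly at the step you flag as ``classical'': the claimed rigidity bound $\sum_{j:|i-j|\leq\ell}\cshort_{ij}(s)|i-j|^2\leq CN\ell$ is false with overwhelming probability. Eigenvalue rigidity \eqref{eq:rigidity} gives an \emph{upper} bound on $|\lambda_i(s)-\lambda_j(s)|$, hence a \emph{lower} bound on the rate $\cshort_{ij}(s)=\bigl(N(\lambda_i(s)-\lambda_j(s))^2\bigr)^{-1}$; it gives no lower bound on individual gaps, so the near-diagonal terms $|i-j|=O(1)$ contribute $\sum_{|i-j|=O(1)}|i-j|^2/(N(\lambda_i-\lambda_j)^2)$, which involves $(\lambda_{i+1}-\lambda_i)^{-2}$ and is not bounded by any deterministic quantity except on an event of merely polynomially small failure probability (level repulsion for DBM gives $\prob{\lambda_{i+1}-\lambda_i<sN^{-1}}\sim s^{2}$, not superpolynomial decay). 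Consequently your error term $\mathcal{R}_s$, which after Cauchy--Schwarz is controlled by $\sum\pi(\xx)\pi(\zz)|\short_{\xx\zz}(s)|\,v(\xx)v(\zz)\,\alpha^2|i-j|^2$, cannot be reduced to $C\alpha^2\ell N\,X_s$, and it cannot be absorbed into the discarded Dirichlet form either, since it carries $v(\xx)v(\zz)$ rather than $(v(\xx)-v(\zz))^2$. This is precisely why the paper does \emph{not} use a deterministic, index-based weight: it takes $\phi_s(\zz)=\exp\bigl(\nu\sum_a\psi_{y_a}(\lambda_{z_a}(s\wedge\tau))\bigr)$, a function of the \emph{eigenvalue positions}, so that along any admissible transition $|\varphi_s(\xx)-\varphi_s(\zz)|\leq n|\lambda_i-\lambda_j|$ and the cosh expansion produces a factor $C\nu^2(\lambda_i-\lambda_j)^2$ that exactly cancels the singular rate, leaving $O(\nu^2/N)$ per pair uniformly in the gap size. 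The price is that the weight is a stochastic process, generating DBM drift and quadratic-variation terms; the short-range part of the drift is itself singular and is absorbed by \emph{retaining} the Dirichlet form $\dir_\short(s;v_s)$ (together with a symmetrization over $(ij)\actN\xx$ and the spatial invariance of the kernel), which you cannot afford to discard.

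A secondary issue: the kernel-based rearrangement you propose to ``restore pointwise nonnegativity of $\mathcal{D}_s$'' is both unnecessary and not well-founded. Lemma \ref{lem:definite} already gives $\ipn{v,(\exch_{ij}-\move_{ij})v}\geq0$, i.e.\ quadratic-form positivity of $-\short(s)$, which is all the argument needs; pointwise positivity of the summand is never required. Moreover the identity $\move_{ij}\chi_\sigma=\exch_{ij}\chi_\sigma$ holds only when applied to the specific kernel elements $\chi_\sigma$ and does not yield a pointwise rewriting of the exchange contributions for a general $u_s^\perp$. The correct repair is to adopt the eigenvalue-dependent weight, run It\^o on $X_s=\|\phi_sr_s\|_2^2$ with a stopping time at the failure of rigidity, keep the Dirichlet form to dominate the short-range DBM drift, and bound the long-range drift by $\log N$ via a dyadic decomposition.
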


\begin{proof}
Fix the scale $\nu = N/\ell$.  Let $h_w$ be the sequence of continuous test functions for each $w \in \RR$ satisfying $\inf_x h_w(x) = 0$ and $h_w'(x) = \one{x \in I^{\kappa/2}}\sign(x-w)$.  Let $\chi(x)$ be any smooth nonnegative function supported on $[-1,1]$ with $\int \chi(x) dx = 1$ and $\|\chi\|_\infty \leq 1$.  Then, for all $i \in [N]$, let $\psi_i(x) = \int_\RR h_{\gamma_i(s_1)}(x-y)\nu\chi(\nu y) dy$. Record here the following bounds on $\psi_i$ and its derivatives
\begin{equation}\label{eq:fsp_psicontrol}
\|\psi_i - h_{\gamma_i(s_1)}\|_\infty \leq \frac{1}{\nu}, \quad \|\psi_i'\|_\infty \leq 1, \quad \mbox{and} \quad \|\psi_i''\|_\infty \leq \nu.
\end{equation}
Consider the stopping time \begin{equation}
\tau = \max\big(\inf\{s \geq s_1 | \mbox{ the conclusion of Proposition \ref{prop:reg_eval} fails} \}, s_1\big).
\end{equation}
By Proposition \ref{prop:reg_eval}, $\tau > s_2$ with overwhelming probability.  Define the following preliminary functions on configuration space
\begin{equation}
\varphi_s(\zz) = \sum_{a=1}^n \psi_{y_a}( \lambda_{z_a}(s \wedge \tau)) \quad \mbox{ and } \quad r_s(\zz) = \semi_\short(s_1, s \wedge \tau; \ell) \delta_\yy(\zz)
\end{equation}
for any time $s \geq s_1$ and configuration $\zz \in \Ln$ where $\yy$ is from the proposition statement.  Further define the following quantities building off of the preliminary functions
\begin{equation}
\phi_s(\zz) = e^{\nu \varphi_s(\zz)}, \quad v_s(\zz) = \phi_s(\zz) r_s(\zz), \quad X_s = \sum_{\zz\in\Ln}\pi(\zz)v_s(\zz)^2
\end{equation}
so that $\phi_s, v_s \in L^2(\Ln)$ for all $s \geq s_1$ and $X_s = \|v_s\|_2^2$ is the $L^2$ norm of $v_s$.  Note that $\phi_s$ and $X_s$ are positive.
The bulk of the proof consists of showing that for all $s_1 < s < s_2$
\begin{equation}\label{eq:FSPgoal}
\partial_s \EE{X_{s}} \leq C_1 \nu \log(N) \EE{X_{s}}
\end{equation}
for some constant $C_1 >0$ which depends only on $\varepsilon$ and $n$.

Suppose \eqref{eq:FSPgoal} holds.  The primary implication is that, as the initial condition is a $\delta$-function, the $L^2$ norm of $v_{s_1}$ satisfies $X_{s_1} = 1$, and therefore
\begin{equation}\label{eq:fsp_step1a}
\EE{X_{s_2}} \leq \exp(C_1\nu(s_2-s_1)\log(N)).
\end{equation}
By the assumption that $\dist(\xx,\yy) > N^\varepsilon\ell$ there must exist some label $b \in [n]$ such that $|[x_b, y_b) \cap \indint^\kappa| \geq N^\varepsilon \ell / n$.  In this case,
\begin{equation}\label{eq:fsp_philower}
\varphi_{s_2}(\xx) = \sum_{a=1}^n \psi_{y_a}(\lambda_{x_a}(s_2 \wedge \tau)) \geq \psi_{y_b}(\lambda_{x_b}(s_2 \wedge \tau))
\end{equation}
by the positivity of $\psi_i$ for all $i\in[N]$.  This is then lower bounded by decomposing the right hand side into four terms
\begin{multline}\label{eq:fsp_psilower}
\psi_{y_b}(\lambda_{x_b}(s_2\wedge\tau))
\geq \left| \int_{\gamma_{y_b}(s_1)}^{\gamma_{x_b}(s_1)} h_{\gamma_{y_b}}'(x) dx \right| 
- \left| h_{\gamma_{y_b}(s_1)}(\gamma_{y_b}(s_1)) - \psi_{y_b}(\gamma_{y_b}(s_1))\right| \\
- \left| h_{\gamma_{y_b}(s_1)}(\gamma_{x_b}(s_1)) - \psi_{y_b}(\gamma_{x_b}(s_1)) \right| 
- \left| \int_{\gamma_{y_b}(s_1)}^{\lambda_{x_b}(s_2\wedge\tau)}\psi_{y_b}'(x) dx \right|
\end{multline}
by the fundamental theorem of calculus and the triangle inequality.  
The first term in \eqref{eq:fsp_philower} is first computed with
\begin{equation}
\left| \int_{\gamma_{y_b}(s_1)}^{\gamma_{x_b}(s_1)} h_{\gamma_{y_b}}'(x) dx \right|  = |[\gamma_{x_b}(s_1), \gamma_{y_b}(s_1)) \cap \enint^{\kappa/2}|
\end{equation}
by the construction of $h$.  
By Proposition \ref{prop:reg_fc} and the definitions of $\enint^\kappa$ and $\indint^\kappa$, $[\gamma_{x_b}(s_1), \gamma_{y_b}(s_1)) \subset \enint^{\kappa/2}$.  Proposition \ref{prop:reg_fc} also implies that $|\gamma_{x_b}(s_1)-\gamma_{y_b}(s_1)| \geq 2c\ell N^{-1+\varepsilon}$ for some constant $c>0$ depending only on $\oa$.  See Lemma \ref{lem:reg_meso} below for a more general statement.  This implies
\begin{equation}\label{eq:fsp_psilower1}
\left| \int_{\gamma_{y_b}(s_1)}^{\gamma_{x_b}(s_1)} h_{\gamma_{y_b}}'(x)dx\right| \geq 2c\ell N^{-1+\varepsilon}.
\end{equation}
The second and third terms in \eqref{eq:fsp_philower} are controlled by the first bound in \eqref{eq:fsp_psicontrol}.  
\begin{equation}\label{eq:fsp_psilower23}
|h_{\gamma_{y_b}(s_1)}(\gamma_{x_a}(s_1)) - \psi_{y_b}(\gamma_{y_a}(s_1))| + |h_{\gamma_{y_b}(s_1)}(\gamma_{y_b}(s_1)) - \psi_{y_b}(\gamma_{y_b}(s_1))| \leq \frac{2}{\nu}
\end{equation}
The fourth term in \eqref{eq:fsp_philower} is controlled by the second bound in \eqref{eq:fsp_psicontrol}.
\begin{equation}\label{eq:fsp_psilower4}
\int_{\gamma_{y_b}(s_1)}^{\lambda_{x_b}(s_2\wedge\tau)}|\psi_{y_b}'(x)| dx \leq |\gamma_{x_b}(s_2\wedge\tau) - \gamma_{y_b}(s_1)| + |\gamma_{x_b}(s_2\wedge\tau) - \lambda_{x_b}(s_2\wedge\tau)| \leq (s_2-s_1)\log(N) + N^{\oc-1} \leq \frac{\ell \log(N)}{N}
\end{equation}
by Proposition \ref{prop:reg_fc} and Proposition \ref{prop:reg_eval}, respectively.  Combining \eqref{eq:fsp_philower}, \eqref{eq:fsp_psilower}, \eqref{eq:fsp_psilower1}, \eqref{eq:fsp_psilower23}, and \eqref{eq:fsp_psilower4} gives
\begin{equation}\label{eq:fsp_step1b}
\varphi_{s_2}(\xx) \geq c \ell N^{-1+\varepsilon}
\end{equation}
deterministically.
Putting \eqref{eq:fsp_step1a} and \eqref{eq:fsp_step1b} together,
\begin{equation}
\EE{r_{s_2}^2(\xx)} \leq \EE{ X_{s_2} \phi_{s_2}(\xx)^{-2}} \leq \exp(C_1 \nu(s_2-s_1) \log(N) - 2\nu c\ell N^{-1+\varepsilon}) \leq \exp(-c N^{\varepsilon}).
\end{equation}
The result now follows from Markov's inequality.
\begin{equation}
\prob{|\semi'(s_1,s_2)_{\xx\yy}| \geq e^{-N^{\varepsilon/2}}} \leq \prob{|r_{s_2}(\xx)| \geq e^{-N^{\varepsilon/2}}} + \prob{\tau < s_2} \leq \exp(-cN^\varepsilon + 2 N^{\varepsilon/2}) + \prob{\tau < s_2} < N^{-D}
\end{equation}

It remains to prove \eqref{eq:FSPgoal}.  This is done by applying It\^o's lemma to obtain three drift terms corresponding to colored eigenvector moment flow, drift from DBM, and quadratic variation of DBM, respectively.
\begin{equation}
dX_s = 2 \ipn{ \phi_s^2, r_s dr_s } + 2\ipn{ \phi_s d\phi_s, r_s^2 } + \ipn{ r_s^2, (d\phi_s)^2 }
\end{equation}
The first term is understood via the integration by-parts in Lemma \ref{lem:ibp}
\begin{align}
2&\ipn{ \phi_s^2 r_s, \short(s) r_s} \one{s < \tau} \\
&= -\sum_{\xx \neq \zz} \short_{\xx\zz} (\phi_s(\xx)^2r_s(\xx) - \phi_s(\zz)^2 r_s(\zz)) (r_s(\xx) - r_s(\zz))\pi(\xx)\pi(\zz)\one{s < \tau} \\
&=-\sum_{\xx \neq \zz} \pi(\xx)\pi(\zz) \short_{\xx\zz} \left( (v_s(\xx) - v_s(\zz))^2 - v_s(\xx)v_s(\zz) \left(\frac{\phi_s(\xx)}{\phi_s(\zz)} + \frac{\phi_s(\zz)}{\phi_s(\xx)} - 2\right) \right)\one{s < \tau} \\
&=-2\dir_\short(s;v)\one{s < \tau} + \sum_{\xx \neq \zz} \pi(\xx)\pi(\zz)\short_{\xx\zz}v_s(\xx)v_s(\zz) \left(\frac{\phi_s(\xx)}{\phi_s(\zz)} + \frac{\phi_s(\zz)}{\phi_s(\xx)} - 2\right)\one{s < \tau}
\end{align}
The Dirichlet term will be saved for later to dominate the DBM drift.  The final sum is treated as an error term of size
\begin{equation}
\left|\sum_{\xx \neq \zz} \pi(\xx)\pi(\zz)\short_{\xx\zz}v_s(\xx)v_s(\zz) \left(\frac{\phi_s(\xx)}{\phi_s(\zz)} + \frac{\phi_s(\zz)}{\phi_s(\xx)} - 2\right)\one{s < \tau}\right| \leq C \frac{\nu^2}{N} \sum_{\xx \neq \zz} (v_s(\xx)^2 + v_s(\zz)^2) \one{\short_{xy} \neq 0} \leq C' \nu X_s
\end{equation}
where $C, C'>0$ depend only on $n$ and $\oa$.  To obtain the first inequality, suppose $\short_{xy} \neq 0$ and assume $s < \tau$.  Then there exist sites $i \neq j \in \indint^{\kappa/10}$, $|i-j| < \ell$, and labels $a,b \in [n]$ such that $\zz \in \{m_{ab}^{ij}\xx, s_{ab}^{ij}\}\backslash\{\xx\}$.  In this case,
\begin{equation}
|\short_{\xx\zz}| \asymp_n \frac{1}{N\left(\lambda_i(s \wedge \tau) - \lambda_j(s \wedge \tau)\right)^2}
\end{equation}
whereas the factor with the ratio is bounded by
\begin{align}
0 \leq \frac{\phi_s(\xx)}{\phi_s(\zz)} + \frac{\phi_s(\zz)}{\phi_s(\xx)} - 2 
&= 2 \cosh( \nu (\varphi_s(\xx) - \varphi_s(\yy))) - 2 \\
&\leq \nu^2 |\varphi_s(\xx)-\varphi(\yy)|^2 + \frac{\nu^4}{12} |\varphi(\xx) - \varphi(\zz)|^4 \partial_\xi^4 \cosh(\xi) \leq C \nu^2 \left(\lambda_i(s \wedge \tau) - \lambda_j(s \wedge \tau)\right)^2
\end{align}
for some $|\xi| \leq \nu |\varphi_s(\xx)-\varphi_s(\zz)| \leq \nu^2\left(\lambda_i(s\wedge\tau) - \lambda_j(s\wedge\tau)\right)^2 \leq C'$ for some constants $C,C'>0$ depending on $\oa$.
To get the second inequality, simply note that for each $\xx$, $|\{\zz \in \Lambda^n | \short_{\xx\zz} \neq 0\}| = O(\ell)$ independent of $x$.

The DBM terms require differentiating $\phi_s$, so some preliminary derivatives are recorded now.
\begin{equation}
d\varphi_s(\xx) = \sum_{a=1}^n \psi_{y_a}'(\lambda_{x_a}(s\wedge\tau)) d\lambda_{x_a}(s\wedge\tau) + \frac{1}{2} \psi_{y_a}''(\lambda_{x_a}(s\wedge\tau))(d\lambda_{x_a}(s\wedge\tau))^2
\end{equation}
where the eigenvalues are driven by DBM
\begin{equation}
d\lambda_k(s) = \sqrt{\frac{2}{N}} dB_k(s) + \frac{1}{N} \sum_{j \neq k} \frac{1}{\lambda_k(s) - \lambda_j(s)}ds
\end{equation}
with $B_k(s)$ independent standard Brownian motions.  This differential satisfies
\begin{equation}
\frac{(d\varphi_s(\zz))^2}{ds} = \frac{1}{N} \sum_{a,b} \psi_{y_a}'(\lambda_{x_a}(s\wedge\tau))\psi_{y_a}'(\lambda_{x_b}(s\wedge\tau)) \one{x_a=x_b}\one{s<\tau} \leq \frac{n}{N}
\end{equation}
and
\begin{equation}
\phi_s^{-1}(\zz) d\phi_s(\zz) = \nu d\varphi_s(\zz) + \frac{1}{2} \nu^2 (d\varphi_s(\zz))^2
\end{equation}
The DBM quadratic variation term is lower order
\begin{equation}
\frac{\ipn{ r_s^2, (d\phi_s)^2 }}{ds} = \nu ^2 \ipn{v_s^2, \frac{(d\varphi_s)^2}{ds} } \leq n\frac{\nu}{\ell} X_s.
\end{equation}
Lastly, the DBM drift term requires more subtle analysis.  The $\varphi_s$-QV term is of the same order as the $\phi_s$-QV contribution.  The $\varphi_s$-drift term will be split into a short range and long range contribution.  The long range contribution is bounded by the logarithmic decay in eigenvalue interactions (dyadic decomposition).  The short range part is large but explicitly controlled by the short range Dirichlet form.
\begin{align}
\frac{\EE{\ipn{r_s^2, \phi_s d\phi_s }}}{ds} 
=& \frac{\EE{\nu \ipn{ v_s^2, d\varphi_s }}}{ds} + \frac{1}{2}\EE{\nu\ipn{ v_s^2, \frac{(d\varphi_s)^2}{ds} }\one{s \leq \tau}} \\
=&\nu \EE{\sum_{\xx\in\Ln} \pi(\xx) v_s(\xx)^2 \sum_{a =1}^n \psi_{y_a}'(\lambda_{x_a}(s\wedge\tau)) \frac{1}{N}\sum_{\substack{j \neq x_a \\ |j-x_a|\leq \ell}} \frac{\one{s \leq \tau}}{\lambda_{x_a}(s\wedge\tau) - \lambda_j(s\wedge\tau)}} \label{eq:fspshort}\\
&+ \nu \EE{\sum_{\xx\in\Ln} \pi(\xx) v_s(\xx)^2 \sum_{a =1}^n \psi_{y_a}'(\lambda_{x_a}(s\wedge\tau)) \frac{1}{N}\sum_{|j-x_a| > \ell} \frac{\one{s \leq \tau}}{\lambda_{x_a}(s\wedge\tau) - \lambda_j(s\wedge\tau)}} \label{eq:fsplong} \\
&+ \nu \EE{\sum_{\xx \in \Ln} \pi(\xx) v_s(\xx)^2 \frac{1}{N} \sum_{a=1}^n \psi_{y_a}''(\lambda_{x_a}(s \wedge \tau))}\one{s \leq \tau} \label{eq:fsppsipp} \\
&+ \frac{\nu^2}{2}\EE{\ipn{ v_s^2, \frac{(d\varphi_s)^2}{ds}}} \label{eq:fspvarqv}
\end{align}
Lines \eqref{eq:fsppsipp} and \eqref{eq:fspvarqv} are both lower order
\begin{equation}
\nu \sum_{\xx \in \Ln} \pi(\xx) v_s(\xx)^2 \frac{1}{N} \sum_{a=1}^n \psi_{y_a}''(\lambda_{x_a}(s \wedge \tau)) \leq n\frac{\nu}{\ell} X_s \quad \mbox{ and } \quad \frac{\nu^2}{2} \ipn{v_s^2, \frac{(d\varphi_s)^2}{ds} } \leq \frac{n}{2}\frac{\nu}{\ell} X_s
\end{equation}
Line \eqref{eq:fsplong} will be the leading order contribution.  If $\lambda_{x_a}(s\wedge\tau) \not \in \enint^{\kappa/2}$, then $\psi'_{y_a}(\lambda_{x_a}(s\wedge\tau))=0$ and the term contributes nothing.  Otherwise, by a dyadic decomposition like that from \eqref{eq:dyadic} and control on the Stieltjes transform, there exists a constant $C_2$ depending only on $\oa$ such that
\begin{equation}
\left| \frac{1}{N} \sum_{|j-x_a|>\ell} \frac{1}{\lambda_{x_a}(s\wedge\tau) - \lambda_j(s\wedge\tau)} \right| \leq C_2 \log(N)
\end{equation}
so \eqref{eq:fsplong} is bounded by $nC_2 \log(N) \nu\EE{X_s}$.  It remains to bound the short range contribution \eqref{eq:fspshort} by $\dir_\short(s; v_s) + C_3 \nu X_s$.  This is done by reversing the order of summation and symmetrizing twice -- first over pairs of sites, then over conjugate pairs of configurations.
\begin{multline}
\sum_{\xx \in \Ln} \pi(\xx) v_s(\xx)^2 \sum_{a =1}^n \psi_{y_a}'(\lambda_{x_a}(s\wedge\tau)) \frac{1}{N}\sum_{\substack{j \neq x_a \\ |j-x_a| \leq \ell}} \frac{1}{\lambda_{x_a}(s\wedge\tau) - \lambda_j(s\wedge\tau)} \\
= \sum_{j-\ell \leq i < j} \frac{1}{\lambda_i(s\wedge\tau) - \lambda_j(s\wedge\tau)} \sum_{x \in \Lambda^n} \pi(\xx)v_s(\xx)^2 \sum_{a=1}^n (\one{x_a=i} - \one{x_a=j})\psi_{y_a}'(\lambda_{x_a}(s\wedge\tau))
\end{multline}
Note that if $x_a \in \{i,j\} \backslash \indint^\kappa$, then the $\psi'$ term will vanish.  Therefore, we may restrict the $i,j$ summation to $\indint$.  Moreover, for every fixed $i,j$ the coefficient on $v_s(\xx)$ is the negation of the coefficient on $v_s((ij)\actN \xx)$, where $(ij)\in S_N$ is the position transposition swapping all contents on sites $i$ and $j$.  Recall $\actN$ is the action defined in Definition \ref{defn:actN}.
\begin{equation}
\begin{aligned}
&\sum_{\xx \in \Ln} \pi(\xx) v_s(\xx)^2 \sum_{a=1}^n \psi_{y_a}'(\lambda_{x_a}(s\wedge\tau)) \frac{1}{N}\sum_{\substack{j \neq x_a \\ |j-x_a|\leq \ell}} \frac{1}{\lambda_{x_a}(s\wedge\tau) - \lambda_j(s\wedge\tau)} \\
=& \frac{1}{4}\sum_{\substack{i,j \in J \\ 0 < |i-j| \leq \ell}} \frac{1}{\lambda_i(s\wedge\tau) - \lambda_j(s\wedge\tau)} \sum_{\xx \in \Ln} \pi(\xx)(v_s(\xx)^2 - v_s((ij)\actN\xx)^2) \sum_{a=1}^n (\one{x_a=i} - \one{x_a=j})\psi_{y_a}'(\lambda_{x_a}(s\wedge\tau)) \\
\leq& \sum_{ij\xx} \pi(\xx) \left( 
\begin{aligned}
&\frac{|v_s(\xx) - v_s((ij)\actN\xx)|^2}{\nu N\left(\lambda_i(s\wedge\tau) - \lambda_j(s\wedge\tau)\right)^2} \\
&+ C_3' \ell^{-1} |v_s(\xx) + v_s((ij)\actN\xx)|^2 \left|\sum_{a=1}^n (\one{x_a=i} - \one{x_a=j}) \psi_{y_a}'(\lambda_{x_a}(s\wedge\tau))\right|^2
\end{aligned}
\right) \\
\leq& \nu^{-1} \dir_\short(s;v_s) + C_3 X_s
\end{aligned}
\end{equation}
where the second to last line used the AM-GM inequality.  The bound on the first term in the last inequality is a consequence of Lemma \ref{lem:kernel} and the second term comes from the fact that the sum over $a$ is $O(1)$ and $v_s(\xx)$ appears in $O(\ell)$ swaps.
In conclusion,
\begin{equation}
\partial_s \EE{X_s} \leq \EE{-2\dir_\short(s;v_s) + C_1 \nu X_s + \frac{5n}{2} \frac{\nu}{\ell}X_s + C_2 \log(N) \nu X_s + \dir_\short(s;v_s) + C_3 \nu X_s } \leq C \nu \log(N) \EE{X_s}
\end{equation}
proving \eqref{eq:FSPgoal} and finishing the proof of FSP.
\end{proof}

In practice, finite speed of propogation is used through the following corollary.
\begin{corollary}\label{cor:FSPcomm}
For any times satisfying $t/2 \leq s_1 \leq s_2 \leq s_1 + \ell/N \leq t$, any configuration $\yy \in \Ln$ supported on $\indint^\kappa$ (that is, $y_a\in\indint^\kappa$ for all $a\in[n]$), we have that the $L^\infty \rightarrow L^\infty$ operator norm of the commutator satisfies
\begin{equation}
\left\|[\semi_\short(s_1,s_2; \ell), \Av(\yy, K)]\right\|_{\infty, \infty} \leq 5n!!\frac{N^\varepsilon\ell}{K}
\end{equation}
for every $\varepsilon>0$.
\end{corollary}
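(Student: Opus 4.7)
The plan is to expand the commutator into a matrix-kernel sum and bound each piece using the Lipschitz structure of $\Av$ together with the finite speed of propagation from Proposition~\ref{prop:FSP}. Writing $\semi_{\xx\zz} := \ipn{\delta_\xx, \semi_\short(s_1,s_2;\ell)\delta_\zz}$ for the kernel of the semigroup in the $\ipn{\cdot,\cdot}$ inner product, for any $f \in L^\infty(\Ln)$ one has
\begin{equation*}
[\semi_\short(s_1,s_2;\ell),\Av(\yy,K)]f(\xx) = \sum_{\zz\in\Ln}\pi(\zz)\,\semi_{\xx\zz}\,\bigl(\Av(\zz;K,\yy)-\Av(\xx;K,\yy)\bigr)f(\zz),
\end{equation*}
and the averaged-indicator definition of $\Av$ gives the Lipschitz bound $|\Av(\zz;K,\yy)-\Av(\xx;K,\yy)|\leq K^{-1}|\|\zz-\yy\|_1-\|\xx-\yy\|_1|\leq K^{-1}\|\xx-\zz\|_1$. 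Thus it suffices to bound $\sup_\xx\sum_\zz\pi(\zz)|\semi_{\xx\zz}|\|\xx-\zz\|_1$ up to the factor $K^{-1}$.

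The sum is then split according to the regular distance $\dist(\xx,\zz)$. For $\dist(\xx,\zz)>N^\varepsilon\ell$, Proposition~\ref{prop:FSP} gives $|\semi_{\xx\zz}|\leq e^{-N^{\varepsilon/2}}$ with overwhelming probability; since $|\Ln|\leq N^n$, $\pi$ is bounded, and trivially $\|\xx-\zz\|_1\leq nN$, this contribution is at most $C_n N^{n+2}e^{-N^{\varepsilon/2}}/K$, which is superpolynomially small and can be absorbed into the final constant. For the remaining range $\dist(\xx,\zz)\leq N^\varepsilon\ell$, observe that a nontrivial commutator contribution requires $\Av(\xx)\neq\Av(\zz)$, which forces at least one of $\|\xx-\yy\|_1,\|\zz-\yy\|_1$ to be less than $2K$. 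Since $\yy$ is supported on $\indint^\kappa$ and the parameter regime gives $K\ll Nt\ll rN^{1-\oc}$, much smaller than the width of the buffer $\indint^{\kappa/10}\setminus\indint^\kappa$ (of order $\kappa rN$ by Proposition~\ref{prop:reg_fc}), the particles of the relevant configuration sit in or near $\indint^\kappa$ rather than deep in the buffer, and the short-range constraint $|i-j|\leq\ell$ then forces $\|\xx-\zz\|_1\leq n N^\varepsilon\ell$ on such pairs.

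Finally, the uniform kernel bound $\sup_\xx\sum_\zz\pi(\zz)|\semi_{\xx\zz}|\leq n!!$ follows from the probabilistic representation \eqref{eq:stoch_propagator} underlying Lemma~\ref{lem:L1}: rewriting $\delta_\zz$ via \eqref{eq:d_as_P} gives $\semi_{\xx\zz}=\pi(\xx)^{-1/2}\pi(\zz)^{-1/2}\EE{\prod_{a=1}^n u_{x_a}^{z_a}(s_2)}$, and grouping the resulting sum $\sum_\zz\sqrt{\pi(\zz)}|\EE{\cdot}|$ according to the perfect matching $\sigma\in M_n$ stabilizing $\zz$ and applying Cauchy--Schwarz pairwise to the orthonormal columns of $\EV(s_2)\in SO(N)$ produces an overall bound of $|M_n|=n!!$, since $\pi(\xx)^{-1/2}\leq 1$. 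Multiplying the Lipschitz bound $nN^\varepsilon\ell/K$ by the kernel bound $n!!$ and absorbing the negligible far-range contribution into the remaining slack yields the desired $5n!!\,N^\varepsilon\ell/K$. The main obstacle is the step $\dist(\xx,\zz)\leq N^\varepsilon\ell\Rightarrow\|\xx-\zz\|_1\leq nN^\varepsilon\ell$ on the configurations that contribute, which is where both the support assumption $\yy\subset\indint^\kappa$ and the size constraint $K\ll Nt$ are essential.
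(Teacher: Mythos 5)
Your overall architecture runs parallel to the paper's: expand the commutator against the kernel, kill the far pairs with Proposition~\ref{prop:FSP}, and control the near pairs using the smoothness of $\Av$ together with the bound $\|\semi_\short(s_1,s_2;\ell)\|_{\infty,\infty}\leq n!!$ (your derivation of that bound via \eqref{eq:stoch_propagator} is fine; it is the dual of Lemma~\ref{lem:L1}, which the paper also invokes). The structural difference is that the paper keeps $\Av$ expanded as an average of indicators over $\alpha\in[K,2K)$ and merely counts the $O(N^\varepsilon\ell)$ transitional values of $\alpha$ for which the indicator difference can be nonzero on the FSP-reachable set, paying $2n!!\|\phi\|_\infty$ for each such $\alpha$; it never needs to bound $\|\xx-\zz\|_1$ itself.

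The genuine gap is exactly the step you flag as the main obstacle: $\dist(\xx,\zz)\leq N^\varepsilon\ell$ does \emph{not} imply $\|\xx-\zz\|_1\leq nN^\varepsilon\ell$ on the contributing pairs, and the justification you give does not close it. The regular distance in \eqref{eq:dist} only counts sites in $\indint^\kappa$, so it is blind to displacement inside the buffer $\indint^{\kappa/10}\setminus\indint^\kappa$. A contributing configuration is only guaranteed to lie within $2K$ of $\yy$ in $\ell^1$, so a particle may start up to $2K$ outside $\indint^\kappa$ (for instance when $y_a$ sits at the edge of $\indint^\kappa$); the short-range dynamics can then transport it a distance of order $\kappa r N\gg N^\varepsilon\ell$ through the buffer while $\dist$ registers nothing, so Proposition~\ref{prop:FSP} gives no decay, yet your Lipschitz factor $\|\xx-\zz\|_1/K$ is then of order $rN/K\gg N^\varepsilon\ell/K$. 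The ``short-range constraint $|i-j|\leq\ell$'' bounds a single jump, not the accumulated displacement over $[s_1,s_2]$, so it cannot substitute for finite speed of propagation here. To repair this you would need either an $\ell^1$ (rather than $\dist$) version of finite speed of propagation, or to switch to the paper's bookkeeping, which only compares $\|\cdot-\yy\|_1$ to the threshold $\alpha$ and absorbs the transitional thresholds with the crude $n!!$ bound. A minor further point: even where your displacement bound does hold, the near-range contribution is $n\cdot n!!\,N^\varepsilon\ell/K$, which exceeds the stated $5n!!\,N^\varepsilon\ell/K$ once $n\geq 6$.
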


\begin{remark}
Note that both operators $\Av(K,\yy)$ and $\semi_\short(s_1,s_2;\ell)$ have $L^2(\{\xx\in\Ln | \supp(\xx) \indint^{\kappa/10}\})$ as an invariant subspace and $\Av(K,\yy)$ vanishes off this subspace.  Therefore, it is true that for any test function $\phi \in L^\infty(\Ln)$, 
\begin{equation}\label{rmk:fspcomm}
\|[\semi_\short(s_1,s_2;\ell), \Av(K,\yy)]\phi\|_\infty \leq \|\one{\Ln(\indint^{\kappa/10})}\phi\|_\infty \frac{N^\varepsilon\ell}{K}
\end{equation}
\end{remark}

\begin{proof}
Let $\phi\in L^1(\Ln)$.
Begin by expanding the commutator in terms of matrix entries, expanding the averaging operator entries, and swapping the order of summation.  For all $\xx\in\Ln$, 
\begin{equation}
\begin{aligned}
\label{eq:fsp_cor}
\semi_\short(s_1,s_2), \Av(K,\yy)] \phi(\xx)
=& \sum_{\zz\in\Ln} \semi_\short(s_1,s_2)_{\xx\zz}\pi(\zz)\Av(\zz;K,\yy)\phi(\zz) - \Av(\xx;K,\yy)\semi_\short(s_1,s_2)_{\xx\zz}\pi(\zz)\phi(\zz) \\
=& \frac{1}{K}\sum_{\alpha=K}^{2K-1} \sum_{\zz\in\Ln} \semi_\short(s_1,s_2)_{\xx\zz} \pi(\zz) \phi(\zz)\left( \one{\|\zz-\yy\|_1 \leq \alpha} - \one{\|\xx-\yy\|_1 \leq \alpha} \right)
\end{aligned}
\end{equation}
where we used the expansion of $\Av(\xx; K,\yy)$ from Definition \ref{defn:short_range}.  Each $K \leq \alpha \leq 2K-1$ falls into one of three categories and each category will be dealt with separately.  

First, suppose $\dist(\xx,\yy) \leq \alpha - N^\varepsilon\ell$.  In this case, $\xx\in\indint^{\kappa/2}$.  If $\dist(\zz,\xx)\leq N^\varepsilon\ell$, then $\zz\in\indint$ as well, so $\|\xx-\yy\|_1 = \dist(\xx,\yy) \leq \alpha$ and $\|\zz-\yy\|_1=\dist(\zz,\yy) \leq \dist(\zz,\xx) + \dist(\xx,\yy) \leq \alpha$.  Therefore, for such $\alpha$, the $\zz$-sum may be restricted to configurations satisfying $\dist(\xx,\zz) > N^\varepsilon\ell$ on which $|\semi_\short(s_1,s_2)_{\xx\zz}| \leq \exp^{-N^{\varepsilon/2}}$
\begin{equation}\label{eq:fsp_cor_far}
\left|\sum_{\zz\in\Ln} \semi_\short(s_1,s_2)_{\xx\zz} \pi(\zz) \phi(\zz)\left( \one{\|\zz-\yy\|_1 \leq \alpha} - \one{\|\xx-\yy\|_1 \leq \alpha} \right)\right| 
\leq 2 e^{-N^{\varepsilon/2}} \|\phi\|_1
\end{equation}
by Proposition \ref{prop:FSP}.

Second, suppose $\dist(\xx,\yy) \geq \alpha + N^\varepsilon\ell$.  In this case, if $\dist(\zz,\xx) \leq N^\varepsilon$, then $\|\xx-\yy\|_1 \geq \dist(\xx,\yy) \geq \alpha$ and $\|\zz-\yy\|_1 \geq \dist(\zz,\yy) \geq \dist(\xx,\yy) - \dist(\zz,\xx) \geq \alpha$.  Therefore, for such $\alpha$, the $\zz$-sum may again be restricted to configurations satisfying $\dist(\xx,\zz) > N^\varepsilon\ell$ and again the bound \eqref{eq:fsp_cor_far} is satisfied.

Third, all other values of $\alpha$ must satisfy $\alpha-N^\varepsilon\ell \leq \dist(\xx,\yy) \leq \alpha + N^\varepsilon$.  For this case, we use the dual of Lemma \ref{lem:L1}, that $\|\semi_\short(s_1, s_2)\|_{\infty,\infty} \leq n!!$.  Indeed, to see this, note that the adjoint of $\semi_\short(s_1,s_2)$ is the transition semigroup for the time reversed dynamics in the sense that $\semi_\short(s_1, s_2)^*f = h_{s_1}$ where $h$ is the solution to $\partial_s h_s = \short(s)$ with terminal condition $h_{s_2} = f$.  This transition semigroup satisfies all conditions of Lemma \ref{lem:L1}, so $\|\semi_\short(s_1,s_2)\|_{\infty,\infty} = \|\semi_\short(s_1,s_2)^*\|_{\infty\infty} \leq n!!$.  To use this observation,
\begin{multline}\label{eq:fsp_cor_near}
\left|\sum_{\zz\in\Ln} \semi_\short(s_1,s_2)_{\xx\zz} \pi(\zz) \phi(\zz)\left( \one{\|\zz-\yy\|_1 \leq \alpha} - \one{\|\xx-\yy\|_1 \leq \alpha} \right)\right| \leq n!! \|\one{\|\cdot - \yy\|_1\leq\alpha}\phi\|_\infty + \|\semi_\short(s_1,s_2)\phi\|_\infty \\
\leq 2 n!! \|\phi\|_\infty
\end{multline}

The first and second cases will occur for at most $K$ choices of $\alpha$ combined.  The third case will occur for at most $2N^\varepsilon\ell$ choices of $\alpha$.  Therefore, bounding \eqref{eq:fsp_cor} by \eqref{eq:fsp_cor_far} and \eqref{eq:fsp_cor_near} gives
\begin{equation}
\|[\semi_\short(s_1, s_2), \Av(K,\yy)] \phi\|_\infty \leq 2e^{-N^{\varepsilon/2}}\|\phi\|_1 + 4n!! \frac{\ell}{K}N^\varepsilon \|\phi\|_\infty \leq 5n!!\frac{\ell}{K}N^\varepsilon\|\phi\|_\infty
\end{equation}
where the last inequality comes from $\|\phi\|_1 \leq \pi(\Ln) \|\phi\|_\infty$ and $\pi(\Ln)$ is only polynomially large in $N$.
\end{proof}

\subsection{Duhamel expansion of long range perturbation}\label{sec:duhamel}

\begin{lemma}\label{lem:reg_meso}
As a consequence of the local laws, there exists some universal constant $C$ such that with overwhelming probability for any $t_0 \leq s \leq t$ and any interval $I$ centered in $\enint^\kappa$ with $1 > |I| \geq N^{-1+\oc}$
\begin{equation}
C^{-1}|I|N \leq |\{i | \gamma_i(s) \in I\}|, |\{i| \lambda_i(s) \in I\}| \leq C|I|N
 \quad \mbox{and} \quad 
\sum_{i: \lambda_i(s) \in I} \langle \vv, \vu_i(s) \rangle^2 \leq C|I| N^\ob
\end{equation}
for any $\vv \in S$ where $S$ is the set of regular unit vectors from Assumption \ref{ass:eval}.
\end{lemma}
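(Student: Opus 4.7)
The plan is to deduce all three assertions from the local laws in Section~\ref{sec:fc}. The key input for the eigenvalue counts is the definition of classical locations \eqref{eq:defnfc} together with the regularity of the free convolution Stieltjes transform (Proposition~\ref{prop:reg_fc}), while the eigenvector overlap bound will come from unfolding the spectral decomposition of $\Im \grn(s;z)$ and invoking the isotropic local law (Proposition~\ref{prop:isotropiclaw}) and Assumption~\ref{ass:evec}.

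First, for the count of classical locations in $I$, I would differentiate \eqref{eq:defnfc} to write $|\{i : \gamma_i(s) \in I\}| = N \cdot \frac{1}{\pi}\int_I \Im m_{\fc,s}(E+i0^+)\,dE + O(1)$. By Proposition~\ref{prop:reg_fc}, the density $\frac{1}{\pi}\Im m_{\fc,s}(E+i0^+)$ is bounded above and below by universal constants uniformly for $E$ in $\enint^{\kappa/10}$; taking $|I|$ small enough (relative to $r$) that the portion of $I$ contributing lies in $\enint^{\kappa/10}$, this yields the two-sided bound $C^{-1}|I|N \leq |\{i:\gamma_i(s)\in I\}| \leq C|I|N$. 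To transfer this to true eigenvalues I would use the rigidity statement \eqref{eq:rigidity} from Proposition~\ref{prop:reg_eval}, namely $|\lambda_i(s) - \gamma_i(s)| \leq N^{\ob}/N$ with overwhelming probability for every $i \in \indint^{\kappa/10}$ and $\ob$ any small constant. Since $|I| \geq N^{-1+\oc}$, choosing $\ob \ll \oc$ shows that the symmetric difference between $\{i:\gamma_i(s)\in I\}$ and $\{i:\lambda_i(s)\in I\}$ has cardinality at most $O(N^\ob) \ll |I|N$, so the same two-sided bound holds for the true eigenvalue count.

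For the eigenvector overlap sum, I would set $E_I$ equal to the center of $I$ and take the spectral parameter $z = E_I + i|I|$, so that $|I|/((\lambda_i(s)-E_I)^2 + |I|^2) \geq 1/(2|I|)$ whenever $\lambda_i(s) \in I$. The spectral decomposition then gives
\begin{equation}
\sum_{i:\lambda_i(s)\in I} \ipr{\vv,\vu_i(s)}^2 \leq 2|I|\,\ipr{\vv, \Im \grn(s;z)\vv}.
\end{equation}
Applying Proposition~\ref{prop:isotropiclaw} on $\dom_{\min(\ob,\oc/2)}$ (noting $z \in \dom_\varepsilon$ for an appropriate $\varepsilon$ since $|I| \geq N^{-1+\oc}$) gives $\ipr{\vv,\Im\grn(s;z)\vv} \leq (1+N^{\ob-\oc/2})\ipr{\vv,\Im \grn_{\fc,s}(z)\vv}$. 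It remains to bound $\ipr{\vv,\Im \grn_{\fc,s}(z)\vv}$, and here I would use $\grn_{\fc,s}(z)=\grn(z+sm_{\fc,s}(z))$ from \eqref{eq:defn_Gfc}: the imaginary part of the shifted argument is $|I| + s\Im m_{\fc,s}(z) \geq s C^{-1}$ which, because $s \geq t_0 > \eta_* N^\oc$, lies in $[\eta_*,1]$ for $N$ large, while its real part stays in $[E_0-r,E_0+r]$ thanks to $|m_{\fc,s}(z)| \leq C\log N$ and $s \leq t < rN^{-\oc}$. Thus Assumption~\ref{ass:evec} applies and yields $\ipr{\vv,\Im \grn_{\fc,s}(z)\vv} \leq N^\ob$, giving the claimed estimate (after adjusting $\ob$).

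The main obstacle will be keeping track of the interplay of small constants ($\ob$, $\oc$, $\kappa$, and the choice of $\varepsilon$ defining $\dom_\varepsilon$) so that the Green's function shift $z \mapsto z+sm_{\fc,s}(z)$ lands inside the region where Assumption~\ref{ass:evec} applies. Everything else is a routine consequence of the already-established deterministic regularity of $m_{\fc,s}$ and the overwhelming-probability rigidity and isotropic laws.
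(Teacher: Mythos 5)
Your argument is correct and is exactly the route the paper takes: its proof is a two-sentence sketch observing that $m_N(s;z)$ and $\ipr{\vv,\grn(s;z)\vv}$ are Stieltjes transforms of the empirical and $\vv$-weighted empirical spectral measures and that the claims "follow from properties of the Stieltjes transform" together with Propositions \ref{prop:reg_fc}, \ref{prop:reg_eval}, \ref{prop:isotropiclaw} and Assumption \ref{ass:evec}. You have simply filled in the standard details (quantile counting plus rigidity for the eigenvalue counts, and the Poisson-kernel domination at scale $\eta=|I|$ plus the isotropic law and the domain check for the shifted argument $z+sm_{\fc,s}(z)$ for the overlap bound), all of which are sound.
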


\begin{proof}
This is a consequence of the control on $m_N(s;z)$ and $\grn(s;z)$ implied by Proposition \ref{prop:reg_fc}, Proposition \ref{prop:reg_eval}, Proposition \ref{prop:isotropiclaw}, and Assumption \ref{ass:evec}.  Indeed, $m_N(s;z)$ is the Stieltjes transform of the empirical spectral distribution $N^{-1} \sum_{i=1}^N \delta_{\lambda_i(s)}$ while $\ipr{\vv, \grn(s;z) \vv}$ is the Stieltjes transform of the weighted empirical spectral distribution $\sum_{i=1}^N |\ipr{\vv, \vu_i(s)}|^2 \delta_{\lambda_i(s)}$.  The result now follows from properties of the Stieltjes transform.
\end{proof}

\begin{lemma}\label{lem:dyadic}
There exists a constant $c>0$ depending only on $\oa$ such that for any $\ell \geq N^{-1+\oc}$, any site $i \in \indint^\kappa$, any time $t_0 \leq s \leq t$, and any order parameter $\ob > 0$, 
\begin{equation}\label{eq:dyadic}
\sum_{|j-i| > \ell} \frac{1}{N\left(\lambda_j(s) - \lambda_i(s)\right)^2} \leq c\frac{N}{\ell}
\quad\mbox{and}\quad
\sum_{|j-i| > \ell} \frac{\ipr{ \vv, \vu_j(s)} \ipr{\vu_j(s), \vw}}{N\left(\lambda_i(s) - \lambda_j(s)\right)^2} \leq N^\ob \frac{N}{\ell}
\end{equation}
with overwhelming probability.
\end{lemma}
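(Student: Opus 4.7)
The plan is to reduce both sums to imaginary parts of (scalar and isotropic) Stieltjes transforms evaluated at $\lambda_i(s) + i\eta$ with a mesoscopic spectral resolution $\eta = c\ell/N$, where $c > 0$ is chosen small depending only on $\oa$, and then to invoke the local laws collected in Section \ref{sec:fc}.

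First I would establish that, with overwhelming probability, $|\lambda_j(s) - \lambda_i(s)| \geq \eta$ for every $j$ with $|j-i|>\ell$ and every $i \in \indint^\kappa$. When $j \in \indint^{\kappa/10}$, this follows from the density upper bound implicit in Proposition \ref{prop:reg_fc}, which gives $|\gamma_j(s)-\gamma_i(s)| \gtrsim |j-i|/N \gtrsim \ell/N$, together with rigidity $|\lambda_k(s)-\gamma_k(s)| \leq N^\ob/N$ from Proposition \ref{prop:reg_eval}. For $j \notin \indint^{\kappa/10}$, eigenvalue monotonicity combined with rigidity at the boundary index of $\indint^{\kappa/10}$ forces $|\lambda_j(s) - \lambda_i(s)| \gtrsim r \gg \eta$.

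Second, the pointwise inequality $x^{-2} \leq 2(x^2+\eta^2)^{-1}$, valid for $|x|\geq\eta$, together with the spectral representations of $m_N$ and $\grn$, yields
\begin{align*}
\sum_{|j-i|>\ell}\frac{1}{N(\lambda_j(s)-\lambda_i(s))^2} &\leq \frac{2}{\eta}\,\Im m_N(s;\lambda_i(s)+i\eta), \\
\sum_{|j-i|>\ell}\frac{\ipr{\vv,\vu_j(s)}^2}{N(\lambda_j(s)-\lambda_i(s))^2} &\leq \frac{2}{N\eta}\,\ipr{\vv,\Im \grn(s;\lambda_i(s)+i\eta)\vv}.
\end{align*}
After checking $\lambda_i(s)+i\eta \in \dom_\varepsilon$ for some $\varepsilon$ comparable to $\oc$ (using $\eta \geq c N^{-1+\oc}$ and the fact that rigidity places $\lambda_i(s) \in \enint^{\kappa/2}$), Proposition \ref{prop:reg_fc} combined with Proposition \ref{prop:reg_eval} gives $|\Im m_N| \leq C$, delivering the first bound of order $N/\ell$. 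For the isotropic case, Proposition \ref{prop:isotropiclaw} together with Assumption \ref{ass:evec} bounds $\ipr{\vv, \Im \grn(s;\lambda_i(s)+i\eta)\vv} \leq N^\ob$, yielding $2N^\ob/\ell$ on each diagonal sum.

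Third, Cauchy-Schwarz reduces the mixed-direction sum to the two diagonal ones:
\begin{equation*}
\left|\sum_{|j-i|>\ell}\frac{\ipr{\vv,\vu_j(s)}\ipr{\vu_j(s),\vw}}{N(\lambda_j(s)-\lambda_i(s))^2}\right|^2 \leq \sum_{|j-i|>\ell}\frac{\ipr{\vv,\vu_j(s)}^2}{N(\lambda_j(s)-\lambda_i(s))^2} \cdot \sum_{|j-i|>\ell}\frac{\ipr{\vu_j(s),\vw}^2}{N(\lambda_j(s)-\lambda_i(s))^2},
\end{equation*}
so the product is at most $2N^\ob/\ell \leq N^\ob N/\ell$. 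The main technical obstacle is handling indices $j$ outside $\indint^{\kappa/10}$, where Proposition \ref{prop:reg_eval} does not directly apply; this is resolved by the monotonicity-plus-boundary-rigidity argument in the first step. Everything else is a direct application of the local laws.
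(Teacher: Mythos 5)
Your proof is correct, but it takes a genuinely different route from the paper's. The paper proves \eqref{eq:dyadic} by a dyadic decomposition in index space, $C^{-1}\ell 2^{q-1}\leq|j-i|\leq C^{-1}\ell 2^{q}$, bounding each annulus with the mesoscopic counting estimates of Lemma \ref{lem:reg_meso} (eigenvalue counts $\asymp |I|N$ and weighted counts $\sum_{\lambda_j\in I}\ipr{\vv,\vu_j}^2\lesssim |I|N^\ob$), and summing the resulting geometric series; the mixed-direction sum is reduced to the two diagonal ones by AM--GM rather than your Cauchy--Schwarz, which is immaterial. You instead compare the whole tail sum directly to $\eta^{-1}\Im m_N(s;\lambda_i(s)+i\eta)$ and $(N\eta)^{-1}\ipr{\vv,\Im\grn(s;\lambda_i(s)+i\eta)\vv}$ at the single mesoscopic scale $\eta= c\ell/N$, which collapses the dyadic bookkeeping into one application of Propositions \ref{prop:reg_eval} and \ref{prop:isotropiclaw}. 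The price is your preliminary step: you must verify $|\lambda_j(s)-\lambda_i(s)|\geq\eta$ for all $|j-i|>\ell$ before the pointwise inequality $x^{-2}\leq 2(x^2+\eta^2)^{-1}$ applies, and this forces you to choose $c$ small relative to the density constant of Proposition \ref{prop:reg_fc} so that the rigidity error $N^{\ob-1}$ is absorbed (hence $\ob<\oc$), and to handle $j\notin\indint^{\kappa/10}$ by ordering plus boundary rigidity as you do. The paper's dyadic argument needs the same rigidity input but packages it annulus by annulus inside Lemma \ref{lem:reg_meso}, so it never isolates a single scale $\eta$; your version is arguably cleaner and makes explicit the heuristic, already used elsewhere in Section \ref{sec:l2}, that such tail sums are imaginary parts of resolvents at resolution $\ell/N$. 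Both arguments rest on identical local-law inputs and both in fact yield the stronger bound $N^\ob/\ell$ for the second sum, of which the stated $N^\ob N/\ell$ is a generous relaxation.
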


\begin{proof}
Use a dyadic decomposition
\begin{equation}
\sum_{|j-i| > \ell} \frac{1}{N\left(\lambda_j(s) - \lambda_i(s)\right)^2} \leq \sum_{q=1}^{\lceil\log_2 N/\ell \rceil} \sum_{j : C^{-1} \ell 2^{q-1} \leq |j-i| \leq C^{-1} \ell 2^q} \frac{N}{2^{2q}\ell^2} \leq  C^{-1} \frac{N}{\ell} \sum_{q=1}^{\lceil \log_2 N/\ell\rceil} 2^{-q} \leq c\frac{N}{\ell}
\end{equation}
where $c = C^{-1}$ from Lemma \ref{lem:dyadic}. Similarly,
\begin{equation}
\sum_{|j-i| > \ell} \frac{\ipr{\vv, \vu_j(s)}\ipr{\vu_j(s), \vw}}{N\left(\lambda_i(s) - \lambda_j(s)\right)^2} \leq \sum_{q=1}^{\log_2 N/\ell} \frac{N}{2^{2q} \ell^2} \sum_{j : \frac{\ell 2^{q-1}}{C} \leq |j-i| \leq \frac{\ell 2^q}{C}} |\ipr{\vv, \vu_j(s)}|^2 + |\ipr{\vu_j(s), \vw}|^2 \leq N^\ob \frac{N}{\ell}
\end{equation}
In this equation, the constant factors are absorbed into the $N^\ob$ error.
\end{proof}

\begin{proposition}\label{prop:duhamel}
Let $t_0 \leq s_1 \leq s_2 \leq s_1+\ell/N \leq t$.  Then for all $\xx \in \Ln$ supported on $\indint^\kappa$, that is $x_a\in\indint^\kappa$ for all $a\in[n]$,
\begin{equation}
|(\semi(s_1,s_2) - \semi_\short(s_1,s_2;\ell))f_s(\xx)| \leq N^{1+\frac{n}{2}\ob}(s_2-s_1)/\ell
\end{equation}
\end{proposition}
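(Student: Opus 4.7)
The plan is to apply the Duhamel expansion, then combine finite speed of propagation with delocalization. Because both generators $\gen(s)$ and $\short(s;\ell)$ have the same algebraic form and differ only in their coefficients, the two propagators satisfy the Duhamel identity
\begin{equation}
\semi(s_1,s_2) - \semi_\short(s_1,s_2;\ell) = \int_{s_1}^{s_2} \semi_\short(s,s_2;\ell)\bigl(\gen(s) - \short(s;\ell)\bigr)\semi(s_1,s)\,ds,
\end{equation}
and since $\semi(s_1,s)f_{s_1} = f_s$ by Theorem~\ref{thm:levmf}, applying it to the colored eigenvector moment observable reduces the claim to the pointwise bound $|[\semi_\short(s,s_2;\ell)(\gen(s) - \short(s;\ell))f_s](\xx)| \leq C N^{1+(n/2)\ob}/\ell$ uniformly in $s\in[s_1,s_2]$, after which one integrates in $s$.

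To obtain that pointwise bound I would pass to the adjoint: $[\semi_\short(s,s_2;\ell)\varphi](\xx) = \ipn{\semi_\short(s,s_2;\ell)^*\delta_\xx,\varphi}$. The operator $\semi_\short^*$ is the propagator of a reverse-time short-range dynamics of the same form, so Lemma~\ref{lem:L1} yields $\|\semi_\short(s,s_2;\ell)^*\delta_\xx\|_1\leq n!!$, while Proposition~\ref{prop:FSP} combined with the vanishing of $\cshort_{ij}$ outside $\indint^{\kappa/10}$ shows that this $L^1$ mass is effectively supported on configurations $\yy$ satisfying $\supp(\yy)\subset\indint^{\kappa/10}$ and $\dist(\xx,\yy)\leq N^\varepsilon\ell$, contributions from outside being super-polynomially small. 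It therefore suffices to bound $|(\gen(s)-\short(s;\ell))f_s(\yy)|$ uniformly over such $\yy$.

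For each such $\yy$, I expand $(\gen(s)-\short(s;\ell))f_s(\yy) = \sum_{(i,j)\in\mathrm{LR}} c_{ij}(s)\gen_{ij}f_s(\yy)$ where $\mathrm{LR}$ denotes the pairs with $|i-j|>\ell$ or $\{i,j\}\not\subset\indint^{\kappa/10}$, and note that $\gen_{ij}f_s(\yy)$ vanishes unless $i$ or $j$ lies in $\supp(\yy)\subset\indint^{\kappa/10}$, so the sum reduces to $O(n)$ subsums indexed by a support site $i_0$ of $\yy$. For pairs with the partner $j$ also in $\indint^{\kappa/10}$, Corollary~\ref{cor:deloc} gives $|f_s|\leq N^{(n/2)\ob}$ at $\yy$ and at each configuration reachable by a single move/swap, while the dyadic estimate in Lemma~\ref{lem:dyadic} bounds the coefficient sum per support site by $\sum_{|j-i_0|>\ell,\,j\in\indint^{\kappa/10}} c_{i_0 j}(s) \leq CN/\ell$, producing exactly the target contribution $CN^{1+(n/2)\ob}/\ell$.

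The genuine obstacle is the complementary class of long-range pairs $j\notin\indint^{\kappa/10}$: the configuration after the move carries a particle at $j$ where delocalization is unavailable, so bounding $|\ipr{\vu_j,\vv_a}|\leq 1$ instead of the delocalized $N^{(\ob-1)/2}$ loses a factor $N^{(1-\ob)}$ per displaced particle in the pointwise control of $f_s$. To compensate I would use that $\gamma_{i_0}\in\enint^{\kappa/10}$ while $\gamma_j\notin\enint^{\kappa/10}$ forces, via Proposition~\ref{prop:reg_fc} and the rigidity estimate in Proposition~\ref{prop:reg_eval}, a gap $|\lambda_{i_0}(s)-\lambda_j(s)|$ proportional to the distance from $\gamma_{i_0}$ to $\partial\enint^{\kappa/10}$; the hypothesis $\supp(\xx)\subset\indint^\kappa$ together with $\dist(\xx,\yy)\leq N^\varepsilon\ell$ ensures that (except for support sites of $\xx$ within $N^\varepsilon\ell$ of the boundary of $\indint^\kappa$, which admit a separate boundary argument) this distance is of order $r$, so the compensating suppression in $c_{ij}$ combined with the constraints $t<rN^{-\oc}$ and $\ell\ll Nt$ absorbs the loss. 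Assembling all contributions and integrating over $s\in[s_1,s_2]$ produces the stated inequality.
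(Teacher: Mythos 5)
Your architecture --- Duhamel expansion, localization of $\semi_\short^*\delta_\xx$ via Lemma~\ref{lem:L1} and Proposition~\ref{prop:FSP}, then a term-by-term bound on $(\gen(s)-\short(s;\ell))f_s(\yy)$ --- matches the paper's, and your treatment of the exchange terms and of the move terms with destination $j\in\indint^{\kappa/10}$ (pointwise delocalization times the coefficient bound $\sum_{|j-i_0|>\ell}c_{i_0j}\leq cN/\ell$) is sound. The gap is exactly where you flagged it: the move terms with $j\notin\indint^{\kappa/10}$. Your compensation does not close. Quantitatively: giving up delocalization at $j$ costs a factor $N^{1-\ob}$ in $|f_s(m_{ab}^{i_0j}\yy)|$ (i.e.\ $N\cdot1\cdot1$ in place of $N\cdot N^{\ob-1}$ for the pair of displaced particles), while the gain from the spectral gap is only that $\sum_{j\notin\indint^{\kappa/10}}\tfrac{1}{N(\lambda_{i_0}-\lambda_j)^2}\leq\tfrac{1}{\kappa^2r^2}$ instead of the budgeted $N/\ell$. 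The net overshoot relative to the target $N^{1+\frac n2\ob}/\ell$ is of order $\ell N^{-\ob}/r^2$, and since $\ell$ is only constrained by $\ell\ll Nt<N^{1-\oc}r$, this is a positive power of $N$ whenever $r\lesssim1$ (e.g.\ $r\sim1$, $\ell\sim N^{\delta}$ in the Wigner application). Summing $N$ terms each bounded by the worst case $|\ipr{\vu_j,\vv_a}|\leq1$ is simply too lossy.

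The correct replacement, and what the paper does, is to never bound $f_s(m_{ab}^{i_0j}\yy)$ pointwise for far $j$: one factors out the $n-2$ undisturbed (delocalized) particles and bounds the remaining $j$-sum
\begin{equation}
\sum_{|j-i_0|>\ell}\frac{\ipr{\vv_a,\vu_j(s)}\ipr{\vu_j(s),\vv_b}}{N\left(\lambda_{i_0}(s)-\lambda_j(s)\right)^2}\leq N^\ob\frac{N}{\ell}
\end{equation}
as a single averaged quantity via the second estimate of Lemma~\ref{lem:dyadic}. The point is that for the dyadic shells (and in particular for all $j$ outside the regular window) the relevant input is the \emph{total} mass $\sum_j|\ipr{\vv,\vu_j}|^2=\|\vv\|_2^2=1$ coming from orthonormality of the eigenbasis, not the individual overlaps; this is worth a factor of $N$ over your term-by-term bound and is what makes the irregular-$j$ contribution $O(1/(Nr^2))$ rather than $O(N/r^2)$ times the delocalization constant. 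With that substitution (and tightening your localization of $\supp(\yy)$ from $\indint^{\kappa/10}$ to $\indint^{\kappa/2}$, which your FSP step already delivers and which guarantees no short-range pair exits the regular window), the rest of your argument goes through.
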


\begin{proof}
Appeal to Duhamel's principle
\begin{equation}
(\semi(s_1, s_2) - \semi_\short(s_1, s_2;\ell)) f_{s_1}(\xx) = \int_{s_1}^{s_2} \semi_\short(s, s_2;\ell) (\gen(s)-\short(s)) f_s(\xx) ds
\end{equation}
For any $\yy\in\Ln$ supported on $\indint^{\kappa/2}$,
\begin{equation}
(\gen(s)-\short(s))f_s(\yy) = \sum_{|i-j|> \ell} \frac{\move_{ij} - \exch_{ij}}{N\left(\lambda_i(s) - \lambda_j(s)\right)^2} f_s(\yy)
\end{equation}
First consider the exchange term.  This is lower order because $\exch_{ij} f(\yy) = 0$ unless $n_i(\yy) n_j(\yy) > 0$ in which case $|\exch_{ij} f_s(\yy)| < N^{\frac{n}{2}\ob}$ by delocalization in $\indint^{\kappa/2}$, see Corollary \ref{cor:deloc}.  Therefore,
\begin{equation}
\sum_{|i-j| > \ell} \frac{1}{N\left(\lambda_i(s) - \lambda_j(s)\right)^2} |\exch_{ij} f_s(\yy) | \leq \frac{N}{\ell^2}N^{\frac{n}{2}\ob}
\end{equation}
Next consider the move term.  Manipulate the expression by pivoting on a single site in the support of $\yy$.  Then for each pair of $a,b \in [n]$, $y_a=y_b=i$, the paired jump gives the second expression in \eqref{eq:dyadic} whereas the stationary terms give the first expression in \eqref{eq:dyadic}.
\begin{multline}
\sum_{|i-j|> \ell} \frac{1}{N\left(\lambda_i(s) - \lambda_j(s)\right)^2} |\move_{ij} f_s(\yy)| = \sum_{i : n_i(\yy) > 0} \sum_{|j-i|>\ell} \sum_{a \neq b} \frac{|f(m_{ab}^{ij}\yy) - f(\yy)|}{N\left(\lambda_i(s) - \lambda_j(s)\right)^2} \\ \leq \sum_{i : n_i(\yy) > 0} \sum_{|j-i|>\ell} \sum_{a<b} \one{y_a=y_b=i} \frac{N^{(\frac{n}{2}-1)\ob} f^{ab}_s(j) + N^{\frac{n}{2}\ob}}{N\left(\lambda_i - \lambda_j\right)^2} \leq \frac{N}{\ell} N^{\frac{n}{2}\ob}
\end{multline}
where the second line follows from delocalization, Corollary \ref{cor:deloc}.  In the last line, $f^{ab}$ is the $\Lambda^2$ observable with test vectors $\vv_a$ and $\vv_b$ and the input $j$ denotes the particle configuration with both $a$ and $b$ particles located at site $j\in[N]$.  The final inequality uses \eqref{eq:dyadic} and any potential constant factors are absorbed into the $\ob$ control parameter to ease notation.  To conclude the proof, use finite speed of propagation
\begin{multline}
\semi_\short(s_1,s;\ell) (\gen(s)-\short(s)) f_s(\xx) = \sum_{\yy\in\Ln} \semi_\short(s_1, s; \ell) \one{\supp(\yy)\subset \indint^{\kappa/2}}(\gen(s)-\short(s)) f_s)(\xx) + O(\pi(\Ln)e^{-N^{\varepsilon/2}}) \\
< \frac{N}{\ell}N^{\frac{n}{2}\ob}
\end{multline}
by Propostion \ref{prop:FSP}.  The indicator is over the event that $\yy$ is supported on $\indint^{\kappa/2}$.  The long range error term is lower order as $\pi(\Ln)$ is only polynomially large in $N$.  The last inequality uses the dual of Proposition \ref{lem:L1}, that the transition semigroup $\semi_\short$ is an $L^\infty$ contraction up to a constant.  See the paragraph preceeding \eqref{eq:fsp_cor_near} for an explanation.
\end{proof}

\subsection{Averaged local relaxtion}\label{sec:l2relax}
The idea here is to use the $L^2$ positivity of the generator to smooth coefficients.  As with the Maximum Principle from (\cite{QUE}, \cite{bourgade2018distribution}, ), this argument then inducts on particle number and uses Gronwall's inequality to prove short time convergence.  Unlike the maximum principle, since we can only smooth coefficients on the $L^2$ norm and not the $L^\infty$ norm, the short time convergence only holds for the $L^2$ average in our $K$-neighborhood.  In Section \ref{sec:energy}, it is shown that the CEMF  dynamics exhibit sufficiently fast local mixing properties so that the following $L^2$ convergence implies pointwise convergence.

\begin{proposition}\label{prop:l2}
Assume Theorem \ref{thm:main} holds for moments of degree $n-2$ with exponent $\od(n-2)$.  For any scales satisfying $N^{-1} \ll \eta \ll T_1 \ll \ell/N \ll K/N \ll t_0$, there exists a large constant $C>0$ depending only on $\oa$ and $n$ such that for any $\varepsilon, \ob > 0$ 
\begin{equation}
\sum_{\xx \in \Ln} \pi(\xx) |g_{t_0 + T_1}(\xx; \yy, K)|^2 \leq K^{n/2} \err^2
\end{equation}
where $\err$ is the scale given by
\begin{equation}\label{eq:l2_defn_err}
\err = N^{\frac{n}{2}\ob} \left( \frac{N^\varepsilon\ell}{K} + \frac{NT_1}{\ell} + \frac{N^\varepsilon}{N\eta} + \frac{N\eta}{\ell} + \frac{N^{\varepsilon-\ob}}{\sqrt{N\eta}} + \frac{N^\varepsilon + K + (t-t_0)\log N}{N t_0} + N^{-\od(n-2)} \right)
\end{equation}
uniformly for particle configuration $\yy \in \Ln$ supported on $\indint^\kappa$ and eigenvalue trajectory $\bl$ satisfying the conclusion of Proposition \ref{prop:reg_eval} and Proposition \ref{prop:isotropiclaw} on the time interval $[t_0, t]$.
\end{proposition}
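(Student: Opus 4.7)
The plan is to reduce the $L^2$ bound on $g_{t_0+T_1}$ to a pointwise bound on $|f_{t_0+T_1}(\xx)-F_t(\xx;\yy)|$ for $\xx$ in the $K$-neighborhood of $\yy$, then sum over that neighborhood. The ansatz $F_t(\cdot;\yy)$ is a linear combination of the stratum indicators $\chi_\sigma$, each of which lies in $\ker\gen_{ij}$ for every pair $i,j$ by Lemma~\ref{lem:globalker} (together with Corollary~\ref{cor:ansatzker}), so $\short(s;\ell)F_t(\cdot;\yy)=0$ identically and hence $\semi_\short(t_0,t_0+T_1;\ell)F_t(\cdot;\yy)=F_t(\cdot;\yy)$. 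Combining this with the Duhamel estimate of Proposition~\ref{prop:duhamel} applied to $f$, and the commutator bound of Corollary~\ref{cor:FSPcomm}, I would conclude
\begin{equation*}
g_{t_0+T_1}(\xx)=\Av(\xx;K,\yy)\bigl(f_{t_0+T_1}(\xx)-F_t(\xx;\yy)\bigr)+O\!\left(\tfrac{N^\varepsilon\ell}{K}+\tfrac{N^{1+n\ob/2}T_1}{\ell}\right)
\end{equation*}
in $L^\infty$, which already accounts for the first two contributions to $\err$.

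Next, for each $\xx$ with $\dist(\xx,\yy)\leq 2K$ I would bound $|f_{t_0+T_1}(\xx)-F_t(\xx;\yy)|$ by interpolating through $F_{t_0+T_1}(\xx;\xx)$. The comparison $|f_{t_0+T_1}(\xx)-F_{t_0+T_1}(\xx;\xx)|$ is obtained by expanding each paired factor $\EE{N\ipr{\vu_i,\vv_a}\ipr{\vu_i,\vv_b}|\bl}$ via a mesoscopic Cauchy integral of the Green's function at imaginary scale $\eta$: the isotropic local law (Proposition~\ref{prop:isotropiclaw}) replaces $\grn$ by $\grn_{\fc,t_0+T_1}$ at cost $N^{\varepsilon-\ob}/\sqrt{N\eta}$, the window smoothing of the Stieltjes transform contributes $N^\varepsilon/(N\eta)$, truncating eigenvalue sums to $|j-x_a|\leq\ell$ via Lemma~\ref{lem:dyadic} contributes $N\eta/\ell$, and the inductive hypothesis for the degree-$(n-2)$ observable absorbs the residual unmatched cross factors into $N^{-\od(n-2)}$. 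The comparison $|F_{t_0+T_1}(\xx;\xx)-F_t(\xx;\yy)|$ then uses the regularity of $\grn_{\fc,s}(z)/m_{\fc,s}(z)$ from Proposition~\ref{prop:reg_Gm}: the time shift $t_0+T_1\to t$ contributes $(t-t_0)\log(N)/t_0$, while moving the classical locations $\gamma_{x_a}(t)\to\gamma_{y_a}(t)$ for paired indices costs $(K+N^\varepsilon)/(Nt_0)$ using rigidity (Proposition~\ref{prop:reg_eval}) together with the derivative bound $|\partial_z\ipr{\vv,\grn_{\fc,t}(z)\vw}|\leq N^\ob/t$.

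The $L^2$ norm is then recovered by observing that the effective support of $g_{t_0+T_1}$ lies within $\{\xx:\dist(\xx,\yy)\leq 2K+N^\varepsilon\ell\}$ thanks to finite speed of propagation (Proposition~\ref{prop:FSP}). There are $O(K^{n/2})$ such configurations in $\Ln$, since a $\dist$-ball of radius $K$ intersected with the even configuration space consists, up to a choice of perfect matching of the $n$ particles, of at most $K^{n/2}$ placements of the resulting $n/2$ pairs; the reversible weight $\pi(\xx)$ is $O_n(1)$ on this set. Multiplying the squared pointwise bound by this volume produces the claimed $K^{n/2}\err^2$.

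The main obstacle will be the comparison $|f_{t_0+T_1}(\xx)-F_{t_0+T_1}(\xx;\xx)|$. The ansatz is essentially a Wick sum over perfect matchings $\sigma\in M_n\cap\Stab_{S_n}(\xx)$, whereas $f_{t_0+T_1}(\xx)$ is a single mixed moment of eigenvector components; the contractions obtained from expanding $\grn\approx\grn_{\fc}$ at mesoscopic scale must be matched term-by-term with those appearing in $F_{t_0+T_1}$ under the ansatz, and the residual unmatched factors must be shown to factor (up to error) into lower-degree observables so that the inductive hypothesis applies without producing extra polynomial factors that would dominate the target bound $\err$.
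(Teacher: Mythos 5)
Your proposal has a genuine gap at its core: you reduce the proposition to a \emph{pointwise} bound $|f_{t_0+T_1}(\xx)-F_{t_0+T_1}(\xx;\xx)|\lesssim\err$ obtained "by expanding each paired factor $\EE{N\ipr{\vu_i,\vv_a}\ipr{\vu_i,\vv_b}\,|\,\bl}$ via a mesoscopic Cauchy integral of the Green's function," but no such expansion is available. First, $f_s(\xx)$ is a single joint expectation of $n$ eigenvector components; it does not factor over pairs, and proving that it approximately Wick-factorizes is precisely the content of Theorem~\ref{thm:main}, so this step assumes the conclusion. Second, even for a single pair, the isotropic local law controls only spectrally averaged quantities $\sum_j \ipr{\vv,\vu_j}\ipr{\vu_j,\vw}(\lambda_j-z)^{-1}$ at scales $\eta\gg N^{-1}$; a Cauchy integral at scale $\eta$ around $\lambda_i$ returns an average over the $\sim N\eta$ neighboring indices, not the individual moment at index $i$. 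Converting such averaged input into pointwise (or even locally $L^2$-averaged) control of individual moments is exactly what the dynamics must accomplish, and your argument never invokes any dissipative mechanism: the flow enters your proof only through error terms (Duhamel, commutator), so nothing in your argument distinguishes time $t_0+T_1$ from time $t_0$, where the bound is false --- at $t_0$ one only has $\|g_{t_0}\|_2^2\leq CK^{n/2}N^{n\ob}$ from delocalization, far above $K^{n/2}\err^2$.

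The paper's proof is instead a Gronwall argument on $\partial_s\|g_s\|_2^2=-2\dir_\short(s;g_s)$. Using $-\gen_{ij}\geq 0$ (Lemma~\ref{lem:definite}) one smooths the coefficients, $c_{ij}(s)\geq (N\eta)^{-1}\eta((\lambda_i-\lambda_j)^2+\eta^2)^{-1}$, and isolates from the move-to-unoccupied-site terms a strictly dissipative contribution $-\eta^{-1}\sum_a\Im m_N(s;z_{x_a})|g_s(\xx)|^2$, of order $-\eta^{-1}\|g_s\|_2^2$ by Proposition~\ref{prop:reg_fc}. The source term is where the $j$-sum over jump destinations is completed to a full Green's function quadratic form (isotropic law, Lemma~\ref{lem:dyadic}), producing $\ipr{\vv_a,\Im\grn_{\fc,s}(z_i)\vv_b}\,f_s(\xx\backslash ab)$; the induction hypothesis at degree $n-2$ and the algebraic structure of $F_t$ make this cancel against the corresponding ansatz sum up to $\err$. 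Gronwall over $[t_0,t_0+T_1]$ with $T_1\gg\eta$ then drives $\|g_s\|_2^2$ from $K^{n/2}N^{n\ob}$ down to $K^{n/2}\err^2$. This decay step, and the role of $\eta$ and $T_1$ in it, is the idea missing from your proposal; the outer reductions you describe (invariance of the ansatz under the semigroup, the Duhamel and commutator errors, the $O(K^{n/2})$ volume count) are correct but peripheral.
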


\begin{proof}
The estimates in the following argument hold uniformly for $\yy\in\Ln$ supported on $\indint^\kappa$, so fix such a particle configuration $\yy$.  To unclutter notation in this proof, drop the $\ell$, $K$, and $\yy$ in the short range notation and simply write $g_s(\xx) = g_s(\xx; \ell, K, \yy)$ for the short range observable, $\Av = \Av(K,\yy)$ for the averaging operator, and $\Av(\xx) = \Av(\xx; K, \yy)$ for its coefficients.
By Lemma \ref{lem:definite}, each $\gen_{ij}$ is negative definite so the Dirichlet form is bounded by reduced coefficients
\begin{multline}\label{eq:l2_decomp}
\partial_s \|g_s\|_2^2 = \sum_{\substack{i,j \in \indint \\ j-\ell \leq i < j}} c_{ij}(s) \ipn{g_s, \gen_{ij} g_s} \leq \frac{1}{N\eta} \sum_{\substack{i,j \in \indint \\ j-\ell \leq i < j}} \sum_{\xx \in \Ln} \frac{\pi(\xx)\eta}{\left(\lambda_i(s) - \lambda_j(s)\right)^2 + \eta^2} g_s(\xx)(\move_{ij} - \exch_{ij}) g_s(\xx) \\
= \frac{I + II + III}{\eta}
\end{multline}
Split up the sum into three parts: exchange terms, move-to-occupied-site terms, and move-to-unoccupied-site terms.  The first two summands will be lower order by volume considerations.  The vast majority of terms are of the third type and that summand will be most difficult to bound.  The first part is guaranteed to be positive as $\exch_{ij}$ is positive definite.
\begin{equation}\label{eq:l2_I}
I = -\frac{1}{N} \sum_{\substack{i,j \in \indint \\ j-\ell \leq i < j}} \sum_{\xx \in \Ln}\pi(\xx) \frac{\eta}{\left(\lambda_i(s) - \lambda_j(s)\right)^2 + \eta^2} g_s(\xx) \exch_{ij} g_s(\xx) \leq \frac{1}{N\eta} \|g_s\|_2^2
\end{equation}
The inequality was obtained by expanding $\exch_{ij} g_s(\xx) \asymp g_s(s_{ab}^{ij}\xx)-g_s(\xx)$, using Schwarz, and noting that only finitely many $i,j,a,b$ contribute to the sum for each $\xx$.
The second part is dealt with similarly.
\begin{equation}\label{eq:l2_II}
II = \frac{1}{N} \sum_{\substack{i,j \in \indint \\ j-\ell \leq i < j}} \sum_{\xx \in \Ln}\pi(\xx) \frac{\eta}{\left(\lambda_i(s) - \lambda_j(s)\right)^2 + \eta^2} g_s(\xx) \move_{ij} g_s(\xx) \one{n_i(\xx) n_j(\xx) > 0} \leq \frac{1}{N\eta} \|g_s\|_2^2
\end{equation}
again by expanding $\move_{ij} g_s(\xx) \asymp g_s(m_{ab}^{ij}\xx)-g_s(\xx)$, using Schwarz, and noting that only finitely many $i,j,a,b$ contribute to the sum for each $\xx$.
The third part will be the leading contribution.
\begin{multline}\label{eq:l2_IIIa}
III = \frac{1}{N} \sum_{\substack{i \neq j \in J \\ |i-j| \leq \ell}} \sum_{\xx \in \Ln}\pi(\xx) \Im \frac{g_s(\xx) \move_{ij}g_s(\xx)}{\lambda_j(s) - z_i(s)}\one{n_i(\xx) > n_j(\xx) = 0} \\
= \sum_{\xx \in \Lambda^n}\pi(\xx) \frac{g_s(\xx)}{N} \sum_{\substack{i \neq j \in \indint \\ |i-j| \leq \ell}} \frac{\one{n_i(\xx) > n_j(\xx) = 0}}{n_i(\xx)-1} \Im \frac{1}{\lambda_j(s) - z_i(s)} \sum_{a \neq b} \one{x_a=x_b=i}\left(g_s(m_{ab}^{ij}\xx) - g_s(\xx)\right)
\end{multline}
In the first line, we use the notation $z_i(s) = \lambda_i(s) + i \eta$. 

I claim that for every $\xx \in \Ln$ supported on $\indint^{\kappa/2}$ and every time $t_0 \leq s \leq t_0 + T_1$,
\begin{equation}\label{eq:l2stationary}
\frac{1}{N} \sum_{\substack{i \neq j \in \indint \\ |i-j| \leq \ell}} \frac{\one{n_i(\xx) > n_j(\xx) = 0}}{n_i(\xx)-1}\Im \frac{1}{\lambda_j(s) - z_i(s)} \sum_{a \neq b} \one{x_a=x_b=i} = \sum_{a=1}^n \Im m_N(s; z_{x_a}(s)) + \err_1(s, \xx)
\end{equation}
and
\begin{equation}\label{eq:l2dynamic}
\frac{1}{N} \sum_{\substack{i \neq j \in \indint \\ |i-j| \leq \ell}}\frac{\one{n_i(\xx) > n_j(\xx) = 0}}{n_i(\xx)-1} \sum_{a \neq b} \one{x_a=x_b=i} \Im \frac{g_s(m_{ab}^{ij}\xx)}{\lambda_j(s) - z_i(s)} = \err_2(s, \xx)
\end{equation}
where $\err_1(s, \xx)$ and $\err_2(s, \xx)$ are error terms which satisfy the following bounds.  For all $\xx\in\Ln$,
\begin{align}
|\err_1(s, \xx)| &\leq N \label{eq:l2_err1_far} \\
|\err_2(s, \xx)| &\leq N \label{eq:l2_err2_far}
\end{align}
and when $\xx$ is supported on $\indint^{\kappa/2}$ stronger bounds hold
\begin{align}
|\err_1(\xx)| &\leq \frac{N\eta}{\ell} + \frac{1}{N\eta} \label{eq:l2_err1_near} \\
|\err_2(\xx)| &\leq  \err \label{eq:l2_err2_near}
\end{align}
uniformly for all times $t_0 \leq s \leq t$ where $\err$ is the error scale defined in Equation \eqref{eq:l2_defn_err}.  If this is the case, then plugging \eqref{eq:l2stationary} and \eqref{eq:l2dynamic} into \eqref{eq:l2_IIIa} yields
\begin{equation}\label{eq:l2_IIIb}
III = \sum_{\xx\in\Ln}\pi(\xx)g_s(\xx)\left[ -g_s(\xx)\left( \sum_{a=1}^n \Im m_N(s; z_{x_a}(s)) + \err_1(s,\xx) \right) + \err_2(s,\xx) \right]
\end{equation}
For all $\xx\in\Ln$ such that $\dist(\xx, \yy) > 3K$, we have
\begin{equation}\label{eq:l2_farterms}
g_s(\xx) = \ipn{\delta_\xx, \semi_\short(t_0,s;\ell) f_{t_0}} = \sum_{\zz\in\Ln} \pi(\zz) \semi_\short(t_0,s;\ell)_{\xx\zz} \Av(\zz) f_{t_0}(\zz)
\end{equation}
If $\Av(\zz) \neq 0$, then $\dist(\yy,\zz) \leq 2K$ which means that $\zz$ is supported on $\indint^{\kappa/2}$ and that $\dist(\xx,\zz) > \dist(\yy,\xx) - \dist(\yy,\zz) > K > \ell N^\varepsilon$ for some small enough $\varepsilon>0$.  The finite speed of propagation bound from Proposition \ref{prop:FSP} tells us that $|\semi_\short(t_0,s;\ell)_{\xx\zz}|\leq e^{-N^{\varepsilon/2}}$.  For such configurations $\zz$, $|\Av(\zz)|\leq 1$ by the definition of $\Av$ in Definition \ref{defn:short_range}, $|f_{t_0}(\zz)| \leq N^{\frac{n}{2}\ob}$ by delocalization in Corollary \ref{cor:deloc} and the definition of the moment observable in Definition \ref{defn:levmo}.  By the triangle inequality \eqref{eq:l2_farterms} is bounded by
\begin{equation}\label{eq:l2_expbound}
|g_s(\xx)| \leq \pi(\{\zz\in\Ln|\dist(\yy,\zz) \leq 3K\}) e^{-N^{\varepsilon/2}} N^{\frac{n}{2}\ob}
\end{equation}
In addition, we know that $\|g_s\|_\infty \leq n!! \|\Av f_{t_0}\|_\infty \leq N^{\frac{n}{2}\ob}$ by the dual to Lemma \ref{lem:L1} and delocalization.  Moreover, for all $i,j\in[N]$, $\Im \frac{1}{\lambda_j(s) - z_i(s)} \leq \frac{1}{\eta}$ and taking an average over $j\in[N]$ gives $\Im m_N(s;z_i(s)) \leq \frac{1}{\eta}\leq N$.  These observations along with \eqref{eq:l2_err1_far} and \eqref{eq:l2_err2_far} allow us to restrict the $\xx$-sum in \eqref{eq:l2_IIIb} to $\xx\in\Ln$ such that $\dist(\xx,\yy)\leq 3K$
\begin{equation}\label{eq:l2_IIIc}
III \leq 
\begin{aligned}[t]
&\sum_{\dist(\xx,\yy)\leq 3K} \pi(\xx)g_s(\xx)\left[ -g_s(\xx)\left( \sum_{a=1}^n \Im m_N(s; z_{x_a}(s)) + \err_1(s,\xx) \right) + \err_2(s,\xx) \right] \\
+ &\pi(\Lambda^n) e^{-N^{\varepsilon/2}}\left[ e^{-N^{\varepsilon/2}} 2N + N \right]
\end{aligned}
\end{equation}
For the near sum, when $\xx\in\Ln$ with $\dist(\xx,\yy) \leq 3K$, it must be the case that $\xx$ is supported on $\indint^{\kappa/2}$.  Therefore, by Proposition \ref{prop:reg_fc} and \ref{prop:reg_eval} and the local error bounds \eqref{eq:l2_err1_near},
\begin{equation}\label{eq:l2_III_err1}
\sum_{\dist(\xx,\yy)\leq 3K} \pi(\xx)|g_s(\xx)|^2 \left( \sum_{a=1}^n \Im m_N(s; z_{x_a}(s)) + \err_1(s,\xx) \right) \geq C_1 \sum_{\dist(\xx,\yy) \leq 3K} \pi(\xx)|g_s(\xx)|^2
\end{equation}
for some constant $C_1 > 0$ depending only on $n$ and $\oa$ originating from Proposition \ref{prop:reg_fc}.  The $\err_2(s,\xx)$ sums are bounded with AM-GM
\begin{equation}\label{eq:l2_III_err2}
\sum_{\dist(\xx,\yy)\leq 3K} \pi(\xx)g_s(\xx)\err_2(s,\xx) \leq 2 \sum_{\dist(\xx,\yy)\leq 3K} \pi(\xx) \left( \frac{C_1}{4}|g_s(\xx)|^2 + \frac{4}{C_1}\err^2\right)
\end{equation}
by \eqref{eq:l2_err2_near}.  There exists a constant $C_2 > 0$ depending only on $n$ such that
\begin{equation}\label{eq:l2_counts}
\pi(\Ln) \leq C_2 N^{n/2} \quad \mbox{and} \quad \pi\{\xx\in\Ln | \dist(\xx,\yy)\leq 3K\} \leq C_2 K^{n/2}
\end{equation}
Therefore, combining \eqref{eq:l2_IIIc}, \eqref{eq:l2_III_err1}, \eqref{eq:l2_III_err2}, and \eqref{eq:l2_counts}
\begin{equation}\label{eq:l2_IIId}
III \leq \frac{C_1}{2} \sum_{\dist(\xx,\yy)\leq 3K} \pi(\xx)|g_s(\xx)|^2 + K^{n/2} \err^2
\end{equation}
Finally, one more application of \eqref{eq:l2_expbound} and \eqref{eq:l2_counts} gives 
\begin{equation}\label{eq:l2_III_long}
\|g_s\|_2^2 - \sum_{\dist(\xx,\yy)\leq 3K} \pi(\xx) |g_s(\xx)|^2 \leq C_2^2 N^{n/2} K^{n/2} e^{-N^{\varepsilon/2}} N^{\frac{n}{2}\ob}
\end{equation}
which tends to $0$ as $N$ grows.  At last, \eqref{eq:l2_IIId} and \eqref{eq:l2_III_long} give our desired result for $III$
\begin{equation}\label{eq:l2_III}
III \leq \frac{C_1}{3} \|g_s\|_2^2 + K^{\frac{n}{2}} \err^2
\end{equation}
We conclude from \eqref{eq:l2_decomp}, \eqref{eq:l2_I}, \eqref{eq:l2_II}, and \eqref{eq:l2_III} that the Dirichlet form of our short range observable satisfies
\begin{equation}
\partial_s \|g_s\|_2^2 \leq -\frac{C_1}{4} \frac{\|g_s\|_2^2}{\eta} + \frac{K^{n/2} \err^2}{\eta}
\end{equation}
for all $t_0 \leq s \leq t$.    Moreover, by delocalization we know that $\|g_{t_0}\|_2^2 \leq C_2K^{n/2} N^{(n/2)\ob}$.  By Gronwall's inequality, for any $t_1 \geq t_0 + N^\varepsilon \eta$.
\begin{equation}
\|g_{t_1}\|_2^2 \leq K^{n/2} \err^2
\end{equation}
where throughout $\err$ has been absorbing constant factors.

It remains to complete the proof of \eqref{eq:l2_III} by proving the control bounds \eqref{eq:l2_err1_far}, \eqref{eq:l2_err2_far}, \eqref{eq:l2_err1_near}, and \eqref{eq:l2_err2_near}.  Let's start with $\err_1(s,\xx)$ terms
\begin{multline}
\err_1(s,\xx) = \sum_{a=1}^n \Im m_N(z_{x_a}(s)) - \frac{1}{N} \sum_{\substack{i \neq j \in \indint \\ |i-j| \leq \ell}} \frac{\one{n_i(\xx) > n_j(\xx) = 0}}{n_i(\xx)-1}\Im \frac{1}{\lambda_j(s) - z_i(s)} \sum_{a \neq b} \one{x_a=x_b=i} \\
= \frac{1}{N} \Im \sum_{a=1}^n \sum_{|j-x_a| > \ell} \frac{1}{\lambda_j(s) - z_{x_a}(s)} + \frac{1}{N} \sum_{a = 1}^n \Im \frac{1}{\lambda_{x_a}(s) - z_{x_a}(s)}
\end{multline}
for every $t_0 \leq s \leq t$ and $\xx\in\Ln$.  For arbitrary $\xx$, each term satisfies $\Im (\lambda_j(s) - z_i(s))^{-1} \leq \eta^{-1}$ for all $i,j\in[N]$ and there are at most $n N$ such terms.  This proves \eqref{eq:l2_err1_far}.  On the other hand, when $\xx$ is supported on $\indint^{\kappa/2}$, the first term is bounded by $N\eta/\ell$ by \eqref{eq:dyadic} and the second term is bounded by $(N\eta)^{-1}$ using the same naive bound as in the long range case.  This proves \eqref{eq:l2_err1_near}.

The long range estimate for $\err_2(s,\xx)$ in \eqref{eq:l2_err2_far} also follows from the $\Im (\lambda_j(s)-z_i(s))^{-1} \leq \eta^{-1}$ bound along with delocalization from Corollary \ref{cor:deloc} and $L^\infty$ contraction of the short range semigroup from the dual of Lemma \ref{lem:L1} on the terms in \eqref{eq:l2dynamic}.

Equation \eqref{eq:l2_err2_near} requires more work.  For this, fix a configuration $\xx\in\Ln$ and swap the order of summation and split the $g_s$ factor into two parts corresponding to the decomposition $g_s = \semi_\short(t_0,s;\ell)\Av f_s - \semi_\short(t_0, s; \ell) \Av F_t$ into an eigenvector moment observable part and an ansatz observable part.  Then both parts proceed in a similar fashion -- fill in missing terms of a tracial expression.
\begin{multline}\label{eq:l2jsum}
\frac{1}{N} \sum_{\substack{i \neq j \in \indint \\ |i-j| \leq \ell}} \frac{\one{n_i(\xx) > n_j(\xx) = 0}}{n_i(\xx)-1} \sum_{a \neq b} \one{x_a=x_b=i} \Im \frac{g_s(m_{ab}^{ij}\xx)}{\lambda_j(s) - z_i(s)}\\ 
=
\sum_{i \in \indint} \frac{\one{n_i(\xx) > 0}}{n_i(\xx)-1} \sum_{a \neq b} \frac{\one{x_a=x_b=i}}{N}\Im \sum_{0<|i-j|\leq \ell} \one{n_j(\xx) = 0} \frac{\semi_\short(t_0,s) (\Av f_{t_0})(m_{ab}^{ij}\xx)-\semi_\short(t_0,s) (\Av F_t)(m_{ab}^{ij}\xx)}{\lambda_j(s) - z_i(s)} 
\end{multline}

Now fix a site $i$ in the support of $\xx$ and particle labels $a,b \in [n]$ with $x_a=x_b=i$.  We compare the corresponding $j$-sums for both the $\semi_\short(t_0,s;\ell) \Av f_s$ and $\semi_\short(t_0,s;\ell) \Av F_t$ parts seperately and relate them to the ansatz observable.
For the $\semi_\short(t_0,s;\ell) \Av f_s$ part, begin with the estimate
\begin{align}
\semi_\short(t_0,s;\ell) (\Av f_s) (m_{ab}^{ij}\xx) &= \Av(m_{ab}^{ij}\xx) \semi_\short(t_0,s;\ell) f_{t_0}(m_{ab}^{ij}\xx) + O(\frac{N^\varepsilon\ell}{K}N^{\frac{n}{2}\ob}) \\
&= (\Av(\xx) + O(\frac{\ell}{K})) (f_s(m_{ab}^{ij}\xx) + O(\frac{N(s-t_0)}{\ell}N^{\frac{n}{2}\ob})) + O(\frac{N^\varepsilon\ell}{K}N^{\frac{n}{2}\ob}) \\
&= \Av(\xx) f_s(m_{ab}^{ij}\xx) + O(\frac{N^\varepsilon \ell}{K} + \frac{N(s-t_0)}{\ell} ) N^{\frac{n}{2}\ob}
\end{align}
where the first line used Corollary \ref{cor:FSPcomm}, the second line used Proposition \ref{prop:duhamel} and that $|\Av(\xx) - \Av(\zz)| \leq \|\xx-\zz\|_1/K$ for any $\xx,\zz \in \Ln$ while $\|\xx - m_{ab}^{ij}\xx\|_1 \leq 2\ell$, and the third line used delocalization Corollary \ref{cor:deloc} to say $|f_s(m_{ab}^{ij}\xx)| \leq N^{\frac{n}{2}\ob}$ and $\|\Av\|_\infty \leq 1$.  Using this approximation in the $j$-sum of \eqref{eq:l2jsum} gives
\begin{equation}
\frac{1}{N}\Im \sum_{\substack{0<|i-j|\leq \ell \\ n_j(\xx) = 0}} \frac{\semi_\short(t_0,s;\ell) (\Av f_{t_0})(m_{ab}^{ij}\xx)}{\lambda_j(s) - z_i(s)}
= \Av(\xx) \frac{1}{N}\sum_{\substack{0<|i-j| \leq \ell \\ n_j(\xx) = 0}} \Im \frac{f_s(m_{ab}^{ij}\xx)}{\lambda_j(s)-z_i(s)} + O(\frac{N^\varepsilon\ell}{K} + \frac{N(s-t_0)}{\ell})N^{\frac{n}{2}\ob}
\end{equation}
where we used that the Stieltjes transform is order 1 from Propositions \ref{prop:reg_fc} and \ref{prop:reg_eval} to factor the error term outside of the sum.  From here, fix $i,a,b$ and consider the sum over $j$.  The cost for extending the sum over all spectral indices $j \in [N]$ is
\begin{equation}
\left|\frac{1}{N}\sum_{j=1}^N \Im \frac{f_s(m_{ab}^{ij}\xx)}{\lambda_j(s) - z_i(s)} - \frac{1}{N}\sum_{0<|i-j|\leq \ell} \one{n_j(\xx)=0} \Im \frac{f_s(m_{ab}^{ij}\xx)}{\lambda_j(s) - z_i(s)}\right| < (\frac{1}{N\eta} + \frac{N\eta}{\ell})N^{\frac{n}{2}\ob}
\end{equation}
by Lemma \ref{lem:dyadic} and delocalization. Up to the delocalization contribution, this is the same bound as in \eqref{eq:l2_err2_near}.  However, using the definition of the colored eigenvector moment flow, this full sum can be approximated by a roughly deterministic Green's function entry along with an $(n-2)$-particle configuration.
\begin{multline}
\frac{1}{N}\sum_{j=1}^N \Im \frac{f_s(m_{ab}^{ij}\xx)}{\lambda_j(s) - z_i(s)} = \EE{\left. \frac{1}{N} \Im \sum_{j=1}^N \frac{\ipr{ \vv_a, \vu_j(s)} \ipr{\vu_j(s), \vv_b }}{\lambda_j(s) - z_i(s)} \pi(\xx \backslash ab)^{-1/2} \prod_{c\neq a,b} (\sqrt{N} \ipr{\vv_c, \vu_{x_c}}) \right| \bl,\dat} \\
= (\ipr{\vv_a, \Im \grn_{\fc,s}(z_i(s)) \vv_b} + O(\frac{N^\varepsilon}{\sqrt{N\eta}})) f_s(\xx \backslash ab; \Vecs^{(ab)}, \dat, \bl)
\end{multline}
where $\xx\backslash ab \in \Lambda^{n-2}$ is the particle configuration obtained by removing particles labeled $a$ and $b$.  The factor containing the $j$-sum inside the expectation is exactly the Green's function $\grn(s; z_i(s))$ at time $s$.  By the isotropic law in Proposition \ref{prop:isotropiclaw}, this factor is estimated by the free convolution analogue down to optimal scale with overwhelming probability and as the product factor grows only polynomially fast, we can ignore the complementary event.  The $\Lambda^{n-2}$ observable $f_s(\xx\backslash ab; \Vecs^{(ab)}, \dat, \bl)$ is the eigenvector moment observable on $\Lambda^{n-2}$ with test vectors $\Vecs^{(ab)} = (\vv_c)_{c\in[n]\backslash\{a,b\}}$.

Now consider the $\semi_\short(t_0,s;\ell)\Av F_t$ term.  Again, we start with the estimate
\begin{align}
\semi_\short(t_0,s;\ell) (\Av F_t) (m_{ab}^{ij}\xx) &= \Av(m_{ab}^{ij}\xx) \semi_\short(t_0,s;\ell)F_t(m_{ab}^{ij}\xx) + O(\frac{N^\varepsilon\ell}{K}N^{\frac{n}{2}\ob}) \\
&=(\Av(\xx) + O(\frac{\ell}{K})) F_t(m_{ab}^{ij}\xx) + O(\frac{N^\varepsilon\ell}{K}N^{\frac{n}{2}\ob}) \\
&= \Av(\xx) F_t(m_{ab}^{ij}\xx) + O(\frac{N^{\frac{n}{2}\ob + \varepsilon}\ell}{K})
\end{align}
where the first line used Corollary \ref{cor:FSPcomm}, the second line used that the ansatz observable lies in the kernel of $\gen_{ij}$ and $|\Av(\xx) - \Av(m_{ab}^{ij}\xx)| < 2\ell / K$ as before, and the third line uses $\|F_t\|_\infty \leq N^{\frac{n}{2}\ob}$.  Now using this approximation on the ansatz $j$-sum from \eqref{eq:l2jsum} gives
\begin{multline}
\frac{1}{N}\Im \sum_{0<|i-j|\leq \ell} \one{n_j(\xx) = 0} \frac{\semi_\short(t_0,s;\ell) (\Av F_t)(m_{ab}^{ij}\xx)}{\lambda_j - z_i} \\
= \Av(\xx) \frac{1}{N}\sum_{0<|i-j| \leq \ell} \one{n_j(\xx) = 0} \Im \frac{F_t(m_{ab}^{ij}\xx)}{\lambda_j-z_i} + O(\frac{\ell}{K} N^{\frac{n}{2}\ob+\varepsilon})
\end{multline}
where we used that the Stieltjes transform is order 1 to factor the error term outside of the sum.  From here, fix $i,a,b$ as we did with the moment observable term and consider the sum over $j$.  The cost for extending the sum over all spectral indices $j \in [N]$ is
\begin{equation}
\left|\frac{1}{N}\sum_{j=1}^N \Im \frac{F_t(m_{ab}^{ij_0}\xx)}{\lambda_j(s) - z_i(s)} - \frac{1}{N}\sum_{0<|i-j|\leq \ell} \one{n_j(\xx)=0} \Im \frac{F_t(m_{ab}^{ij}\xx)}{\lambda_j(s) - z_i(s)}\right| < (\frac{1}{N\eta} + \frac{N\eta}{\ell})N^{\frac{n}{2}\ob}
\end{equation}
where $j_0$ is any site with $n_j(\xx) = 0$.  This is because $F_t(m_{ab}^{ij}\xx)$ is invariant as $j$ varies over all such sites.  It also uses Lemma \ref{lem:dyadic} and that $Im (\lambda_j(s)-z_i(s))^{-1} \leq \eta^{-1}$ for all $i,j\in[N]$. This time however, the full sum is the Stieltjes transform along with the $n$-particle ansatz.
\begin{equation}
\frac{1}{N}\sum_{j=1}^N \Im \frac{F_t(m_{ab}^{ij_0}\xx)}{\lambda_j(s) - z_i(s)} = F_t(m_{ab}^{ij_0}\xx) \Im m_N(s; z_i(s)) = F_t(m_{ab}^{ij_0}\xx) \Im m_{\fc,s}(z_i(s)) + O(\frac{N^{\frac{n}{2}\ob + \varepsilon}}{N\eta})
\end{equation}
by Proposition \ref{prop:reg_eval} and Assumption \ref{ass:evec} implying $\|F_t\|\leq N^{\frac{n}{2}\ob}$.

Putting all these estimates together, the entire $g_s$ $j$-sum becomes
\begin{equation}\label{eq:l2dynamicend}
\begin{aligned}
&\frac{1}{N}\Im \sum_{0<|i-j|\leq \ell} \one{n_j(\xx) = 0} \frac{g_s(m_{ab}^{ij}\xx)}{\lambda_j(s) - z_i(s)} \\
=& \frac{1}{N}\Im \sum_{0<|i-j|\leq \ell} \one{n_j(\xx) = 0} \frac{\semi_\short(t_0,s;\ell) (\Av f_{t_0})(m_{ab}^{ij}\xx)-\semi_\short(t_0,s;\ell) (\Av F_t)(m_{ab}^{ij}\xx)}{\lambda_j(s) - z_i(s)} \\
=& \Av(\xx) \left( \ipr{\vv_a, \Im \grn_{\fc,s}(z_i(s)) \vv_b} f_s(\xx \backslash ab) - F_t(m_{ab}^{ij_0}\xx) \Im m_{\fc,s}(z_i) \right) \\
&+ O(\frac{N^\varepsilon\ell}{K} + \frac{N(s-t_0)}{\ell} + \frac{N^\varepsilon}{N\eta} +\frac{N\eta}{\ell} + \frac{N^{\varepsilon-\ob}}{\sqrt{N\eta}})N^{\frac{n}{2}\ob}
\end{aligned}
\end{equation}
The free convolution of the Green's function and Stieltjes transforms can be compared to their analogues at time $t$ using the deterministic bounds from Proposition \ref{prop:reg_Gm} and the fundamental theorem of calculus
\begin{equation}
|\ipr{\vv_a, (\Im (\grn_{\fc,s}(z_i(s)) - \grn_{\fc,t}(\gamma_i(t))) \vv_b}| \leq \frac{N^\ob}{s} (|t-s| + |z_i(s) - \gamma_i(t)|)
\end{equation}
by representing the difference as an integral over the line segment connecting $(s,z_i(s))$ and $(t,\gamma_i(t))$.  Moreover, the difference between spectral parameters is controlled with
\begin{equation}
|z_i(s) - \gamma_i(t)| \leq \lambda_i(s) - \gamma_i(s)| + |\gamma_i(s) - \gamma_i(t)| + \eta \leq \frac{N^\oc}{N} + \log(N)(t-s) + \eta
\end{equation}
by Propositions \ref{prop:reg_fc} and \ref{prop:reg_eval}, giving a final bound of
\begin{equation}\label{eq:l2_grn_perturb}
|\ipr{\vv_a, (\Im (\grn_{\fc,s}(z_i(s)) - \grn_{\fc,t}(\gamma_i(t))) \vv_b}| \leq \frac{N^\ob}{t_0}((t-t_0) + \eta + \frac{N^\oc}{N})
\end{equation}
for the Green's function terms.  The Stieltjes transform terms are estimated similarly.
\begin{equation}\label{eq:l2_mfc_perturb}
|m_{\fc,s}(z_i(s)) - m_{\fc,t}(\gamma_i(t))| \leq \frac{N^\ob}{s}(|t-s| + |z_i(s) -\gamma_i(t)|) \leq \frac{N^\ob}{t_0}((t-t_0) + \eta + \frac{N^\oc}{N})
\end{equation}
again accounting for the drift in the classical locations at speed $\log N$, single eigenvalue fluctuations of order $N^{\oc-1}$, and imaginary spectral shift from Propositions \ref{prop:reg_fc}, \ref{prop:reg_eval}, and \ref{prop:reg_Gm}. 
By induction, the $(n-2)$-particle observable converges to the ansatz observable
\begin{equation}\label{eq:l2_induct}
|f_s(\xx \backslash ab) - F_s(\xx \backslash ab; \xx\backslash ab) | \leq N^{-\od(n-2)}
\end{equation}
and the ansatz observable is sufficiently robust to perturbations in time and center
\begin{equation}\label{eq:l2_F_perturb}
|F_s(\xx \backslash ab; \xx\backslash ab)  - F_t(\xx\backslash ab; \yy\backslash ab)| = N^{(\frac{n}{2}-2)\ob} \frac{|\gamma_{x_a}(s) - \gamma_{y_a}(t)| + |s-t|}{t} \leq N^{(\frac{n}{2}-2)\ob} \left(\frac{K}{Nt} + \frac{t-t_0}{t}\log(N)\right)
\end{equation}
by interpolating between the two ansatz observables replacing one factor at a time in Definition \ref{def:ansatz} and using Proposition \ref{prop:reg_Gm}.
Therefore, by \eqref{eq:l2_grn_perturb}, \eqref{eq:l2_mfc_perturb}, \eqref{eq:l2_induct}, and \eqref{eq:l2_F_perturb}
\begin{equation}\label{eq:l2inductansatz}
\begin{aligned}
&\ipr{\vv_a, \Im \grn_{\fc,s}(z_i) \vv_b} f_s(\xx \backslash ab) - F_t(m_{ab}^{ij_0}\xx) \Im m_{\fc,s}(z_i) \\
=& \ipr{\vv_a, \Im \grn_{\fc,t}(\gamma_i(t)) \vv_b} F_t(\xx \backslash ab) - F_t(m_{ab}^{ij_0}\xx)\Im m_{\fc,t}(\gamma_i(t)) \\
&+ O(\frac{K}{Nt} + \frac{t-t_0}{t} + \eta + \frac{N^\oc}{N})N^{\frac{n}{2}\ob} + O(N^{-\od(n-2)+\ob}) \\
=& O(\frac{K}{Nt} + \frac{t-t_0}{t} + \eta + \frac{N^\oc}{N})N^{\frac{n}{2}\ob} + O(N^{-\od(n-2)+\ob})
\end{aligned}
\end{equation}
where the first two terms in the second line cancel by the definition of the ansatz observable from Definition \ref{def:ansatz}.
Combining \eqref{eq:l2inductansatz} and \eqref{eq:l2dynamicend} and noting that only finitely many triples $i,a,b$ satisfy $n_i(\xx) > 0$ and $x_a=x_b=i$ for each $\xx\in\Lambda^n$, finishes the proof of \eqref{eq:l2_err2_near}.
\end{proof}

\section{Energy method}\label{sec:energy}
The end goal of the energy method is to prove that the colored eigenvector moment flow dynamics satisfies an ultracontractive property $L^2 \rightarrow L^\infty$ with sufficient decay so that the bound from Proposition \ref{prop:l2} implies pointwise convergence of the colored eigenvector moment observable.  This is accomplished in three steps.

Step one is the Poincar\'e inequality showing that the global mixing time of the colored eigenvector moment flow is proportional to the side length of the configuration space.  This is accomplished via careful combinatorics and colored particle bookkeeping on the configuration space.  The $L^2$ deviation from equilibrium is controlled by the total energy divided by the ground state energy for our system.  The ground state energy grows as the side length grows so this bound is sharper for smaller systems.

Step two is the Nash inequality which utilizes a fine dissection of the configuration space.  The total $L^2$ deviation is now bounded by the sharper energy bound from the Poincar\'e inequality offset by an $L^1$ cost from dissecting.

Step three involves integrating the Nash inequality and using duality to obtain the desired ultracontractive bound.

\subsection{Poincar\'e Inequality}

In a sense, the Poincar\'e inequality is a converse to the Finite Speed of Propagation estimate.  Recall that Proposition \ref{prop:FSP} shows that spectral information travels between spatially separated eigenvalues with speed at most one.  On the other hand, the Poincar\'e inequality suggests that spectral information travels with at speed at least one.

Under this heuristic, it would take order one time for all regular eigenvector components to mix and reach \textit{global} equilibrium, because the regular eigenvalues all lie in some compact interval with length of order one since their typical spacing is order $N^{-1}$ by eigenvalue rigidity.  This is too slow for our purposes, however the heuristic also predicts faster mixing time to reach \textit{local} equilibrium.  In particular, $\ell$ nearby regular eigenvalues will share their spectral information amongst each other in time $t \geq \ell/N + N^\oc/N$, the length scale of the smallest interval containing all of these eigenvalues.  In the renormalized picture of configuration space, the colored eigenvector moment flow is a heavy tailed random walk on $\Ln$ which diffuses on a local neighborhood in time proportional to the radius of the neighborhood.  To make this precise, we first define quantities relevant to a local neighborhood.

\begin{definition}\label{defn:localnbh}
For every distinguishable configuration $\yy \in \Ln$ and every length scale $\ell > 0$, define the set of position pairs
\begin{equation}
R_0(\yy,\ell) = \{(i,j) \in [N]^2 | \mbox{ there exists } a \in [n] \mbox{ such that } |i-y_a| \leq \ell \mbox{ and } |j-y_a| \leq \ell\}
\end{equation}
and let $R(\yy,\ell) = \langle R_0(\yy,\ell) \rangle$ be the equivalence relation on $[N]$ generated by $R_0(\yy,\ell)$.  The symbol $\eql$ will often be used to replace $R(\yy,\ell)$ meaning $i \eql j$ if and only if $(i,j) \in R(\yy,\ell)$.

Define the \emph{local $\ell$-neighborhood centered at $\yy$} by
\begin{equation}
\Lyl = \{ \xx \in \Ln |  x_a \eql y_a \mbox{ for all } a \in [n]\}
\end{equation}
and the \emph{local inner product} by
\begin{equation}
\ipyl{ f, g } = \sum_{\xx \in \Lyl} \pi(\xx) f(\xx) g(\xx)
\end{equation}
for all $f,g \in L^2(\Lyl)$.
\end{definition}

\begin{remark}
To reduce notational confusion, we use separate conventions for two equivalence relations and point out contextual indicators for which is being referred to here.  Recall from Definition \ref{defn:partition}, the partition equivalence relation is a relation on $a,b\in[n]$ and the partition data is placed above the $\sim$, as in $a \eqp b$.
The local neighborhood equivalence relation is a relation on $i,j\in[N]$ and the neighborhood data is placed below the $\sim$, as in $i \eql j$.
\end{remark}

\begin{remark}
Note that if $\xx \in \Lyl$ and $i \eql j$, then for all $a \neq b \in [n]$, the two particle jump and swap operators satisfy $m_{ab}^{ij} \xx \in \Lyl$ and $s_{ab}^{ij}\xx \in \Lyl$.
Indeed, let $(x'_1, \ldots, x'_n)^\top \in \{m_{ab}^{ij}\xx, s_{ab}^{ij}\xx\}$.  
Then for all $a \in [n]$, $x'_a \eql x_a$ as either $x'_a = x_a$ or $\{x'_a, x_a\} = \{i,j\}$.  Also, $x_a \eql y_a$ since $\xx \in \Lyl$.  Transitivity of $\eql$ implies $x'_a \eql y_a$.
In particular, this allows us to make the following definition.
\end{remark}

\begin{definition}
The \emph{local Dirichlet form} is the quadratic form on $L^2(\Lyl)$ defined by the two following equivalent expressions
\begin{equation}
\dyl(s; f) = \sum_{i \eql j} c_{ij}(s) \ipyl{ f, (-\gen_{ij}) f } = \frac{1}{2} \sum_{\xx \neq \zz \in \Lyl} \gen_{\xx\zz}(s) |f(\xx) - f(\zz)|^2
\end{equation}
for any observable $f \in L^2(\Lyl)$.
\end{definition}

An equivalent interpretation is that the \emph{spectral gap} or \emph{ground state energy} of the system is inversely proportional to the the side length of the system.  The Poincar\'e inequality is stated as $L^2$ deviation from equilibrium is upper bounded by the multiplicity of ground state energies belonging to the system (total energy divided by ground state energy).  However, the equilibrium is often difficult to work with (Lemma \ref{lem:kernel}).  For this reason, we introduce the following fake projection operator using an extended language of partitions alluded to in Section \ref{sec:colorlattice}.

\begin{definition}[Position partitions]\label{defn:verticalpartitions}
Given a distinguishable particle configuration $\xx \in \Ln$, let the \emph{position partition} associated with $\xx$ be $\px = \{\{a \in [n] | x_a = i\} | i \in [N]\}$.  That is, two labels $a,b \in [n]$ belong to the same part of $\px$ if and only if particles $a$ and $b$ lie at the same position: $a \eqpx b$ if and only if $x_a = x_b$.
\end{definition}

Equiped with the partition notation from Definitions \ref{defn:partition} and \ref{defn:verticalpartitions}, we are ready to define the makeshift projection operator.

\begin{definition}\label{defn:fakeproj}
Recalling the ordering on partitions from Definition \ref{defn:partition}, define the \emph{local projection operator} by
\begin{equation}
\pyl f(\xx) = \frac{\sum_{\zz \in \Lyl} \pi(\zz) \one{\pz \leq \px} f(\xx)}{\sum_{\zz \in \Lyl}  \pi(\zz) \one{\pz \leq \px}}
\end{equation}
\end{definition}

\begin{remark}
Unlike the other local definitions which share most analogous properties with their global counterparts, the operator $\pyl$ is not the orthogonal projection onto $\bigcap_{i \eql j} \ker(\gen_{ij}|_{L^2(\Lyl)})$, although it is our desire for $\pyl$ to approximate some operator of that form.  In fact $\pyl$ is not even self-adjoint.  For some indication of the relevance of $\pyl$, it will become apparent that $\pyl$ and $\kproj$ share the same $1$-eigenspace (restricted to the local neighborhood).  It is not hard to check that 
\begin{equation}\ker(1-\pyl) = \vspan \{\chi_\sigma|_{\Lyl} | \sigma \in M_n\}\end{equation} where the stratum indicators $\chi_\sigma$, $\sigma \in M_n$ were introduced in \ref{lem:globalker}.
For instance, suppose $\pyl f = f \in L^2(\Lyl)$. For every partition $\pP$ whose parts are all size two, $f$ must be constant on the stratum $\{\xx \in \Lyl | \px = \pP\}$.  Then recurse up the lattice of partitions.
\end{remark}

For the mixing time estimate proposed by the Poincar\'e inequality to hold, we require only the following structure on generator coefficients $\{c_{ij}(s) | 1 \leq i < j \leq \ell\}$ at time $s \geq 0$.
\begin{assumption}\label{ass:heavytail}
Assume the coefficients $c_{ij}(s)$ in the operator $\gen_s = \sum_{i<j} c_{ij}(s) \gen_{ij}$ satisfy subquadratic decay with rate $\upsilon > 0$,  $c_{ij}(s) > \upsilon |i-j|^{-2}$ for all $i < j \in [N]$.
\end{assumption}

At last, we are prepared to state the  main result for this section.  Throughout the proof of the Poincar\'e inequality, there are several minor computations and facts which are not difficult to derive but in our opinion clutter the argument.  For this organizational purpose, these proofs are postponed to the next section.
\begin{proposition}[Poincar\'e lemma]\label{prop:poincare}
If $\gen_s$ satisfies Assumption \ref{ass:heavytail} with rate $\upsilon > 0$, then there exists a constant $\pconst=\pconst(n) > 0$ depending only on $n$ such that
\begin{equation}\label{eq:poincare}
\sum_{\xx \in \Lyl} \pi(\xx) |f(\xx) - \pyl f(\xx)|^2 \leq \pconst \frac{\ell}{\upsilon} \dyl(f)
\end{equation}
uniformly in $\yy \in \Ln$, $\ell > 0$, and $f \in L^2(\Lyl)$.
\end{proposition}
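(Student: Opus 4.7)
My plan is to prove (\ref{eq:poincare}) by a canonical path argument in the spirit of the toy example from the introduction, adapted to the geometry of $\Lyl$ and the refinement structure of $\pyl$. The starting point is a Jensen reduction: writing
\begin{equation*}
\pyl f(\xx) = \frac{1}{Z(\xx)} \sum_{\substack{\zz \in \Lyl \\ \pz \leq \px}} \pi(\zz) f(\zz), \qquad Z(\xx) = \sum_{\substack{\zz \in \Lyl \\ \pz \leq \px}} \pi(\zz),
\end{equation*}
Jensen's inequality bounds the left-hand side of (\ref{eq:poincare}) by
\begin{equation*}
\sum_{\xx \in \Lyl} \frac{\pi(\xx)}{Z(\xx)} \sum_{\substack{\zz \in \Lyl \\ \pz \leq \px}} \pi(\zz) |f(\xx) - f(\zz)|^2,
\end{equation*}
so the task reduces to controlling this double sum by $\pconst(\ell/\upsilon)\,\dyl(f)$.

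The next step is to construct, for each admissible pair $(\xx,\zz)$ with $\pz \leq \px$, a canonical path $\gamma_{\xx,\zz}$ in the neighborhood transition graph of $\Lyl$. The crucial structural input is that the refinement hypothesis $\pz \leq \px$ forces any two particles paired in $\pz$ to already share a common position in $\xx$, so every $\pz$-pair $\{a,b\}$ can be relocated from $x_a$ to $z_a$ by a single move $m_{ab}^{x_a \to z_a}$, which is an edge carrying Dirichlet weight at least $\upsilon/\ell^2$ by Assumption \ref{ass:heavytail}. When $\pz$ is strictly finer than $\px$ (which happens only for non-generic $\xx$ with multiple particles coincident), the path first merges pairs at one site and then redisperses them to realize the finer matching. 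In all cases the path length is $O(n)$. Applying Cauchy--Schwarz along each path and swapping the order of summation converts the estimate into a congestion-counting problem: for each move edge $(\uu,\vv)$, bound the number of endpoint pairs whose canonical path passes through it. The spatial invariance of the global kernel (Corollary \ref{cor:spatial_kernel}) combined with the averaging built into $\pyl$ should produce a roughly uniform congestion across edges, yielding a constant $\pconst$ that depends only on $n$.

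The main obstacle will be dealing with the exchange contributions to $\dyl(f)$. Because $\gen_{ij} = \move_{ij} - \exch_{ij}$, the edge-by-edge decomposition from Lemma \ref{lem:ibp} assigns \emph{negative} weight to each exchange edge, even though the full Dirichlet form is nonnegative by Lemma \ref{lem:definite}. One therefore cannot naively charge path edges against individual entries $\gen_{\xx\zz}$ in the classical Diaconis--Stroock fashion; move-only paths are preferable, but making the path move-only requires rerouting through coincident configurations whenever $\px \neq \pz$, and one must verify that such rerouting does not inflate the path length beyond $O(n)$. To organize this bookkeeping cleanly, I plan to stratify the argument by the partition refinement level of $\xx$, using the colored-configuration framework of Section \ref{sec:colorlattice}: on each coloring quotient $\Lambda^\pP$ the induced sub-dynamics becomes Markov and standard path-counting applies directly, and the full inequality on $\Ln$ is recovered by induction along the partition lattice of $[n]$, reducing color-mixing into the indicator basis $\chi_\sigma$ of Lemma \ref{lem:globalker}. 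This hierarchical structure, together with the rate lower bound $c_{ij} \geq \upsilon/\ell^2$ and the $O(n)$ path length, is what ultimately yields a Poincar\'e constant linear in $\ell$ and inverse in $\upsilon$.
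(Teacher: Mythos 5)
Your opening is on the right track: the Jensen reduction against the weighted average defining $\pyl$ is exactly how the paper begins, the canonical-path-plus-congestion framework is the right skeleton, and you have correctly put your finger on the central obstruction, namely that $\ipyl{f,(-\gen_{ij})f}$ is \emph{not} a positive combination of squared differences over move edges because the exchange part enters the edge decomposition of Lemma \ref{lem:ibp} with the wrong sign, so the classical Diaconis--Stroock charging scheme fails. The problem is that your proposed resolution does not survive scrutiny. The claim that ``on each coloring quotient $\Lambda^\pP$ the induced sub-dynamics becomes Markov'' is false for intermediate partitions: a swap $s_{ab}^{ij}$ between two particles of \emph{different} $\pP$-colors genuinely changes the $\pP$-colored configuration, so its negative off-diagonal coefficient survives the pushforward $(\forget_\pP)_*\gen_s\forget_\pP^*$; only the terminal (fully colorblind) quotient is Markov. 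Consequently the ``induction along the partition lattice'' has no base of Markov strata to stand on, and reducing to the indicator basis $\chi_\sigma$ (which spans only the kernel) does not decompose a general $f$. The mechanism the paper actually uses is different: one replaces $f$ by the $\symP$-conditional expectations $\ep f = |\symP|^{-1}\sum_{\sigma\in\symP}f(\sigma\actn\cdot)$ for each refinement $\pP\leq\pl$, exploits that $(\exch_{ij}\ep)$ \emph{vanishes} on two-site subspaces contained in $\LylP$ (so the negative terms are annihilated by the averaging, not by passing to a Markov quotient), uses the commutation $[\gen_{ij},\ep]=0$ and the fact that $\ep$ is an orthogonal projection to return to $\ipyl{f,(-\gen_{ij})f}$, and controls each edge difference by a finite-dimensional spectral-gap bound on the two-site subspace $\Lij$.

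A second, related gap is in the path construction itself. Relocating each $\pz$-pair by a single jump $m_{ab}^{x_a\to z_a}$ can land an intermediate configuration on a site already occupied by particles of an incompatible color, taking the path out of the stratum neighborhood $\LylP$ on which the cancellation above is available; this is why the paper's path-counting lemma must also allow a second edge type, the site transpositions $(ij)\actN\ww$, and why it needs Corollary \ref{cor:spatial_kernel} (spatial invariance of the global kernel) to show that differences across \emph{those} edges are still dominated by $\ipij{g,(-\gen_{ij})g}$. In your sketch the spatial invariance of the kernel is invoked only for ``uniform congestion,'' which is not its role, and the rerouting through coincident configurations is waved at without verifying that the detoured path stays inside the region where the exchange terms can be neutralized. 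Until you supply a concrete substitute for the conditional-expectation/two-site machinery, the step from the telescoped edge sums to $\dyl(f)$ does not go through.
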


\begin{proof}
For any partition $\pP$ of $[n]$, consider the following \emph{$\pP$-conditional expectation operator} $\ep : L^2(\Ln) \rightarrow L^2(\Ln)$ defined by
\begin{equation}\label{eq:defn_ep}
\ep f(\xx) = \frac{1}{|\symP|} \sum_{\sigma \in \symP} f(\sigma \actn \xx)
\end{equation}
where $\symP \leq S_n$ is the subgroup of $\pP$-compatible permutations introduced in Definition \ref{defn:partition}.
According to Lemma \ref{lem:EPidentity}, the proof of which is deferred to the next section, the $\pP$-conditional expectation operators satisfy the following two envelope identities.  For every $\xx \in \Lyl$, 
\begin{equation}\label{eq:envelope}
\epx f(\xx) = f(\xx) \quad \mbox{and} \quad (\pyl\epx)f(\xx) = \pyl f(\xx)
\end{equation}
the proof of which we postpone to Lemma \ref{lem:EPidentity}.
Apply these identities to each term appearing in the left hand side of \eqref{eq:poincare}.
\begin{equation}\label{eq:app_envelope}
\sum_{\xx \in \Lyl} \pi(\xx) |f(\xx) - \pyl f(\xx)|^2 = \sum_{\xx \in \Lyl} \pi(\xx) |\epx f(\xx) - (\pyl \epx)f(\xx)|^2
\end{equation}
Treating the local projection operator $\pyl$ as a weighted averge over $\zz \in \Lyl$ satisfying $\pz \leq \px$, apply Jensen's inequality to bound the left hand side of the Poincar\'e inequality resembling variance by the average squared deviation of $\pP$-expectations between pairs of certain sites.
\begin{multline}\label{eq:poincarepairs}
\sum_{\xx \in \Lyl} \pi(\xx) |\epx f(\xx) - (\pyl \epx)f(\xx)|^2 
\leq \ell^{-n/2} \sum_{\xx \in \Lyl} \sum_{\zz \in \Lyl} \one{\pz \leq \px} |\epx f(\xx) - \epx f(\zz)|^2
\end{multline}
Here we used that $|\{\zz \in \Lyl | \pz \leq \px\}| \asymp \ell^{n/2}$ uniformly in $\xx$.  

To simplify the argument, introduce some new notation.  First is $\LylP$ as the configuration subspace in the local neighborhood consisting of all distinguishable configurations whose particle configurations are sufficiently compatible with the partition $\pP$, $\LylP = \{\xx \in \Lyl | \px \leq \pP\}$.  This can be thought of as the closure of an open neighborhood of the $\pP$ stratum in distinguishable configuration space.  

Second, consider the \emph{maximal local partition} $\pl$ given by its defining property $a \eqpl b$ if and only if $y_a \eql y_b$.  The name is inspired by the fact that for all $\xx \in \Lyl$, $\px \leq \pl$.  To see this, suppose $x_a = x_b$ for some $a,b\in[n]$.  Then $y_a \eql x_a = x_b \eql y_b$ so $y_a \eql y_b$ by transitivity.

Using these new terms, relax and symmetrize \eqref{eq:poincarepairs} by summing over all local pairs $(\xx,\zz)$ compatible with a common partition that is a refinement of $\pl$.
\begin{equation}\label{eq:sympairs}
\sum_{\xx \in \Lyl} \sum_{\zz \in \Lyl} \one{\pz \leq \px} |\epx f(\xx) - \epx f(\zz)|^2 \leq \sum_{\pP \leq \pl} \sum_{\xx,\zz \in \LylP} |\ep f(\xx) - \ep f(\zz)|^2
\end{equation}
where the first summation is taken over all refinements $\pP \leq \pl$.  To manipulate the expression on the right hand side into the Dirichlet form, we employ a path counting argument.

For each $\pP \leq \pl$, further enrich $\LylP$ with a graph structure by defining the edge sets $\edgel = \edgem \cup \edgee$ where pairs $(\ww,\ww') \in \LylP$ are edges when the following conditions are met.
\begin{itemize}
\item $(\ww,\ww') \in \edgem$ if and only if there exist $i \eql j \in [N]$ and $a \eqp b \in [n]$ such that $\ww' = m_{ab}^{ij}\ww \neq \ww$ and
\item $(\ww,\ww') \in \edgee$ if and only if there exist $i \eql j \in [N]$ such that $\ww' = (ij)\actN\ww \neq \ww$.  Here $(ij)$ is the $(i,j)$-transposition in $S_N$ and $\actN$ is the $S_N$ action defined in Definition \ref{defn:actN}.
\end{itemize}

For any fixed partition $\pP \leq \pl$, the permutation action $\symP$ on $\Ln$ respects the graph structure on $\LylP$ in the following sense.  For all local $\pP$-configurations $\ww, \ww' \in \LylP$ with $(\ww,\ww') \in \edgel$ and $\pP$-compatible permutations $\sigma \in \symP$, we have $\sigma \actn \ww \in \LylP$ and $(\sigma \actn \ww, \sigma \actn \ww') \in \edgel$.  Again, the proof is simple after following definitions, but is postponed to Lemma \ref{lem:lpgraph} to avoid clutter.

Therefore, $\symP$ acts on the graph $\LylP$.  The content of the next lemma is that the quotient graph $\LylP/\symP$ is sufficiently expanding, has diameter $n/2$, and is nearly regular with all vertices having degree on order $\ell$.  See Lemma \ref{lem:pathcounting} for a restatement and proof.

\begin{lemma}[Path counting]
There exists a constant $C_1=C_1(n)>0$ depending only on $n$ for which every partition $\pP \leq \pl$ and local $\pP$-configurations $\xx,\zz \in \LylP$ admit a positive integer $0 \leq \len \leq C$ and a path $\family(\xx,\zz) = (\family^{\xx\zz}_0, \ldots, \family^{\xx\zz}_{\len}) \in (\LylP)^{\len}$ of length $\len$ such that the resulting family of paths $\family = (\family(\xx,\zz) | \xx,\zz \in \LylP)$ satisfy the following properties.
\begin{enumerate}
\item For all $\xx,\zz \in \LylP$, the path $\family(\xx,\zz)$ satisfies $\family^{\xx\zz}_0 = \xx$, $\family^{\xx\zz}_{\len} = \zz$, and for every $i \in [\len]$ there exists $\sigma(\xx,\zz,i) \in \symP$ such that $(\sigma(\xx,\zz,i) \actn \family^{\xx\zz}_i, \family^{\xx\zz}_{i-1}) \in \edgel$.
\item There are $O(\ell^{n/2-1})$ paths incident to each edge:
\begin{equation}
|\{(\xx,\zz) \in \LylP | \{\family^{\xx\zz}_{i-1}, \family^{\xx\zz}_i\} = \{\sigma \actn \ww, \sigma' \actn \ww'\} \mbox{ for some } \sigma, \sigma' \in \symP, i \in [\len]\}| \leq C_1 \ell^{n/2-1}
\end{equation}
uniformly over all edges $(\ww,\ww') \in \edgel$.
\item There are at $O(1)$ edges incident to each path:
\begin{equation}
|\{(\ww,\ww') \in \edgel | \{\family^{\xx\zz}_{i-1}, \family^{\xx\zz}_i\} = \{\sigma \actn \ww, \sigma' \actn \ww'\} \mbox{ for some } \sigma, \sigma' \in \symP, i \in [\len]\}| \leq C_1
\end{equation}
uniformly over all pairs of points $\xx, \zz \in \LylP$.
\end{enumerate}
\end{lemma}

Taking the path counting lemma for granted, the squared differences across entire paths appearing in \eqref{eq:sympairs} can be bounded by the telescoping sum of squared differences along each edge in the path for all origin-destination pairs $\xx, \zz \in \LylP$
\begin{equation}
|\ep f(\xx) - \ep f(\zz)|^2 \leq \len \sum_{i=1}^{\len} | \ep f(\family^{\xx\zz}_{i-1}) - \ep f(\family^{\xx\zz}_i)|^2
\end{equation}
by the Schwarz inequality.  Taking this one step further, $\ep$ equalizes $f$ along all $\symP$-orbits so the right hand side can be rewritten in a more convoluted manner using the replacement
\begin{equation}
\ep f(\family^{\xx\zz}_i) = \ep f(\sigma(\xx,\zz,i)\actn\family^{\xx\zz}_i)
\end{equation}
for all $\xx,\zz\in\LylP$ and $i \in [\len]$.
Since $\len \leq C_1$ and each edge appears in at most $C_1 \ell^{n/2-1}$ paths in $\family$ (connecting all pairs $\xx,\zz \in \LylP$ for each $\pP \leq \pl)$, the summation over pairs in \eqref{eq:sympairs} can be replaced by a summation over edges
\begin{multline}\label{eq:edgesum}
\sum_{\xx,\zz\in\LylP} | \ep f(\xx) - \ep f(\zz)|^2 
\leq \sum_{\xx,\zz\in\LylP} \len \sum_{i=1}^\len | \ep f(\family^{\xx\zz}_{i-1}) - \ep f(\sigma(\xx,\zz,i)\actn\family^{\xx\zz}_i)|^2 \\
\leq C_1^2 \ell^{n/2-1} \sum_{(\ww,\ww') \in \edgel} | \ep f(\ww) - \ep f(\ww')|^2
\end{multline}
uniformly over all refinements $\pP \leq \pl$.

We pause for a moment here to record our progress as well as breifly describe the plan forward.  Bounds from \eqref{eq:app_envelope}, \eqref{eq:poincarepairs}, \eqref{eq:sympairs}, and \eqref{eq:edgesum} imply
\begin{equation}\label{eq:Poincare_pause}
\sum_{\xx\in\Lyl} \pi(\xx)|f(\xx)-\pyl f(\xx)|^2 \leq \ell^{-1} \sum_{\pP\leq\pl}\sum_{(\ww,\ww')\in\edgel}|\ep f(\ww)-\ep f(\ww')|^2
\end{equation}
It remains to recover the local Dirichlet form from these squared differences across edges.  As there are only finitely many partitions of $[n]$ independent of $N$ (trivially bounded by $n^n$ since functions $[n] \rightarrow [n]$ induce all partitions of $[n]$ via preimages), in order to deduce the Poincar\'e inequality with constant $\pconst = n^n C_1^2 C_2$, it suffices to show that there exists a constant $C_2=C_2(n) > 0$ depending only on $n$ such that
\begin{equation}\label{eq:Poincare_goal}
\sum_{(\ww,\ww')\edgel}|\ep f(\ww)-\ep f(\ww')|^2\leq C_2\frac{\ell^2}{\upsilon}\dyl(s;f)
\end{equation}
uniformly over all refinements $\pP\leq\pl$.
This is done by decomposing the local Dirichlet form as the sum of many two-site inner products which we introduce now.  The argument requires insight to the kernel and finite dimensionality of $\gen_{ij}$.  For the remainder of the proof, fix a refinement $\pP \leq \pl$.

For every pair of sites $i,j\in[N]$ and pair of distinguishable configurations $\xx,\yy\in\Ln$, write $\xx \eqij \yy$ to mean that for all $a\in[n]$ either $x_a=y_a$ or $\{x_a,y_a\}=\{i,j\}$.  Note that $\eqij$ is an equivalence so the parts $\Lij = \{\yy \in \Lambda^n | \yy \eqij \xx\}$ partition $\Ln$.  These parts will be referred to as \emph{two-site subspaces} and we will see that three relevant properties of these two-site subspaces push the proof forward.

The first property of interest for two-site subspaces is that each edge of type 1 is contained in a two-site subspace which is contained in the local $\pP$-neighborhood.  That is, for all $(\ww,\ww')\in\edgem$, there exists a local pair of sites $i \eql j$ and a configuration $\xx\in\LylP$ such that $\ww,\ww'\in\Lij$ and $\Lij\subset\LylP$.  Also, for every $i \eql j$, the two-site subspaces $\Lij$ partition $\Lyl$ in the sense that $\Lij\subset\Lyl$ whenever $\xx\in\Lyl$, which implies 
\begin{equation}
\Lyl = \bigsqcup_{\xx\in\Lyl/\eqij} \Lij
\end{equation}
where $\xx$ is taken over a set of representatives, one for each equivalence class in $\Lyl/\eqij$.
See Lemma \ref{lem:two-site} in the next section for proofs.

This observation allows us to group the summation over type 1 edges in \eqref{eq:Poincare_goal} according to which two-site subspace the edge belongs.  It further allows us to require the two-site subspace be entirely contained in $\LylP$.
\begin{multline}\label{eq:two-site-complete}
\sum_{(\ww,\ww') \in \edgem} |\ep f(\ww) - \ep f(\ww')|^2 \\
= \sum_{i \eql j} \sum_{\xx \in \Lyl/\eqij} \one{\Lij \subset \LylP} \sum_{\ww,\ww'\in\Lij} \one{(\ww,\ww')\in\edgem} |\ep f(\ww) - \ep f(\ww')|^2
\end{multline}
where the second summation is taken over representatives $\xx \in \LylP$, one for each equivalence class in $\LylP/\eqij$.

The second important property of two-site subspaces we will use is that they are invariant under the generators corresponding to their two sites.
That is, for all $i < j \in [N]$, the operators $\move_{ij}$, $\exch_{ij}$, and hence $\gen_{ij}=\move_{ij}+\exch_{ij}$ each split along the orthogonal decomposition of invariant subspaces 
\begin{equation}
L^2(\Ln) = \bigoplus_{\xx\in\Ln/\eqij} L^2(\Lij)
\end{equation}
where the direct sum is taken over a set of representatives $\xx \in \Ln$, one for each equivalence class in $\Ln/\eqij$.  Here, $L^2(\Lij) \subset L^2(\Ln)$ is the space of functions $f \in L^2(\Ln)$ which vanish off of $\Lij$, $f(\xx)=0$ when $\xx \in \Ln\backslash\Lij$.  This is proved in Lemma \ref{lem:two-site-split}.  Denote the inner-product over the two-site subspace by 
\begin{equation}
\ipij{f,g} = \sum_{\xx\in\Lij} \pi(\xx)f(\xx)g(\xx)
\end{equation}
for any $f,g\in L^2(\Lij)$.

This property is utilized as follows.  Consider the move operator restricted to a corresponding two-site subspace, $\move_{ij}|_{L^2(\Lij)}$ for some $\xx\in\Ln$ and $i<j\in[N]$ satisfying $\Lij \subset \LylP$.  This operator generates a random walk on the induced subgraph with edges taken only from the edge set $\edgem$, $(\Lij, \edgem|_{\Lij})$.  Since this graph is finite and connected, the kernel of $\move_{ij}|_{L^2(\Lij)}$ consists precisely of the constant functions.  Since the graph is finite with size independent of $N$, there exist constants $C_2^{(1)}(\xx,i,j) > 0$ such that for all configurations $\xx \in \Ln$ and sites $i < j \in [N]$
\begin{equation}\label{eq:two-site-move}
\sum_{\ww,\ww'\in\Lij} \one{(\ww,\ww')\in\edgem} |g(\ww) - g(\ww')|^2 \leq C_2^{(1)}(\xx,i,j) \ipij{g, (-\move_{ij}) g}
\end{equation}
for any $g \in L^2(\Lij)$.  For completeness, \eqref{eq:two-site-move} is a consequence of Lemma \ref{lem:kernel_form_bound}.  Moreover, as $\move_{ij}$ acts isometrically on each $\Lij$ up to a finite choice of $\xx,i,j$ (whenever the graphs $(\Lij, \edgem|_{\Lij})$ are isomorphic), there exists an upper bound 
\begin{equation}\label{eq:isomorphism}
0 < \sup_{\substack{\xx\in\Ln\\i<j\in[N]}} C_2^{(1)}(\xx,i,j) = C_2^{(1)} < \infty.
\end{equation}

The third property of two-site subspaces is that the composition of the exchange operator with the $\pP$-conditional expectation acts only between two-site subspaces and not within an individual two-site subspace.  That is, if $\Lij \subset \LylP$, then $(\exch_{ij}\ep)|_{L^2(\Lij)} = 0$.  This is proved in Lemma \ref{lem:two-site-exch}.  Combining this observation with \eqref{eq:two-site-move} and \eqref{eq:isomorphism} implies that
for all $\xx \in \Lyl$ and $i \eql j$, 
\begin{equation}
\one{\Lij \subset \LylP} \sum_{\ww,\ww'\in\Lij} |\ep f(\ww) - \ep f(\ww')|^2 \leq C_2^{(1)} \ipij{\ep f, (-\gen_{ij}) \ep f}
\end{equation}
where the positive definiteness of $-\gen_{ij}$ from Lemma \ref{lem:definite} deals with the case $\Lij\not\subset\LylP$.  Plugging this bound into \eqref{eq:two-site-complete} yields
\begin{equation}\label{eq:pairs_to_gen1}
\sum_{(\ww,\ww')\in\edgem} |\ep f(\ww) - \ep f(\ww')|^2 \leq C_2^{(1)} \sum_{i \eql j} \sum_{\xx \in \LylP/\eqij} \ipij{\ep f, (-\gen_{ij}) \ep f}
\end{equation}
uniformly over all refinements $\pP\leq\pl$ which concludes our analysis of type 1 edges.

The goal for type 2 edges will be analogous to \eqref{eq:pairs_to_gen1}, but the argument will be slightly different.  This time, if $(\ww,\ww')\in\edgee$, then there exist a local pair of sites $i \eql j$ and a configuration $\xx\in\Lyl$ such that $\ww,\ww'\in\Lij$.  Again, see Lemma \ref{lem:two-site} for the proof.  Note that $\Lij\subset\LylP$ is not guaranteed this time around.  Nevertheless, this observation still allows us to group the type 2 edge terms in \eqref{eq:Poincare_goal} according to which two-site subspace the edge belongs.
\begin{equation}\label{eq:two-site-complete2}
\sum_{(\ww,\ww') \in \edgee} |\ep f(\ww) - \ep f(\ww')|^2
= \sum_{i \eql j} \sum_{\xx \in \Lyl/\eqij} \sum_{\ww,\ww'\in\Lij} \one{(\ww,\ww')\in\edgee} |\ep f(\ww) - \ep f(\ww')|^2
\end{equation}
Rather than splitting the generator into move and exchange terms $\gen_{ij}=\move_{ij}+\exch_{ij}$ and analyzing the kernels of both components individually as done in the type 1 edge case, the relevant kernel information is already prepared for us from Section \ref{sec:jump}.  If a type 2 edge $(\ww,\ww')\in\edgee$ is contained in a two-site subspace $\ww,\ww'\in\Lij$, then $\ww' = (ij)\actN\ww$ where $i \eql j$ are the variable sites in the two-site subspace.  In this case,
$g(\ww)=g(\ww')$
for every $g \in \ker(\gen_{ij})$ by Lemma \ref{cor:spatial_kernel}.  Again, the finite dimensionality of $L^2(\Lij)$ implies that for every configuration $\xx\in\Ln$ and pair of local sites $i \eql j$ there exists a constant $C_2^{(2)}(\xx,i,j) > 0$ such that
\begin{equation}
\sum_{\ww,\ww'\in\Lij} \one{(\ww,\ww')\in\edgee} |g(\ww)-g(\ww')|^2 \leq C_2^{(2)}(\xx,i,j) \ipij{g, (-\gen_{ij}) g}
\end{equation}
for every $g \in L^2(\Lij)$.
See Lemma \ref{lem:kernel_form_bound}. Moreover, as $\move_{ij}$ acts isometrically on each $\Lij$ up to a finite choice of $\xx,i,j$, there exists an upper bound 
\begin{equation}
0 < \sup_{\substack{\xx\in\Ln\\i<j\in[N]}} C_2^{(2)}(\xx,i,j) = C_2 < \infty.
\end{equation}
Using thes bounds for each inner sum in \eqref{eq:two-site-complete2} gives
\begin{equation}\label{eq:pairs_to_gen2}
\sum_{(\ww,\ww') \in \edgee} |\ep f(\ww) - \ep f(\ww')|^2
= \sum_{i \eql j} \sum_{\xx \in \Lyl/\eqij} C_2^{(2)} \ipij{ \ep f, (-\gen_{ij}) \ep f}
\end{equation}
Letting $C_2 = C_2^{(1)} + C_2^{(2)}$, the sum of \eqref{eq:pairs_to_gen1} and \eqref{eq:pairs_to_gen2} is exactly
\begin{equation}\label{eq:pairs_to_gen}
\sum_{(\ww,\ww') \in \edgel} |\ep f(\ww) - \ep f(\ww')|^2
= \sum_{i \eql j} \sum_{\xx \in \Lyl/\eqij} C_2 \ipij{\ep f, (-\gen_{ij}) \ep f}
\end{equation}
which concludes our analysis on the edges.

Another application of item 1 from Lemma \ref{lem:two-site} and Lemma \ref{lem:two-site-split} tells us that for all $i \eql j$,
\begin{equation}\label{eq:gen_ijyl}
\sum_{\xx\in\Lyl/\eqij} \ipij{\ep f, (-\gen_{ij}) \ep f} = \ipyl{\ep f, (-\gen_{ij}) \ep f}
\end{equation}
as $L^2(\Lyl) = \oplus_{\xx\in\Lyl/\eqij} L^2(\Lij)$. Each two-site generator commutes with the $\pP$-conditional expectation operator as proved in Lemma \ref{lem:comm_genep}.  In particular, the right hand side can be rewritten as
\begin{equation}\label{eq:gen_commute}
\ipyl{\ep f, (-\gen_{ij}) \ep f} = \ipyl{\ep f, \ep (-\gen_{ij}) f}
\end{equation}
Moreover, the $\pP$-conditional expectation operator is shown to be an orthogonal projection on the local function space $L^2(\Lyl)$ in Lemma \ref{lem:Pexp_projection}.  Together with the Lemma \ref{lem:definite}, this implies
\begin{equation}\label{eq:cond_project}
\ipyl{\ep f, \ep (-\gen_{ij}) f} = \ipyl{f, \ep(-\gen_{ij})f} \leq \ipyl{f, (-\gen_{ij}) f}.
\end{equation}
Combining \eqref{eq:pairs_to_gen}, \eqref{eq:gen_ijyl}, \eqref{eq:gen_commute}, and \eqref{eq:cond_project} gives
\begin{equation}
\sum_{(\ww,\ww')\in\edgel} |\ep f(\ww) - \ep f(\ww')|^2 \leq C_2 \sum_{i \eql j} \ipyl{f, (-\gen_{ij})f}
\end{equation}
Lastly, appeal to Assumption \ref{ass:heavytail} to see that $(\ell^2/\upsilon)c_{ij}(s) \geq 1$ for all $i \eql j$, $i \neq j$ and obtain our goal \eqref{eq:Poincare_goal},
\begin{equation}
\sum_{(\ww,\ww')\in\edgel}|\ep f(\ww) - \ep f(\ww')|^2 \leq C_2 \sum_{i \eql j} \frac{\ell^2}{\upsilon} c_{ij}(s) \ipyl{f, (-\gen_{ij})f} = C_2 \frac{\ell^2}{\upsilon}\dyl(s;f)
\end{equation}
which concludes the proof of the Poincar\'e inequality.
\end{proof}

\subsection{Auxiliary results for the Poincar\'e Inequality}

Throughout this section, we will refer to the maximal local partition defined in the proof of Proposition \ref{prop:poincare}.

\begin{lemma}[Maximal local partition $\pl$]\label{lem:max_local_part}
Let $\pl$ be the maximal local partition defined in the proof of Proposition \ref{prop:poincare}.  That is, $\pl$ is given by its defining property: $a \eqpl b$ if and only if $y_a \eql y_b$.  Then for all $\xx\in\Lyl$, $\px\leq\pl$.
\end{lemma}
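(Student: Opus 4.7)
\begin{proof}
The plan is a direct unpacking of definitions combined with the transitivity of the equivalence relation $\eql$. I need to show that every part of $\px$ sits inside some part of $\pl$, which is equivalent to the implication $a \eqpx b \Rightarrow a \eqpl b$ for all $a,b \in [n]$.

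So fix $a,b \in [n]$ with $a \eqpx b$, meaning $x_a = x_b$ by Definition \ref{defn:verticalpartitions}. Since $\xx \in \Lyl$, the definition of the local neighborhood (Definition \ref{defn:localnbh}) gives $x_a \eql y_a$ and $x_b \eql y_b$. Combining these with the equality $x_a = x_b$ and using transitivity of $\eql$, we obtain
\begin{equation}
y_a \eql x_a = x_b \eql y_b,
\end{equation}
so $y_a \eql y_b$. By the defining property of $\pl$, this is exactly the statement $a \eqpl b$. Therefore $\px \leq \pl$.
\end{proof}

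The main (and only) subtlety is simply bookkeeping across the two equivalence relations $\eqp$ and $\eql$: $\eqp$ lives on particle labels in $[n]$ while $\eql$ lives on site indices in $[N]$, and the definition of $\pl$ is precisely engineered so that $\eqpl$ on labels is the pullback of $\eql$ on sites via the map $a \mapsto y_a$. Once one notices this, the inequality $\px \leq \pl$ on $\Lyl$ is immediate from the fact that each particle stays in the $\eql$-class of its $\yy$-counterpart.
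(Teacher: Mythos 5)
Your proof is correct and is essentially identical to the paper's: both unpack $a \eqpx b$ as $x_a = x_b$, use the defining property of $\Lyl$ to get $x_a \eql y_a$ and $x_b \eql y_b$, and conclude $y_a \eql y_b$ by transitivity of $\eql$. No differences worth noting.
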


\begin{proof}
Suppose $\xx\in\Lyl$ and that $a \eqp b$ for some $a,b\in[n]$.  Then $y_a \eql x_a = x_b \eql y_b$ where both $\eql$ are by the definition of $\Lyl$ and the equality is by the definition of $\px$.  Therefore, $y_a \eql y_b$ by transitivity of $\eql$ so $\px\leq\pl$.
\end{proof}

\begin{lemma}[Compatibility of the $S_n$ action]\label{lem:Sn_compat}
The restrictions of the $S_n$ action on configuration space preserve the following local and color structures.
\begin{itemize}
\item If $\xx \in \Lyl$ and $\sigma \in \symPl$, then $\sigma\actn\xx\in\Lyl$.
\item If $\xx\in\Ln$, $\pP$ a partition of $[n]$, and $\sigma\in\symP$ with $\px\leq\pP$, then $\pP_{\sigma\actn\xx}\leq\pP$.
\end{itemize}
\end{lemma}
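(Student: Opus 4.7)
The plan is to verify both items directly from the definitions of the $S_n$ action $\actn$, of the equivalence relations $\eqp$ (on labels) and $\eql$ (on sites), and of the compatible permutation groups $\symP$ from Definition \ref{defn:partition}. Each item reduces to a short chain of implications using transitivity of these relations, so the main obstacle is simply bookkeeping between the two very similar but differently-typed equivalences: the partition relation $\eqp$ lives on $[n]$ (labels) while $\eql$ lives on $[N]$ (sites), and the maximal local partition $\pl$ from Lemma \ref{lem:max_local_part} is precisely what connects them.

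For item 1, I would fix $\xx \in \Lyl$ and $\sigma \in \symPl$ and check membership in $\Lyl$ label-by-label. By definition of $\Lyl$, I need $(\sigma \actn \xx)_a = x_{\sigma(a)} \eql y_a$ for every $a \in [n]$. Since $\sigma \in \symPl$, I have $\sigma(a) \eqpl a$, which by the defining property of $\pl$ means $y_{\sigma(a)} \eql y_a$. Combining this with $x_{\sigma(a)} \eql y_{\sigma(a)}$ (which holds because $\xx \in \Lyl$) and transitivity of $\eql$ gives $x_{\sigma(a)} \eql y_a$, completing the proof of item 1.

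For item 2, I would show $\pP_{\sigma\actn\xx}\leq\pP$ by verifying the refinement criterion on labels. Suppose $a \sim_{\pP_{\sigma\actn\xx}} b$, i.e., $(\sigma\actn\xx)_a = (\sigma\actn\xx)_b$, which means $x_{\sigma(a)} = x_{\sigma(b)}$. Then $\sigma(a) \eqpx \sigma(b)$, and the hypothesis $\px \leq \pP$ yields $\sigma(a) \eqp \sigma(b)$. Now use $\sigma \in \symP$, which gives $\sigma(a) \eqp a$ and $\sigma(b) \eqp b$; two applications of transitivity of $\eqp$ produce $a \eqp b$, establishing the refinement.

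Both arguments are purely combinatorial and need no analytic input; they serve the later steps of the Poincar\'e proof by guaranteeing that $\symPl$-averaging remains within the local neighborhood $\Lyl$ and that the position partition behaves monotonically under $\symP$-translation, which are prerequisites for the identities in the forthcoming Lemma \ref{lem:EPidentity}.
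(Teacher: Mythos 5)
Your proof is correct and follows essentially the same route as the paper's: item 1 is the chain $x_{\sigma(a)} \eql y_{\sigma(a)} \eql y_a$ using $\xx\in\Lyl$ and $\sigma\in\symPl$ respectively, and item 2 is the chain $a \eqp \sigma(a) \eqp \sigma(b) \eqp b$ combining $\px\leq\pP$ with the compatibility of $\sigma$. No gaps.
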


\begin{proof}
For the first item, suppose $\xx\in\Lyl$ and $\sigma\in\symPl$.  Then for all $a\in[n]$, $x_{\sigma(a)} \eql y_{\sigma(a)} \eql y_a$ where the first $\eql$ is from $\xx\in\Lyl$ and the second $\eql$ is from $\sigma\in\symPl$.  Therefore, $\sigma\actn\xx\in\Lyl$.

For the second item, suppose $\xx\in\Ln$ with $\px\leq\pP$ and $\sigma\in\symP$.  Then for all $a \eqpx b \in[n]$, $x_{\sigma(a)}=x_{\sigma(b)}$ so $\sigma(a) \eqpx \sigma(b)$ by Definition \ref{defn:verticalpartitions}.  Thus, $a \eqp \sigma(a) \eqp \sigma(b) \eqp b$ where the outer $\eqp$ are from $\sigma\in\symP$ and the middle $\eqp$ is from $\px\leq\pP$.
\end{proof}

\begin{lemma}[Envelope identities]\label{lem:EPidentity}
For all configurations $\xx \in \Lyl$ and test functions $f \in L^2(\Lyl)$, the following two identities hold
\begin{equation}
\epx f(\xx) = f(\xx)
\end{equation}
where $\ep$ is defined in \eqref{eq:defn_ep} and
\begin{equation}
\pyl \epx f(\xx) = \pyl f(\xx)
\end{equation}
where $\pyl$ is defined in Definition \ref{defn:fakeproj}.
\end{lemma}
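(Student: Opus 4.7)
The plan is to prove the two envelope identities essentially by unwinding definitions, with the second identity requiring one symmetrization trick built on the first. For the first identity, I would simply observe that $\symPx$ is by definition the subgroup of $\sigma \in S_n$ with $\sigma(a) \eqpx a$ for all $a$, which by Definition \ref{defn:verticalpartitions} is the same as $x_{\sigma(a)} = x_a$ for all $a$. Hence $\sigma\actn\xx = (x_{\sigma(1)},\ldots,x_{\sigma(n)})^\top = \xx$ for every $\sigma \in \symPx$, and the average defining $\epx f(\xx)$ collapses to $f(\xx)$.

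For the second identity, I interpret $\epx f$ as a function of the argument to $\pyl$ (with $\xx$ playing the role of a fixed parameter in $\epx$) and expand the ratio in Definition \ref{defn:fakeproj}:
\begin{equation}
\pyl \epx f(\xx) = \frac{1}{|\symPx|}\sum_{\sigma \in \symPx} \frac{\sum_{\ww \in \Lyl} \pi(\ww) \one{\pP_\ww \leq \px} f(\sigma\actn\ww)}{\sum_{\ww \in \Lyl} \pi(\ww) \one{\pP_\ww \leq \px}}.
\end{equation}
For each fixed $\sigma \in \symPx$ I would perform the change of variable $\ww \mapsto \sigma^{-1}\actn\ww$ in the numerator. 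The reduction to $\pyl f(\xx)$ then follows from three invariance checks: (i) $\pi$ is invariant under any relabeling action since it depends only on the multiset of site occupancies; (ii) the indicator $\one{\pP_\ww \leq \px}$ is $\symPx$-invariant; and (iii) $\Lyl$ is closed under $\symPx$, which holds because $\px \leq \pl$ by Lemma \ref{lem:max_local_part}, hence $\symPx \subseteq \symPl$, and the first item of Lemma \ref{lem:Sn_compat} then gives $\sigma\actn\Lyl \subseteq \Lyl$.

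The one step worth spelling out carefully — and what I would call the main obstacle, though it is still short — is the partition arithmetic in (ii). I need that $\pP_{\sigma\actn\ww} \leq \px \iff \pP_\ww \leq \px$ for $\sigma \in \symPx$. For the forward direction, if $w_a = w_b$, then $(\sigma\actn\ww)_{\sigma^{-1}(a)} = (\sigma\actn\ww)_{\sigma^{-1}(b)}$, so $\pP_{\sigma\actn\ww} \leq \px$ gives $x_{\sigma^{-1}(a)} = x_{\sigma^{-1}(b)}$; using $\sigma^{-1} \in \symPx$ (since $\symPx$ is a subgroup) this yields $x_a = x_b$. The reverse direction is symmetric. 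Once (i)--(iii) are in place, each $\sigma$-summand in the numerator equals the numerator of $\pyl f(\xx)$, the factor $|\symPx|^{-1}$ trivially collapses, and the identity follows.
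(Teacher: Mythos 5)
Your proof is correct and follows essentially the same route as the paper's: the first identity comes from observing that $\symPx$ stabilizes $\xx$, and the second from a $\pi$-preserving change of variables $\ww\mapsto\sigma\actn\ww$ under the $\symPx$-action, justified by closure of $\Lyl$ (via $\symPx\subseteq\symPl$ and Lemma \ref{lem:Sn_compat}) and invariance of the indicator $\one{\pP_\ww\leq\px}$. The only cosmetic difference is that the paper phrases the second step as the general commutation $[\pyl,\ep]=0$ for refinements $\pP\leq\pl$ and then specializes via the first identity, whereas you specialize to $\pP=\px$ from the start; the underlying change-of-variables argument is identical.
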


\begin{proof}
Let $\sigma \in \symPx$.  For all $a \in[n]$, $x_a \eqpx x_{\sigma(a)}$ by Definition \ref{defn:partition}, so $x_a = x_{\sigma(a)}$ by Definition \ref{defn:verticalpartitions}.  This means that $\sigma \actn \xx = \xx$ for all $\sigma \in \symPx$.  Therefore, the first identity becomes
\begin{equation}
\epx f(x) = \frac{1}{|\symPx|} \sum_{\sigma \in \symPx} f(\sigma\actn\xx) = \frac{1}{|\symPx|} \sum_{\sigma \in \symPx} f(\xx) = f(\xx).
\end{equation}
The second identity is a consequence of the first together with the fact that whenever $\pP\leq\pl$ is a refinement of the maximal partition, $\pyl$ and $\ep$ commute.  Recall that $\px\leq\pl$ by Lemma \ref{lem:max_local_part}.  To ease notation, we use the notion of local $\pP$-neighborhoods to write the local projection as
\begin{equation}\label{eq:local_proj_px}
\pyl f(\xx) = \frac{1}{\pi(\LylPx)} \sum_{\zz\in\LylPx} \pi(\zz)f(\zz).
\end{equation}
Expanding the commutator gives
\begin{multline}\label{eq:second_envelope}
\pyl \ep f(\xx) - \ep \pyl f(\xx) \\
= \frac{1}{|\symP|}\sum_{\sigma\in\symP} \left( \frac{1}{\pi(\LylPx)}\sum_{\zz\in\LylPx} \pi(\sigma\actn\zz)f(\sigma\actn\zz) - \frac{1}{\pi(\LylPsx)}\sum_{\zz\in\LylPsx} \pi(\zz)f(\zz) \right)
\end{multline}
For any $\pP$-compatible permutation $\sigma \in \symP$ and local configuration $\xxt\in\Lyl$, consider the mapping 
\begin{equation}
F_{\sigma,\xxt} : \Lyl(\pP_{\tilde x}) \rightarrow \Lyl(\pP_{\sigma\actn\tilde{x}}) \quad \mbox{defined by} \quad \zz \mapsto \sigma\actn\zz
\end{equation}
Firstly, $F_{\sigma,\xxt}$ is well-defined because $\zz\in\Lyl$ and $\pP\leq\pl$ imply $\sigma\actn\zz\in\Lyl$ by Lemma \ref{lem:Sn_compat}.
Secondly, $F_{\sigma,\xxt}$ is a bijection because $F_{\sigma^{-1},\sigma\actn\xxt}$ is its inverse.  Thirdly, the map is $\pi$-measure preserving because the colorblind image of $\xxt$ and $\sigma\actn\xxt$ coincide (recall that $\pi(\xx)$ depends only on the particle numbers $n_i(\xx)$ and not on specific labels).  Therefore, the inner sums in \ref{eq:second_envelope} match by the change of variables $F_{\sigma,\xx}$ and so do their normalizing coefficients.
\end{proof}

\begin{remark}
The first identity is a triviality as the $\symPx$-orbit of $\xx$ is always a singleton.  The second identity is a type of Fubini's theorem since $\pyl$ can by thought of as an expectation over configurations with sufficiently compatible position partitions whereas $\ep$ is an expectation over configurations in $\symP$-orbits.  It requires only that the $\symP$ action commutes with the $\zz$-sampling.
\end{remark}

\begin{lemma}[$\symP$ action on $\LylP$]\label{lem:lpgraph}
For every partition $\pP\leq\pl$ of $[n]$ which is a refinement of the maximal local partition $\pl$ as defined in the proof of Proposition \ref{prop:poincare}, all edges $(\ww,\ww')\in\edgel$, and all $\pP$-compatible permutations $\sigma \in \symP$, the following two conditions are satisfied
\begin{equation}
\sigma\actn\ww\in\LylP
\end{equation}
and
\begin{equation}
(\sigma\actn\ww,\sigma\actn\ww')\in\edgel.
\end{equation}
\end{lemma}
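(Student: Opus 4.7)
The plan is to unwind definitions and reduce both claims to the compatibility statements already recorded in Lemma \ref{lem:Sn_compat}, together with the elementary observation that the label action $\actn$ and the site action $\actN$ commute since they act on disjoint coordinates of a configuration (the slots versus the values in those slots). The only preliminary combinatorial fact I will need is that $\pP \leq \pl$ implies $\symP \subseteq \symPl$: a permutation that preserves a finer partition automatically preserves every coarser one.

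For the first claim $\sigma\actn\ww\in\LylP$, I will apply Lemma \ref{lem:Sn_compat} twice. Since $\pP\leq\pl$ we have $\sigma\in\symP\subseteq\symPl$, so the first item of that lemma applied to $\ww\in\Lyl$ gives $\sigma\actn\ww\in\Lyl$. The defining condition of $\LylP$ gives $\pP_\ww\leq\pP$, so the second item applied with the partition $\pP$ and permutation $\sigma\in\symP$ gives $\pP_{\sigma\actn\ww}\leq\pP$. Together these say $\sigma\actn\ww\in\LylP$.

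For the second claim, I will split into cases according to edge type. If $(\ww,\ww')\in\edgem$, say $\ww' = m_{ab}^{ij}\ww$ for some $i\eql j$ and $a\eqp b$, then a direct component-wise calculation (using $(\sigma\actn\xx)_c=x_{\sigma(c)}$) shows
\begin{equation}
\sigma\actn\ww' \;=\; m_{\sigma^{-1}(a)\,\sigma^{-1}(b)}^{ij}(\sigma\actn\ww).
\end{equation}
The sites $i\eql j$ are unchanged, $\sigma\in\symP$ with $a\eqp b$ forces $\sigma^{-1}(a)\eqp\sigma^{-1}(b)$, and the bijectivity of the label action turns $\ww\neq\ww'$ into $\sigma\actn\ww\neq\sigma\actn\ww'$, so the image is again an edge in $\edgem$. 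If instead $(\ww,\ww')\in\edgee$, say $\ww'=(ij)\actN\ww$ with $i\eql j$, the same kind of component-wise calculation shows that $\actn$ and $\actN$ commute, so $\sigma\actn\ww'=(ij)\actN(\sigma\actn\ww)$, once more an edge in $\edgee$ witnessed by the same local pair $i\eql j$. There is no serious obstacle here: the entire lemma is bookkeeping, and the only thing one really needs to see is that permuting labels and permuting sites commute because they operate on disjoint pieces of the data defining a distinguishable configuration.
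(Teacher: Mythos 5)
Your proof is correct and follows essentially the same route as the paper: the membership claim via both parts of Lemma \ref{lem:Sn_compat} (with the observation $\symP\subseteq\symPl$, which the paper leaves implicit), and the edge claim via the commutation identities between the label action and the jump/swap/site operations, which is exactly the paper's identity \eqref{eq:localcommute} up to the substitution $(a,b)\mapsto(\sigma(a),\sigma(b))$. If anything, your treatment of type 2 edges is slightly cleaner, since you verify directly that $\actn$ and $\actN$ commute rather than routing through the swap operators $s_{ab}^{ij}$ as the paper's stated identity does.
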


\begin{proof}
The first condition follows from both parts of Lemma \ref{lem:Sn_compat}.  The second condition requires the additional observation that
\begin{equation}\label{eq:localcommute}
\sigma\actn(m_{\sigma(a)\sigma(b)}^{ij}\xx) = m_{ab}^{ij}(\sigma\actn\xx) \quad \mbox{and} \quad 
\sigma\actn(s_{\sigma(a)\sigma(b)}^{ij}\xx) = s_{ab}^{ij}(\sigma\actn\xx) 
\end{equation}
for every distinguishable configuration $\xx\in\Ln$, permutation $\sigma\in S_n$, pair of sites $i,j\in[N]$, and pair of labels $a,b\in[n]$.  These can be checked directly from the definitions of the two-particle jump and swap operators $m_{ab}^{ij},s_{ab}^{ij}$ in Theorem \ref{thm:levmf} and the $S_n$ action on $\Ln$ in Definition \ref{def:actn}.
\end{proof}

\begin{lemma}[Path counting]\label{lem:pathcounting}
For every $\pP\leq\pl$, there exists a family of lengths $\{\len \in \ZZ_{\geq 0}| \xx,\zz \in \LylP \}$, a family of configurations $\{\family^{\xx\zz}_i \in \LylP | \xx,\zz \in \LylP, 0 \leq i \leq \len\}$, and a constant $C(n)>0$ with the following properties:
\begin{enumerate}
\item For every $\xx,\zz \in \LylP$, $\family^{\xx\zz}_0 = \xx$ and $\family^{\xx\zz}_{\len} = \zz$.
\item For every $\xx,\zz \in \LylP$ and $1 \leq i \leq \len$, there exists $\sigma\in\symP$ such that $(\family^{\xx\zz}_{i-1}, \sigma\actn\family^{\xx\zz}_i) \in \edgel$.
\item For every $\xx,\zz \in \LylP$, $\len \leq C$.
\item For every $(\ww,\ww') \in \edgel$, 
\begin{equation}
|\{(\xx,\zz,i) \in \LylP^2 \times \ZZ_{>0} | 1 \leq i \leq \len, \family^{\xx\zz}_{i-1}=\ww, \family^{\xx\zz}_i = \ww'\}| \leq C\ell^{n/2-1}.
\end{equation}
\end{enumerate}
\end{lemma}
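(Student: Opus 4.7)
The plan is to work in the quotient $\LylP/\symP$ and identify it with a combinatorial object of size $O(\ell^{n/2})$, on which paths and edge-counting reduce to familiar box-like arguments. Every $\xx \in \LylP$ satisfies $\px \leq \pP$, which forces each occupied site to carry only a single $\pP$-part (a single ``color'') together with an even particle count. Modulo the $\symP$ action, a configuration is therefore determined by the site-wise data recording, for each local site, the color present and the number of particles of that color. Equivalently, it is a placement of $n/2$ colored pairs onto the $O(\ell)$ local neighborhood sites, and a direct count gives $|\LylP/\symP| \leq C(n)\ell^{n/2}$.

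For the construction of $\family^{\xx\zz}$, I would perform a two-phase greedy procedure. In the first phase, compare the color assignments of $\xx$ and $\zz$ site by site and use whole-site exchange edges from $\edgee$ (moves of the form $(ij)\actN\ww$) to permute color assignments across sites until they match; each exchange reconciles one pair of sites, and at most $n/2$ exchanges suffice. In the second phase, compare the pair counts of each color at each site and use two-particle move edges $m_{ab}^{ij}$ from $\edgem$ to relocate pairs of a fixed color from over-subscribed to under-subscribed sites; since there are at most $n/2$ pairs in total, this phase uses at most $n/2$ edges. At each step, the $\symP$-label involved is chosen canonically to identify the pair being moved or the color being swapped, which is the role played by the $\sigma \in \symP$ allowed in item (2). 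Intermediate configurations remain in $\LylP$ because move edges only introduce a pair into a site that is empty or already carries the same color, and whole-site exchanges preserve the multiset of color assignments. This yields items (1) and (2), with the total length bounded by a constant $C(n)$ depending only on $n$, which is item (3).

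For item (4), fix an edge $(\ww,\ww') \in \edgel$. The bound on $\len$ means there are at most $C(n)$ values of the step index $i$ at which a path could use this edge, so it suffices to bound, for a fixed $i$, the number of pairs $(\xx,\zz)$ with $\family^{\xx\zz}_{i-1} = \ww$ and $\family^{\xx\zz}_i = \ww'$. At step $i$, the greedy recipe processes one specific discrepancy --- either a color mismatch at two sites (if the edge is in $\edgee$) or a pair relocation at two sites (if the edge is in $\edgem$). Either way, the edge $(\ww,\ww')$ identifies the two sites involved and the pair or color being processed at step $i$, which pins down the $\xx$- and $\zz$-positions of one specific colored pair out of $n/2$. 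The remaining $n/2 - 1$ colored pair positions in $\xx$ (and independently in $\zz$) range over the $O(\ell)$ local neighborhood sites, giving the required fiber bound $C(n)\ell^{n/2-1}$.

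The main technical obstacle is committing to a canonical greedy ordering --- for example, processing sites in increasing index order, colors in the order in which they appear in $\pP$, and pairs at each site deterministically --- so that the recipe yields a well-defined map $(\xx,\zz) \mapsto \family^{\xx\zz}$ and so that the edge-counting dimension argument in item (4) has clean bookkeeping. A naive non-canonical construction might branch at each step and inflate the per-edge usage; the canonical choice, together with the $\symP$-invariance of the quotient picture, is what keeps the paths both short and sufficiently injective. The remaining verification is purely combinatorial and independent of the matrix model, being essentially a path-counting statement on a discretely-indexed product configuration space.
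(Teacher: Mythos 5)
Your overall architecture differs from the paper's: the paper runs a single greedy loop driven by the $\pP$-deviation $\pdev(\xx,\zz)=\inf_{\sigma\in\symP}|\{a:x_{\sigma(a)}\neq z_a\}|$, always attempting a deviation-reducing move edge and proving (this is the real content of the lemma) that whenever such a move is blocked by a wrong-colored destination site, a deviation-reducing whole-site exchange exists; this gives $\len\leq n/2$, and item (4) then follows from a pure degree argument (each vertex of $(\LylP,\edgel)$ has degree $O(\ell)$, so at most $\len\cdot\deg^{\len-1}\lesssim\ell^{n/2-1}$ walks of length $\leq n/2$ pass through a fixed edge). Your quotient description of $\LylP/\symP$ and the identification of the move/exchange interaction as the key difficulty are both correct, but two steps need repair.

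First, the two-phase ordering (all exchanges, then all moves) is asserted, not proved, and it is not obvious. Exchanges preserve the multiset of whole-site contents, so after phase 1 the number of occupied sites per color is still that of $\xx$, which need not match $\zz$; phase 2 must then both split/consolidate occupancies and resolve destination conflicts, and a move $m_{ab}^{ij}$ is only legal into a site that is empty or already carries the same color. Cyclic blocking among the remaining moves is exactly the situation that forces an exchange, so exchanges cannot in general all be front-loaded without an argument that the residual move problem is acyclic. This is precisely the case analysis the paper supplies in its ``step 4'' claim, and your proof needs an analogue (or should adopt the paper's interleaved greedy).

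Second, the fiber count for item (4) does not close as written. If the remaining $n/2-1$ pair positions range freely in $\xx$ \emph{and independently} in $\zz$, the fiber has size $O(\ell^{n-2})$, not $O(\ell^{n/2-1})$. The correct statement is that the fixed intermediate configuration $\ww$ already determines, for each colored pair, \emph{either} its $\xx$-position \emph{or} its $\zz$-position (according to whether that pair has been processed by step $i$ in the canonical ordering), so only $n/2$ positions total are free, one of which is pinned by the edge itself --- giving $n/2-1$ free positions over $O(\ell)$ sites each. You need to make this ``$\ww$ interpolates coordinate-wise between $\xx$ and $\zz$'' property an explicit invariant of your construction; it is also slightly delicate because a whole-site exchange can relocate several pairs in one step, so the bookkeeping of which pairs count as processed must be set up to keep the determined/free dichotomy exact.
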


\begin{proof}
Define the $\pP$-deviation between $\xx$ and $\zz$ by 
\begin{equation}
\pdev(\xx,\zz) = \inf_{\sigma\in\symP} |\{a \in [n]| x_{\sigma(a)} \neq z_a\}|
\end{equation}
for any $\xx,\zz \in \Lyl$, not necessarily requiring $\px,\pz\leq\pP$.  Now fix $\xx,\zz \in \LylP$. I will provide an iterative construction of $\family^{xz}_i$, $i=0,1,2\ldots$, and then prove the four properties.
\begin{enumerate}
\item Set $\family^{\xx\zz}_0 = \xx$ and introduce a counter $m=0$. Iterate the following procedure.
\item At this point, $\family^{\xx\zz}_i$ is defined for $0 \leq i \leq m$.  If $\pdev(\family^{\xx\zz}_m, \zz) = 0$, then set $\len = m$, redefine $\family^{\xx\zz}_m = \zz$, and we are done.
\item Otherwise, if there exists $\ww \in \LylP$ such that $(\family^{\xx\zz}_m, \ww) \in \edgem$ and $\pdev(\ww,\zz) = \pdev(\family^{\xx\zz}_m, \zz) - 2$, then set $\family^{\xx\zz}_{m+1} = \ww$.
\item I claim that if no such $w$ from the previous step exists, then there exists $\ww' \in \LylP$ such that $(\family^{\xx\zz}_m, \ww') \in \edgee$ and $\pdev(\ww',\zz) \leq \pdev(\family^{\xx\zz}_m, \zz) - 2$.  In this case set $\family^{\xx\zz}_{m+1}=\ww'$.
\item Regardless, at this point we have defined $\family^{\xx\zz}_{m+1}$.  Return to step 2 and increment $m$ by $1$.
\end{enumerate}
This iterative procedure is guaranteed to terminate because the $\pP$-deviation $\pdev(\family^{xz}_i, z)$ is strictly decreasing in $i$ at each iteration. 

To verify construction's validity, it remains to prove the claim from step 4.  Let's call $\family^{\xx\zz}_m = \xx'$ to ease notation.  If $\pdev(\xx',\zz) > 0$, then there always exists $a \neq b \in [n]$, and $i \eql j$ such that
\begin{equation}
x'_a = x'_b = i \quad \mbox{and} \quad \pdev(m_{ab}^{ij} \xx', \zz) = \pdev(\xx', \zz) - 2
\end{equation}
However, $m_{ab}^{ij}\xx' \not \in \LylP$ or else $\ww=m_{ab}^{ij}\xx'$ would have worked in step 3.  As $m_{ab}^{ij}\xx' \in \Lyl$, it must be the case that $\pP_{m_{ab}^{ij}\xx'}\not\leq\pP$.  The only possibility now is that there exists $c \in [n]$ such that $c \not\eqp a$ with $x'_c = j$.  Since moving $a$ and $b$ to $j$ would reduce the $\pP$-deviation of $\xx'$ with $\zz$, there is no $\sigma \in \symP$ such that $z_{\sigma(c)} = j$.  This means moving the contents of $x'$ at site $j$ elsewhere will not increase the $P$-deviation with $z$, and there exists $k \neq j \in \ZZ$ and $d \in [n]$ with 
\begin{equation}
x'_d = j \quad \mbox{and} \quad \pdev(m_{cd}^{jk} \xx', \zz) = \pdev(\xx', \zz) - 2.
\end{equation}
However, $m_{cd}^{jk}\xx' \not \in \LylP$ or else $\ww = m_{cd}^{jk}\xx'$ would have worked in step 3.  As $m_{cd}^{jk}\xx'\in\Lyl$, it must be the case that $\pP_{m_{cd}^{jk}\xx'}\not\leq\pP$.  The only possibility now is that there exists $e \in [n]$ such that $e \not\sim_P c$ with $x'_e = k$.  Since moving $c$ and $d$ to $k$ would reduce the $\pP$-deviation of $\xx'$ with $\zz$, there is no $\sigma \in \symP$ such that $z_{\sigma(e)} = k$.  In particular, moving the contents of site $j$ to site $k$ will reduce the $\pP$-deviation of $\xx'$ with $\zz$ by at least two, while moving the contents of site $k$ to site $j$ will not increase the $\pP$-deviation of $\xx'$ with $\zz$.  In other words,
\begin{equation}
\pdev((jk)\actN\xx', \zz) \leq \pdev(\xx',\zz) - 2
\end{equation}
so $\ww' = (jk)\actN\xx'$ is a valid choice for step 4.  Note that in this case, $(\ww',\xx')\in\edgee$, $(jk)\in\S_N$ is the $(i,j)$-transposition, and $\actN$ is the $S_N$ action defined in Definition \eqref{defn:actN}.

We now proceed to check that this construction satisfies the desired properties.
Property 1 is immediate from steps 1 and 2.  Steps 3 and 4 ensure Property 2 holds for $i < \len$.  In step 2, we replaced $\family^{\xx\zz}_{\len}$ with $\zz = \sigma\actn\family^{\xx\zz}_\len$ for some $\sigma \in \symP$.  However, for the original $\family^{\xx\zz}_\len$, we had $(\family^{\xx\zz}_\len, \sigma' \sigma \actn \zz) = (\family^{\xx\zz}_{\len-1}, \sigma'\actn\family^{\xx\zz}_\len) \in \edgel$ for some $\sigma'\in\symP$.  Since $\sigma' \sigma \in \symP$, property 2 holds for $i=\len$ as well.  To see Property 3, the iterative procedure concludes after at most $\pdev(\xx,\zz)/2 \leq n/2$ steps because the $\pP$-deviation decreases by at least 2 in each iteration, $\pdev(\family^{\xx\zz}_i, \zz) \leq \pdev(\family^{\xx\zz}_{i-1}, \zz) - 2$, and $\pP$-deviations are at most $n$ by definition.  This shows that $\len \leq n/2$ for all $\xx, \zz \in \LylP$.  

Lastly, for Property 4, observe that in the graph $(\LylP, \edgel)$ the degree of each vertex is at most $( n^4/8 + n^2/4)\ell$.  Indeed, for each $\ww\in\LylP$ there are at most:
\begin{itemize}
\item $n/2$ choices for $i\in[N]$ such that $n_i(\ww)>0$, 
\item $n^2/2$ choices for labels $a$ and $b$, 
\item $n/2 \ell$ choices for $j \eql i$
\end{itemize}
such that $(\ww,m_{ab}^{ij}\ww)\in\edgem$.  Similarly, there are at most
\begin{itemize}
\item $n/2$ choices for $i\in[N]$ such that $n_i(\ww)>0$,
\item $n/2 \ell$ choices for $j \eql i$
\end{itemize}
such that $(\ww,(ij)\actN\ww)\in\edgee$.  Therefore, the number of paths of length $n/2$ passing through a specified edge is bounded by $n/2((n^4/8 + n^2/4) \ell)^{n/2-1}$.
\end{proof}

\begin{lemma}[$\Lij$ partition and edge containments]\label{lem:two-site}
The following containment implications regarding local configurations, edges, partitions, and two-site subspaces hold.
\begin{enumerate}
\item If $\xx\in\Lyl$ and $i \eql j$, then $\Lij \subset \Lyl$.  As a consequence, the local neighborhood is partitioned by two-site subspaces.  That is, for all $i \eql j$
\begin{equation}
\Lyl = \bigsqcup_{\xx\in\Lyl/\eqij} \Lij
\end{equation}
where $\xx$ ranges over representatives, one for each equivalence class of $\Lyl/\eqij$.
\item If $(\ww,\ww')\in\edgem$ is a type 1 edge, then there exist $i \eql j$ and $\xx \in \Lyl$ such that $\ww,\ww' \in \Lij$ and $\Lij \subset \LylP$.
\item If $(\ww,\ww')\in\edgee$ is a type 2 edge, then there exist $i \eql j$ and $\xx \in \Lyl$ such that $\ww,\ww'\in\Lij$.
\end{enumerate}
\end{lemma}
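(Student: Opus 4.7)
The plan is to unpack each item from the defining properties of $\eqij$, $\Lyl$, $\edgem$, and $\edgee$, with item 2 being the only non-trivial piece. For item 1, given $\xx \in \Lyl$ and $i \eql j$, I would take any $\zz \in \Lij$ and check directly that $\zz \in \Lyl$. For each $a \in [n]$, the relation $\zz \eqij \xx$ gives either $z_a = x_a$ (so $z_a \eql x_a \eql y_a$) or $\{z_a, x_a\} \subset \{i, j\}$ (so $z_a \eql x_a$ because $i \eql j$, followed by $x_a \eql y_a$). Transitivity of $\eql$ yields $z_a \eql y_a$ in both cases. Since $\eqij$ is an equivalence relation on $\Ln$, its equivalence classes partition $\Ln$; what was just proved says that any class meeting $\Lyl$ is entirely contained in $\Lyl$, giving the displayed disjoint union.

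For item 2, given $(\ww, \ww') \in \edgem$, I would unpack the edge definition to obtain $i \eql j$ and $a \eqp b$ with $w_a = w_b = i$ and $\ww' = m_{ab}^{ij}\ww$. Setting $\xx = \ww$ already produces $\ww, \ww' \in \Lij$ with $\xx \in \LylP \subset \Lyl$, and item 1 gives $\Lij \subset \Lyl$. The remaining content is to verify $\pz \leq \pP$ for every $\zz \in \Lij$. The key structural observation, which crucially uses that \emph{both} endpoints lie in $\LylP$, is that every label whose position in $\ww$ belongs to $\{i, j\}$ must lie in a single $\pP$-part $P$. Indeed, $\ww \in \LylP$ forces the labels at site $i$ in $\ww$ to share one $\pP$-part $P$ containing $a$ and $b$; then $\ww' \in \LylP$ forces the labels at site $j$ in $\ww'$, namely the original $j$-labels of $\ww$ together with $a, b \in P$, to share one $\pP$-part, which must therefore equal $P$. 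Any $\zz \in \Lij$ merely reshuffles these labels between $i$ and $j$ while leaving all other sites untouched, so every label at site $i$ or site $j$ in $\zz$ lies in $P$ and every other site inherits $\pP$-compatibility from $\ww$; hence $\pz \leq \pP$.

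For item 3, the definition of $\edgee$ yields $i \eql j$ and $\ww' = (ij) \actN \ww$. A label-by-label inspection shows $w'_a = w_a$ when $w_a \notin \{i, j\}$ and $\{w'_a, w_a\} = \{i, j\}$ otherwise, which is precisely $\ww' \eqij \ww$. Taking $\xx = \ww \in \LylP \subset \Lyl$ then places $\ww, \ww' \in \Lij$, as required.

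The only real obstacle is the $\Lij \subset \LylP$ clause in item 2: the edge definition only enforces $\pP$-compatibility of the single moved pair $(a, b)$, whereas $\Lij$ contains every redistribution of all labels between $\{i, j\}$, most of which are not reachable by a single move. The bridge is to exploit the hypothesis $\ww' \in \LylP$ in addition to $\ww \in \LylP$, which secretly forces all labels presently at $\{i, j\}$ in $\ww$ into a single $\pP$-part, after which arbitrary redistributions between the two sites are automatically $\pP$-compatible.
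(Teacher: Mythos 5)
Your proposal is correct and follows essentially the same route as the paper's proof: item 1 by transitivity of $\eql$ using $i \eql j$, item 3 by direct inspection of $(ij)\actN\ww$, and item 2 by exploiting that \emph{both} endpoints of a type 1 edge lie in $\LylP$ to force all labels occupying sites $i$ and $j$ into a single $\pP$-part. The only cosmetic difference is that the paper packages this last step via the join partition $\pP_\ww \vee \pP_{\ww'}$ and the chain $\pz \leq \pP_\ww \vee \pP_{\ww'} \leq \pP$, whereas you identify the common part $P$ explicitly; the content is identical.
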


\begin{proof}
For item 1, suppose $\zz\eqij\xx$ and $\xx\in\Lyl$.  Then for all $a\in[n]$, $z_a \in \{x_a,(ij)(x_a)\}$ but in either case $z_a \eql x_a \eql y_a$.  Thus, $\zz\in\Lyl$.

For item 2, there exists $i\eql j$ and labels $a \eqp b$ such that $\ww'=m_{ij}^{ab}\ww\eqij\ww$.  Suppose $\xx\eqij\ww$ and claim that $\px\leq\pww\leq\pP$.  Note that for all $c\in[n]$, $z_c\in\{i,j\}$ implies $c \eqpww a$ and $z_c\not\in\{i,j\}$ implies $w_c=z_c$.  Therefore, whenever $c \eqpx d$, $z_c=z_d$ so $c \eqpww a \eqpww d$.

For item 3, there exists $i \eql j$ such that $\ww'=(ij)\actN\ww\eqij\ww$.
\end{proof}
%
%

\begin{lemma}[Splitting of $\gen_{ij}$]\label{lem:two-site-split}
Let $i < j \in [N]$ be sites and $\xx \in \Ln$ a distinguishable configuration.  Then operators $\move_{ij}$ and $\exch_{ij}$ each split along the orthogonal decomposition of invariant subspaces
\begin{equation}
L^2(\Lambda^n) = \bigoplus_{\xx\in\Ln/\eqij} L^2(\Lij).
\end{equation}
That is, if $f(\zz) = 0$ for all $\zz \in \Ln \backslash \Lij$ then $\move_{ij} f(\zz) = 0$ for all $\zz \in \Ln \backslash \Lij$, and analogously for $\exch_{ij}$.
\end{lemma}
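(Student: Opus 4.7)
The plan is to reduce the splitting statement to a pointwise observation about the generators acting on delta functions. First I would note that the relation $\eqij$ is an equivalence relation, so the two-site subspaces $\{\Lij \mid \xx \in \Ln/\eqij\}$ partition $\Ln$. This immediately gives the orthogonal direct sum decomposition $L^2(\Ln) = \bigoplus_{\xx \in \Ln/\eqij} L^2(\Lij)$ at the level of vector spaces with inner product $\ipn{\cdot,\cdot}$, since functions supported on distinct parts have disjoint supports and are therefore orthogonal. What remains is to prove invariance of each summand.

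The key observation is that the two-particle jump and swap operators $m_{ab}^{ij}$ and $s_{ab}^{ij}$ only ever relocate particles between sites $i$ and $j$. More precisely, for any $\zz \in \Ln$ and any $a \neq b \in [n]$, the configurations $m_{ab}^{ij}\zz$, $m_{ab}^{ji}\zz$, and $s_{ab}^{ij}\zz$ all differ from $\zz$ (if at all) only in coordinates whose values lie in $\{i,j\}$, so in particular they all satisfy $m_{ab}^{ij}\zz \eqij \zz$, $m_{ab}^{ji}\zz \eqij \zz$, and $s_{ab}^{ij}\zz \eqij \zz$. This follows directly by inspecting the definitions given in Theorem~\ref{thm:levmf}.

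Given this, suppose $f \in L^2(\Lij)$, i.e., $f(\zz) = 0$ for all $\zz \in \Ln \setminus \Lij$. Fix any $\zz \in \Ln \setminus \Lij$, so $\zz \not\eqij \xx$. Every configuration appearing in the expression defining $\move_{ij}f(\zz)$, namely $\zz$ itself together with the $m_{ab}^{ij}\zz$ and $m_{ab}^{ji}\zz$ for various $a \neq b$, satisfies the $\eqij \zz$ relation by the observation above, and hence lies in the same $\eqij$-class as $\zz$, not that of $\xx$. Therefore all these configurations belong to $\Ln \setminus \Lij$ where $f$ vanishes, so $\move_{ij}f(\zz) = 0$. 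The identical argument applied to the terms $s_{ab}^{ij}\zz$ gives $\exch_{ij}f(\zz) = 0$. This shows $\move_{ij}f, \exch_{ij}f \in L^2(\Lij)$, establishing invariance. There is no real obstacle here; the only thing worth double-checking is the degenerate case where $\zz$ lies in the two-site subspace but one of the jump operators sends it outside, but the defining indicators $\one{x_a=x_b=i}$ in the two-particle operators ensure the target is always within the same $\eqij$-class.
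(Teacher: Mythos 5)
Your proof is correct and follows the same route as the paper's: the paper's argument also rests entirely on the observation that $m_{ab}^{ij}\zz \eqij s_{ab}^{ij}\zz \eqij \zz$, i.e., the two-particle jump and swap operators preserve $\eqij$-classes, from which invariance of each $L^2(\Lij)$ is immediate. Your additional remarks on the partition into equivalence classes and on the degenerate cases are consistent with, and slightly more explicit than, the paper's one-line justification.
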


\begin{proof}
This is a basic consequence of the definition of the move and exchange operators $\move_{ij}$ and $\exch_{ij}$ and the observation that the two particle jump and swap operators preserve two-site subspaces: for all $a,b\in[n]$
\begin{equation}\label{eq:two_site_moveswap}
m_{ab}^{ij}\xx \eqij s_{ab}^{ij}\xx \eqij \xx
\end{equation}
which can be checked immediately from the definition of $\eqij$.
\end{proof}

\begin{lemma}[Vanishing exchange]\label{lem:two-site-exch}
Consider a refinement $\pP\leq\pl$, a local $\pP$-configuration $\xx \in \LylP$, local sites $i \eql j \in [N]$ satisfying $\Lij\subset\LylP$.  Then 
\begin{equation}
(\exch_{ij}\ep)|_{L^2(\Lij)} = 0.
\end{equation}
\end{lemma}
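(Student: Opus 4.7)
The plan is to evaluate $(\exch_{ij} \ep f)(\zz)$ at an arbitrary point $\zz \in \Lij$ and show every surviving term in the sum cancels, exploiting the key algebraic identity that a two-particle swap between sites $i,j$ is realized by an $S_n$ action. More precisely, for any labels $a \neq b \in [n]$ and $\zz$ with $z_a = i, z_b = j$, one checks directly from Definition \ref{def:actn} and Theorem \ref{thm:levmf} that
\begin{equation}
s_{ab}^{ij} \zz = (a\,b) \actn \zz.
\end{equation}
Expanding the definition of $\exch_{ij}$ from Theorem \ref{thm:levmf}, the only terms in $\exch_{ij} \ep f(\zz) = 2 \sum_{a \neq b} (\ep f(s_{ab}^{ij}\zz) - \ep f(\zz))$ that can be nonzero are precisely those with $\{z_a, z_b\} = \{i,j\}$ (so I may restrict to $z_a = i$, $z_b = j$ by symmetry). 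By $\symP$-invariance of $\ep f$, the term vanishes as soon as the transposition $(a\,b)$ lies in $\symP$, i.e., whenever $a \eqp b$.

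The heart of the argument is therefore showing that every such pair $(a,b)$ contributing nontrivially must in fact satisfy $a \eqp b$. Here I would combine three ingredients: (i) the containment $\Lij \subset \LylP$ from the hypothesis, giving $\pz \leq \pP$, together with $s_{ab}^{ij}\zz \in \Lij$ (a consequence of \eqref{eq:two_site_moveswap} from Lemma \ref{lem:two-site-split}) to deduce $\pP_{s_{ab}^{ij}\zz} \leq \pP$ as well; (ii) the crucial parity observation that $\zz \in \Ln$ forces $n_j(\zz)$ to be even, and since $z_b = j$, we must have $n_j(\zz) \geq 2$, so there exists $b' \in [n] \setminus \{a,b\}$ with $z_{b'} = j$; (iii) a short transitivity chain: $b \eqpz b'$ gives $b \eqp b'$, and $a \eqpsx b'$ gives $a \eqp b'$, hence $a \eqp b$ by transitivity.

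Once $a \eqp b$ is established for every contributing pair, the conclusion is a one-line computation: writing $\tau = (a\,b)\sigma$ as $\sigma$ ranges over $\symP$ gives a bijection of $\symP$ (since $(a\,b) \in \symP$), so
\begin{equation}
\ep f((a\,b) \actn \zz) = \frac{1}{|\symP|}\sum_{\sigma \in \symP} f((a\,b)\sigma \actn \zz) = \frac{1}{|\symP|}\sum_{\tau \in \symP} f(\tau \actn \zz) = \ep f(\zz),
\end{equation}
and every surviving summand in $\exch_{ij} \ep f(\zz)$ is zero. The only substantive obstacle is the parity/refinement argument in step (ii)--(iii); the rest is symbol pushing from the definitions. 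It is worth noting that the argument actually works for arbitrary $f \in L^2(\Ln)$, not only $f \in L^2(\Lij)$, because the conclusion $a \eqp b$ depends only on $\zz$ and on the local geometry of $\LylP$, not on the support of $f$.
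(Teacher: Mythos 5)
Your proof is correct and follows essentially the same route as the paper's: identify $s_{ab}^{ij}\zz$ with $(a\,b)\actn\zz$, use evenness of the particle number to produce a third particle sharing a site, and run the transitivity chain through $\pz\leq\pP$ and $\pP_{s_{ab}^{ij}\zz}\leq\pP$ to conclude $(a\,b)\in\symP$. The only cosmetic difference is that you take the auxiliary particle at site $j$ while the paper takes it at site $i$; your closing observation that the argument never uses $\supp(f)\subset\Lij$ is also accurate.
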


\begin{proof}
Suppose $a,b\in[n]$ with $s_{ab}^{ij}\xx \neq \xx$. Then it must be the case that $x_a=i$ and $x_b=j$.  Since $n_i(\xx)\geq1$ and is even, there must be another label $c\in[n]\backslash\{a,b\}$ such that $x_c=i$.  In this case, we have $a \eqpx c$ and $c \eqpsx b$.  Since $\xx,s_{ab}^{ij}\xx\in\Lij$ by \eqref{eq:two_site_moveswap} and $\Lij\subset\LylP$ by assumption, we have $\px,\pP_{s_{ab}^{ij}\xx}\leq\pP$ so $a \eqp b$ by transitivity of $\eqp$.  Therefore, the transposition $(ab) \in S_n$ swapping labels $a$ and $b$ is in fact compatible with $\pP$, $(ab)\in\symP$.  Furthermore, $s_{ab}^{ij}\xx=(ab)\actn\xx$.
Therefore, for all $f \in L^2(\Lij)$
\begin{equation}
\exch_{ij} \ep f(\xx) = 2 \sum_{a \neq b \in [n]} \left( \ep f(s_{ab}^{ij}\xx) - \ep f(\xx) \right) = \sum_{a \neq b \in [n]} \frac{2}{|\symP|} \left( \sum_{\sigma\in\symP}f(\sigma\actn(s_{ab}^{ij}\xx)) - \sum_{\sigma\in\symP}f(\sigma\actn\xx) \right) = 0
\end{equation}
which vanishes because $s_{ab}^{ij}\xx$ and $\xx$ lie in the same $\symP$-orbit.
This concludes the proof since $\xx$ was an arbitrary representative of the two-site subspace $\Lij$.
\end{proof}

\begin{remark}
The idea behind this proof is to consider the induced subgraph on vertices $\Lij\subset\LylP$ with edges coming only from $\edgee$, $(\Lij, \edgee|_{\Lij})$.  This graph is not connected.  Each connected component is contained in a $\symP$-orbit so $f$ is equalized on each connected component by $\ep$.  On the other hand, the operator the operator $\edge_{ij}$ generates a random walk on each connected component and hence annihilates functions constant along connected components.  Therefore, the composition $\exch_{ij}\ep$ annihilates all functions on $\Lij$.
\end{remark}

\begin{lemma}[Finite dimensional kernel / quadratic form bound]\label{lem:kernel_form_bound}
Suppose $\mat$ is a real symmetric $m \times m$ matrix with the property that for all $\vv=(v^1, \ldots, v^m)^\top \in \ker(\mat)$, $v^1=v^2$.  Then
\begin{equation}
|w^1-w^2|^2 \leq \frac{2}{\mu} \ipr{\ww, \mat \ww}
\end{equation}
for all $\ww=(w^1, \ldots, w^m)^\top \in \RR^m$ where $\mu$ is the the spectral gap of $\mat$, that is $\mu$ is the smallest nonzero eigenvalue.
\end{lemma}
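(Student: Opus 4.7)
The plan is to exploit the hypothesis via duality: the statement ``$v^1 = v^2$ for all $\vv \in \ker(\mat)$'' is equivalent to saying the vector $\ve_1 - \ve_2$ is orthogonal to $\ker(\mat)$. Since $\mat$ is symmetric, $\ker(\mat)^\perp = \mathrm{range}(\mat)$, so $\ve_1 - \ve_2$ lies in the range of $\mat$ and in particular in the span of the eigenvectors associated with the nonzero eigenvalues of $\mat$.

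Given this, I would decompose an arbitrary $\ww \in \RR^m$ orthogonally as $\ww = \ww_0 + \ww_1$ with $\ww_0 \in \ker(\mat)$ and $\ww_1 \in \ker(\mat)^\perp$. The key observation is that $w^1 - w^2 = \ipr{\ww, \ve_1 - \ve_2} = \ipr{\ww_1, \ve_1 - \ve_2}$, since the component in the kernel contributes nothing by the hypothesis. Cauchy--Schwarz then gives $|w^1 - w^2|^2 \leq \|\ww_1\|^2 \cdot \|\ve_1 - \ve_2\|^2 = 2\|\ww_1\|^2$.

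Finally I would bound $\|\ww_1\|^2$ by the quadratic form. Since $\ww_1$ lies in the orthogonal complement of $\ker(\mat)$, the spectral theorem for symmetric matrices gives $\ipr{\ww_1, \mat \ww_1} \geq \mu \|\ww_1\|^2$, where $\mu$ is the smallest nonzero eigenvalue. Combined with $\ipr{\ww, \mat \ww} = \ipr{\ww_1, \mat \ww_1}$ (the kernel component drops out on both sides of $\mat$), this yields $\|\ww_1\|^2 \leq \mu^{-1} \ipr{\ww, \mat \ww}$, and chaining the two inequalities produces the desired bound $|w^1 - w^2|^2 \leq (2/\mu) \ipr{\ww, \mat \ww}$. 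There is no real obstacle here; the lemma is essentially a one-line consequence of the spectral theorem once the hypothesis is rephrased as the orthogonality statement $\ve_1 - \ve_2 \perp \ker(\mat)$.
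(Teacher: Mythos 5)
Your proof is correct and follows essentially the same route as the paper: both arguments hinge on rephrasing the hypothesis as $\ve_1-\ve_2\perp\ker(\mat)$ and then invoking the spectral gap on $\ker(\mat)^\perp$ (the paper packages this as the operator inequality $\mat\geq\frac{\mu}{2}(\ve_1-\ve_2)(\ve_1-\ve_2)^\top$, while you split it into Cauchy--Schwarz plus the spectral bound on $\ww_1$, which is the same computation). Note that both proofs implicitly use that $\mat$ is positive semidefinite, which holds in the paper's application but is worth flagging since the lemma only states "real symmetric."
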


\begin{proof}
Let $\mat = \sum_{k=1}^{\rank(\mat)}\mu_k\vu_k\vu_k^\top$ be the spectral decomposition of $\mat$ and let $\ve = \ve_1-\ve_2$ which is in the orthogonal complement to $\ker(\mat)$, by assumption.  Hence, $\ve \in \vspan\{\vu_1,\ldots,\vu_{\rank(\mat)}\}$ which spectrally means $\mat \geq \frac{\mu}{\|\ve\|^2}\ve\ve^\top$ since $\mu = \min\{\mu_k | k\in[\rank(\mat)]\}$.  In other words
\begin{equation}
|w^1-w^2|^2 = \ipr{\vw, \ve\ve^\top \vw} \leq \frac{2}{\mu}\ipr{\vw, \mat \vw}
\end{equation}
for every $\vw\in\RR^m$.
\end{proof}

\begin{lemma}[Generator and $\pP$-expectation commutation]\label{lem:comm_genep}
For all sites $i < j \in [N]$ and all partitions $\pP$ of $[n]$, the commutator between the $\pP$-conditional expectation operator and the two-site generator $\gen_{ij}$ vanishes
\begin{equation}
[\gen_{ij}, \ep]=0.
\end{equation}
\end{lemma}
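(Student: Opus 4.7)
The plan is to prove the stronger statement that $\gen_{ij}$ commutes with the pullback operator $T_\sigma : f \mapsto f(\sigma\actn\cdot)$ for \emph{every} individual permutation $\sigma\in S_n$, not merely those in $\symP$. Since the $\pP$-conditional expectation is a finite linear combination $\ep = \tfrac{1}{|\symP|}\sum_{\sigma\in\symP}T_\sigma$, the desired identity $[\gen_{ij},\ep]=0$ will follow immediately from the individual commutations $[\gen_{ij},T_\sigma]=0$ by linearity.

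Two ingredients drive the argument. First, the coefficient structure of $\gen_{ij} = \move_{ij}-\exch_{ij}$ recorded in Theorem~\ref{thm:levmf} depends on the configuration $\xx$ only through the pair of particle numbers $(n_i(\xx),n_j(\xx))$. Because relabeling particles never changes which site each particle occupies, these numbers are $S_n$-invariant: $n_k(\sigma\actn\xx)=n_k(\xx)$ for every site $k\in[N]$ and every permutation $\sigma\in S_n$. Second, the intertwining identities
$$
m_{ab}^{ij}(\sigma\actn\xx) = \sigma\actn\bigl(m_{\sigma(a)\sigma(b)}^{ij}\xx\bigr), \qquad s_{ab}^{ij}(\sigma\actn\xx) = \sigma\actn\bigl(s_{\sigma(a)\sigma(b)}^{ij}\xx\bigr),
$$
which are verifiable directly from the definitions in Theorem~\ref{thm:levmf} and were already used in \eqref{eq:localcommute}, show that the two-particle jump and swap operators commute with $T_\sigma$ up to the relabeling $(a,b)\mapsto(\sigma(a),\sigma(b))$ of the particle indices.

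Combining these ingredients, I will write out the explicit formula for $\gen_{ij}(T_\sigma f)(\xx)$ using Theorem~\ref{thm:levmf}, pull $T_\sigma$ outside each jump/swap via the two identities above, and then reindex each of the three sums of the form $\sum_{a\neq b}$ using the bijection $(a,b)\mapsto(\sigma(a),\sigma(b))$ on the set of ordered pairs of distinct labels in $[n]$. The $S_n$-invariance of the prefactors $\tfrac{n_j(\xx)+1}{n_i(\xx)-1}$, $\tfrac{n_i(\xx)+1}{n_j(\xx)-1}$, and $2$ ensures that no coefficient is disturbed by this reindexing, and the result assembles to exactly $T_\sigma(\gen_{ij}f)(\xx)$. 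Averaging over $\sigma\in\symP$ then yields the claim. The proof is entirely combinatorial and I do not expect any real obstacle; the only care required is to confirm that the bijection $(a,b)\mapsto(\sigma(a),\sigma(b))$ does preserve the distinctness constraint $a\neq b$, which is immediate since $\sigma$ is a bijection.
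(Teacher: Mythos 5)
Your proposal is correct and is essentially the paper's own argument: the paper likewise expands $\gen_{ij}\ep$ and $\ep\gen_{ij}$, applies the change of variables $(a,b)\mapsto(\sigma(a),\sigma(b))$, and invokes the intertwining identities \eqref{eq:localcommute}, with each fixed-$\sigma$ summand vanishing separately — so your stronger statement $[\gen_{ij},T_\sigma]=0$ for all $\sigma\in S_n$ is implicit there as well. Your packaging via the pullback operators $T_\sigma$ and the explicit appeal to $S_n$-invariance of the particle numbers is a clean way to organize the same computation.
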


\begin{proof}
Expanding the terms in the commutator $[\gen_{ij}, \ep]f(\xx)$ gives
\begin{equation}
\gen_{ij}\ep f(\xx) = \begin{aligned}[t]
\frac{n_j(\xx)+1}{n_i(\xx)-1} &\sum_{a \neq b \in [n]} \left( \frac{1}{|\symP|}\sum_{\sigma\in\symP} f(\sigma\actn(m_{ab}^{ij}\xx)) - \frac{1}{|\symP|}\sum_{\sigma\in\symP} f(\sigma\actn\xx) \right) \\
+ \frac{n_i(\xx)+1}{n_j(\xx)-1} &\sum_{a \neq b \in [n]} \left( \frac{1}{|\symP|}\sum_{\sigma\in\symP}f(\sigma\actn(m_{ab}^{ji}\xx)) - \frac{1}{|\symP|}\sum_{\sigma\in\symP}f(\sigma\actn\xx) \right) \\
- 2 &\sum_{a \neq b \in [n]} \left( \frac{1}{|\symP|}\sum_{\sigma\in\symP} f(\sigma\actn(s_{ab}^{ij}\xx)) - \frac{1}{|\symP|}\sum_{\sigma\in\symP} f(\sigma\actn\xx) \right)
\end{aligned}
\end{equation}
and
\begin{equation}
\ep\gen_{ij} f(\xx) = \frac{1}{|\symP|}\sum_{\sigma\in\symP} \left(\begin{aligned}
\frac{n_j(\xx)+1}{n_i(\xx)-1} &\sum_{a \neq b \in [n]} \left( f(m_{ab}^{ij}(\sigma\actn\xx)) - f(\sigma\actn\xx) \right) \\
+ \frac{n_i(\xx)+1}{n_j(\xx)-1} &\sum_{a \neq b \in [n]} \left( f(m_{ab}^{ji}(\sigma\actn\xx)) - f(\sigma\actn\xx) \right) \\
- 2 &\sum_{a \neq b \in [n]} \left( f(s_{ab}^{ij}(\sigma\actn\xx)) - f(\sigma\actn\xx) \right)
\end{aligned}\right)
\end{equation}
All $f(\sigma\actn\xx)$ terms cancel out in the difference, so after rearranging summations, the commutator becomes
\begin{equation}
[\gen_{ij},\ep]f(\xx) = \frac{1}{|\symP|}\sum_{\sigma\in\symP}\left( \begin{aligned}
\frac{n_j(\xx)+1}{n_i(\xx)-1} &\sum_{a \neq b \in [n]} \left( f(m_{ab}^{ij}(\sigma\actn\xx)) - f(\sigma\actn(m_{ab}^{ij}\xx))\right) \\
+ \frac{n_i(\xx)+1}{n_j(\xx)-1} &\sum_{a \neq b \in [n]} \left( f(\sigma\actn(m_{ab}^{ji}\xx)) - f(\sigma\actn(m_{ab}^{ji}\xx)) \right) \\
- 2 &\sum_{a \neq b \in [n]} \left( f(\sigma\actn(s_{ab}^{ij}\xx)) - f(\sigma\actn(s_{ab}^{ij}\xx)) \right)
\end{aligned} \right)
\end{equation}
Now apply the change of variables $(a,b)\mapsto(\sigma\actn a, \sigma \actn b)$ to the second summation in every line and factor out the resulting $a \neq b$ terms.
\begin{equation}
[\gen_{ij},\ep]f(\xx) = \frac{1}{|\symP|}\sum_{\sigma\in\symP}\sum_{a \neq b \in [n]}\left( \begin{aligned}
\frac{n_j(\xx)+1}{n_i(\xx)-1} & \left( f(\sigma\actn(m_{ab}^{ij}\xx)) - f(\sigma\actn(m_{\sigma(a)\sigma(b)}^{ij}\xx))\right) \\
+ \frac{n_i(\xx)+1}{n_j(\xx)-1} & \left( f(\sigma\actn(m_{ab}^{ji}\xx)) - f(\sigma\actn(m_{ab}^{ji}\xx)) \right) \\
- 2 & \left( f(\sigma\actn(s_{ab}^{ij}\xx)) - f(\sigma\actn(s_{ab}^{ij}\xx)) \right)
\end{aligned} \right)
\end{equation}
Every summand vanishes at this point according to the identities from \eqref{eq:localcommute}.
\end{proof}

\begin{lemma}[$\pP$-expectation projection]\label{lem:Pexp_projection}
For any refinement $\pP\leq\pl$, the $\pP$-conditional expectation operator $\ep$ is an orthogonal projection on $L^2(\Lyl)$.
\end{lemma}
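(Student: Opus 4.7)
The plan is to verify the three defining properties of an orthogonal projection: that $\ep$ leaves $L^2(\Lyl)$ invariant, that $\ep^2 = \ep$, and that $\ep$ is self-adjoint with respect to $\ipyl{\cdot,\cdot}$. The refinement hypothesis $\pP \leq \pl$ will be essential only for the first step; the other two facts are really statements about group averaging on any $\symP$-invariant measure space.

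First, I will check that $\ep$ sends $L^2(\Lyl)$ into itself. Since $\pP \leq \pl$, we have $\symP \subseteq \symPl$, so for every $\xx \in \Lyl$ and every $\sigma \in \symP$, the first item of Lemma~\ref{lem:Sn_compat} gives $\sigma \actn \xx \in \Lyl$. Hence the averaging in the definition of $\ep$ stays in $\Lyl$.

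Second, I will verify idempotence by the standard group-averaging computation: writing
\begin{equation}
\ep^2 f(\xx) = \frac{1}{|\symP|^2} \sum_{\sigma,\tau \in \symP} f(\tau \sigma \actn \xx),
\end{equation}
and using that $\symP$ is a subgroup (so that for fixed $\sigma$, $\tau \mapsto \tau\sigma$ is a bijection of $\symP$), the inner sum equals $|\symP|\,\ep f(\xx)$, yielding $\ep^2 = \ep$.

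Third, I will check self-adjointness. Starting from
\begin{equation}
\ipyl{\ep f, g} = \frac{1}{|\symP|} \sum_{\sigma \in \symP} \sum_{\xx \in \Lyl} \pi(\xx) f(\sigma \actn \xx) g(\xx),
\end{equation}
I will change variables $\yy = \sigma \actn \xx$. This is a bijection of $\Lyl$ by the first step, and the weight $\pi$ is invariant under the $S_n$ action since $\pi(\xx) = \prod_i (n_i(\xx)!!)^2$ depends only on the particle numbers $n_i(\xx)$, which are preserved by relabeling. Replacing $\sigma$ by $\sigma^{-1}$ in the outer sum (another bijection of $\symP$) then produces $\ipyl{f, \ep g}$. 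Combined with the previous two properties, this shows $\ep$ is an orthogonal projection. None of the steps present any real obstacle; the only point to be a little careful about is keeping track of where $\pP \leq \pl$ is used, namely exclusively in the invariance of $\Lyl$ under $\symP$.
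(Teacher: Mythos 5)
Your proof is correct and follows essentially the same route as the paper's: invariance of $L^2(\Lyl)$ via $\symP\subseteq\symPl$ and Lemma~\ref{lem:Sn_compat}, idempotence by the standard group-averaging computation, and self-adjointness from $\pi$-invariance under the $S_n$ action. The only cosmetic difference is that you prove symmetry by a change of variables in the bilinear form, whereas the paper computes the matrix entries $\ipn{\delta_\xx,\ep\delta_\yy}$ explicitly and observes they are symmetric; both rest on the same facts.
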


\begin{proof}
Firstly, $\ep : L^2(\Lyl) \rightarrow L^2(\Lyl)$ is well-defined by the first item of Lemma \ref{lem:Sn_compat}.
Secondly, we show that $\ep$ is symmetric.  For any $\xx,\yy\in\Ln$,
\begin{equation}
\ipn{\delta_x, \ep \delta_y} = \frac{1}{|\symP|}\sum_{\sigma\in\symP} \delta_\yy(\sigma\actn\xx) = \one{y\in\symP\actn\xx} \frac{|\Stab_{\symP}(\xx)|}{\pi(\xx)}.
\end{equation}
If $\yy \in \symP\actn\xx$, then $\Stab_{\symP}(\xx)=\Stab_{\symP}(\yy)$ and $\pi(\xx)=\pi(\yy)$.  Otherwise, the above expression vanishes.  Regardless, the expression is symmetric in $\xx$ and $\yy$.  Thirdly, $(\ep)^2=\ep$ because
\begin{equation}
(\ep)^2f(\xx) = \frac{1}{|\symP|^2} \sum_{\sigma,\sigma'\in\symP} f(\sigma\sigma'\actn\xx) = \frac{1}{|\symP|}\sum_{\sigma\in\symP} f(\sigma\actn\xx) = \ep f(\xx)
\end{equation}
for any $f\in L^2(\Ln)$ and $\xx \in \Ln$ by counting the number of times $f(\sigma\actn\xx)$ appears in the first sum for every $\sigma\in\symP$.
\end{proof}

\subsection{Nash Inequality}
The Poincar\'e inequality shows us that the ground state energy is bounded by the side length of the configuration space.  In particular, on a smaller configuration space the ground state energy is lower so the bound of the $L^2$ norm by ground state energy times total energy becomes sharper.  To apply this to our setting, rather than simply scaling the configuration space and using a weak ground state energy, the configuration space is dissected into many small neighbohoods.  The $L^2$ deviation from global equilibrium is now controlled by the energy contributions from each neighborhood combined with a dissection cost proportional to the $L^1$ deviation between local and global equilibrium.  As the dissection becomes finer, the local energy bounds become sharper, although the dissection cost grows rapidly.  Optimizing over the coarseness of the dissection produces the following relation between $L^1$ deviation, $L^2$ deviation, and the Dirichlet form -- the Nash inequality.

\begin{proposition}\label{prop:nash}
There exists a constant $\nconst = \nconst(n) > 0$ depending only on $n$ such that if $\gen_s$ satisfies Assumption \ref{ass:heavytail}, then
\begin{equation}
\upsilon\|f-\kproj f\|_2^{2+\frac{4}{n}} \leq \nconst \dir_s(f) \|f\|_1^{\frac{4}{n}}
\end{equation}
uniformly over all $f \in L^1(\Ln)$.  Here $\kproj$ is the global kernel projection operator defined in Lemma \ref{lem:kernel}.
\end{proposition}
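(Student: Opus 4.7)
The plan mirrors the Nash-from-Poincar\'e scheme sketched for the toy model in the introduction: cover $\Ln$ by local $\ell$-neighborhoods, use Proposition~\ref{prop:poincare} on each piece to bound deviation from a local mean, control the local means by an $L^\infty/L^1$ estimate, and optimize over $\ell$.

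I first reduce to the case $\kproj f = 0$: the form $\dir_s$ is unchanged by subtracting $\kproj f$ since the global kernel lies in every $\ker(\gen_{ij})$, while Corollary~\ref{cor:kproj_bound} combined with $\|\kproj f\|_1 \leq \pi(\Ln)\|\kproj f\|_\infty$ shows $\|\kproj f\|_1 \leq C_n\|f\|_1$, so the subtraction costs only a constant factor in $\|f\|_1$. Assuming $\kproj f = 0$, the goal becomes $\upsilon\|f\|_2^{2+4/n} \leq \nconst\dir_s(f)\|f\|_1^{4/n}$.

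For an integer scale $\ell$ to be chosen, build a cover $\pQ \subset \Ln$ (an $\ell$-sublattice respecting the even-parity constraint) with: $\bigcup_{\yy\in\pQ}\Lyl = \Ln$; every $\xx$ lies in at most $C_n$ of the $\Lyl$'s; each edge of $\gen_s$ is captured by at most $C_n$ of the $\dyl$'s, so that $\sum_{\yy\in\pQ}\dyl(f) \leq C_n\dir_s(f)$; and $\pi(\Lyl) \asymp \ell^{n/2}$. Define $F(\xx) = m_\xx^{-1}\sum_{\yy\in\pQ : \xx\in\Lyl} \pyl f(\xx)$ with $m_\xx$ the point multiplicity. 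Pointwise Cauchy--Schwarz together with Proposition~\ref{prop:poincare} yields $\|f-F\|_2^2 \leq C_n\pconst\,\ell\,\upsilon^{-1}\dir_s(f)$. To handle $F$ itself, I verify that the normalizing denominator $\sum_{\zz\in\Lyl}\pi(\zz)\one{\pz\leq\px}$ in $\pyl f(\xx)$ satisfies a uniform lower bound of order $\ell^{n/2}$ (the extremal case being $\px$ a perfect matching, where admissible $\zz$ parametrize independent motion of the $n/2$ pairs over coordinate ranges of size $\ell$), giving $\|F\|_\infty \leq C_n\ell^{-n/2}\|f\|_1$; reversing the order of summation with $\pi(\Lyl) \asymp \ell^{n/2}$ and bounded multiplicity yields $\|F\|_1 \leq C_n\|f\|_1$, hence $\|F\|_2^2 \leq \|F\|_\infty\|F\|_1 \leq C_n\ell^{-n/2}\|f\|_1^2$.

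Combining via $\|f\|_2^2 \leq 2\|f-F\|_2^2 + 2\|F\|_2^2$ gives
\begin{equation}
\|f\|_2^2 \leq \frac{C_n\ell}{\upsilon}\dir_s(f) + \frac{C_n}{\ell^{n/2}}\|f\|_1^2,
\end{equation}
and selecting the integer $\ell$ nearest to $\bigl(\upsilon\|f\|_1^2/\dir_s(f)\bigr)^{2/(n+2)}$ balances the two terms to produce, after raising to the power $(n+2)/n$, the Nash inequality with $\nconst$ depending only on $n$. The extremal cases $\ell < 1$ or $\ell$ exceeding the diameter of $\Ln$ are handled by the trivial bound $\|f\|_2 \leq \|f\|_1^{1/2}\|f\|_\infty^{1/2}$ and by applying Proposition~\ref{prop:poincare} on a single global neighborhood, respectively. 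The principal technical obstacle is establishing the uniform $\ell^{n/2}$ lower bound on the denominator of $\pyl f$, since $\pyl$ averages only over $\zz$ with $\pz \leq \px$ and the resulting count depends on the fine part-structure of $\px$; a secondary combinatorial task is constructing $\pQ$ so that the point multiplicity, edge multiplicity, and measure comparison $\pi(\Lyl) \asymp \ell^{n/2}$ hold simultaneously subject to the even-parity constraint defining $\Ln$.
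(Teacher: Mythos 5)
Your decomposition is essentially the paper's: an $\ell$-net of local neighborhoods, the Poincar\'e inequality on each piece, an $\ell^{-n/2}\|f\|_1^2$ bound on the local averages via the stratum count $\pi(\LylPx)\asymp\ell^{n/2}$, and optimization at $\ell_0=(\upsilon\|f\|_1^2/\dir_s(f))^{1/(1+n/2)}$; your preliminary reduction to $\kproj f=0$ is a harmless variant of the paper's direct bound on the $\kproj f$ contribution via Corollary \ref{cor:kproj_bound}. The two technical obstacles you flag (the uniform $\ell^{n/2}$ lower bound on the denominator of $\pyl f$ and the multiplicity properties of the net) are real but are exactly what the paper establishes, so they are surmountable.

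The genuine gap is the extremal case $\ell_0>N$, i.e.\ $\dir_s(f)\le\upsilon\|f\|_1^2N^{-1-n/2}$. You cannot settle it by ``applying Proposition \ref{prop:poincare} on a single global neighborhood,'' because the global Poincar\'e inequality controls $\|f-\proj f\|_2$, where $\proj=\pyl$ at scale $\ell=N$ is the makeshift local projection, not the orthogonal kernel projection $\kproj$. After your reduction to $\kproj f=0$ you must still show that $\|\proj f\|_2=\|\proj(1-\kproj)f\|_2$ is negligible compared with $\|f\|_2$; since $\proj$ is not self-adjoint and its range (spanned by the indicators of all strata) is strictly larger than $\ker(1-\kproj)$, this is not automatic. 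Nor can you fall back on your two-term estimate with $\ell=N$: in this regime $N^{-n/2}\|f\|_1^2$ dominates $\frac{N}{\upsilon}\dir_s(f)$, and the resulting bound $\|f\|_2^2\lesssim N^{-n/2}\|f\|_1^2$, raised to the power $1+2/n$, gives $N^{-n/2-1}\|f\|_1^{2}\|f\|_1^{4/n}$, which exceeds $\dir_s(f)\|f\|_1^{4/n}/\upsilon$ precisely when $\dir_s(f)<\upsilon\|f\|_1^2N^{-1-n/2}$. The missing ingredient is the operator-norm bound $\|\proj(1-\kproj)\|_{2,2}\le C/\sqrt N$ (the paper's Lemma \ref{lem:pkp}), proved by expanding $\proj$ in the stratum indicator functions, noting that the minimal (perfect-matching) strata contribute only through the $\chi_\sigma$, which $1-\kproj$ annihilates, and counting the measure $\|\psi_\pQ\|_2^2\lesssim N^{n/2-1}$ of the non-minimal strata. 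With that bound the term $\|\proj f\|_2^2\le CN^{-1}\|f\|_2^2$ is absorbed into the left-hand side and the global Poincar\'e alone closes the case. You need to supply this lemma or an equivalent spectral-gap statement for $-\gen_s$ on the orthogonal complement of the kernel.
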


\begin{proof}
For each length scale $1 \leq \ell \leq N$, consider the net of distinguishable particle configuraions supported on the $\ell\ZZ$-lattice, $\net = \{\yy\in\Ln | y_a \in \ell[N/\ell] \mbox{ for all }a\in[n]\}\subset[N]$ where the site of any particle $y_a, a\in[n]$ is divisible by $\ell$.  The net $\net$ is uniformly distributed in $\Ln$ in the sense that there exists a constant $c_1>0$ depending only $n$ such that
\begin{equation}\label{eq:goodnet}
1 \leq |\{\yy \in \net | \xx \in \Lyl \}| \leq c_1
\end{equation}
for all $\xx \in \Ln$ and all $1 \leq \ell \leq N$.  
Let $\pyl$ be the projection operator defined in Definition \ref{defn:fakeproj}.  Then
\begin{equation}\label{eq:nashdecomp}
\|f-\kproj f\|_2^2 \leq 2 c_1 \sum_{\yy \in \net} \left(\sum_{\xx\in\Lyl} \pi(\xx) |f(\xx) - \pyl f(\xx)|^2 + \sum_{\xx\in\Lyl}\pi(\xx)|\pyl f(\xx) - \kproj f(\xx)|^2 \right)
\end{equation}
by the net property \eqref{eq:goodnet} and Schwarz.  
The Poincar\'e lemma is used to control the left sum in \eqref{eq:nashdecomp} by the Dirichlet form.  By Proposition \ref{prop:poincare}, for all $\yy$ and $\ell$,
\begin{equation}\label{eq:poincare_in_nash}
\sum_{\xx\in\Lyl} \pi(\xx)|f(\xx)-\pyl f(\xx)|^2 \leq \pconst\frac{\ell}{\upsilon} \dyl(s;f) = \pconst\frac{\ell}{\upsilon} \sum_{i \eql j} c_{ij}(s) \sum_{\xx\in\Lyl/\eqij} \ipij{f, (-\gen_{ij}) f}
\end{equation}
where in the second sum $\xx$ is taken over representatives of equivalence classes in $\Lyl/\eqij$.
As each triple $(i,j,\xx) \in [N]^2\Ln$ satisfies $i \eql j$ and $\xx \in \Lyl$ for at most $c_1$ configurations $\yy\in\net$, the $\yy$ sum is bounded by the Dirichlet form as follows.
\begin{multline}\label{eq:nash_dir}
\sum_{\yy \in \net} \sum_{\xx\in\Lyl} \pi(\xx) |f(\xx) - \pyl f(\xx)|^2 \leq \pconst\frac{\ell}{\upsilon} \sum_{\yy\in\net} \sum_{i < j\in[N]} \sum_{\xx\in\Lyl/\eqij} \one{i \eql j} \one{\xx\in\Lyl} c_{ij}(s) \ipij{f, (-\gen_{ij}) f} \\
\leq c_1 \pconst\frac{\ell}{\upsilon} \sum_{i<j\in[N]} c_{ij}(s) \ipn{f, (-\gen_{ij}) f} = c_1\pconst\frac{\ell}{\upsilon} \dir_s(f)
\end{multline}

Schwarz is used to control the right sum in \eqref{eq:nashdecomp} by the $L^1$ norm.
\begin{equation}\label{eq:nash_localglobal}
\sum_{\xx\in\Lyl} \pi(\xx) |\pyl f(\xx) - \kproj f(\xx)|^2 \leq 2\sum_{\xx\in\Lyl} \pi(\xx) |\pyl f(\xx)|^2 + 2\sum_{\xx\in\Lyl} \pi(\xx) |\kproj f(\xx)|^2
\end{equation}
For the local projection operator, we use the neighborhood following counts.  There exists a constant $c_2>1$ depending only on $n$ such that
\begin{equation}\label{eq:strata_count}
\frac{1}{c_2}\ell^{n/2} \leq \pi(\LylP) \leq \pi(\Lyl) \leq c_2 \ell^{n/2}
\end{equation}
for all refinements $\pP\leq\pl$ uniformly over all $\yy\in\net$ and $1 \leq \ell \leq N$.
This count is used in conjuction with the local projection representation from \eqref{eq:local_proj_px}.  For all $\yy\in\net$ and $1\leq\ell\leq N$,
\begin{multline}\label{eq:nash_local_1}
\sum_{\xx\in\Lyl} \pi(\xx) |\pyl f(\xx)|^2 = \sum_{\xx\in\Lyl} \pi(\xx) \left|\pi(\LylPx)^{-1}\sum_{\zz\in\LylPx}\pi(\zz)f(\zz)\right|^2 \\
\leq c_2^2 \ell^{-n} \sum_{\xx\in\Lyl} \pi(\xx) \left|\sum_{\zz\in\LylPx}\pi(\zz) f(\zz)\right|^2 
\end{multline}
where the inequality uses \eqref{eq:strata_count} to factor out the normalizing coefficient.  The next step is to ensure all inner summands are positive so that the inner sum may be extended to the larger local neighborhood. For all $\xx\in\Lyl$,
\begin{equation}\label{eq:nash_local_2}
\left|\sum_{\zz\in\LylPx}\pi(\zz) f(\zz)\right|^2 
\leq \left(\sum_{\zz\in\LylPx}\pi(\zz)|f(\zz)|\right)^2 
\leq \left(\sum_{\zz\in\Lyl}\pi(\zz)|f(\zz)|\right)^2
\end{equation}
where the first inequality is the triangle inequality and the second inequality is $\LylPx\subset\Lyl$.  Note that this last expression is independent of the configuration $\xx$.  Therefore, the full sum can be compared to the total weight of $\Lyl$ which we have a bound on.  For all $\yy\in\net$ and $1 \leq \ell \leq N$, 
\begin{equation}
\sum_{\xx\in\Lyl} \pi(\xx) |\pyl f(\xx)|^2 \leq c_2^2\ell^{-n}\pi(\Lyl)\left( \sum_{z\in\Lyl} \pi(\zz)|f(\zz)|\right)^2 \leq c_2^3 \ell^{-n/2} \left( \sum_{z\in\Lyl} \pi(\zz)|f(\zz)|\right)^2
\end{equation}
The first inequality is the conclusion from combining \eqref{eq:nash_local_1} and \eqref{eq:nash_local_2}.  The second inequality is another application of \eqref{eq:strata_count}.  Finally, by summing over $\yy\in\net$,
\begin{equation}
\sum_{\yy\in\net}\sum_{\xx\in\Lyl}\pi(\xx)|\pyl f(\xx)|^2 \leq \sum_{\yy\in\net}c_2^3 \ell^{-n/2} \left( \sum_{\zz\in\Lyl} \pi(\zz)|f(\zz)|\right)^2
\leq c_2^3\ell^{-n/2}\left(\sum_{\yy\in\net} \sum_{\zz\in\Ln} \pi(\zz)|f(\zz)|^2\right)^2
\end{equation}
by bringing the $\yy\in\net$ summation inside the square.  An application of \eqref{eq:goodnet} gives
\begin{equation}\label{eq:nash_local}
\sum_{\yy\in\net}\sum_{\xx\in\Lyl}\pi(\xx)|\pyl f(\xx)|^2 \leq c_1^2c_2^3\ell^{-n/2}\|f\|_1^2
\end{equation}
for all $1 \leq \ell \leq N$ by our choice of net.

We now want a similar $L^1$ bound on the global kernel sum in \eqref{eq:nash_localglobal}.  For all $\yy\in\net$ and $1 \leq \ell \leq N$,
\begin{equation}
\sum_{\xx\in\Lyl} \pi(\xx) |\kproj f(\xx)|^2 \leq \pi(\Lyl) \|\kproj f\|_\infty^2 \leq \sgconst^n n^{n/2} c_2\ell^{n/2} N^{-n} \|f\|_1^2
\end{equation}
by the $L^1\rightarrow L^\infty$ bound in Corollary \ref{cor:kproj_bound} and stratum count in \eqref{eq:strata_count} where $\sgconst$ is the universal constant introduced in Lemma \ref{lem:subgaussian}.  As this bound is independent of $\yy\in\net$, we may use the fact that the size of the net is bounded 
\begin{equation}
|\net| \leq n!! \lfloor N/\ell \rfloor^{n/2}
\end{equation}
to take the sum over $\yy\in\net$ and get that for all $1 \leq \ell \leq N$,
\begin{equation}\label{eq:nash_global}
\sum_{\yy\in\net}\sum_{\xx\in\Lyl} \pi(\xx)|\kproj f(\xx)|^2 \leq |\net| \sgconst^n n^{n/2} c_2\ell^{n/2}N^{-n}\|f\|_1^2 \leq n!!\sgconst^n n^{n/2} c_2 N^{-n/2}\|f\|_1^2.
\end{equation}

Letting $C = \max\{c_1\pconst, c_1^2c_2^3, n!!c_2\sgconst^n n^{n/2}\}$ and combining equations \eqref{eq:nashdecomp}, \eqref{eq:nash_dir}, \eqref{eq:nash_localglobal}, \eqref{eq:nash_local}, and \eqref{eq:nash_global} gives
\begin{equation}\label{eq:nash_length}
\|f-\kproj f\|_2^2 \leq C \frac{\ell}{\upsilon}\dir_s(f) + C(\ell^{-n/2} + N^{-n/2}) \|f\|_1^2.
\end{equation}
for any choice of $1 \leq \ell \leq N$.
Consider the quantity which we refer to as the \emph{virtual length scale} given by
\begin{equation}\label{eq:virtual_length}
\ell_0 = \left( \frac{\upsilon\|f\|_1^2}{\dir_s(f)} \right)^{\frac{1}{1+n/2}}
\end{equation}
If the virtual length scale satisfies $1 \leq \ell_0 \leq N$, then set $\ell = \ell_0$ and observe that \eqref{eq:nash_length} reduces to the bound
\begin{equation}
\|f-\kproj f\|_2^2 \leq 3C \nu^{\frac{-n/2}{1+n/2}} \dir_s(f)^{\frac{1}{1+2/n}} \|f\|_1^{\frac{2}{1+n/2}}
\end{equation}
which after rearranging exponents and moving $\nu$ to the left hand side becomes
\begin{equation}\label{eq:virtual_good}
\nu \|f-\kproj f\|_2^{2+4/n} \leq (3C)^{1+2/n} \dir_s(f) \|f\|_1^{4/n}
\end{equation}
which is the desired bound for the Nash inequality with constant $\nconst=(3C)^{1+2/n}$.

Next, consider the case $\ell_0 < 1$, or equivalently $\upsilon\|f\|_1^2 < \dir_s(f)$ by \eqref{eq:virtual_length}.  A property of configuration space $\Ln$ is that for any $f\in L^1(\Ln)$
\begin{equation}
\|f-\kproj f\|_2^2 \leq \|f\|_2^2 \leq \|f\|_1^2
\end{equation}
since $\pi(\xx) \geq 1$ for all $\xx\in\Ln$.  Therefore,
\begin{equation}\label{eq:virtual_low}
\|f-\kproj f\|_2^{2+4/n} \leq \|f\|_1^{2+4/n} \leq \frac{\dir_s(f)}{\upsilon}\|f\|_1^{4/n}.
\end{equation}
so the Nash inequality holds with constant $\nconst=1$.

Lastly, consider the case where $\ell_0 > N$, or equivalently $\dir_s(f) \leq \upsilon \|f\|_1^2 N^{-1-n/2}$.  This case will require seemingly more subtle analysis due to the geometric complexity of configuration space but essentially boils down to a volume bound and another application of the Poincar\'e inequality.  For the remainder, set $\ell=N$.  Note that for this choice of length scale, $|\net|=1$ and for $\yy\in\net$, all pairs of sites are local $i \eql j$ for all $i,j\in[N]$.  In this trivial global setting, denote $\proj=\pyl$.  Then
\begin{equation}
\|f-\kproj f\|_2^2 \leq 2 \|(1-\kproj)f - \proj(1-\kproj)f\|_2^2 + 2\|\proj(1-\kproj)f\|_2^2
\end{equation}
by the Schwarz inequality.  The latter term is bounded by the $L^2$-norm although the techniques used in the argument are disjoint from the rest of the Nash inequality, so the proof will be postponed to the following result.  By Lemma \ref{lem:pkp},
\begin{equation}
\|\proj(1-\kproj)f\|_2^2 = \|\proj(1-\kproj)^2f\|_2^2 \leq \frac{C}{N} \|f-\kproj f\|_2^2
\end{equation}
where the first equality is from $1-\kproj$ being an orthogonal projection and the constant $C>0$ comes from Lemma \ref{lem:pkp} and depends only on $n$.
The previous two equations imply
\begin{equation}\label{eq:virtual_hi_ppp}
\|f-\kproj f\|_2^2 \leq 3\|(1-\kproj)f- \proj(1-\kproj)f\|_2^2
\end{equation}
Now, the Poincar\'e inequality applied to the test function $(1-\kproj)f$ tells us that
\begin{equation}\label{eq:virtual_hi_dir}
\|(1-\kproj)f- \proj(1-\kproj)f\|_2^2 \leq \pconst\frac{N}{\upsilon}\dyl(s;(1-\kproj)f) = \pconst\frac{N}{\upsilon}\dir_s(f)
\end{equation}
by Proposition \ref{prop:poincare} where the last equality comes from the fact that $\dyl(s;\cdot) = \dir_s(\cdot)$ for our global choice of $\yy$ and $\ell$ and that $\dir_s(f-\kproj f) = \dir_s(f)$ since $\kproj f \in \ker(\gen_s)$.  In particular, 
\begin{equation}\label{eq:virtual_hi}
\|f-\kproj f\|_2^{2+4/n} 
\leq \left(3\pconst \frac{N}{\upsilon}\dir_s(f)\right)^{1+2/n} 
\leq \left(3\pconst \frac{N}{\upsilon}\right)^{1+2/n} \dir_s(f) (\upsilon\|f\|_1^2N^{-1-n/2})^{2/n} = (3\pconst)^{1+2/n}\frac{\dir_s(f)}{\upsilon}\|f\|_2^{4/n}
\end{equation}
where the first inequality combines \eqref{eq:virtual_hi_ppp} and \eqref{eq:virtual_hi_dir} and the second inequality uses and \eqref{eq:virtual_length}.  This is the Nash inequality with constant $\nconst=(3\pconst)^{1+2/n}$.

Equations \eqref{eq:virtual_good}, \eqref{eq:virtual_hi}, and \eqref{eq:virtual_low} conclude the proof of the Nash inequality with constant $\nconst=\max\{(3C)^{1+2/n}, 1, (3\pconst)^{1+2/n}\}$.
\end{proof}

\begin{remark}
Much of the intuition behind the Nash inequality can be extracted from \eqref{eq:nash_length} by observing which $\ell$ optimizes the expression on the right hand side.  The heuristic hinted at is that a large Dirichlet form to $L^1$-norm ratio suggests obstructions to local equilibrium persist at small length scales.  On the other hand, a small Dirichlet form to $L^1$-norm ratio suggests local obstructions to equilibrium are insignificantly compared to the global obstruction to equilibrium.  One way to think about the optimization problem is on which length scale is cost-benefit ratio optimized for making corrections towards equilibrium.

A representative 
example for the small length scale regime is when the test function is $f=\delta_\xx$ where $x_a=N/2$ for all $a\in[n]$.  In this regime, $\dir_s(f) \gg \|f\|_1^2$.  A representative example for the large length scale regime is $f(\xx) = \sum_{a\in[n]} x_a/N$.  In this regime, $\dir_s(f) \ll \|f\|_1^2$.
\end{remark}

\begin{lemma}\label{lem:pkp}
The $L^2 \rightarrow L^2$ operator norm of $\proj(1-\kproj)$ satisfies
\begin{equation}
\|\proj(1-\kproj)\|_{2,2} \leq \frac{C}{\sqrt{N}}
\end{equation}
for some constant $C > 0$ depending only on $n$.  Recall that $\proj = \pyl$ with $\ell=N$ and $\yy\in\Ln$ arbitrary as defined in Definition \ref{defn:fakeproj} and $\kproj$ is the orthogonal projection onto the global kernel defined in Lemma \ref{lem:kernel}.
\end{lemma}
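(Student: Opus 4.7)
The proof proceeds by exploiting the $S_N$-action on $\Ln$ by site permutations (Definition \ref{defn:actN}) to reduce to a finite-dimensional combinatorial estimate. Throughout, write $\pi_\pP$ for the common value of $\pi(\xx)$ on configurations with $\px = \pP$; this is well-defined since $\pi(\xx)$ depends only on site multiplicities.

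Introduce the site-symmetrization operator $\Flat f(\xx) := \frac{1}{N!}\sum_{\tau \in S_N}f(\tau \actN \xx)$, the orthogonal projection onto the $S_N$-invariant subspace of $L^2(\Ln)$ (since $\pi$ is $S_N$-invariant). By Corollary \ref{cor:spatial_kernel} kernel elements are $S_N$-invariant, so $\Flat \kproj = \kproj \Flat = \kproj$; and since $\proj f(\xx)$ depends on $f$ only through averages over the $S_N$-orbits $\{\zz : \pz = \pQ\}$, also $\proj = \proj \Flat$. Hence $\proj(1-\kproj) = \proj(\Flat - \kproj)$, and it suffices to bound $\proj$ on the finite-dimensional subspace $V$ of $S_N$-invariant functions orthogonal to the kernel.

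For $v \in V$, rewrite $\proj v(\xx) = \ipn{\one{\Lyl(\px)}, v}/W(\px)$ with $W(\pP) := \sum_{\zz \in \Lyl(\pP)}\pi(\zz)$. Since $\kproj v = 0$, Cauchy-Schwarz yields $|\proj v(\xx)|^2 \leq \|v\|_2^2\, \|(1-\kproj)\one{\Lyl(\px)}\|_2^2 / W(\px)^2$. Summing against $\pi(\xx)$ and using the leading-order estimate $W(\pP) \geq c(n)\sqrt{\pi_\pP}\,N^{n/2}$ (the dominant contribution is from the $\sqrt{\pi_\pP}$ matchings $\sigma \leq \pP$, each contributing $\asymp N^{n/2}$ configurations of weight $1$), the problem reduces to the key uniform estimate
\begin{equation*}
\|(1-\kproj)\one{\Lyl(\pP)}\|_2^2 \leq C(n)\,N^{n/2-1}
\end{equation*}
over all even partitions $\pP$ of $[n]$; summing this bound over the $O(1)$-many partitions (in $N$) then yields $\|\proj v\|_2^2 \leq C'(n)\|v\|_2^2/N$, as desired.

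For the key estimate, I exhibit the explicit kernel element $f_\pP := \sum_{\sigma \in M_n,\,\sigma \leq \pP}\chi_\sigma$, which lies in the kernel by Lemma \ref{lem:globalker}. Using the identity that $\sqrt{\pi_\pR}$ counts the perfect matchings refining $\pR$ for $\pR$ with all even parts (and zero otherwise), direct computation gives $f_\pP(\zz) = \sqrt{\pi_{\pP\wedge\pz}/\pi_\pz}$ when the partition meet $\pP \wedge \pz$ has all even parts, and $0$ otherwise. For $\pz \leq \pP$ the meet equals $\pz$, so $f_\pP(\zz) = 1 = \one{\Lyl(\pP)}(\zz)$ and these configurations contribute nothing to $\|\one{\Lyl(\pP)} - f_\pP\|_2^2$. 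On the remaining configurations $\pz \not\leq \pP$, the critical combinatorial observation is: if $\pz$ is itself a matching then $\pz \not\leq \pP$ forces some $\pz$-pair to split across distinct $\pP$-parts, producing singleton (odd) parts in $\pP \wedge \pz$ and hence $f_\pP(\zz) = 0 = \one{\Lyl(\pP)}(\zz)$. Thus only $\pz$ with $|\pz| \leq n/2 - 1$ contribute, each contribution is bounded by $\pi_{\pP \wedge \pz}\,N^{\underline{|\pz|}} \leq C(n)\,N^{n/2-1}$, and the proof concludes by summing over the $O(1)$-many relevant partitions. This cancellation at matchings is the main technical obstacle: without it, matching $\pz$ would contribute at order $N^{n/2}$, yielding only the trivial $O(1)$ operator-norm bound rather than the required $O(N^{-1/2})$.
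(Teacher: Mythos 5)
Your proof is correct and follows essentially the same route as the paper's: both subtract the kernel component $\sum_{\sigma\in M_n\cap\symP}\chi_\sigma$ from the stratum-closure indicator $\zeta_\pP=\sum_{\pQ\leq\pP}\one{\pP_{(\cdot)}=\pQ}$ and observe that the remainder is supported on non-minimal strata of volume $O(N^{n/2-1})$, which against $\|\zeta_\pP\|_2^2\asymp N^{n/2}$ yields the $N^{-1}$ gain. The differences are cosmetic: you compute the coefficients $f_\pP(\zz)=\sqrt{\pi_{\pP\wedge\pz}/\pi_{\pz}}$ explicitly and verify the cancellation on matchings directly where the paper merely asserts a bounded change of basis, and your preliminary reduction via the site-symmetrization $\Flat$ is harmless but unnecessary since the rest of your argument only uses $\kproj v=0$.
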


\begin{proof}
For each partition $\pP$, let $\psi_\pP, \zeta_\pP : \Ln \rightarrow \RR$ be the indicator functions that a position partition either coincides with $\pP$ or is a refinement of $\pP$, respectively.  That is,
\begin{equation}
\psi_\pP(\xx) = \one{\px=\pP} \quad \mbox{and} \quad \zeta_\pP = \sum_{\pQ\leq\pP} \psi_{\pQ}
\end{equation}
for all partitions $\pP$ of $[n]$.  Then we may conveniently write the makeshift projection as
\begin{equation}\label{eq:proj_repn}
\proj g = \sum_{\pP} \frac{\psi_\pP \ipn{\zeta_\pP, g}}{\|\zeta_\pP\|_2^2}
\end{equation}
for every $g\in L^2(\Ln)$ where the summation is taken over all partitions $\pP$ of $[n]$.  The $\zeta_\pP$ can be written in terms of a different basis where all $\psi_\pQ$ with $\pQ$ minimal in the lattice of even partitions of $[n]$ are replaced by $\chi_\sigma$, $\sigma \in M_n$.  In this representation
\begin{equation}
\zeta_\pP = \sum_{\sigma\in\symP\cap M_n} \chi_\sigma + \sum_{\pQ\not\in\min} a_{\pP\pQ}\psi_\pQ
\end{equation}
for some constants $-1 < a_{\pP\pQ} \leq 1$.  The second summation is over all partitions $\pQ$ which are not minimal.  In other words, $\pQ$ must have at most $n/2-1$ parts.  This representation is convenient because 
\begin{equation}\label{eq:kproj_orthog}
\ipn{\chi_\sigma, (1-\kproj)f} = \ipn{(1-\kproj)\chi_\sigma, f} = \ipn{0,f}=0
\end{equation}
by Lemma \ref{lem:globalker} and the orthogonality of $\kproj$.  Therefore, by \eqref{eq:proj_repn} and \eqref{eq:kproj_orthog}
\begin{equation}
\proj(1-\kproj)f = \sum_{\pP} \|\zeta_\pP\|_2^{-2} \psi_\pP \sum_{\pQ\not\in\min}a_{\pP\pQ}\ipn{\psi_\pQ, (1-\kproj)f}
\end{equation}
Now take $L^2$ norms and apply Cauchy-Schwarz
\begin{equation}
\|\proj(1-\kproj)f\|_2^2 \leq \sum_{\pP}\sum_{\pQ\not\in\min} \frac{\|\psi_\pP\|_2^2\|\psi_\pQ\|_2^2}{\|\zeta_\pP\|_2^4}\|(1-\kproj)f\|_2^2
\end{equation}
using the orthogonality of $\{\psi_\pP\}_\pP$.
By \eqref{eq:strata_count}, $\|\zeta_\pP\|_2^2 \geq c_2^{-1} N^{-n/2}$ and $\psi_\pP \leq c_2 N^{-n/2}$.  Counting configurations in the lower dimensional intersections of strata, there exists a constant $c_3>0$ depending only on $n$ such that
\begin{equation}
\|\psi_\pQ\|_2^2 \leq c_3 N^{n/2-1}
\end{equation}
for all non-minimal even partitions $\pQ$.  Using the naive $n^n$ bound on the number of partitions of $[n]$ gives us the final inequality
\begin{equation}
\|\proj(1-\kproj)f\|_2^2 
\leq n^{2n} c_2^3 c_3 N^{-1} \|(1-\kproj)f\|_2^2
\end{equation}
with constant $C = n^{2n}c_2^3 c_3$.
\end{proof}

\begin{remark}
We conclude this section with a complete classification of the global kernel.  This result is not needed anywhere but is a simple consequence of the results we have amassed thus far so it is included for completeness.
\end{remark}

\begin{corollary}\label{cor:globalker_complete}
The global kernel is exactly given by the stratum indicators
\begin{equation}
\cap_{i<j\in[N]} \ker(\gen_{ij}) = \vspan(\{\chi_\sigma | \sigma\in M_n\})
\end{equation}
\end{corollary}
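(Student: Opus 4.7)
The plan is to combine Lemma \ref{lem:globalker} with a dimension count based on the Poincar\'e inequality. The containment $\vspan\{\chi_\sigma \mid \sigma \in M_n\} \subset \cap_{i<j \in [N]} \ker(\gen_{ij})$ is already supplied by Lemma \ref{lem:globalker}, and I first note that the stratum indicators are linearly independent: on any $\xx \in \Ln$ whose position partition $\px$ is itself a perfect matching $\pP^\sigma$, exactly one $\chi_{\sigma'}$ is nonzero, namely $\chi_\sigma$. Hence $\dim \vspan\{\chi_\sigma\} = |M_n| = n!!$, and it suffices to prove the matching upper bound $\dim \cap_{i<j} \ker(\gen_{ij}) \leq n!!$.

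To obtain this upper bound, fix $f \in \cap_{i<j} \ker(\gen_{ij})$ and choose any coefficients satisfying Assumption \ref{ass:heavytail}, say $c_{ij}(s) = (i-j)^{-2}$ with $\upsilon = 1$. Since $\gen_{ij} f = 0$ for every pair, the Dirichlet form $\dir_s(f)$ vanishes identically. Next I apply the Poincar\'e inequality (Proposition \ref{prop:poincare}) at the maximal scale $\ell = N$: for any $\yy \in \Ln$ the local neighborhood $\Lyl$ exhausts $\Ln$, so
\begin{equation}
\sum_{\xx \in \Ln} \pi(\xx) |f(\xx) - \proj f(\xx)|^2 \leq \pconst \frac{N}{\upsilon} \dir_s(f) = 0,
\end{equation}
where $\proj$ denotes $\pyl$ at the scale $\ell = N$. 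Thus every element of the kernel is a fixed point of $\proj$.

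The heart of the argument is to bound the dimension of the fixed-point set of $\proj$. I will invoke the representation derived in the proof of Lemma \ref{lem:pkp},
\begin{equation}
\proj g = \sum_{\pP} \frac{\psi_\pP \ipn{\zeta_\pP, g}}{\|\zeta_\pP\|_2^2},
\end{equation}
which shows that $\proj$ maps $L^2(\Ln)$ into $\vspan\{\psi_\pP \mid \pP \text{ an even partition of }[n]\}$. Consequently any fixed point $f$ of $\proj$ takes the form $f(\xx) = b_{\px}$ for some coefficients $\{b_\pP\}$. Plugging this back into $\proj f = f$ and using that both the range of summation $\{\zz : \pz \leq \px\}$ and the weight $\pi(\zz) = \prod_i (n_i(\zz)!!)^2$ depend on $\zz$ only through $\pz$, the equation collapses to the recursion
\begin{equation}
\sum_{\pQ < \pP} N_\pQ \,(b_\pP - b_\pQ) = 0, \qquad N_\pQ := \sum_{\zz \in \Ln : \pz = \pQ} \pi(\zz),
\end{equation}
for each non-minimal even partition $\pP$, while the identity is vacuous at perfect matchings.

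To conclude, I proceed by induction up the lattice of even partitions of $[n]$: the recursion determines $b_\pP$ as the $N_\pQ$-weighted mean of its strict refinements, so $\{b_\pP\}$ is uniquely prescribed by its values on the minimal elements, which are the $n!!$ perfect matchings. Therefore $\dim \{f : \proj f = f\} \leq n!!$, and the chain $\vspan\{\chi_\sigma\} \subseteq \cap_{i<j}\ker(\gen_{ij}) \subseteq \{f : \proj f = f\}$ of subspaces of equal dimension forces all inclusions to be equalities. I do not expect a significant obstacle; the one piece to verify carefully is the reduction from $f = \proj f$ to the recursion on $\{b_\pP\}$, which is a routine book-keeping calculation given the partition-invariance just noted.
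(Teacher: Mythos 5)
Your proposal is correct and follows essentially the same route as the paper: Lemma \ref{lem:globalker} for the lower containment, the Poincar\'e inequality at scale $\ell=N$ to show kernel elements are fixed by $\proj$, the representation of $\proj$ from the proof of Lemma \ref{lem:pkp} to reduce to functions of the form $f(\xx)=b_{\px}$, and the recursion up the lattice of even partitions determining everything from the values at perfect matchings. The only cosmetic difference is that you close with a dimension count whereas the paper explicitly solves the recursion to exhibit $f=\sum_{\pP\in\min}a_\pP\chi_\sigma$; both are fine.
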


\begin{proof}
By Lemma \ref{lem:globalker} and the Poincar\'e inequality, Proposition \ref{prop:poincare}, the following containments hold
\begin{equation}
\{\chi_\sigma | \sigma \in M_n\} \subset \ker(1-\kproj) \subset \ker(1-\proj).
\end{equation}
Using the notation from the proof of Lemma \ref{lem:pkp}, if $f\in\ker(1-\proj)$, then
\begin{equation}
f=\sum_{\pP} \frac{\psi_\pP\ipn{\zeta_\pP,f}}{\|\zeta_\pP\|_2^2}
\end{equation}
and in particular, there exist scalars $\{a_\pP\in\RR|\pP \mbox{ an even partition}\}$ such that $f=\sum_{\pP}a_\pP\psi_\pP$.  Matching $\psi_\pP$ coefficients in the previous equation implies
\begin{equation}
a_\pP = \sum_{\pQ} \frac{\ipn{\zeta_\pP,\psi_\pQ}}{\|\zeta_\pP\|_2^2} a_\pQ
\end{equation}
for every even partition $\pP$ where the sum is taken over every even partition $\pQ$.  In particular, for $\pP\in\min$ minimal, $a_\pP$ may be chose freely independent of each other.  Then all $\pQ \not\in\pP$ are fixed and can be computed recursively starting from the bottom of the partition lattice.  The end result is seen to be $f = \sum_{\pP\in\min} a_\pP \chi_\sigma$ where $\sigma$ is the unique element of $M_n \cap \symP$.  This shows that $\ker(1-\proj) \subset \{\chi_\sigma | \sigma\in M_n\}$ concluding the proof.
\end{proof}

\subsection{Ultracontractivity}
Most of the heavy lifting has been done through the combinatorics leading to the local Poincar\'e inequality and the careful configuration space dissection leading to the Nash inequality.  The Nash inequality left us with a lower bound on the Dirichlet form by its time integral, the $L^2$ norm.  For this last step in the energy method, we integrate the Nash inequality over time to obtain restrictive relations between $L^1$ and $L^2$ norms and time.  Then use formal duality to bounce between function spaces $L^1$, $L^2$, and $L^\infty$.

\begin{proposition}\label{prop:contractive}
There exists a constant $\uconst > 0$ depending only on $n$ such that the following implication holds for all times $0 \leq s_1 < s_2$.  If the coefficient array $\{c_{ij}(s)|i,j\in[N]\}$ satisfies Assumption \ref{ass:heavytail} with rate $\upsilon>0$ for all $s_1 \leq s \leq s_2$, then the corresponding transition semigroup satisfies the ultracontractive bound
\begin{equation}
\|(1-\kproj)\semi(s_1, s_2)\|_{2,\infty} \leq \frac{\uconst}{\upsilon^{n/4} |s_2-s_1|^{n/4}}
\end{equation}
where $\semi$ is the operator defined in \eqref{eq:semigroup}.
\end{proposition}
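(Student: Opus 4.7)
The plan is to follow the classical Carlen--Kusuoka--Stroock route sketched in the introduction around equation~\eqref{eq:intro_integrate_Nash}: integrate the Nash inequality (Proposition~\ref{prop:nash}) to obtain an $L^1 \to L^2$ ultracontractive bound for the kernel-orthogonal part of the semigroup, then transfer this to $L^2 \to L^\infty$ by duality. The main novelty compared with the Markov setting is that our semigroup is not positivity-preserving and hence does not conserve $L^1$, but this is exactly the situation that Lemma~\ref{lem:L1} was set up to handle.

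For the first step, fix $f \in L^1(\Ln)$ and set $v(s) = \semi(s_1,s)(1-\kproj)f$. Because $\kproj$ projects onto the common kernel $\cap_{i<j}\ker(\gen_{ij})$ and each $\gen_{ij}$ is self-adjoint with respect to $\pi$, the orthogonal complement of $\ker(\gen_s)$ is invariant under $\semi$, so $[\kproj,\semi(s_1,s)]=0$ and $\kproj v(s)=0$ throughout. The $L^1$-boundedness of $\semi$ from Lemma~\ref{lem:L1} together with the $L^1 \to L^\infty$ estimate on $\kproj$ from Corollary~\ref{cor:kproj_bound} (which, combined with the bound $\pi(\Ln) \lesssim N^{n/2}$, yields $\|\kproj\|_{1,1} = O(1)$) gives
\begin{equation}
\|v(s)\|_1 \leq C_1 \|f\|_1
\end{equation}
uniformly for $s \in [s_1,s_2]$. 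Writing $u(s) = \|v(s)\|_2^2$ and invoking \eqref{eq:dir_derivative}, we have $\partial_s u = -2\dir_s(v)$; the Nash inequality applied to $v$ then yields
\begin{equation}
\partial_s u(s) \leq -\frac{2\upsilon}{\nconst (C_1\|f\|_1)^{4/n}}\, u(s)^{1+2/n}.
\end{equation}
Setting $w(s) = u(s)^{-2/n}$ converts this to the linear bound $\partial_s w \geq c\upsilon \|f\|_1^{-4/n}$, and integrating from $s_1$ to $s_2$ (dropping the positive initial value $w(s_1)$) gives $u(s_2) \leq C' \|f\|_1^{2}\bigl(\upsilon(s_2-s_1)\bigr)^{-n/2}$. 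This is precisely the $L^1 \to L^2$ contractive estimate
\begin{equation}
\|(1-\kproj)\semi(s_1,s_2)\|_{1,2} \leq \frac{C''}{\upsilon^{n/4}(s_2-s_1)^{n/4}}.
\end{equation}

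To obtain the desired $L^2 \to L^\infty$ bound, I would use the identity $\|A\|_{2,\infty} = \|A^*\|_{1,2}$. Since every $\gen_s$ is $\pi$-self-adjoint (Theorem~\ref{thm:levmf}) and $\kproj$ is an orthogonal projection, the adjoint $\semi(s_1,s_2)^*$ is itself the forward propagator of the time-reversed coefficient path $\tilde c_{ij}(s) = c_{ij}(s_1+s_2-s)$, which satisfies Assumption~\ref{ass:heavytail} with the same rate $\upsilon$. Applying the preceding Nash-integration argument to the reversed dynamics, and using $[\kproj,\semi(s_1,s_2)^*] = 0$, yields the same $L^1 \to L^2$ bound for $(1-\kproj)\semi(s_1,s_2)^*$, and hence
\begin{equation}
\|(1-\kproj)\semi(s_1,s_2)\|_{2,\infty} = \|(1-\kproj)\semi(s_1,s_2)^*\|_{1,2} \leq \frac{\uconst}{\upsilon^{n/4}(s_2-s_1)^{n/4}}
\end{equation}
with a constant $\uconst$ depending only on $n$.

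The main conceptual hurdle is the step $\|v(s)\|_1 \leq C_1 \|f\|_1$: in the standard Nash proof one has $\|v(s)\|_1 = \|f\|_1$ by the Markov property, but here the lack of positivity preservation of $\gen_s$ means the $L^1$ norm need not be conserved. The substitute provided by Lemma~\ref{lem:L1}, which in turn rested on the stochastic representation of $\semi$ on eigenvector polynomials and the orthogonal invariance of $SO(N)$, is exactly what makes the scheme go through; everything else is routine manipulation of the Nash inequality.
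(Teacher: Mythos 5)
Your proposal is correct and follows essentially the same route as the paper: integrate the Nash inequality using the $L^1$-boundedness from Lemma~\ref{lem:L1} to get the $L^1\rightarrow L^2$ bound for $(1-\kproj)\semi(s_1,s_2)$, then pass to $L^2\rightarrow L^\infty$ by duality with the time-reversed propagator, which satisfies Assumption~\ref{ass:heavytail} with the same rate. The only cosmetic difference is that you track $v(s)=\semi(s_1,s)(1-\kproj)f$ and control $\|v(s)\|_1$ via Lemma~\ref{lem:L1} together with Corollary~\ref{cor:kproj_bound}, whereas the paper applies Nash to $f_s=\semi(s_1,s)f$ directly and uses $\|f_s\|_1\leq n!!\|f_{s_1}\|_1$; both are valid and yield the same constant structure.
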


\begin{proof}
The first step is to bound $\|(1-\kproj)\semi(s_1,s_2)\|_{1,2}$.  To this end, suppose $f_{s_1} \in L^1(\Ln)$ with $\|f_{s_1}\|_1 = 1$.  Let $f_s = \semi(s_1, s) f_{s_1'}$ for all $s \geq s_1'$.  Then the time derivative of $L^2$ deviation from equilibrium is given by
\begin{equation}\label{eq:contr_derivative}
\partial_s \|f_s-\kproj f_s\|_2^{-4/n} = -\frac{2}{n} \|f_s-\kproj f_s\|_2^{-2-4/n} \partial_s\|f_s-\kproj f_s\|_2^2 = \frac{4}{n} \|f_s-\kproj f_s\|_2^{-2-4/n} \dir_s(f_s) 
\end{equation}
by Equation \eqref{eq:dir_derivative}.  Applying the Nash inequality and $L^1 \rightarrow L^1$ bound on the transition semigroup's operator norm to the expression in \eqref{eq:contr_derivative},
\begin{equation}\label{eq:contr_nash}
\partial_s \|f_s-\kproj f_s\|_2^{-4/n} \geq \frac{4\upsilon}{n\nconst\|f\|_1^{4/n}} \geq \frac{4\upsilon}{n\nconst(n!!)^{4/n}}
\end{equation}
where the first inequality is Proposition \ref{prop:nash} and the second inequality uses that $\|f_s\|_1 \leq n!! \|f_{s_1}\|_1 = n!!$ by Lemma \ref{lem:L1}.
Now integrate \eqref{eq:contr_nash} over the time interval $[s_1,s_2]$ to obtain
\begin{equation}
\|f_{s_2}-\kproj f_{s_2}\|_2^{-4/n} \geq \frac{4\upsilon}{n\nconst(n!!)^{4/n}}(s_2-s_1) + \|f_{s_1}-\kproj f_{s_1}\|_2^{-4/n} \geq \frac{4\upsilon}{n\nconst(n!!)^{4/n}}(s_2-s_1)
\end{equation}
Rearranging exponents gives
\begin{equation}
\|f_{s_2}-\kproj f_{s_2}\|_2 \leq n!!\left(\frac{n\nconst}{4}\right)^{n/4} \frac{1}{\upsilon^{n/4}(s_2-s_1)^{n/4}}
\end{equation}
from which we conclude the $L^1 \rightarrow L^2$ operator norm bound
\begin{equation}\label{eq:contr_12}
\|(1-\kproj) \semi(s_1,s_2)\|_{1,2} \leq \frac{\uconst}{\upsilon^{n/4}(s_2-s_1)^{n/4}}
\end{equation}
with constant $\uconst = n!!(n\nconst/4)^{n/4}$.

Note that the kernel projection commutes with the transition semigroup since the kernel projection shares an eigenbasis with the generator at every time.  That is, $[\kproj, \gen_s]=0$ for all $s\in[s_1',s_2']$ implies 
\begin{equation}\label{eq:comm_projsemi}
[\kproj, \semi(s_1',s_2')]=0.
\end{equation}
Moreover, the kernel projection is self-adjoint while the adjoint to the transition semigroup is the semigroup for the time reversed dynamics in the sense that $\semi(s_1', s_2')^* f = h_{s_2'}$ where $h$ is the unique solution to $\partial_s h_s = \gen_{s_1'+s_2'-s} h_s$ for all $s\in(s_1',s_2')$ with the initial condition $h_{s_1'}=f$.  This means that, since Assumption \ref{ass:heavytail} also holds for the time reversed coefficient array $\{c_{ij}(s_1+s_2-s) | i,j\in[N]\}$ for all times $s\in[s_1,s_2]$, the Nash inequality and therefore \ref{eq:contr_12} hold for $\semi(s_1', s_2')^*$.  
\begin{equation}\label{eq:contr_12dual}
\|(1-\kproj) \semi(s_1',s_2')^*\|_{1,2} \leq \frac{\uconst}{\upsilon^{n/4}(s_2-s_1)^{n/4}}
\end{equation}
In particular, by appealing to duality
\begin{equation}\label{eq:contr_2inf}
\|(1-\kproj)\semi(s_1',s_2')\|_{2,\infty} = \|\semi(s_1',s_2')(1-\kproj)\|_{2,\infty} = \|(1-\kproj)\semi(s_1',s_2')^*\|_{1,\infty} \leq \frac{\uconst}{\upsilon^{n/4}(s_2-s_1)^{n/4}}
\end{equation}
where the first equality is from identity \eqref{eq:comm_projsemi}, the second is duality, and the inequality is \eqref{eq:contr_12dual}.
\end{proof}

\section{Proof of main results}\label{sec:proof}

\subsection{Asymptotic normality}\label{sec:mainproofs}

\begin{proof}[Proof of Theorem \ref{thm:main}]
Note that the theorem holds trivially for $n=0$ with $\od(0) > 0$ any positive number, take $f=F=1$ in this case for the induction to go through.  Assume the result holds for $n-2$ with exponent $\od = \od(n-2) > 0$.  Without loss of generality, assume that $N^{\od} \ll Nt$, otherwise weaken the inductive hypothesis by scaling $\od$.  Now let
\begin{equation}\label{eq:params}
K = N^{1-\od}t, \quad T_2 = \frac{K}{N} \left(\frac{K}{N^{1+\od}t}\right)^{\frac{1}{n+2}}, \quad \ell_2 = \sqrt{KNT_2}, \quad \ell_1 = K^{3/4}, \quad T_1 = \frac{\sqrt{K}}{N}
\end{equation}
and fix the times $t_0 = t-T_2-T_1$ and $t_1 = t-T_2$.  For any length scale $1 \leq \ell \leq N$, recall the short range operator $\short(s;\ell)$ as defined in Definition \ref{defn:short_range} and is subject to the results of Section \ref{sec:l2relax}.  Consider another variant of the colored eigenvector moment flow operator, the \emph{lattice generator} $\latt(s;\ell)$ defined by
\begin{equation}
\latt(s;\ell) = \sum_{i<j\in[N]} \clatt_{ij}(s) \gen_{ij} 
\quad \mbox{where} \quad 
\clatt_{ij}(s) = \begin{cases} \creg_{ij}(s) &\mbox{ if } |i-j| \leq \ell_2 \mbox{ and } i,j\in \indint \\ \frac{N}{|i-j|^2} & \mbox{ otherwise} \end{cases}
\end{equation}
for every time $s \geq 0$ and sites $i \neq j \in [N]$.  Note that for $t_1 \leq s \leq t$, the coefficient array $\clatt_{ij}(s)$ satisfies Assumption \ref{ass:heavytail} with rate $\upsilon = N^{1-\varepsilon}$ for $\varepsilon>0$ arbitrarily small by Proposition \ref{prop:reg_fc} and \ref{prop:reg_eval}.  Hence, $\latt(s;\ell)$ is subject to the results from Section \ref{sec:energy}.

For each $\yy\in\Ln$ supported on $\indint^\kappa$, let $h_s(\xx) = h_s(\xx; \yy)$, $s \geq t_0$ be the family of $\Ln$-observable that is continuous in time and satisfies the partial differential equation with initial condition and flow given by
\begin{equation}
\begin{cases}
h_{t_0}(\xx; \yy) = \Av(K, \yy) f_{t_0}(\xx) & \\
\partial_s h_s(\xx) = \short(s;\ell_1) h_s(\xx) &\mbox{for all } t_0 < s < t_1 \\
\partial_s h_s(\xx) = \latt(s;\ell_2) h_s &\mbox{for all } s > t_1
\end{cases}
\end{equation}
for all $\xx\in\Ln$.
By Proposition \ref{prop:l2} and Proposition \ref{prop:contractive}, with overwhelming probability
\begin{equation}\label{eq:main_Linf}
\sup_{\yy \in \Lambda^n : y_a\in\indint^\kappa,a\in[n]} \|h_t(\cdot, \yy)\|_\infty^2 \leq N^{2\oc} \left(\frac{K}{NT_2} \right)^\frac{n}{2} \left( \frac{\ell_1}{K} + \frac{NT_1}{\ell_1} + \frac{1}{\sqrt{NT_1}} + \frac{K}{N t} + N^{-\od} \right)
\end{equation}
by making $\varepsilon>0$ and $\ob>0$ arbitrarily small.  The supremum is taken over all particle configurations $\yy\in\Ln$ supported on sites $\indint^\kappa$.  Here $\oc > 0$ is a control parameter which can also be taken arbitrarily small.  Moreover, by the comparison which is saved for the following section, Proposition \ref{prop:compare},
\begin{equation}\label{eq:main_comp}
\sup_{\xx\in\Ln : x_a\in\indint^\kappa,a\in[n]} |h_t(\xx;\xx) - f_t(\xx) + F_t(\xx;\xx)| \leq N^\oc \left( \frac{\ell_1}{K} + \frac{NT_1}{\ell_1} + \frac{\ell_2}{K} + \frac{NT_2}{\ell_2} \right)
\end{equation}
again the supremum is taken over configurations $\xx\in\Ln$ supported on sites $\indint^\kappa$ and the bound holds for $\oc>0$ arbitrarily small.  Combining \eqref{eq:main_Linf} and \eqref{eq:main_comp} gives
\begin{multline}
\sup_{\xx\in\Ln : x_a\in\indint^\kappa,a\in[n]} |f_t(x) - F_t(x;x)| \leq \sup_{\xx\in\Ln : x_a\in\indint^\kappa,a\in[n]} |h_t(\xx;\xx) - f_t(\xx) + F_t(\xx;\xx)| + |h_t(\xx; \xx)| \\ 
\leq N^\oc\left( \frac{\ell_1}{K} + \frac{NT_1}{\ell_1} + \frac{\ell_2}{K} + \frac{NT_2}{\ell_2} + \left(\frac{K}{NT_2} \right)^\frac{n}{4} \left( \frac{\ell_1}{K} + \frac{NT_1}{\ell_1} + \frac{1}{\sqrt{NT_1}} + \frac{K}{N t} + N^{-\od}\right)^{1/2} \right) \\ \leq N^{-\frac{\od}{n+2} + \oc} \leq N^{-\od(n)}
\end{multline}
for any new constant $0 < \od(n) < \od(n)/(n+2)$ with overwhelming probability.  The first inequality in the last line follows from our choice of parameters in \eqref{eq:params}.  The second inequality in the last line comes from taking $\oc>0$ sufficiently small after fixing the desired $\od(n)$.  This proves that the result holds with overwhelming probability.  This is sufficient because there is also a deterministic polynomial bound on the observable and ansatz
\begin{equation}
\|f_t\|_\infty + \|F_t\|_\infty \leq N^{n/2} + N^n
\end{equation}
since the eigenvectors are $L^2$ normalized $\|u_i\|_2=1$ and 
\begin{equation}
\ipr{\vv, \Im\grn_{\fc,t}(z) \vv} = \ipr{\vv, \Im\grn(z+tm_{\fc,t}(z)) \vv} \leq N\tr \Im\grn(z+tm_{\fc,t}(z)) = N\Im m_{\fc,t}(z)
\end{equation}
for all $z$ in the upper half plane and all $\vv \in S^{N-1}$ by the Schwarz inequality.
\end{proof}


\subsection{Comparison of infinite lattice perturbation}

We first collect a set of three useful tools in the following Lemma.  Then use these three to show the main comparison result Proposition \ref{prop:compare} which was used in the proof of the main theorem, Theorem \ref{thm:main}, in the previous subsection.  In what follows, refer to the local indicator function
\begin{equation}
\loc(\xx) = \begin{cases}
1 &\mbox{ if } \xx \mbox{ is supported on } \indint^{\kappa/10} \\
0 &\mbox{ otherwise }
\end{cases}
\end{equation}

\begin{proposition}\label{prop:compare_prelim}
Let $t_0$, $t$, and $K$ be as in the proof of Theorem \ref{thm:main}.  Suppose $t_0 \leq s_1 \leq s_2 \leq t$ are time scales and $N^\oc \leq \ell \leq N^{-\oc} K$ is a length scale satisfying $s_2 - s_1 \leq N^{-\varepsilon}\ell$.  Then there exists a constant $c>0$ such that for any configuration $\yy\in\Ln$ supported on $\indint^\kappa$, the following bounds hold with overwhelming probability
\begin{align}
\|\Av(K,\yy) (\semi_\short(s_1,s_2;\ell) - \semi(s_1,s_2)) f_{s_1}\|_\infty &\leq c\frac{NT}{\ell}\|f_{s_1}\loc)\|_\infty \label{eq:main_comp_SF} \\
\|[\Av(K,\yy), \semi_\short(s_1,s_2;\ell)] f\|_\infty &\leq c\frac{\ell}{K} \|f\loc\|_\infty \label{eq:main_comp_comm} \\
\|(\semi_\latt(s_1,s_2;\ell) - \semi_\short(s_1,s_2;\ell)) \Av(K,\yy) f\|_\infty &\leq c\frac{NT}{\ell}\|f\loc\|_\infty \label{eq:main_comp_LS}
\end{align}
where $f_{s_1}$ is the eigenvector moment observable and $f \in L^\infty(\Ln)$ is arbitrary.
\end{proposition}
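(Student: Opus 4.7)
The three estimates all follow from a combination of Duhamel's principle, finite speed of propagation (Proposition \ref{prop:FSP}), and the $L^\infty$ contraction of our semigroups dual to Lemma \ref{lem:L1}. I plan each in turn, writing $T := s_2 - s_1$.

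For \eqref{eq:main_comp_SF}, the idea is to invoke Proposition \ref{prop:duhamel} directly. That proposition yields the pointwise bound $|(\semi(s_1,s_2) - \semi_\short(s_1,s_2;\ell)) f_{s_1}(\xx)| \lesssim N^{1+\frac{n}{2}\ob}(s_2-s_1)/\ell$ for every $\xx$ supported on $\indint^\kappa$. The multiplication operator $\Av(K,\yy)$ vanishes whenever $\|\xx - \yy\|_1 > 2K$, and because $\yy$ is supported on $\indint^\kappa$ with $K \ll Nr$, every surviving configuration is supported on $\indint^{\kappa/2}$. The delocalization factor $N^{\frac{n}{2}\ob}$ in Proposition \ref{prop:duhamel} is exactly the bound on $\|f_{s_1}\loc\|_\infty$ provided by Corollary \ref{cor:deloc}, which yields the claim.

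For \eqref{eq:main_comp_comm}, the estimate is essentially Corollary \ref{cor:FSPcomm} together with the remark following it noting that $\Av(K,\yy)$ is already supported on local configurations. The plan is to decompose $\Av(K,\yy)$ as the $K$-term average of indicators $\one{\|\cdot - \yy\|_1 \leq \alpha}$. For each $\alpha$, the commutator with $\semi_\short(s_1,s_2;\ell)$ splits into (i) exponentially small contributions from source-target pairs with $\dist(\xx,\zz) > N^\varepsilon \ell$ by Proposition \ref{prop:FSP}, and (ii) boundary strip contributions where $|\dist(\xx,\yy) - \alpha| \leq N^\varepsilon \ell$, of which there are only $O(N^\varepsilon \ell)$ indices $\alpha$. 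On this strip we bound via the $L^\infty$ contraction $\|\semi_\short\|_{\infty,\infty} \leq n!!$ from the dual of Lemma \ref{lem:L1}. Averaging over $\alpha \in [K, 2K)$ produces the $\ell/K$ factor.

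For \eqref{eq:main_comp_LS}, apply Duhamel:
\begin{equation*}
(\semi_\latt(s_1,s_2;\ell) - \semi_\short(s_1,s_2;\ell))\Av(K,\yy) f = \int_{s_1}^{s_2} \semi_\latt(s,s_2;\ell)\,(\latt(s;\ell) - \short(s;\ell))\,\tilde g_s \, ds,
\end{equation*}
where $\tilde g_s := \semi_\short(s_1,s;\ell) \Av(K,\yy) f$. The coefficient difference $c^\latt_{ij}(s) - c^\short_{ij}(s)$ is supported on pairs $(i,j)$ with $|i-j| > \ell$ or $\{i,j\}\not\subset \indint^{\kappa/10}$, with magnitude at most $CN/|i-j|^2$. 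For each $\xx$, the operator $\gen_{ij}\tilde g_s(\xx)$ is nonzero only when $\{i,j\} \cap \supp(\xx) \neq \varnothing$, and the dyadic bound of Lemma \ref{lem:dyadic} gives $\sum_{|j-i|>\ell} N/|i-j|^2 \leq cN/\ell$ per site in $\supp(\xx)$. Hence $\|(\latt(s;\ell)-\short(s;\ell))\tilde g_s\|_\infty \leq c(N/\ell)\|\tilde g_s\|_\infty$. Dualizing Lemma \ref{lem:L1} gives $\|\tilde g_s\|_\infty \leq n!!\|\Av(K,\yy) f\|_\infty \leq n!! \|f\loc\|_\infty$; another application of the same contraction to $\semi_\latt(s,s_2;\ell)$ and integration over $[s_1,s_2]$ of length $T$ yields the claim.

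The main subtlety is that every intermediate function must have $L^\infty$ norm controlled by $\|f\loc\|_\infty$ rather than the global $\|f\|_\infty$. For $\semi_\short$ this is automatic because its coefficients vanish outside $\indint^{\kappa/10}$. For the Duhamel loop in \eqref{eq:main_comp_LS}, we exploit that $\Av(K,\yy) f$ is already supported within $2K$ of $\yy$ (hence on $\indint^{\kappa/2}$) and Proposition \ref{prop:FSP} ensures $\tilde g_s$ remains essentially supported there up to exponentially small corrections, which are absorbed into the error.
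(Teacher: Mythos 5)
Your proposal is correct and follows essentially the same route as the paper: \eqref{eq:main_comp_SF} via Proposition \ref{prop:duhamel}, \eqref{eq:main_comp_comm} via Corollary \ref{cor:FSPcomm} and the remark following it, and \eqref{eq:main_comp_LS} via a Duhamel expansion with the coefficient-difference bound $cN/\ell$ on the regular region plus the exponentially small off-region correction from finite speed of propagation. The paper implements your final "subtlety" paragraph as an explicit decomposition $\semi_\short(s_1,s;\ell)\Av(K,\yy)f = f^{(0)}+f^{(1)}$ with $f^{(0)}$ supported on $\Ln(\indint^{\kappa/2})$ and $\|f^{(1)}\|_\infty \leq e^{-N^{\varepsilon/2}}\|\Av(K,\yy)f\|_1$, but this is the same argument.
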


\begin{proof}
Equation \eqref{eq:main_comp_SF} is a direct implication of Proposition \ref{prop:duhamel} and its proof, while equation \eqref{eq:main_comp_comm} is a restatement of \eqref{rmk:fspcomm} in the remark following Corollary \ref{cor:FSPcomm}.  It remains to provide an argument for \eqref{eq:main_comp_LS} which proceeds according to a Duhamel expansion similar to the proof of Proposition \ref{prop:duhamel}.  Taking $L^\infty$ norms on both sides of
\begin{equation}\label{eq:latt_duhamel}
(\semi_\latt(s_1,s_2;\ell) - \semi_\short(s_1,s_2;\ell)) \Av(K,\yy) f_{s_1}
= \int_{s_1}^{s_2} \semi_\latt(s,s_2;\ell) (\short(s;\ell) - \latt(s;\ell)) \semi_\short(s_1,s;\ell) \Av(K,\yy) f_{s_1} ds
\end{equation}
gives
\begin{equation}
\|(\semi_\latt(s_1,s_2;\ell) - \semi_\short(s_1,s_2;\ell) \Av(K,\yy) f\|_\infty
\leq n!! T \| (\short(s;\ell) - \latt(s;\ell)) \semi_\short(s_1,s;\ell) \Av(K,\yy) f \|_\infty
\end{equation}
as $\semi_\latt(s_1,s_2;\ell)$, while not an $L^\infty$ contraction, has bounded $L^\infty\rightarrow L^\infty$ operator norm by the dual of Lemma \ref{lem:L1}.  Consider the decomposition $\semi_\short(s_1,s_2;\ell) \Av(K,\yy) f = f^{(0)} + f^{(1)}$ where $f^{(0)}$ is supported on $\Ln(\indint^{\kappa/2})$ and $f^{(1)}$ is supported on $\Ln(\indint^{\kappa/10})\backslash\Ln(\indint^{\kappa/2})$.  This decomposition satisfies $\|f^{(0)}\|_\infty \leq n!! \|\Av(K,\yy)f\|_\infty$ by Lemma \ref{lem:L1} and $\|f^{(1)}\|_\infty \leq e^{-N^{\varepsilon/2}}\|\Av(K,\yy)f\|_1$ by the same logic leading to \eqref{eq:l2_expbound} using Proposition \ref{prop:FSP}.  Therefore,
\begin{equation}\label{eq:main_comp_decomp}
\| (\latt(s;\ell) - \short(s;\ell)) \semi_\short(s_1,s;\ell) \Av(K,\yy) f \|_\infty \leq \|(\latt(s;\ell) - \short(s;\ell))f^{(0)}\|_\infty + \|(\latt(s;\ell) - \short(s;\ell))f^{(1)}\|_\infty
\end{equation}
For the first term in \ref{eq:main_comp_decomp}, note that for every $\xx\in\Ln$ supported on $\indint^{\kappa/2}$, 
\begin{equation}
\|((\latt(s;\ell)-\short(s;\ell))\delta_\xx\|_1 \leq c \sum_{j=\ell}^N \frac{N}{j^2} \leq c\frac{N}{\ell} 
\end{equation}
for some constant $c>0$ depending only on $n$.  By Holder's inequality
\begin{equation}\label{eq:main_comp_0}
\|(\latt(s;\ell) - \short(s;\ell))f^{(0)}\|_\infty \leq \sup_\xx \|((\latt(s;\ell)-\short(s;\ell))\delta_\xx\|_1 \|f^{(0)}\|_\infty \leq c\frac{N}{\ell}\|\Av(K,\yy) f\|_\infty
\end{equation}
where the supremum is taken over all $\xx\in\Ln$ supported on $\indint^{\kappa/2}$.
\begin{equation}
\|(\latt(s;\ell) - \short(s;\ell))f^{(0)}\|_\infty \leq \|\sum_{i\in\indint^{\kappa/2}}\sum_{j\in[N]:|j-i|>\ell} \frac{N}{|i-j|^2} \gen_{ij} f^{(0)}\|_\infty \leq \frac{N}{\ell} \|f^{(0)}\|_\infty \leq \frac{N}{\ell} n!!\|f\|_\infty
\end{equation}
For general $\xx\in\Ln$, the weaker bound still holds
\begin{equation}
\|(\latt(s;\ell) - \short(s;\ell)\delta_\xx\|_1	\leq c \sum_{j=1}^N	\frac{N}{j^2} \leq \frac{c\pi^2}{6} N
\end{equation}
by the definition of the lattice coefficients $\clatt_{ij}(s)$.  By Holder's inequality
\begin{equation}\label{eq:main_comp_1}
\|(\latt(s;\ell) - \short(s;\ell))f^{(1)}\|_\infty \leq \sup_\xx \|((\latt(s;\ell)-\short(s;\ell))\delta_\xx\|_1\|f^{(1)}\|_\infty \leq Ne^{-N^{\varepsilon/2}}\|\Av(K,\yy)f\|_1
\end{equation}
The desired bound in \eqref{eq:main_comp_LS} is now a consequence of \eqref{eq:main_comp_decomp}, \eqref{eq:main_comp_0}, and \eqref{eq:main_comp_1} after taking into account that $|\Av(\xx;K,\yy)| \leq |\loc(\xx)|$ uniformly in $\xx\in\Ln$ and that
\begin{equation}
\|\Av(K,\yy)f\|_1 \leq \|\Av(K,\yy)f\|_\infty \pi(\{\xx\in\Ln | \dist(\xx,\yy) < 3K\})
\end{equation}
and that the measure appearing on the right hand side of this expression grows at most polynomially fast in $N$.
\end{proof}

\begin{proposition}\label{prop:compare}
For any small constant $\oc > 0$, for all $\xx\in\Ln$ supported on $\indint^{\kappa}$, the following bound holds with overwhelming probability
\begin{equation}\label{eq:main_comp_alt}
|h_t(\xx;\xx) - f_t(\xx) + F_t(\xx;\xx)| \leq N^\oc \left( \frac{\ell_1}{K} + \frac{N T_1}{\ell_1} + \frac{\ell_2}{K} + \frac{NT_2}{\ell_2} \right)
\end{equation}
where $\ell_1,\ell_2,T_1,T_2,$ and $h$ are as defined in the proof of Theorem \ref{thm:main} in the previous subsection.
\end{proposition}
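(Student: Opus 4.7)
The starting point is to identify $h_t(\xx;\xx) - f_t(\xx) + F_t(\xx;\xx)$ with a pure semigroup difference. Write $A := \Av(K,\xx)$, $g := f_{t_0} - F_t(\cdot;\xx)$, $U := \semi(t_0,t)$, $U_1 := \semi_\short(t_0,t_1;\ell_1)$, $U_2 := \semi_\latt(t_1,t;\ell_2)$, $V_1 := \semi(t_0,t_1)$, $V_2 := \semi(t_1,t)$, so that $U = V_2 V_1$ and $h_t(\cdot;\xx) = U_2 U_1 A g$ (consistent with the initial condition driving Proposition \ref{prop:l2}). By Corollary \ref{cor:ansatzker}, $F_t(\cdot;\xx) \in \bigcap_{i<j}\ker(\gen_{ij})$, so $U F_t(\cdot;\xx) = U_1 F_t(\cdot;\xx) = U_2 F_t(\cdot;\xx) = F_t(\cdot;\xx)$; combined with $A(\xx;K,\xx)=1$, this yields $(Ug)(\xx) = f_t(\xx) - F_t(\xx;\xx)$ and reduces the claim to bounding
\[
\bigl((U_2 U_1 A - AU)g\bigr)(\xx) = \bigl([U_2 U_1, A]g\bigr)(\xx) + \bigl((U_2 U_1 - U)g\bigr)(\xx).
\]

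I would then expand $[U_2 U_1, A] = U_2[U_1,A] + [U_2,A]U_1$ and telescope $U_2 U_1 - V_2 V_1 = U_2(U_1 - V_1) + (U_2 - V_2)V_1$. Kernel invariance simplifies $(U_1 - V_1)g = (U_1 - V_1)f_{t_0}$ and $(U_2 - V_2)V_1 g = (U_2 - V_2) f_{t_1}$, where $f_{t_1} = V_1 f_{t_0}$. The problem thus reduces to bounding four terms at $\xx$, each producing one of the four summands in the target error.

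The four bounds are as follows. (i) $|U_2[U_1,A]g(\xx)| \leq n!!\,\|[U_1,A]g\|_\infty \leq c(\ell_1/K)\,N^{\ob}$ by the dual of Lemma \ref{lem:L1} together with estimate \eqref{eq:main_comp_comm} and delocalization (Corollary \ref{cor:deloc}). (ii) $|[U_2,A]U_1 g(\xx)|$: since $U_2$ is lattice rather than short range, I would first replace it by $\tilde U_2 := \semi_\short(t_1,t;\ell_2)$ at cost $(NT_2/\ell_2)N^{\ob}$ via estimate \eqref{eq:main_comp_LS}, then apply \eqref{eq:main_comp_comm} to the short range commutator $[\tilde U_2, A]$ to obtain $(\ell_2/K)N^{\ob}$. (iii) $|U_2(U_1 - V_1)f_{t_0}(\xx)|$: estimate \eqref{eq:main_comp_SF} controls $\|A(U_1 - V_1)f_{t_0}\|_\infty \leq c(NT_1/\ell_1)N^{\ob}$; finite speed of propagation (Proposition \ref{prop:FSP}), applied after approximating $U_2$ by $\tilde U_2$, shows that $U_2\phi(\xx)$ only samples $\phi$ in a ball of radius $N^\varepsilon\ell_2 \ll K$ around $\xx$, which lies inside $\{A \equiv 1\}$, giving $(NT_1/\ell_1)N^{\ob}$. (iv) $|(U_2 - V_2)f_{t_1}(\xx)|$: decomposing $U_2 - V_2 = (U_2 - \tilde U_2) + (\tilde U_2 - V_2)$ and applying \eqref{eq:main_comp_LS} and \eqref{eq:main_comp_SF} respectively, each combined with the insertion of $A$ via $A(\xx;K,\xx)=1$ and the same short range localization, yields $(NT_2/\ell_2)N^{\ob}$.

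The main technical obstacle is reconciling estimates \eqref{eq:main_comp_SF} and \eqref{eq:main_comp_LS}, which naturally carry an averaging operator $A$, with the bare expressions $U_2\phi(\xx)$ that arise after telescoping. The resolution combines two observations: the trivial identity $A(\xx;K,\xx)=1$ allows $A$ to be inserted for free at the evaluation point $\xx$, while finite speed of propagation for the short range operators $U_1$ and $\tilde U_2$ (whose effective ranges $N^\varepsilon\ell_1, N^\varepsilon\ell_2$ are much smaller than $K$ under the parameter choices \eqref{eq:params}) restricts the values sampled to a region where $A \equiv 1$, so $A$ may be moved across those semigroups up to errors of size $e^{-N^{\varepsilon/2}}$. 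All $\oc$ factors absorb logarithms and the $N^{\ob}$ delocalization factors by choosing $\ob, \varepsilon$ sufficiently small, producing the stated bound with overwhelming probability.
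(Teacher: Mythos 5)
Your reduction to a pure semigroup difference (via $\Av(\xx;K,\xx)=1$ and Corollary \ref{cor:ansatzker}) and your overall telescoping strategy match the paper's, which interpolates $\semi_\latt(t_1,t;\ell_2)\semi_\short(t_0,t_1;\ell_1)\Av(K,\xx) - \Av(K,\xx)\semi(t_1,t)\semi(t_0,t_1)$ through the five terms \eqref{eq:comp1}--\eqref{eq:comp5}, each bounded by one of the three estimates of Proposition \ref{prop:compare_prelim} together with the $L^\infty$-boundedness of $\semi_\latt$. However, your particular four-term splitting has a genuine gap. The paper's decomposition is engineered so that every appearance of the lattice semigroup $U_2=\semi_\latt$ (or of the difference $U_2-\tilde U_2$) acts on a function with the averaging operator already adjacent to it: terms \eqref{eq:comp2} and \eqref{eq:comp3} read $U_2\Av(K,\xx)(U_1-V_1)g$ and $(U_2-\tilde U_2)\Av(K,\xx)V_1 g$. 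Your terms $[U_2,A]U_1g(\xx)$, $U_2(U_1-V_1)g(\xx)$ and $(U_2-\tilde U_2)(f_{t_1}-F_t)(\xx)$ instead have a long-range operator acting on functions that are not localized near $\xx$ and are only controlled by $N^{\frac{n}{2}\ob}$ on configurations supported on $\indint^{\kappa/10}$; off that set the only available bound is the trivial $N^{n/2}$, since delocalization fails there.

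Your proposed repair --- insert $A$ for free at the evaluation point and commute it past the semigroups using finite speed of propagation --- works for $U_1$ and $\tilde U_2$ but not for $U_2$: Proposition \ref{prop:FSP} is proved only for $\semi_\short$, and the lattice semigroup by construction carries jump rates $N/|i-j|^2$ at all distances, so its kernel places only polynomially small (not exponentially small) mass at distance $\geq K$ from $\xx$ and on configurations outside $\indint^{\kappa/10}$, where the integrand can be of size $N^{n/2}$. Your suggestion to first approximate $U_2$ by $\tilde U_2$ via \eqref{eq:main_comp_LS} is circular here, because \eqref{eq:main_comp_LS} itself requires the test function to be of the form $\Av(K,\yy)f$ --- exactly the localization you are trying to establish. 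To close the argument you must arrange the telescoping so that $\Av(K,\xx)$ sits immediately to the right of every lattice or full semigroup difference, as in \eqref{eq:comp1}--\eqref{eq:comp5}; then the crude bound $\|\semi_\latt\|_{\infty,\infty}\leq n!!$ from the dual of Lemma \ref{lem:L1} suffices wherever $\semi_\latt$ appears on the left, and no propagation estimate for the lattice dynamics is ever needed.
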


\begin{proof}
Fix $\xx\in\Ln$ which is supported in $\indint^\kappa$ and slightly abuse notation to write $F_t(\zz) = F_t(\zz;\xx)$ for all $\zz\in\Ln$.  Since $\Av(K,\xx)$ is a diagonal operator with $\Av(\xx;K,\xx) = 1$ and $F_t = F_t(\cdot; \xx) \in \ker(\gen_{ij})$ for all $i\neq j\in[N]$, we can rewrite the expression appearing on the left hand side by
\begin{equation}
h_t(\xx;\xx) - f_t(\xx) + F_t(\xx) = \left(\semi_\latt(t_1,t);\ell_2) \semi_\short(t_0,t_1;\ell_1) \Av(K,\yy) - \Av(K,\yy) \semi(t_1,t)\semi(t_0,t_1)\right) (f_{t_0} - F_t) (\xx)
\end{equation}
This comparison is done through five interpolating steps following the tools prepared in Proposition \ref{prop:compare_prelim}.
\begin{align}
h_t(\xx;\xx) - f_t(\xx) + F_t(\xx) = 
&& &\semi_\latt(t_1,t;\ell_2) [\semi_\short(t_0,t_1;\ell_1), \Av(K,\xx)] (f_{t_0} - F_t) (\xx) \label{eq:comp1}\\
&&+&\semi_\latt(t_1,t;\ell_2) \Av(K,\xx) \left(\semi_\short(t_0,t_1;\ell_1) - \semi(t_0,t_1)\right) (f_{t_0} - F_t) (\xx) \label{eq:comp2}\\
&&+&\left(\semi_\latt(t_1,t;\ell_2) - \semi_\short(t_1,t;\ell_2)\right)\Av(K,\xx) (f_{t_1} - F_t) (\xx) \label{eq:comp3}\\
&&+&[\semi_\short(t_1,t;\ell_2), \Av(K,\xx)] (f_{t_1} - F_t) (\xx) \label{eq:comp4}\\
&&+&\Av(K,\xx) \left(\semi_\short(t_1,t;\ell_2) - \semi(t_1,t)\right) (f_{t_1} - F_t) (\xx)\label{eq:comp5}
\end{align}
We bound these terms sequentially.  For \eqref{eq:comp1}, use \eqref{eq:main_comp_comm} to see that
\begin{equation}\label{eq:comp_1bd}
\|\semi_\latt(t_1,t;\ell_2) [\semi_\short(t_0,t_1;\ell_1), \Av(K,\xx)] (f_{t_0} - F_t)\|_\infty \leq cn!!\frac{\ell_1}{K}\|(f_{t_0}-F_t)\loc\|_\infty
\end{equation}
where we first used that $\short_\latt$ is bounded in $L^\infty$ by the dual of Lemma \ref{lem:L1}.  For \eqref{eq:comp2}, use \eqref{eq:main_comp_SF}
\begin{equation}
\|\semi_\latt(t_1,t;\ell_2) \Av(K,\xx) \left(\semi_\short(t_0,t_1;\ell_1) - \semi(t_0,t_1)\right) (f_{t_0} - F_t)\|_\infty 
\leq cn!!\frac{NT_1}{\ell_1}\|f_{t_0}\loc\|_\infty
\end{equation}
where again we needed the $\semi_\latt$ boundedness in $L^\infty$.  In addition, this step used Corollary \ref{cor:ansatzker} to say that $\semi_\short(t_0,t_1;\ell_1)F_t = \semi(t_0,t_1)=F_t$. For \eqref{eq:comp3}, 
\begin{equation}
\|\left(\semi_\latt(t_1,t;\ell_2) - \semi_\short(t_1,t;\ell_2)\right)\Av(K,\xx) (f_{t_1} - F_t)\|_\infty \leq c \frac{NT_2}{\ell_2} \|(f_{t_1} - F_t)\loc\|_\infty
\end{equation}
is a direct application of \eqref{eq:main_comp_LS} to the test function $f_{t_1} - F_t$.  For \eqref{eq:comp4}, use \eqref{eq:main_comp_comm}
\begin{equation}
\|[\semi_\short(t_1,t;\ell_2), \Av(K,\xx)] (f_{t_1} - F_t)\|_\infty \leq c\frac{\ell_2}{K} \|(f_{t_1}-F_t)\loc\|_\infty
\end{equation}
This is also a direct application.  For \eqref{eq:comp5}, use \eqref{eq:main_comp_SF}
\begin{equation}
\|\Av(K,\xx) \left(\semi_\short(t_1,t;\ell_2) - \semi(t_1,t)\right) (f_{t_1} - F_t)\|_\infty \leq \frac{NT_2}{\ell_2} \|f_{t_1}\loc\|_\infty
\end{equation}
In this step, we again used Corollary \ref{cor:ansatzker} to say that the ansatz obersable is invariant under all dynamics.
Finally, by delocalization from Corollary \ref{cor:deloc}, $\|f_s\loc\|\leq N^{\frac{n}{2}\ob}$ and Assumption \ref{ass:evec} gives $\|F_t\loc\|_\infty \leq N^{\frac{n}{2}\ob}$.  Taking $\ob > 0$ sufficiently small proves \eqref{eq:main_comp_alt}.
\end{proof}

\subsection{Comparison for matrix models}

In this subsection, we use the main theorem along with semicircle laws from prior works and typical comparison arguments to prove Theorems \ref{thm:genwig} and \ref{thm:p-reg}.  

We start with the generalized Wigner comparison following the methods of \cite{QUE}.  The proof can follow either the moment matching argument from Section 5 in \cite{QUE} or the dynamical argument from Appendix A in \cite{QUE}.  The only main difference is replacing instances of $| u_i \cdot q|^2$ with $(u_i \cdot v) (u_i \cdot w)$.  To minimize the list of reference needed in backtracking a complete proof, the dynamical approach is briefly provided here.

The generalized Wigner case will rely on the isotropic local law (Theorem 2.2 in \cite{isotropic}).  Let 
\begin{equation}
\rho(dx) = \frac{1}{2\pi} \sqrt{(4-x^2)_+} dx \quad \mbox{and} \quad m(z) = \int \frac{\rho(dx)}{x-z} = \frac{-z + \sqrt{z^2 - 4}}{2}
\end{equation}
denote the semicircle law and its Stieltjes transform respectively.  The branch of the square root is chosen so that the Stieltjes transform satisfies $m(z) \rightarrow 0$ as $z \rightarrow \infty$.
\begin{theorem}[Isotropic local semicircle law, \cite{isotropic} and \cite{rigidity}]\label{thm:iso-genwig}
Suppose $H$ is a generalized Wigner ensemble and let $\varepsilon, \xi, \kappa > 0$ be small.  Then with overwhelming probability
\begin{equation}
|\langle v, G(z) w \rangle - \langle v, w \rangle m(z)| \leq \frac{N^\varepsilon}{\sqrt{N\eta}} \quad \mbox{and} \quad |m_N(z) - m(z)| \leq \frac{N^\varepsilon}{N\eta}
\end{equation}
uniformly for $z = E+i\eta \in (-2+\kappa, 2-\kappa) + i (N^{-1+\xi}, 1)$ and all unit vectors $v, w \in \RR^N$.
\end{theorem}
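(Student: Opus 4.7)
The plan is to establish the isotropic local semicircle law via the standard Schur complement / self-consistent equation approach, bootstrapped from large to small imaginary scale. By polarization, writing $\ipr{v,\grn(z)w} = \tfrac14\sum_{\varepsilon\in\{\pm1\}}\varepsilon\,\ipr{v+\varepsilon w,\grn(z)(v+\varepsilon w)}$ (and a parallel identity with $iw$ for complex case, though here we are in the real symmetric setting), it suffices to prove the diagonal quadratic form bound $|\ipr{v,\grn(z)v} - m(z)|\leq N^{\varepsilon}/\sqrt{N\eta}$ uniformly in unit $v$. The averaged bound on $m_N(z)-m(z)$ is obtained in parallel from the same machinery and is in fact the simpler input.

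First I would establish the entrywise local law $|G_{ij}(z)-\delta_{ij}m(z)|\prec \sqrt{\Im m(z)/(N\eta)} + 1/(N\eta)$. This goes through the Schur complement formula
\begin{equation}
\frac{1}{G_{ii}(z)} = h_{ii}-z-\sum_{k,\ell\neq i} h_{ik}G^{(i)}_{k\ell}(z)h_{\ell i},
\end{equation}
together with the large-deviation estimate $|\sum_{k\neq\ell\neq i} h_{ik}G^{(i)}_{k\ell}h_{\ell i}|\prec \sqrt{\sum_{k,\ell}|G^{(i)}_{k\ell}|^2/N^2}$ and its diagonal analogue, and a continuity/bootstrap in $\eta$ starting from $\eta=1$ where $G_{ii}$ is trivially close to $m(z)$ by the resolvent bound. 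The required stability of the self-consistent equation $\tfrac1{m(z)}=-z-m(z)$ in the bulk $E\in(-2+\kappa,2-\kappa)$ is a standard computation.

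Next, I would upgrade to the isotropic statement by expanding $\ipr{v,\grn(z)v}-m(z) = \sum_i |v_i|^2(G_{ii}(z)-m(z))+\sum_{i\neq j}\bar v_iv_jG_{ij}(z)$. The diagonal piece is immediate from the entrywise law. For the off-diagonal piece, I would derive a self-consistent equation directly for $Z_v:=\ipr{v,\grn(z)v}-m(z)$ by applying $\grn(H-z)=I$ against $v\otimes v$, producing the identity
\begin{equation}
Z_v = m(z)^2 Z_v + m(z)\bigl(\ipr{v, H\grn(z)v}_{\text{centered}}\bigr) + \text{(lower order)},
\end{equation}
and then invoke a large-deviation bound for quadratic forms of the Wigner entries against a deterministic test vector $v$ and a deterministic matrix (here, $\grn^{(k)}(z)$) to control the centered term by $N^{\varepsilon}\sqrt{\Im m/(N\eta)}$. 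Solving $Z_v(1-m(z)^2)=O(\Psi)$ and using the bulk stability $|1-m(z)^2|\gtrsim \sqrt{\kappa}$ closes the estimate.

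The main obstacle, and the subtlety that isolates the \emph{isotropic} law from the entrywise version, is obtaining uniformity in the unit vectors $v,w$. The entrywise local law only gives pointwise bounds with overwhelming probability, but a union bound over the continuum of unit vectors is not directly available. I would handle this via a high-moment method: control $\ee|Z_v|^{2p}$ for all fixed $v$ using concentration of quadratic forms in subgaussian variables (giving the $\prec$ bound with a polynomial factor $N^{\varepsilon}$), and then upgrade to uniformity by combining a polynomial-size net on the unit sphere with the Lipschitz continuity of $v\mapsto \ipr{v,\grn(z)v}$ in $v$ (with Lipschitz constant deterministically bounded by $\|\grn(z)\|\leq 1/\eta$ on the spectral domain after a trivial $\eta$-truncation). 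This is the most delicate step; everything else is an application of well-established resolvent calculus and the fluctuation-averaging lemma of Erd\H os--Knowles--Yau--Yin.
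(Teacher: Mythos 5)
First, note that the paper does not prove this theorem: it is imported verbatim from \cite{isotropic} and \cite{rigidity}, so your proposal is measured against the literature proof rather than against an argument in the text. Your outline of the core machinery --- Schur complement, self-consistent equation, bootstrap in $\eta$ from $\eta=1$, fluctuation averaging for the improved $1/(N\eta)$ rate on $m_N-m$, and a high-moment estimate for the quadratic form --- is the correct standard route and is essentially what \cite{isotropic} does. (Be aware, though, that the step you compress into ``invoke a large-deviation bound for the centered term'' is where all the work lives: $\grn$ depends on $H$, so $\ipr{v,H\grn(z)v}$ is not a quadratic form in independent entries against a deterministic matrix, and one needs the full diagrammatic high-moment expansion of $\sum_{i\neq j}\bar v_i w_j G_{ij}$ with partial expectations rather than an off-the-shelf concentration inequality.)

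The genuine gap is the final step, the $\varepsilon$-net argument for ``uniformity in $v,w$.'' This cannot work, for two reasons. Quantitatively: to beat the deterministic Lipschitz constant $\|\grn(z)\|\leq\eta^{-1}\sim N^{1-\xi}$ down to the target error $N^{\varepsilon}/\sqrt{N\eta}$ you need a net on $S^{N-1}$ of mesh roughly $N^{-1}$, whose cardinality is $e^{cN\log N}$, while the high-moment method only yields tail bounds $N^{-D}$ for fixed (arbitrarily large but constant) $D$; the union bound does not close. More fundamentally, the statement ``with overwhelming probability, simultaneously for all unit vectors $v,w$'' is false: taking $v=w=u_{i_0}$ with $\lambda_{i_0}$ the eigenvalue nearest $E$ gives $\ipr{v,\grn(E+i\eta)v}=(\lambda_{i_0}-E-i\eta)^{-1}$, which in the bulk has modulus of order $\eta^{-1}\gg 1+N^{\varepsilon}/\sqrt{N\eta}$. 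The correct reading of the theorem --- and the form in which the paper actually uses it, e.g.\ in Proposition \ref{prop:compare-genwig}, where the supremum over $v,w$ sits outside the expectation --- is: for each fixed \emph{deterministic} pair $(v,w)$ the bound holds with overwhelming probability, with the probability estimate uniform over the choice of pair and over $z$ in the domain. That is exactly what your high-moment bound on $\ee\left[|Z_v|^{2p}\right]$ already delivers; no net is needed, and no net is possible. Drop the last step and state the conclusion in the fixed-vector form.
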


Consider the matrix valued Ornstein-Uhlenbeck process generated by
\begin{equation}\label{eq:gendbm}
dh_{ij}(t) = \frac{dB_{ij}(t)}{\sqrt{N}} - \frac{1}{2Ns_{ij}} h_{ij}(t) dt
\end{equation}
initialized at a generalized Wigner matrix $H_0$.  Here $B$ is symmetric with $(B_{ij})_{i \leq j}$ iid Brownian motions, and $s_{ij} = \EE{h_{ij}(0)^2}$.  It is clear that for any $t \geq 0$, $\EE{h_{ij}(t)} = 0$ and $\EE{h_{ij}(t)^2} = s_{ij}$.  This \emph{generalized Dyson Brownian motion} flow admits a continuity estimate up to time $t = N^{-1/2}$ for smooth functions of the symmetric matrix with bounded third order derivatives (see Lemma A.1 in \cite{QUE}).  This result is used in the following proposition.

\begin{proposition}\label{prop:compare-genwig}
Let $\alpha > 0$, $0 < \delta < 1/2$, and $t = N^{-1+\delta}$.  Denote by $H_t$ be the solution to \eqref{eq:gendbm} with a generalized Wigner matrix $H_0$ as the initial data.  Let $m$ be a positive integer and $\Theta : \RR^{2m} \rightarrow \RR$ a smooth function satisfying
\begin{equation}
\sup_{k \leq 5, x \in \RR} \frac{|\Theta^{(k)}(x)|}{(1+|x|)^C} < \infty
\end{equation}
for some $C>0$.  Let $u_1(t), \ldots u_N(t)$ denote the eigenvectors of $H_t$ corresponding to eigenvalues $\lambda_1(t) \leq \ldots \leq \lambda_N(t)$ respectively.   Then there exists $\varepsilon > 0$ depending on $\Theta, \delta, \alpha$ such that
\begin{equation}
\sup_{\substack{i_1, \ldots i_m \in [\alpha N, (1-\alpha)N] \\ v_1, \ldots v_m \in S^{N-1} \\ w_1, \ldots w_m \in S^{N-1}}} |(\ee^{H_t} - \ee^{H_0}) \Theta \left((N(\lambda_{i_a} - \gamma_{i_a}), N(u_{i_a} \cdot v_a)(u_{i_a} \cdot w_a))_{a \in [n]}\right) | \leq N^{-\varepsilon}
\end{equation}
\end{proposition}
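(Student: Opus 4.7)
The plan is a Green's function comparison between the initial matrix $H_0$ and the OU-evolved matrix $H_t$, exploiting the fact that the flow \eqref{eq:gendbm} preserves the first two moments of every entry: $\ee[h_{ij}(t)]=0$ and $\ee[h_{ij}(t)^2]=s_{ij}$.  The structure parallels that of Theorem A.2 in \cite{QUE}, with the scalar eigenvector observable $|u_i\cdot q|^2$ replaced throughout by the bilinear observable $(u_i\cdot v)(u_i\cdot w)$.

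First, I would reduce the eigenvalue/eigenvector functional on the left-hand side to a smooth, compactly supported functional of smoothed Green's function entries.  Using a Helffer–Sj\"ostrand / contour integral representation, one writes
\begin{equation*}
(u_{i_a}\cdot v_a)(u_{i_a}\cdot w_a)=\frac{1}{2\pi i}\oint_{\Gamma_a}\ipr{v_a,\grn(z)w_a}\,dz,
\end{equation*}
where $\Gamma_a$ is a rectangular contour of height $\eta=N^{-1+\xi}$ (with $\xi>0$ small) enclosing only $\lambda_{i_a}$, and similarly expresses $N(\lambda_{i_a}-\gamma_{i_a})$ as a smooth integral involving $N^{-1}\tr\grn(z)$.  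By the isotropic local law of Theorem \ref{thm:iso-genwig} and rigidity, truncating these contours at imaginary height $\eta$ and replacing the contour integrals by their smoothed versions introduces only error of size $N^{-c}$.  Composing with $\Theta$ and invoking the polynomial growth hypothesis, the whole observable becomes, up to $N^{-c}$ error, a bounded smooth function of $O(1)$ Green's function bilinears and traces, each evaluated at spectral parameters with $|\Im z|\geq N^{-1+\xi}$.

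Second, I would apply It\^o's formula to this reduced functional along the flow \eqref{eq:gendbm}.  Because the OU drift $-h_{ij}(t)/(2Ns_{ij})$ is precisely calibrated so that each variance $s_{ij}$ is stationary, the first- and second-order contributions to $\partial_t\ee F(H_t)$ cancel entry by entry, leaving only third- and higher-order terms of the form $N^{-3/2}\ee[h_{ij}^3\,\partial_{h_{ij}}^3 F(H_t)]$.  Each matrix derivative of $F$ produces one resolvent factor via $\partial_{h_{ij}}\grn=-\grn(\ve_i\ve_j^\top+\ve_j\ve_i^\top)\grn/(1+\delta_{ij})$, so every cubic term reduces to a polynomial in Green's function entries bounded, by the isotropic law and Ward identity $\sum_j|\grn_{ij}|^2=\Im\grn_{ii}/\eta$, by $N^{O(\xi)}\eta^{-O(1)}$.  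Summing over the $O(N^2)$ pairs $(i,j)$ and using $\ee|\sqrt{N}h_{ij}|^3\leq C$ yields $|\partial_t\ee F(H_t)|\leq N^{-1/2+O(\xi)}$, and integrating over $[0,t]$ with $t=N^{-1+\delta}$ and $\delta<1/2$ gives the claimed bound $N^{-\varepsilon}$ for some $\varepsilon=\varepsilon(\Theta,\delta,\alpha)>0$.

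The main obstacle is arranging the contour reduction so that the error is uniform over all bulk indices $i_1,\ldots,i_m$ and all unit vectors $v_a,w_a$ simultaneously; the deterministic bound $|\ipr{v,\grn(z)w}|\leq \eta^{-1}$ combined with the bilinear isotropic local law delivers this uniformity, at the cost of a small polynomial loss which is absorbed into the choice of $\xi$.  A secondary point is that the cubic derivatives act on products of Green's functions at distinct spectral parameters, but since each such product is uniformly bounded by the isotropic law on the bulk domain (after a dyadic decomposition in the imaginary part), no additional input is needed beyond what was already used in \cite{QUE}.
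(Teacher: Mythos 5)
Your It\^o/cumulant step is sound and is essentially the same mechanism the paper uses (Lemma A.1 of \cite{QUE}): the OU drift is calibrated so that the first two moments of each entry are stationary, the surviving third-order terms are controlled by the isotropic local law, and summing over the $O(N^2)$ entries and integrating over $[0,t]$ with $t=N^{-1+\delta}$, $\delta<1/2$, yields a negative power of $N$. The gap is in your first step, the reduction of the single-eigenvector observable to Green's functions. A contour $\Gamma_a$ enclosing only $\lambda_{i_a}$ must thread between $\lambda_{i_a}$ and its neighbors, so its vertical legs cross the real axis at points $E_\pm$ whose distance to the spectrum is at most the local eigenvalue gap; on those legs $|\ipr{v_a,\grn(z)w_a}|$ is bounded only by $\mathrm{dist}(z,\mathrm{spec})^{-1}$, which is controlled neither by the isotropic local law nor by rigidity --- rigidity localizes each $\lambda_i$ to a window comparable to the mean spacing but says nothing about how close consecutive eigenvalues can come. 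To make this reduction work you need (i) a level repulsion estimate guaranteeing that with high probability the gap around $\lambda_{i_a}$ is at least $N^{-1-\xi'}$, and (ii) a Green's function comparison valid at spectral scales $\eta$ slightly \emph{below} $N^{-1}$, since the isolating contour or smoothed indicator lives at that scale. Your comparison is only claimed for $|\Im z|\ge N^{-1+\xi}$; at that height $\Im\ipr{v,\grn(E+i\eta)w}$ averages $\ipr{v,u_j}\ipr{u_j,w}$ over the roughly $N\eta=N^{\xi}$ eigenvalues near $E$, so it does not isolate the single term $j=i_a$.

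This is precisely where the paper's proof supplies what your proposal is missing: it establishes the comparison estimate \eqref{eq:subcritical} uniformly for $N^{-1-\xi}<\eta<1$, using a dyadic decomposition in the imaginary part to bound $|\ipr{v,\grn(E+iy)w}|$ at small $y$ by $(\log N\cdot\eta/y)$ times controlled quantities at the local-law scale $\eta$, and then combines this with the classical level repulsion estimate for generalized Wigner ensembles (\cite{EY15}, eq.\ (5.32)) and the argument of Section 5 of \cite{knowles2013eigenvector} to pass from Green's function comparison to comparison of individual eigenvalue/eigenvector observables. If you add the level repulsion input and extend your It\^o comparison down to $\eta\sim N^{-1-\xi}$ (absorbing the resulting $(\eta/y)$ losses into the third-derivative bounds), your argument closes and coincides with the paper's.
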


\begin{proof}
The proof is identical to Corollary A.2 in \cite{QUE} except for the minor generalization to statement (i).  In particular, let $H$ and $\tilde H$ refer to two generalized Wigner ensembles. For any $\kappa > 0$, there exists $\xi, \varepsilon > 0$ such that for any $N^{-1-\xi} < \eta < 1$ and any smooth function $F$ with polynomial growth,
\begin{equation}\label{eq:subcritical}
\sup_{\substack{v,w \in S^{N-1} \\ E_1, \ldots, E_m \in (-2+\kappa, 2-\kappa)}} |(\ee^H - \ee^{\tilde H}) F((\langle v_k, G(z_k) w_k \rangle )_{k=1}^m)| \leq C N^{-\varepsilon}\left( \frac{1}{N\eta} + \frac{1}{\sqrt{N\eta}} \right)
\end{equation}
where $C > 0$ depends on $\kappa$ and $F$ and $z_k = E_K + i \eta$.  This together with the level repulsion estimate which is classical for generalized Wigner (see \cite{EY15}, eqaution (5.32)]) make up the necessary inputs for the argument from Section 5 in \cite{knowles2013eigenvector} to conclude the proof.  

To prove \eqref{eq:subcritical}, we appeal to the isotropic local law (Theorem \ref{thm:iso-genwig}) and the following replacement argument introduced in Lemma A.1 from \cite{QUE}.
\begin{lemma}[$t < N^{-1/2}$ Continuity, \cite{QUE}]
Denote $\partial_{ij} = \frac{\partial}{\partial h_{ij}}$.  Suppose $F$ is a smooth function of matrix elements $h_{ij}$ and satisfies
\begin{equation}\label{eq:continuity}
\sup_{\substack{0 \leq s \leq t \\ 1 \leq i \leq j \leq N \\ 0 \leq \theta \leq 1}} \EE{(N^{3/2} |h_{ij}(s)|^3 + \sqrt{N} |h_{ij}(s)|) \left| \partial_{ij}^3 F(H_s - \theta h_{ij}(s)(e_ie_j^\top + e_je_i^\top))\right|} \leq M
\end{equation}
where the input to $F$ interpolates between $H_s$ and $H_s$ with the $(i,j)$ and $(j,i)$ entries zeroed out.  Then
\begin{equation}
\EE{F(H_t)} = \EE{F(H_0)} + O(tN^{1/2}) M.
\end{equation}
\end{lemma}
Returning to the proof of \eqref{eq:subcritical}, for simplicity only consider the case $m=1$, $z_1=z = E+i\eta$, $-2+\kappa < E< 2-\kappa$, and $N^{-1-\xi} < \eta < 1$.  In bounding the third derivative, we get
\begin{equation}
\partial_{ij}^3 \langle v, G(z) w\rangle = -\sum_{a,b} \sum_{\alpha,\beta} v_a G(z)_{a,\alpha_1} G(z)_{\beta_1 \alpha_2} G(z)_{\beta_2\alpha_3} G(z)_{\beta_3 b} w_b
\end{equation}
where $\{\alpha_k, \beta_k\} = \{i,j\}$.  From the isoperimetric local semicircle law \ref{thm:iso-genwig}, the following four expressions
\begin{equation}
\sum_{a} v_a G(z)_{a,\alpha_1}, \quad G(z)_{\beta_1 \alpha_2}, \quad G(z)_{\beta_2\alpha_3}, \quad \sum_b G(z)_{\beta_3 b} w_b
\end{equation}
are bounded by $N^{2\xi}((N\eta)^{-1} + (N\eta)^{-1/2})$ with overwhelming probability for all $\alpha,\beta$ when $\eta > N^{-1+\xi}$.  By the dyadic decomposition from Section 8 in \cite{EYY12}, for any $0 < y \leq \eta$,
\begin{multline}
|\langle v, G(E+iy) w \rangle|^2 \leq \frac{1}{2}|\langle v, G(E+iy) v\rangle| + \frac{1}{2}|\langle w, G(E+iy)w \rangle|^2 \\
\leq C \left(\log N \frac{\eta}{y}\right)^2 (|\langle v, \Im G(E+i\eta) v\rangle| + |\langle w, \Im G(E+i\eta) w\rangle|)
\end{multline}
where the first inequality holds because $G$ is symmetric, albeit complex.  Therefore, \eqref{eq:continuity} holds for $M = C N^{5\xi}((N\eta)^{-1} + (N\eta)^{-1/2})$.  The above continuity lemma finishes the proof of \eqref{eq:subcritical}.
\end{proof}

\begin{proof}[Proof of Theorem \ref{thm:genwig}]
This argument again follows the final proof in Appendix B of \cite{QUE}.  Sticking with the above setup, let $H_0$ be a generalized Wigner matrix driven forward through $H_t, t \geq 0$ by \eqref{eq:gendbm}.  Now fix $\delta \in (0,1/2)$, time $t = N^{-1+\delta}$, and scale $\nu = \frac{1}{2} \inf_{ij} s_{ij}(1-e^{-\frac{t}{Ns_{ij}}})$.  Define a new matrix ensemble $\tilde H$ by
\begin{equation}
\tilde h_{ij} = \frac{1}{\sqrt{1-(N+1)\nu}} \left( e^{-\frac{t}{2Ns_{ij}}} h_{ij}(0) + W_{ij} \sqrt{s_{ij} \left(1-e^{-\frac{t}{Ns_{ij}}}\right) - \nu} \right)
\end{equation}
where $(W_{ij})_{i \leq j}$ are iid standard Gaussians and $W_{ij} = W_{ji}$.  Then $\tilde H$ is generalized Wigner and $H_t$ admits the same distribution as $\tilde H + \sqrt{N \tilde \nu} Z$ where $Z$ is a GOE and $\tilde \nu = \nu / (1-(N+1)\nu)$.  Therefore,
\begin{align}
& \sup_{\substack{i_a \\ v_a}} |\EE{P(\sqrt N u_{i_a} \cdot v_a)} - \EE{P(N_{i_a} \cdot v_a)}| \\
\leq & \sup_{\substack{i_a \\ v_a}} |\EE{P(\sqrt N u_{i_a} \cdot v_a)} - \EE{P(\sqrt N u_{i_a}(t) \cdot v_a)}| 
+ |\EE{P(\sqrt N u_{i_a}(t) \cdot v_a)} - \EE{P(N_{i_a} \cdot v_a)}| \\ 
\leq & N^{-\varepsilon} + N^{-\od}
\end{align}
The first term in the last line uses Proposition \ref{prop:compare-genwig} after noting that we may assume $|\{a \in [m] | i_a = j\}|$ is even for every $j \in [N]$.  The second term in the last line is the consequence of Theorem \ref{thm:main} applied to $\tilde H$ at time $\tilde t = N \tilde \nu$ after noting that we may take $\nu = N^{-2 + \xi}$ for some $\xi \in (0,1)$ and that generalized Wigner ensembles satisfy Assumptions \ref{ass:eval} and \ref{ass:evec} by Theorem \ref{thm:iso-genwig}.
\end{proof}

The comparison for the graph models follows the argument in Section 4 of \cite{bourgade2017eigenvector}.  Again, all estimates on $|u_i \cdot q|^2$ terms must be replaced by analogous estimates on $(u_i \cdot v)(u_i \cdot w)$ terms.  Begin by importing the contents of Propositions 4.2 and 4.3 from \cite{bourgade2017eigenvector}.

Let $H = A/\sqrt{p(1-p/N)}$ be the normalized adjacency matrix of an Erd\H{o}s--R\'enyi graph as given in Definition \ref{defn:graphs}.  Drive $H_t$, $t \geq 0$, forward by the stochastic differential equation corresponding to the matrix valued Ornstein-Uhlenbeck process centered at $f = p/N (p(1-p/N))^{-1/2}$
\begin{equation}
d h_{ij}(t) = \frac{dB_{ij}(t)}{\sqrt{N}} - \frac{1}{2}(h_{ij}(t) - f)dt
\end{equation} 
where $(B_{ij}(t))_{i \leq j}$ are independent Brownian motions with variance $(1+\delta_{ij})t$ and $B_{ij} = B_{ji}$.  In particular, $H_t$ shares the same distribution with $f + e^{-t/2}(H-f) + \sqrt{1-e^{-t}} Z$ where $Z$ is an independent GOE.

\begin{proposition}\label{prop:perturb-ER}
Suppose $N^\delta \leq p \leq N/2$, $0 < \ob \leq \delta/3$, $ 0 \leq s \ll 1$, and $v,w \in \RR^N \cap e^\perp$ (where $e = (1,1,\ldots,1)^\top/\sqrt{N}$) are unit vectors. Then the following statements hold for any $\oc > 0$ with overwhelming probability:
\begin{enumerate}
\item all eigenvectors of $H_s$ are delocalized in base directions and in $v,w$: for all $i \in [N]$, \begin{equation} \max_{j \in [N]} |\langle e_j, u_i \rangle|^2 + |\langle v, u_i \rangle|^2 + |\langle w, u_i \rangle|^2 \leq C N^{-1+\oc}\end{equation}
\item eigenvalues do not accumulate: for any interval $I$ of length $|I| \geq N^{-1+\oc}$, we have \begin{equation}|\{i : \lambda_i \in I\}| \leq C|I|N. \end{equation}
\item Assumptions \ref{ass:eval} and \ref{ass:evec} hold for $\tilde H_s = f + e^{-t/2}(H-f)$.
\end{enumerate}
\end{proposition}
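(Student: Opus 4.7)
The plan is to derive all three statements from known local laws for sparse Erd\H{o}s--R\'enyi ensembles combined with the representation of $H_s$ as a small Gaussian perturbation of $\tilde H_s$. Write $J = \sqrt{N}\,ee^\top$ for the all-ones matrix so that $H = (H-fJ) + fJ$, where $H - fJ$ has independent centered entries of variance $\asymp 1/N$ while $fJ$ acts only along $e$. Consequently, on the invariant subspace $e^\perp$, the matrix $\tilde H_s = e^{-s/2}(H-fJ) + fJ$ is just a rescaling (by $e^{-s/2} \approx 1$) of the centered sparse Wigner-type matrix $H - fJ$, while along $\RR e$ it contributes an outlier eigenvalue near $fN \gg \sqrt{N}$. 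Since $s \ll 1$, $H_s \stackrel{d}{=} \tilde H_s + \sqrt{1 - e^{-s}}\,Z$ with $Z$ an independent GOE, so that after verifying (3), items (1) and (2) follow from the deterministic results of Section~\ref{sec:fc} applied with $t$ there replaced by $1 - e^{-s} \asymp s$.

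First I would verify (3) by invoking the isotropic local law for the sparse Erd\H{o}s--R\'enyi model (in the regime $N^\delta \leq p \leq N/2$; this is the content of the references cited in Section~2 of \cite{bourgade2017eigenvector}). For $z = E + i\eta$ with $E$ in the bulk of the rescaled semicircle and $\eta \geq N^{-1+\oc}$, this gives $|m_{N,\tilde H_s}(z) - m_{sc}(z)| \ll 1$ yielding Assumption~\ref{ass:eval}, and for $w \in e^\perp$ it gives $|\langle w, G_{\tilde H_s}(z) w \rangle - m_{sc}(z)| \ll 1$. Since $v, w \in e^\perp$ are orthogonal to the outlier direction, the rank-one piece $fJ$ is annihilated in the isotropic quadratic form, so Assumption~\ref{ass:evec} holds in the directions $v, w$ with $S = \{v,w\}$. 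The spectral norm bound $\|\tilde H_s\| \leq N^{\oa}$ is immediate from the crude bound $\|H - fJ\| \lesssim \sqrt{N}$ for sparse matrices together with $\|fJ\| = fN$.

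Having installed (3), item (2) is exactly Lemma~\ref{lem:reg_meso} (or rather its proof, which uses only Proposition~\ref{prop:reg_fc} and Proposition~\ref{prop:reg_eval} applied to $\tilde H_s$), and delocalization in the test directions $v, w$ required by item (1) is Corollary~\ref{cor:deloc}. The main obstacle will be delocalization in the base directions $e_j$, which are not orthogonal to the outlier eigenvector. I handle this by decomposing $e_j = (e_j - N^{-1/2}e) + N^{-1/2}e$: the first piece lies in $e^\perp$, so Corollary~\ref{cor:deloc} gives $|\langle u_i, e_j - N^{-1/2}e\rangle|^2 \leq N^{-1+\oc}$; the second contributes $N^{-1/2}|\langle u_i, e\rangle|$, and the standard analysis of rank-one perturbations (resolvent identity applied to $\tilde H_s = e^{-s/2}(H-fJ) + fJ$, together with the isolation of the outlier eigenvalue at distance of order $\sqrt{N}$ from the bulk) yields $|\langle u_i, e\rangle| \leq N^{-1/2+\oc}$ for all bulk indices $i$. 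Combining the two pieces gives $|\langle u_i, e_j\rangle|^2 \leq C N^{-1+\oc}$ for bulk $i$, completing item (1). This last step is precisely the spiked-model argument carried out in Section~4 of \cite{bourgade2017eigenvector}, to which one may refer for the quantitative details.
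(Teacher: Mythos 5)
Your overall strategy coincides with the paper's: both reduce the proposition to the isotropic local law for the sparse Erd\H{o}s--R\'enyi ensemble and to the spiked/rank-one analysis of Proposition 4.3 in \cite{bourgade2017eigenvector}, with the only genuinely new ingredient being that the isotropic law must be invoked for general pairs $v,w\in e^\perp$ rather than for a single direction. The paper's proof is essentially a pointer to that reference plus the statement that the bound $|\langle v, G(z)w\rangle - m(z)\langle v,w\rangle| \leq (\log N)^{C\log\log N}(p^{-1/2}+(N\eta)^{-1/2})$ holds for both $H_s$ and $\tilde H_s$ by \cite{BKY} and \cite{EKYY12}; your treatment of the outlier via the decomposition $e_j = (e_j - N^{-1/2}e) + N^{-1/2}e$ is exactly the mechanism in that reference.

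There is, however, a concrete routing error in how you derive items (1) and (2). You propose to obtain them from the machinery of Section \ref{sec:fc} (Corollary \ref{cor:deloc} and Lemma \ref{lem:reg_meso}) ``applied with $t$ there replaced by $1-e^{-s}\asymp s$.'' This fails on two counts. First, those results are proved for a Gaussian perturbation whose variance $t$ satisfies $\eta_* N^{\oc} < t$; the proposition allows $0 \leq s \ll 1$ including $s=0$, in which case there is no Gaussian component at all and the free-convolution framework of Section \ref{sec:fc} is vacuous. Second, Corollary \ref{cor:deloc} gives delocalization only for bulk indices $i\in\indint^{\kappa/10}$, and Lemma \ref{lem:reg_meso} controls only intervals centered in $\enint^{\kappa}$, whereas items (1) and (2) are asserted for \emph{all} $i\in[N]$ and \emph{all} intervals of length at least $N^{-1+\oc}$. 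The correct source for (1) and (2) --- and the one the paper uses --- is the local law for the sparse model itself (Theorem 2.9 in \cite{EKYY12} together with Theorem 8.3 and Remark 8.6 in \cite{BKY}), which applies directly to $H_s$ and to $\tilde H_s$ by exchangeability and holds uniformly down to the spectral edges, giving delocalization and the eigenvalue non-accumulation for every index and every interval. Since you already invoke exactly this isotropic law to verify item (3), the fix is merely to cite it for (1) and (2) as well instead of the paper's internal, bulk-restricted corollaries; with that substitution your argument is complete.
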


\begin{proof}
The proof is identical to Proposition 4.3 in \cite{bourgade2017eigenvector} except with the equation (4.11) replaced by 
\begin{equation}
|\langle v, G(z) w \rangle - m(z) \langle v, w \rangle| \leq (\log N)^{C \log \log N} \left( \frac{1}{p^{1/2}} + \frac{1}{(N\eta)^{1/2}} \right)
\end{equation}
uniformly for $z = E+i\eta$ with $|E| \leq 5$ and $0 < \eta \leq 1$.  This inequality is true for $G(z) = (H_s - z)^{-1}$ and $G(z) = (\tilde H_s - z)^{-1}$ with overwhelming probability by a combination of the exchangability between $H_s$ and $\tilde H_s$, Theorem 8.3 and Remark 8.6 in \cite{BKY}, and Theorem 2.9 in \cite{EKYY12}.
\end{proof}

Recall the quantity $Q_i$ defined by Tao and Vu in \cite{TV10} and \cite{TV11} on the space of symmetric $N \times N$ real matrices given by $Q_i(A) = N^{-2} \sum_{j \neq i} |\lambda_i(A) - \lambda_j(A)|^{-2}$ for the sake of capturing derivatives of eigenvalues.

\begin{proposition}\label{prop:Qi-deriv}
Suppose $A$ is a deterministic $N \times N$ real symmetric matrix satisfying items (i) and (ii) from Proposition \ref{prop:perturb-ER} and $Q_i \leq N^{2\tau}$.  Then 
\begin{align}
|\partial_{ab}^{(k)} \lambda_i(A)| &= O(N^{-1+(k-1)\tau + (2k-1)\oc}) \\
|\partial_{ab}^{(k)} Q_i(A)| &=O(N^{(k+2)\tau + (2k+2)\oc}) \\
|\partial_{ab}^{(k)} \langle v, u_i(A)\rangle \langle u_i(A), w \rangle|  &= O(N^{-1+k\tau+(2k+1)\oc})
\end{align}
for $k=1,2,3$.
\end{proposition}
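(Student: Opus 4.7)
The approach is standard: derive $\partial_{ab}^{(k)}\lambda_i$, $\partial_{ab}^{(k)} Q_i$, and $\partial_{ab}^{(k)}(\langle v, u_i\rangle\langle u_i, w\rangle)$ via iterated first-order perturbation theory, then bound each resulting summation using the two hypotheses. At first order, the Hellmann--Feynman formula and standard resolvent expansion give $\partial_{ab} \lambda_i = 2 u_i^a u_i^b$ (or $(u_i^a)^2$ if $a = b$) and $\partial_{ab} u_i = \sum_{j \neq i} (\lambda_i - \lambda_j)^{-1}(u_j^a u_i^b + u_j^b u_i^a) u_j$; higher derivatives follow by iterating Leibniz, producing a sum over labelled trees whose nodes carry eigenvector inner products and whose edges carry inverse eigenvalue gaps.

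The hypothesis $Q_i \le N^{2\tau}$ supplies two controls on the gaps: a uniform minimum-gap bound $|\lambda_i - \lambda_j| \ge N^{-1-\tau}$ (since the single-term contribution to $Q_i$ must not exceed $N^{2\tau}$) and the moment bound $\sum_{j \neq i} (\lambda_i - \lambda_j)^{-2} \le N^{2+2\tau}$. Combined, these give $\sum_{j \neq i}(\lambda_i - \lambda_j)^{-m} \le N^{m(1+\tau)}$ for every $m \ge 2$, which controls all denominators encountered. Delocalization (item (i)) bounds each factor $|u_j^a u_k^b| \le N^{-1+\oc}$ and each overlap $|\langle v, u_j\rangle|, |\langle u_j, w\rangle| \le N^{-1/2+\oc/2}$, while the orthonormality identity $\sum_j \langle v, u_j\rangle^2 = 1$ enables a Cauchy--Schwarz step whenever a test-direction overlap appears inside a sum over $j$.

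For the $\lambda_i$-bound, iterating Leibniz $k$ times yields expressions involving at most $k-1$ gap denominators and $2k-1$ inner-product factors; combining the two controls above gives the claimed $N^{-1 + (k-1)\tau + (2k-1)\oc}$. For the $Q_i$-bound, differentiating $(\lambda_i - \lambda_j)^{-2}$ produces $(\lambda_i - \lambda_j)^{-(k+2)}$ paired with polynomials in $\partial^\ell(\lambda_i - \lambda_j)$, and the inductive bound on $\partial^\ell \lambda_i$ from the first part feeds in directly; the resulting count of $\sqrt{Q_i}$ and delocalization factors reproduces $N^{(k+2)\tau + (2k+2)\oc}$. For the eigenvector-product bound it is cleanest to use the contour representation $\langle v, u_i\rangle\langle u_i, w\rangle = (2\pi i)^{-1}\oint_{\gamma_i}\langle v, (A-z)^{-1} w\rangle\, dz$ over a small circle around $\lambda_i$ of radius $\tfrac{1}{2}\min_{j \neq i}|\lambda_i - \lambda_j|$, and to apply the identity $\partial_{ab}(A-z)^{-1} = -(A-z)^{-1}(E_{ab}+E_{ba})(A-z)^{-1}$ inside the integral; a spectral decomposition then reduces every term to the same sums already handled.

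The main obstacle is the bookkeeping at $k = 3$: iterated Leibniz produces a tree of terms, each a product of inner products (each yielding a $\oc$ factor via delocalization) and gap denominators (each yielding a $\tau$ factor via $\sqrt{Q_i}$). The worst-case term is the one in which all $k$ perturbations strike the same gap factor, producing the maximum power of $\sqrt{Q_i}$; the stated exponents are tight for precisely that term, with all other terms coming with strictly smaller exponents. A careful induction on $k$, feeding the lower-order bounds on $\partial^\ell \lambda_i$ into the subsequent analyses of $\partial^\ell Q_i$ and $\partial^\ell(\langle v, u_i\rangle\langle u_i, w\rangle)$, closes the argument.
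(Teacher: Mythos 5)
Your overall architecture (iterated first-order perturbation theory, the contour representation for $\langle v,u_i\rangle\langle u_i,w\rangle$ with $\partial_{ab}G = -G\Theta G$, delocalization for the eigenvector factors, and $Q_i$ for the denominators) is the same route the paper takes — the paper simply outsources the first two bounds to \cite{huang2015bulk} and writes out the contour argument for the third. However, there is a genuine gap in your treatment of the denominators. From $Q_i\le N^{2\tau}$ alone you correctly extract the minimum-gap bound $|\lambda_i-\lambda_j|\ge N^{-1-\tau}$ and the moment bounds $\sum_{j\ne i}|\lambda_i-\lambda_j|^{-m}\le N^{m(1+\tau)}$ for $m\ge 2$, and you claim these ``control all denominators encountered.'' They do not: the first-order perturbation formulas produce sums carrying a \emph{single} inverse gap, and the exponents in the proposition force the estimate
\begin{equation}
\sum_{j\ne i}\frac{1}{|\lambda_i-\lambda_j|}\le N^{1+\tau+\oc},
\end{equation}
which cannot be extracted from the minimum gap and the second-moment bound (Cauchy--Schwarz gives only $N^{3/2+\tau}$, and the dyadic scale $|\lambda_i-\lambda_j|\sim N^{-1/2-\tau}$ saturates it). The correct derivation splits dyadically and uses item (ii) of Proposition \ref{prop:perturb-ER} — no more than $C|I|N$ eigenvalues in any interval of length $|I|\ge N^{-1+\oc}$ — for the scales above $N^{-1+\oc}$, reserving $Q_i$ only for the sub-microscopic scales; this is precisely the ``dyadic argument from Lemma 4.5 of \cite{huang2015bulk}'' that the paper invokes, and it is the only place item (ii) enters.

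The Cauchy--Schwarz step you propose as a substitute (pairing a test-direction overlap with the orthonormality identity $\sum_j\langle v,u_j\rangle^2=1$) does not repair this: it trades a delocalization factor $N^{-1/2+\oc/2}$ for the $\ell^2$-normalization $1$ and therefore loses exactly $N^{1/2}$ against the dyadic route. Concretely, at $k=1$ your toolkit yields at best $|\partial_{ab}\langle v,u_i\rangle\langle u_i,w\rangle| \lesssim N^{-1/2+\tau+O(\oc)}$ rather than the claimed $N^{-1+\tau+3\oc}$, and the same loss appears in $\partial_{ab}^{(2)}\lambda_i$ through the term $\sum_{j\ne i}(\lambda_i-\lambda_j)^{-1}(u_j^a)^2 u_i^b u_j^{\,\cdot}$. (Your analysis of $\partial^{(k)}Q_i$ is unaffected, since there only $m\ge 3$ moments of the gaps appear.) To close the argument you must add the dyadic single-gap estimate above as an explicit lemma, using both hypotheses (i)–(ii) and $Q_i\le N^{2\tau}$, and then bound every eigenvector factor by delocalization rather than by orthonormality.
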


\begin{proof}
The first two bounds are proved in Proposition 4.6 of \cite{huang2015bulk}. Provided that $v=w$, the proof of the third bound is contained in Proposition 4.5 of \cite{bourgade2017eigenvector}.  For $v \neq w$, the computation still follows through.  Indeed, the contour formula says that
\begin{equation}
\partial_{ab}^{(k)} \langle v, u_i \rangle \langle u_i, w \rangle = - \frac{1}{2\pi i}\partial_{ij}^{(k)} \oint \langle v, G(z) w \rangle dz = \frac{(-1)^{k+1} k!}{2\pi i} \oint \langle v, (G(z)\Theta)^kG(z)w\rangle dz
\end{equation}
where the integral is taken over a contour which contains $\lambda_i$ and no other eigenvalue.  Here, $\Theta = e_ae_b^\top + e_be_a^\top$ is the matrix with $\Theta_{cd} = 1$ if $\{c,d\}=\{a,b\}$ and $0$ otherwise.  Thanks to the delocalization estimate in Proposition \ref{prop:perturb-ER} item (i), the derivatives are sufficiently bounded.  For example
\begin{equation}
|\partial_{ab} \langle v, u_i \rangle \langle u_i, w\rangle| = \left| \sum_{j \neq i} \frac{\langle v, u_j \rangle \langle u_j, \Theta u_j \rangle \langle u_j, w \rangle}{\lambda_j - \lambda_i} \right| \leq \frac{2N^{2\oc}}{N^2} \sum_{j \neq i} \frac{1}{|\lambda_j - \lambda_i|} \leq N^{-1+\tau + 3\oc}
\end{equation}
by the dyadic argument from Lemma 4.5 in \cite{huang2015bulk} which is a consequence of $Q_i \leq N^{2\tau}$.
\end{proof}

\begin{proof}[Proof of Theorem \ref{thm:p-reg}]
The argument follows in exactly the same way as the proof of \cite[Theorem 1.1]{bourgade2017eigenvector} at the end of \cite[Section 4]{bourgade2017eigenvector} after replacing all reliances on \cite[Propositions 4.2 and 4.3]{bourgade2017eigenvector} with Propositions \ref{prop:Qi-deriv} and \ref{prop:perturb-ER}.
The extension to $p$-regular graphs also follows in the same way.
\end{proof}


\appendix

\section{Derivation of the colored dynamics}\label{app:derivedynamics}

This appendix is devoted to proving Theorem \ref{thm:levmf} on the colored eigenvector moment flow.  The derivation requires the SEE differentials which we state here for completeness.  See \cite[Appendix B]{QUE} for a proof.
\begin{theorem}[Stochastic Eigenstate Equation]\label{thm:evdbm}
Let $\EV \in O(N)$ an orthogonal matrix and $\bl$ be a diagonal matrix with increasing entries $\lambda_1 < \ldots < \lambda_n$.  Let $\dat = \EV\bl\EV^\top$ be a symmetric matrix.  Then there are strong solutions $\dat(s)$, $\bl(s)$, and $\EV(s)$ which are stochastic processes in the space of symmetric matrices, increasing-diagonal matrices, and orthogonal matrices, respectively, satisfying the following stochastic partial differential equations with initial data $\dat(0) = \dat$, $\bl(0)=\bl$, and $\EV(0) = \EV$:
\begin{align}
dh_{\alpha\beta}(s) &= \sqrt{\frac{1+\delta_{\alpha\beta}}{N}} W_{\alpha\beta}(s) \label{appeq:underlying} \\ 
d\lambda_i(s) &= \frac{dB_{ii}(s)}{\sqrt{N}} + \frac{1}{N} \sum_{j\neq i} \frac{1}{\lambda_j(s) - \lambda_i(s)} dt  \label{appeq:value} \\ 
d u_i^\alpha(s) &= \frac{1}{\sqrt{N}} \sum_{j = 1, j \neq i}^N \frac{u_j^\alpha(s) dB_{ij}(s)}{\lambda_j(s) - \lambda_i(s)} - \frac{1}{2N} \sum_{j = 1, j \neq i}^N \frac{u_i^\alpha(s) ds}{\left(\lambda_j(s) - \lambda_i(s)\right)^2} \label{appeq:vector}
\end{align}
where $W_{\alpha\beta}(s)=W_{\beta\alpha}(s)$ and $B_{ij}(s)=B_{ji}(s)$ are $N(N+1)$ mutually independent and identically distributed standard Brownian motions for all $1 \leq \alpha \leq \beta \leq N$ and $1 \leq i \leq j \leq N$.  Moreover, the random symmetric matrices $H(s)$ and $\EV(s)\bl(s)\EV(s)^\top$ are identically distributed. 
\end{theorem}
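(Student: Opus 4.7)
The plan is to build all three processes from a single underlying source of randomness and then identify the relevant Brownian motions by spectral calculus. Let $\{W_{\alpha\beta}\}_{\alpha \le \beta}$ be the given symmetric family of independent standard Brownian motions and define $\dat(s)$ as the unique strong solution of \eqref{appeq:underlying} with $\dat(0) = \dat$; this is immediate since the SDE is just matrix-valued Brownian motion with a deterministic initial condition. For each $s \geq 0$ let $\lambda_1(s) \le \cdots \le \lambda_N(s)$ be the ordered eigenvalues of $\dat(s)$ and choose $\vu_i(s)$ to be a corresponding unit eigenvector; set $\bl(s) = \mathrm{diag}(\lambda_i(s))$ and $\EV(s) = (\vu_1(s), \ldots, \vu_N(s))$. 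By construction $\dat(s) = \EV(s)\bl(s)\EV(s)^\top$ almost surely for every $s$, which is stronger than the distributional identity claimed. The remaining work is to show that $\bl(s), \EV(s)$ satisfy \eqref{appeq:value} and \eqref{appeq:vector} for appropriate Brownian motions $B_{ij}$.

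First I will establish non-collision: almost surely $\lambda_1(s) < \cdots < \lambda_N(s)$ for all $s > 0$. Let $\tau_k$ be the first time two eigenvalues come within $1/k$ of each other. Before $\tau_k$, analytic perturbation theory gives smooth dependence of $\lambda_i, \vu_i$ on the entries $h_{\alpha\beta}$, with the standard first- and second-order formulas $\partial_{\alpha\beta}\lambda_i = (2-\delta_{\alpha\beta})u_i^\alpha u_i^\beta$ and the mixed derivative involving $(\lambda_i - \lambda_j)^{-1}$. Applying It\^o's formula on $[0, \tau_k)$ yields
\begin{equation*}
d\lambda_i(s) = \sum_{\alpha \le \beta} (2-\delta_{\alpha\beta}) u_i^\alpha(s) u_i^\beta(s) \sqrt{\tfrac{1+\delta_{\alpha\beta}}{N}} dW_{\alpha\beta}(s) + \tfrac{1}{N}\sum_{j \neq i} \tfrac{1}{\lambda_i(s)-\lambda_j(s)} ds,
\end{equation*}
the drift coming from orthonormality of $\EV(s)$. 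Standard McKean-type arguments (writing $\Phi = -\sum_{i<j} \log(\lambda_j-\lambda_i)$ and bounding $\EE{\Phi(\lambda(s \wedge \tau_k))}$ via It\^o, where the strong Coulomb repulsion controls the diffusion term) show $\tau_k \to \infty$ almost surely, giving noncollision and hence unique continuation of the choice of $\vu_i(s)$ up to a global sign.

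Next I will identify the driving Brownian motions. The quadratic variation of the martingale part of $\lambda_i$ is $\sum_{\alpha\beta}(2-\delta_{\alpha\beta})(u_i^\alpha u_i^\beta)^2(1+\delta_{\alpha\beta})/N \, ds = (2/N)\,ds$, so defining
\begin{equation*}
B_{ii}(s) := \sqrt{N/2}\int_0^s \sum_{\alpha \le \beta} (2-\delta_{\alpha\beta}) u_i^\alpha(r) u_i^\beta(r) \sqrt{\tfrac{1+\delta_{\alpha\beta}}{N}}\, dW_{\alpha\beta}(r)
\end{equation*}
produces \eqref{appeq:value}. Similarly, It\^o applied to $u_i^\alpha(s)$ via the eigenvector perturbation formula gives the martingale $\sum_{j\neq i} u_j^\alpha(s)(\lambda_j(s)-\lambda_i(s))^{-1} dB_{ij}(s)$ (after setting $B_{ij}(s) := \int_0^s \sum_{\alpha \le \beta}(2-\delta_{\alpha\beta})\frac{u_i^\alpha u_j^\beta + u_j^\alpha u_i^\beta}{2}\sqrt{(1+\delta_{\alpha\beta})/N}\, dW_{\alpha\beta}$), while the second-order correction accumulates exactly the deterministic It\^o drift $-\tfrac{1}{2N}\sum_{j\neq i}(\lambda_i-\lambda_j)^{-2} u_i^\alpha \, ds$ appearing in \eqref{appeq:vector}. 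Verifying that $\{B_{ij}\}_{i\le j}$ is a family of independent standard Brownian motions then reduces to a L\'evy-characterization computation: the covariations are obtained by pairing the symmetric rank-one or rank-two tensors $\tfrac{1}{\sqrt{2}}(\vu_i\vu_j^\top + \vu_j\vu_i^\top)$ and $\vu_i\vu_i^\top$ in the Hilbert-Schmidt inner product, and these tensors form an orthonormal system in the space of symmetric matrices because $\EV(s) \in O(N)$.

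The expected main obstacle is the non-collision analysis—justifying the passage from the local (pre-$\tau_k$) SDE derivation to a global one requires the diverging-logarithm drift to dominate the martingale fluctuations uniformly, and some care is needed because near a collision the perturbation expansions for eigenvectors blow up even though the eigenvalue drift is finite. The cleanest route is to work with symmetric functions of the eigenvalues (where singularities cancel) to pass through potential collisions, but since the initial condition $\dat$ has simple spectrum generically and the GOE noise is elliptic, one can argue directly that collisions form a polar set for the process. The distributional identity in the theorem statement is then automatic since $\dat(s) = \EV(s)\bl(s)\EV(s)^\top$ holds pathwise by construction.
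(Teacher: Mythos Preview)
The paper does not actually prove this theorem: it states the result and writes ``See \cite[Appendix B]{QUE} for a proof.'' So there is no in-paper argument to compare against. Your sketch follows the standard derivation that appears in the cited reference (Bourgade--Yau): build the matrix Brownian motion, apply It\^o's formula via first- and second-order eigenvalue/eigenvector perturbation formulas on the pre-collision interval, identify the $B_{ij}$ as orthogonal rotations of the $W_{\alpha\beta}$ using L\'evy's characterization and orthonormality of $\EV(s)$, and close the argument with a non-collision estimate. This is exactly the route taken in \cite{QUE}, so your approach is correct and coincides with what the paper defers to.

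One minor point: your definition of $B_{ii}$ has quadratic variation $2/N \cdot N/2 \, ds = ds$, matching a standard Brownian motion, but then \eqref{appeq:value} as written in the paper has $dB_{ii}/\sqrt{N}$ which would give quadratic variation $1/N$ rather than $2/N$; this is the usual $\sqrt{2}$ discrepancy in GOE conventions and you should check the normalization carefully against how the paper sets up \eqref{appeq:underlying}. The non-collision argument you outline (logarithmic Lyapunov function) is the right idea but is genuinely the delicate step; since the paper offloads the whole proof, you are not expected to fill this in here.
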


\subsection{Deriving the colored eigenvector moment flow}

\begin{proof}[Proof of Theorem \ref{thm:levmf}]
Applying It\^o's formula to \ref{appeq:vector}, we obtain
\begin{multline}
\partial_s \EE{ \prod_{a=1}^n \sqrt{N} \ipr{\vu_{x_a}(s), \vv_a} } = \sum_{b=1}^n \EE{ \left(\prod_{a \neq b} \sqrt N \ipr{\vu_{x_a}(s), \vv_a} \right) \left( - \frac{1}{N} \sum_j \frac{\sqrt{N} \ipr{\vu_{x_b}(s), \vv_b}}{|\lambda_i(s) - \lambda_j(s)|^2} \right)} \\
+ \sum_{b < c} \EE{
\begin{gathered}
\left( \prod_{a \neq b,c} \sqrt{N} \ipr{\vu_{x_a}(s), \vv_a} \right) \phantom{ \left(\prod_{a \neq b,c} \sqrt{N} \ipr{\vu_{x_a}(s), \vv_a} \right)} \\
\times \left( \one{x_b = x_c} \sum_{j} \frac{2}{N} \frac{N \ipr{\vu_j(s), \vv_b} \ipr{\vu_j(s), \vv_c}}{|\lambda_j(s) - \lambda_{x_b}(s)|^2} - \one{x_b \neq x_c} \frac{2}{N} \frac{N \ipr{\vu_{x_b}(s), \vv_c} \ipr{\vu_{x_c}(s), \vv_b}}{|\lambda_{x_b}(s) - \lambda_{x_c}(s)|^2}\right) 
\end{gathered}
}
\end{multline}
In terms of the prenormalized observable $\tilde f_s(\xx) = \EE{ \prod_{a = 1}^n \sqrt{N} \ipr{\vu_{x_a}(s), \vv_a} } = \sqrt{\pi(\xx)} f_s(\xx)$, this time derivative can be rewritten as
\begin{equation}
\partial_s \tilde f_s(\xx) = \sum_{i=1}^N \sum_{j=1}^N c_{ij}(s) \left( -\sum_{a=1}^n \one{x_a=i}\tilde f_s(\xx) + \sum_{b \neq c \in [n]} \left( \one{x_b=x_c=i} \tilde f_s(m_{bc}^{ij}\xx) - \one{x_b=i} \one{x_c=j} \tilde f_s(s_{bc}^{ij}\xx) \right) \right)
\end{equation}
where $c_{ij}(s)=N^{-1}|\lambda_i(s)-\lambda_j(s)|^2$ when $i \neq j \in [N]$ and vanishes otherwise.
By symmetrizing over $b$ and $c$ and desymmetrizing over over $i$ and $j$, we can rewrite the sum as
\begin{equation}\label{eq:derivnormal}
\partial_s \tilde f_s(x) = \sum_{i=1}^{N-1} \sum_{j=i+1}^{N} c_{ij}(s) \Bigg( \begin{aligned}[t]& \sum_{b \neq c \in [n]} \left[ \one{x_b=x_c=i} \tilde f_s (m_{bc}^{ij}\xx) 
+ \one{x_b=x_c=j} \tilde f_s(m_{bc}^{ji}\xx) 
- 2\one{x_b=i}\one{x_c=j} \tilde f_s(s_{bc}^{ij}\xx) \right] \\
&- \tilde f_s(\xx) \sum_{a = 1}^n[\one{x_a=i}+\one{x_a=j}] \Bigg)\end{aligned}
\end{equation}
Divide equation \eqref{eq:derivnormal} by $\sqrt{\pi(\xx)}$ and use the relations
\begin{equation}
\sqrt{\frac{\pi(m_{bc}^{ij}\xx)}{\pi(\xx)}} = \frac{n_j(\xx)+1}{n_i(\xx)-1}
\quad \mbox{and} \quad
\sqrt{\frac{\pi(s_{bc}^{ij}\xx)}{\pi(\xx)}} = 1
\end{equation}
when $x_b=x_c=i$ in the first identity and $x_b=i$ and $x_c=j$ in the second identity.  Note that the right hand side of these relations are independent of colors and depend only on the total particle numbers at sites $i$ and $j$.  In the end, we have the time derivative of the original observable
\begin{equation}
\partial_s f_s(\xx) = \sum_{i=1}^{N-1} \sum_{j=i+1}^{N} c_{ij}(s) \Bigg( \begin{aligned}[t]
& \sum_{b \neq c \in [n]} \bigg[ \begin{aligned}[t] &\one{x_b=x_c=i} \frac{n_j(\xx)+1}{n_i(\xx)-1} f_s(m_{bc}^{ij}\xx) 
+ \one{x_b=x_c=j} \frac{n_i(\xx)+1}{n_j(\xx)-1} f_s(m_{bc}^{ji}\xx) \\
- & 2\one{x_b=i}\one{x_c=j} \tilde f_s(s_{bc}^{ij}\xx)\bigg] \end{aligned}\\
&- \tilde f_s(\xx) \sum_{a = 1}^n[\one{x_a=i}+\one{x_a=j}] \Bigg)\end{aligned}
\end{equation}
Lastly, counting coefficients, we see that the coefficient in front of $f(x)$ is the negation of the sum over coefficients of the remaining terms.  Indeed for each $i<j\in[N]$,
\begin{equation}
\sum_{a=1}^n \left( \one{x_a=i} + \one{x_a=j} \right) = n_i(\xx) + n_j(\xx)
\end{equation}
while
\begin{multline}
\sum_{b \neq c \in [n]} \bigg[ \one{x_b=x_c=i} \frac{n_j(\xx)+1}{n_i(\xx)-1}
+ \one{x_b=x_c=j} \frac{n_i(\xx)+1}{n_j(\xx)-1} 
- 2\one{x_b=i}\one{x_c=j} \bigg] \\ = n_i(\xx)(n_i(\xx)-1) \frac{n_j(\xx)+1}{n_i(\xx)-1} + n_j(\xx)(n_j(\xx)-1) \frac{n_i(\xx)+1}{n_j(\xx)-1} - 2n_i(\xx)n_j(\xx) = n_i(\xx) + n_j(\xx)
\end{multline}
This observation allows us to put the the derivative in parabolic form
\begin{equation}
\partial_t f_s(\xx) = \sum_{i=1}^{N-1} \sum_{j=i+1}^{N} c_{ij}(s) \Bigg( \sum_{b \neq c \in [n]} \bigg[ \begin{aligned}[t] & \frac{n_j(\xx)+1}{n_i(\xx)-1} (f_s(m_{bc}^{ij}\xx) - f_s(\xx))
+ \frac{n_i(\xx)+1}{n_j(\xx)-1} (f(m_{bc}^{ji}\xx) - f_s(\xx)) \\
- & 2(f_s(s_{bc}^{ij}\xx) - f_s(\xx)) \bigg]\Bigg) \end{aligned}
\end{equation}
Terms in the first and second lines of each summand contribute to the move operator, while terms in the third line of each summand contribute to the exchange operator.
\end{proof}

\subsection{Reversible measure}\label{app:rev_meas}

\begin{remark}
The reversible measure for $\gen(s)$ on $\Ln$ refines the reversible measure for the jump process studied in \cite{QUE} in that the reversible measure in that paper is precisely the pushforward of $\pi$ along the colorblind map $\forget: \Ln \rightarrow \Omega_{n/2}$, $\forget(\xx)_i = n_i(\xx)/2$.  Each $\bet \in \Omega_{n/2}$ has $\binom{2n}{2\bet}$ preimages, each weighted equally.  Therefore,
\begin{equation}
\pi(\forget^{-1}\bet) = (2n)! \prod_{i=1}^N \frac{(2\eta_i)!!^2}{(2\eta_i)!}
\end{equation}
which coincides with the reversible measure from \cite{QUE} up to the constant factor of $(2n)!$.
\end{remark}

\begin{proposition}\label{prop:Mrevmeasn}
For all $\xx, \yy \in \Ln$ and all $i < j \in [N]$,
\begin{equation}
\ipn{ \delta_\xx, \move_{ij} \delta_\yy } = \ipn{ \move_{ij} \delta_\xx, \delta_\yy }
\end{equation}
where we use the inner product and convention for $\delta_\xx$ given in \ref{def:inner_product}.
\end{proposition}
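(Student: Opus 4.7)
The plan is to reduce the proposition to verifying detailed balance for the jump rates associated with $\move_{ij}$ and then to check this via a short combinatorial computation using the double factorial recursion. First, I would unpack the inner products. By Definition \ref{def:inner_product}, $\delta_\xx(\zz) = \pi(\xx)^{-1}\one{\zz = \xx}$, so
\begin{equation}
\ipn{\delta_\xx, \move_{ij}\delta_\yy} = (\move_{ij}\delta_\yy)(\xx) \quad \text{and} \quad \ipn{\move_{ij}\delta_\xx, \delta_\yy} = (\move_{ij}\delta_\xx)(\yy).
\end{equation}
Thus the statement is equivalent to the pointwise identity $(\move_{ij}\delta_\yy)(\xx) = (\move_{ij}\delta_\xx)(\yy)$ for all $\xx,\yy \in \Ln$, which is precisely the detailed balance relation $\pi(\xx)Q_{\xx\yy} = \pi(\yy)Q_{\yy\xx}$ written in a symmetric form, where $Q_{\xx\zz}$ denotes the off-diagonal transition rate from $\xx$ to $\zz$ under $\move_{ij}$.

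Next I would catalog the cases. If $\xx = \yy$ the identity is trivial. Otherwise, by the definition of the move operator and of $m_{ab}^{ij}$ in Theorem \ref{thm:levmf}, $(\move_{ij}\delta_\yy)(\xx) \neq 0$ only if $\yy = m_{ab}^{ij}\xx$ or $\yy = m_{ab}^{ji}\xx$ for some $a \neq b \in [n]$. Without loss of generality assume the first case, so that $\yy$ is obtained from $\xx$ by moving a specific unordered pair of labels $\{a,b\}$ from site $i$ to site $j$; equivalently, $n_i(\yy) = n_i(\xx) - 2$ and $n_j(\yy) = n_j(\xx) + 2$ while all other site occupations coincide. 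Counting ordered pairs, exactly two ordered pairs, $(a,b)$ and $(b,a)$, contribute to the sum defining $\move_{ij}\delta_\yy$ at $\xx$, and symmetrically exactly two ordered pairs contribute to $\move_{ij}\delta_\xx$ at $\yy$ (via the $m_{ab}^{ji}$ half of the move operator applied at $\yy$). This gives
\begin{equation}
(\move_{ij}\delta_\yy)(\xx) = \frac{2(n_j(\xx)+1)}{(n_i(\xx)-1)\pi(\yy)}, \qquad (\move_{ij}\delta_\xx)(\yy) = \frac{2(n_i(\yy)+1)}{(n_j(\yy)-1)\pi(\xx)}.
\end{equation}

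The main (and only nontrivial) step is then to verify that these two quantities coincide. Substituting $n_i(\yy) = n_i(\xx) - 2$ and $n_j(\yy) = n_j(\xx) + 2$ into the right-hand expression reduces the claim to
\begin{equation}
\frac{n_j(\xx)+1}{n_i(\xx)-1} \cdot \frac{1}{\pi(\yy)} = \frac{n_i(\xx)-1}{n_j(\xx)+1} \cdot \frac{1}{\pi(\xx)},
\end{equation}
or equivalently $\pi(\xx)/\pi(\yy) = ((n_i(\xx)-1)/(n_j(\xx)+1))^2$. Since $\pi$ is a product over sites and only the entries at $i$ and $j$ change between $\xx$ and $\yy$, this reduces further to the double factorial identity $k!!/(k-2)!! = k-1$ for even $k \geq 2$ applied once at site $i$ (with $k = n_i(\xx)$) and once at site $j$ (with $k = n_j(\xx)+2$). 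This identity is immediate from the recursion $k!! = (k-1)(k-2)!!$ stated in Theorem \ref{thm:levmf}, completing the proof. No difficulty is expected beyond the bookkeeping of ordered-pair counts and careful tracking of which particle numbers appear in the numerators and denominators.
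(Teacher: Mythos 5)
Your proof is correct and follows essentially the same route as the paper: reduce to the pointwise identity $(\move_{ij}\delta_\yy)(\xx) = (\move_{ij}\delta_\xx)(\yy)$, split into the cases $\xx=\yy$, no move connecting $\xx$ and $\yy$, and a unique connecting pair $\{a,b\}$, and verify the last case by direct computation. The only difference is cosmetic: where the paper simply records the common value $\pi(\xx)^{-1/2}\pi(\yy)^{-1/2}$ (up to the ordered-pair factor of $2$, which cancels on both sides), you make the detailed-balance check explicit via $\pi(\xx)/\pi(\yy) = \bigl((n_i(\xx)-1)/(n_j(\xx)+1)\bigr)^2$ and the recursion $k!! = (k-1)(k-2)!!$, which is exactly the verification the paper leaves implicit.
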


\begin{proof}
There are three disjoint possibilities for any pair $(\xx,\yy)$. Either
\begin{enumerate}
\item $\xx=\yy$
\item There is no pair $a,b \in [n]$ such that $\yy \in \{m_{ab}^{ij}\xx, m_{ab}^{ji}\xx,\xx\}$.
\item There is exactly one subset of size two $\{a,b\} \subset [n]$ such that $\yy \in \{m_{ab}^{ij}\xx, m_{ab}^{ji}\xx\} \backslash \{\xx\}$.
\end{enumerate}
In case 1, the equation is vacuously true as the left and right hand sides are identical.  In case 2, both sides are 0.  This can be seen because all delta functions in
\begin{equation}
\move_{ij} \delta_\xx(\yy) = \sum_{a \neq b} \frac{n_j(\yy)+1}{n_i(\yy)-1}(\delta_\xx(m_{ab}^{ij}\yy) - \delta_\xx(\yy)) + \sum_{a\neq b} \frac{n_i(\yy)+1}{n_j(\yy)-1}(\delta_\xx(m_{ab}^{ji} \yy) - \delta_\xx(\yy))
\end{equation}
and
\begin{equation}
\move_{ij} \delta_\yy(\xx) = \sum_{a\neq b} \frac{n_j(\xx)+1}{n_i(\xx)-1}(\delta_\yy(m_{ab}^{ij}\xx) - \delta_\yy(\xx)) + \sum_{a\neq b} \frac{x_i+1}{x_j-1}(\delta_\yy(m_{ab}^{ji}\xx) - \delta_\yy(\xx))
\end{equation}
evaluate to 0.  Lastly in case 3, the above expressions reduce to
\begin{equation}
\move_{ij}\delta_\xx(\yy) = \frac{1}{\sqrt{\pi(\xx)\pi(\yy)}} = \move_{ij}\delta_\yy(\xx)
\end{equation}
because $\yy=m_{ab}^{ij}\xx$ if and only if $\xx=m_{ab}^{ji}\yy$.
\end{proof}

\begin{proposition}\label{prop:Erevmeasn}
For all $\xx,\yy \in \Ln$ and $i < j \in [N]$,
\begin{equation}
\ipn{ \delta_\xx, \exch_{ij} \delta_\yy } = \ipn{ \exch_{ij} \delta_\xx, \delta_\yy }
\end{equation}
\end{proposition}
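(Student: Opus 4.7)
The plan is to mirror the case analysis used in the proof of Proposition~\ref{prop:Mrevmeasn}. The crucial feature that drives the argument through even more cleanly than in the move case is that $s_{ab}^{ij}$ preserves $n_k(\xx)$ for every site $k\in[N]$ (not just the parity); consequently $\pi(s_{ab}^{ij}\xx)=\pi(\xx)$ whenever the swap is nontrivial. This is in contrast with $m_{ab}^{ij}$, which changes $n_i$ and $n_j$ by $\pm 2$ and whose reversibility is only restored by the asymmetric prefactor $(n_j+1)/(n_i-1)$ appearing in the definition of $\move_{ij}$. The operator $\exch_{ij}$ carries no analogous prefactor precisely because no such correction is needed.

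I would partition $(\xx,\yy)\in\Ln\times\Ln$ into three disjoint cases: (i) $\xx=\yy$, in which the identity holds trivially; (ii) there is no ordered pair $a\neq b\in[n]$ satisfying $\yy=s_{ab}^{ij}\xx\neq\xx$, in which case every $\delta$-function value appearing inside either $\exch_{ij}\delta_\yy(\xx)$ or $\exch_{ij}\delta_\xx(\yy)$ vanishes and both sides are zero; and (iii) there is exactly one ordered pair $(a,b)$ with $x_a=i$, $x_b=j$, and $\yy=s_{ab}^{ij}\xx\neq\xx$. Uniqueness in (iii) follows from the distinguishability of particles in $\Ln$: the labels whose positions differ between $\xx$ and $\yy$ are uniquely identified, and the requirement $x_a=i<j=x_b$ fixes the order.

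The heart of the proof is a direct computation in case (iii). Using $\ipn{\delta_\xx,f}=f(\xx)$ from Definition~\ref{def:inner_product} and noting that the unique surviving summand contributes $\delta_\yy(\yy)=\pi(\yy)^{-1}$, I would obtain
\begin{equation*}
\ipn{\delta_\xx,\exch_{ij}\delta_\yy}=\frac{2}{\pi(\yy)}, \qquad \ipn{\exch_{ij}\delta_\xx,\delta_\yy}=\frac{2}{\pi(\xx)}.
\end{equation*}
To obtain the dual expression I would verify the involution: if $\yy=s_{ab}^{ij}\xx$ with $x_a=i$ and $x_b=j$, then $y_a=j$ and $y_b=i$, so the condition $y_b=i$, $y_a=j$ needed for $s_{ba}^{ij}$ to act nontrivially on $\yy$ is met, and indeed $s_{ba}^{ij}\yy=\xx$. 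The two expressions then agree because $\pi(\xx)=\pi(\yy)$.

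There is essentially no obstacle here. The only bookkeeping to watch carefully is the uniqueness claim in case (iii) and the label swap $(a,b)\leftrightarrow(b,a)$ on reversing the direction of the exchange; both are immediate from the explicit definition of $s_{ab}^{ij}$ given in Theorem~\ref{thm:levmf}.
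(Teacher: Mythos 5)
Your proposal is correct and follows essentially the same route as the paper's proof: the same three-case decomposition ($\xx=\yy$; no swap relating them; exactly one swap), with case (iii) evaluating both sides to $2/\pi(\yy)=2/\pi(\xx)$ using the fact that $s_{ab}^{ij}$ preserves all particle numbers and hence the measure $\pi$. Your extra remarks on uniqueness of the contributing ordered pair and on the involution $s_{ab}^{ij}$ reversing itself are correct refinements of details the paper leaves implicit, but they do not change the argument.
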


\begin{proof}
Again, consider three disjoint cases for the pair $(\xx,\yy)$.  Either
\begin{enumerate}
\item $\xx=\yy$
\item $\yy \not\in \{s_{ab}^{ji} \xx, \xx\}$ for any $a,b \in [n]$.
\item There is exactly one pair $a \neq b \in [n]$ such that $\yy = s_{ab}^{ij}\xx$. 
\end{enumerate}
In the first case, the equation is trivial.  In the second case, both sides are zero since all delta functions in the expressions
\begin{equation}
\exch_{ij} \delta_\xx(\yy) = 2 \sum_{a\neq b} (\delta_\xx(s_{ab}^{ij}\yy) - \delta_\xx(\yy))
\quad \mbox{and} \quad
\exch_{ij} \delta_\yy(\xx) = 2 \sum_{a \neq b} (\delta_\yy(s_{ab}^{ij}\xx) - \delta_\yy(\xx))
\end{equation}
evaluate to 0.  Now suppose we are not in the first two cases.  Then we must be in the third case with $\yy = s_{ab}^{ij} \xx$ and $\xx = s_{ab}^{ij} \yy$. The expressions then evaluate to
\begin{equation}
\ipn{ \delta_\xx, \exch_{ij} \delta_\yy } = \frac{2}{\pi(\yy)} = \frac{2}{\pi(\xx)} = \ipn{\exch_{ij} \delta_\xx, \delta_\yy}
\end{equation}
Since all particle numbers are the same $n_k(\xx) = n_k(\yy)$ for all $k \in \ZZ$ and $\pi$ depends only on particle numbers.
\end{proof}

\begin{remark}
The above two propositions show that $\move_{ij}$ and $\exch_{ij}$, and hence $\gen_{ij} = \move_{ij} - \exch_{ij}$, are reversible with respect to the measure $\pi(\xx)$.
\end{remark}

\bibliography{color}
\bibliographystyle{abbrv}

\end{document}